\documentclass[12pt]{article}
\usepackage{amsmath,amsfonts,amssymb,amsthm,mathrsfs}
\usepackage[a4paper,vmargin={3.5cm,3.5cm},hmargin={2.5cm,2.5cm}]{geometry}
\usepackage[margin=1cm]{caption}
\usepackage{graphicx,graphics}
\usepackage{epsfig}
\usepackage{latexsym}
\usepackage[utf8]{inputenc}
\usepackage{ae,aecompl}
\usepackage[english]{babel}
\usepackage[colorlinks=true]{hyperref}
\usepackage{enumerate}
\usepackage{bbm}
\usepackage{pxfonts}
\usepackage{dsfont}
\usepackage{tikz}

\usepackage[auto]{contour}
\usetikzlibrary{shapes, angles, decorations.text, patterns, fadings,decorations.pathreplacing}
\pgfdeclarelayer{edgelayer}
\pgfdeclarelayer{nodelayer}
\pgfsetlayers{edgelayer,nodelayer,main}
\tikzstyle{none}=[inner sep=0pt]
\tikzstyle{real}=[circle,fill=black,draw=black, inner sep=1pt]
\tikzstyle{phantom}=[circle,fill=white,draw=black, inner sep=1pt]
\tikzstyle{geoone}=[very thick, orange]
\tikzstyle{geotwo}=[very thick, blue]
\tikzstyle{geothree}=[ultra thick, green!50!black]
\tikzstyle{map}=[red, densely dashed]
\tikzstyle{root}=[red, very thick, ->]
\tikzstyle{georight}=[very thick, green!50!black]
\tikzstyle{geoleft}=[very thick, blue]
\tikzstyle{bla}=[]
\tikzset{
        hatch distance/.store in=\hatchdistance,
        hatch distance=10pt,
        hatch thickness/.store in=\hatchthickness,
        hatch thickness=2pt
    }

    \makeatletter
    \pgfdeclarepatternformonly[\hatchdistance,\hatchthickness]{flexible hatch}
    {\pgfqpoint{0pt}{0pt}}
    {\pgfqpoint{\hatchdistance}{\hatchdistance}}
    {\pgfpoint{\hatchdistance}{\hatchdistance}}
    {
        \pgfsetcolor{\tikz@pattern@color}
        \pgfsetlinewidth{\hatchthickness}
        \pgfpathmoveto{\pgfqpoint{0pt}{0pt}}
        \pgfpathlineto{\pgfqpoint{\hatchdistance}{\hatchdistance}}
        \pgfusepath{stroke}
    }

\date{}

\headheight=-0.2cm
\oddsidemargin=-1.3cm
\evensidemargin=0cm
\textwidth=18.5cm
\textheight=23cm
\marginparsep=0cm
\marginparwidth=0cm
\topskip=0cm
\pagestyle{plain}
\topmargin=0.5cm

\newtheorem{remark}{Remark}[]

\newtheorem{thm}{Theorem}[section]
\newtheorem{theorem}[thm]{Theorem}
\newtheorem{rem}{Remark}[section]
\newtheorem{df}{Definition}[section]
\newtheorem{prop}[thm]{Proposition}
\newtheorem{proposition}[thm]{Proposition}
\newtheorem{lem}[thm]{Lemma}
\newtheorem{lemma}[thm]{Lemma}
\newtheorem{cor}[thm]{Corollary}

\linespread{1.15}

\makeatletter
\newcommand*\dashline{\rotatebox[origin=c]{90}{$\dabar@\dabar@\dabar@$}}
\makeatother

\newcommand{\B}{\mathcal{B}} 
\renewcommand{\P}{\mathbb{P}} 
\newcommand{\E}{\mathbb{E}} 
\newcommand{\DS}{\mathsf{DS}} 
\newcommand{\Q}{\mathcal{Q}} 
\newcommand{\QS}{\widetilde{\mathcal{Q}}} 
\newcommand{\BQ}{\mathcal{Q}^B} 
\newcommand{\BPQ}{\mathcal{Q}^{B, \bullet}} 
\newcommand{\UIHPQ}{\mathcal{H}_\infty} 
\newcommand{\UIHPQS}{\widetilde{\mathcal{H}}_\infty} 
\newcommand{\sQ}{\mathsf{Q}} 
\newcommand{\sQS}{\widetilde{\mathsf{Q}}} 
\newcommand{\TB}{\mathsf{TB}} 
\newcommand{\LT}{\mathsf{LT}} 
\newcommand{\T}{\mathsf{T}} 
\newcommand{\flip}{\leftrightarrow} 
\newcommand{\q}{\mathbf{q}} 
\renewcommand{\S}{\mathbf{p}} 
\renewcommand{\epsilon}{\varepsilon}

\DeclareSymbolFont{extraup}{U}{zavm}{m}{n}
\DeclareMathSymbol{\varheart}{\mathalpha}{extraup}{86}
\DeclareMathSymbol{\vardiamond}{\mathalpha}{extraup}{87}

\makeatletter
\renewcommand*{\@fnsymbol}[1]{\ensuremath{\ifcase#1\or  \vardiamond \or \clubsuit\or \spadesuit\or
   \mathsection\or \mathparagraph\or \|\or **\or \dagger\dagger
   \or \ddagger\ddagger \else\@ctrerr\fi}}
\makeatother

\title{\bf \textsc{Geometry of the Uniform Infinite\\Half-Planar Quadrangulation}}

\author{Alessandra Caraceni\thanks{Scuola Normale Superiore and Université Paris Sud.\hfill  \texttt{alessandra.caraceni@sns.it}} \hspace{12pt} \& \hspace{4pt} Nicolas Curien\thanks{Université Paris Sud.\hfill  \texttt{nicolas.curien@gmail.com}}}
\begin{document}
\maketitle

\abstract{We give a new construction of the uniform infinite half-planar quadrangulation with a general boundary (or UIHPQ), analogous to the construction of the UIPQ presented by Chassaing and Durhuus ~\cite{CD06}, which allows us to perform a detailed study of its geometry. We show that the process of distances to the root vertex read along the boundary contour of the UIHPQ evolves as a particularly simple Markov chain and converges to a pair of independent Bessel processes of dimension $5$ in the scaling limit. We study the ``pencil'' of infinite geodesics issued from the root vertex as in~\cite{CMMinfini}, and prove that it induces a decomposition of the UIHPQ into three independent submaps. We are also able to prove that balls of large radius around the root are on average $7/9$ times as large as those in the UIPQ, both in the UIHPQ and in the UIHPQ with a simple boundary; this fact we use in a companion paper to study self-avoiding walks on large quadrangulations.}

\section{Introduction}

The aim of this paper is to investigate the geometry of large random quadrangulations with a boundary through their infinite local limit, an object named \emph{the Uniform Infinite Quadrangulation of the Half-Plane} (UIHPQ for short). The framework which we draw from is thus the broader probabilistic theory developed around random planar maps, a field which has been very active over the last decade (see~\cite{LeGallICM,MieStFlour}).

The core of this paper consists in an adaptation of methods which were originally developed in~\cite{CS04,CMMinfini,CMboundary, Kri05,LGM10, Men08} in order to study the Uniform Infinite Planar Quadrangulation (UIPQ): we hence briefly summarise progress made so far on the study of the geometry of the UIPQ, so that we may then present corresponding results and conjectures in the case of the UIHPQ.

\paragraph{A brief history of the UIPQ.}
Following pioneering work of Angel \& Schramm~\cite{AS03} on local limits of random triangulations, Krikun~\cite{Kri05} studies the local limit of uniform random quadrangulations of the sphere. Taking $\Q_n$ to be a uniform random rooted quadrangulation of the sphere with $n$ faces, he uses exact enumeration formulas to prove that
$$\Q_n \xrightarrow[n \to \infty]{(d)} \Q_\infty$$ for the local topology (see Section~\ref{quadrangulations}); $\Q_\infty$, commonly referred to as the UIPQ, is a random infinite quadrangulation of the plane with a distinguished oriented edge.

Meanwhile, Chassaing and Durhuus~\cite{CD06} give a ``Schaeffer-type'' construction for an infinite random quadrangulation of the plane; this construction, which relies on a bijection between quadrangulations and certain trees whose vertices bear positive labels (later referred to as \emph{positive labelled trees}), is proved to be equivalent to that of Krikun by M\'enard~\cite{Men08}, and lays the basis for much further work on the geometry of the UIPQ. Le Gall and M\'enard~\cite{LGM10} compute scaling limits for the contour functions coding the infinite positive labelled tree that Chassaing and Durhuus used to build the UIPQ; in particular, they prove~\cite[Theorem 6]{LGM10}: if $ \#[\Q_{\infty}]_{n}$ is the number of vertices within distance $n$ from the root vertex in the UIPQ, then we have
\begin{eqnarray} \label{eq:cvballuipq} \frac{1}{n^4} \# [\Q_{\infty}]_{n} \xrightarrow[n\to\infty]{(d)} \mathcal{V}_{p}  \end{eqnarray} 
for a certain explicit limiting law $ \mathcal{V}_{p}$ (where ``$p$'' stands for ``plane'').

An alternative construction of the UIPQ is given in~\cite{CMMinfini}: it is this time an ``unconstrained'' construction, in the sense that it is based on a Schaeffer-type correspondence which relates pointed, rooted quadrangulations to labelled plane trees with no positivity condition on their labels. While the positive construction of~\cite{CD06} encodes the UIPQ as a labelled infinite tree carrying precise information about distances between the root vertex and other vertices of the map, labels in the ``unconstrained'' tree carry a different set of geometric information, and can in some sense be interpreted as distances to infinity in the corresponding UIPQ, see~\cite[Theorem 2.8]{CMMinfini}. Such a construction is thus well-suited to the study of coalescence properties of geodesic rays to infinity, which is treated in detail in~\cite{CMMinfini}; furthermore, it underlies the proof of the fact that the UIPQ admits a scaling limit in the local Gromov--Hausdorff sense: a scale invariant locally compact random metric space that is homeomorphic to the plane and has Hausdorff dimension $4$, known as \emph{the Brownian plane}, see~\cite{CLGHull,CLGplane}.

\paragraph{Previous results on the UIHPQ.}
A \emph{quadrangulation with a boundary} is a rooted planar map whose faces are all quadrangles, except for the face adjacent to the root edge and lying to its right (called the external face, or outerface), which can be of arbitrary (even) degree (and, like other faces, is not necessarily simple); all of the relevant definitions and enumeration results are given in Section~\ref{quadrangulations}. 

If we denote by $\Q_{n,p}$ a uniformly random quadrangulation with $n$ inner quadrangular faces and a boundary of perimeter $2p$, then -- as shown in~\cite{CMboundary} -- we have the following convergences in distribution for the local metric: 
\begin{equation}\label{intro:UIHPQ as double limit} \Q_{n,p} \xrightarrow[n\to\infty]{(d)} \Q_{\infty,p} \xrightarrow[p\to\infty]{(d)}  \UIHPQ.\end{equation}
The random map $\Q_{\infty,p}$ is the Uniform Infinite Planar Quadrangulation with a boundary of perimeter $2p$, while $\UIHPQ$, our object of interest, is the aforementioned UIHPQ. The proof of~\cite{CMboundary} relies on a Schaeffer construction of the ``unconstrained'' type, roughly analogous to that of~\cite{CMMinfini} for the UIPQ.

Our first goal is to give an equivalent construction of the UIHPQ, modelled on that of~\cite{CD06}, which employs a treed bridge bearing positive labels (see Section~\ref{treed bridges}), thus rendering information about the profile of distances from the root vertex readily available for further study.

\paragraph{The positive construction of $\UIHPQ$.} 
\begin{figure}
\centering\begin{tikzpicture}

\tikzstyle{real}=[inner sep=1.5pt, draw=white,fill=black, thick, circle]
\tikzstyle{phantom}=[inner sep=1.5pt, draw=black,fill=white, thick, circle]
\tikzstyle{map}=[red,thin]
\tikzstyle{boundary}=[very thick]

	\begin{pgfonlayer}{nodelayer}
		\node [style=phantom, label=below:\contour{white}{0}] (0) at (0, 0) {};
		\node [style=real, fill=red, label=right:\contour{white}{\footnotesize{$\delta$}}] (delta) at (0, 1) {};
		\node [style=phantom, label=below:1] (1) at (1, 0) {};
		\node [style=real, label=below:2] (2) at (2, 0) {};
		\node [style=phantom, label=below:1] (3) at (3, 0) {};
		\node [style=phantom, label=below:2] (4) at (4, 0) {};
		\node [style=real, label=below:3] (5) at (5, 0) {};
		\node [style=real, label=below:1] (-1) at (-1, 0) {};
		\node [style=real, label=below:2] (-2) at (-2, 0) {};
		\node [style=real, label=below:3] (-3) at (-3, 0) {};
		\node [style=phantom, label=below:2] (-4) at (-4, 0) {};
		\node [style=real, label=below:3] (-5) at (-5, 0) {};
		\node (left) at (-6,0) {$\ldots$};
		\node (right) at (6,0) {$\ldots$};
		\contourlength{1px}
		\node [style=real, label=right:\contour{white}{\footnotesize 2}] (2-1) at (2, 0.75) {};
		\node [style=real, label=left:\contour{white}{\footnotesize 3}] (2-11) at (1.25, 1.5) {};
		\node [style=real, label=below:\contour{white}{\footnotesize 1}] (2-13) at (2.75, 1.5) {};
		\node [style=real, label=above:\contour{white}{\footnotesize 2}] (2-111) at (1, 2) {};
		\node [style=real, label=above:\contour{white}{\footnotesize 4}] (2-112) at (1.5, 2) {};
		\node [style=real, label=above:\contour{white}{\footnotesize 2}] (2-12) at (2, 1.5) {};
		\node [style=real, label=left:\contour{white}{\footnotesize 2}] (5-1) at (4.75, 0.75) {};
		\node [style=real, label=right:\contour{white}{\footnotesize 4}] (5-2) at (5.25, 0.75) {};
		\node [style=real, label=left:\contour{white}{\footnotesize 2}] (-2-1) at (-2, 0.75) {};
		\node [style=real, label=above:\contour{white}{\footnotesize 2}] (-2-11) at (-2.5, 1.5) {};
		\node [style=real, label=left:\contour{white}{\footnotesize 3}] (-2-12) at (-1.5, 1.5) {};
		\node [style=real, label=right:\contour{white}{\footnotesize 4}] (-5-1) at (-5, 0.75) {};
		\node [style=real,, label=right:\contour{white}{\footnotesize 4}] (-5-11) at (-5, 1.5) {};
		\node [style=real,, label=right:\contour{white}{\footnotesize 5}] (-5-111) at (-5, 2.25) {};
	\end{pgfonlayer}
	\begin{pgfonlayer}{edgelayer}
	

\fill[gray!10, out=170, in=210, boundary] (2) to (1.1,2.8) [out=30, in=90] to (2-13);
\fill[gray!10, out=60, in=0, looseness=1] (2-13) to (1.5,3.1) [in=80, out=180, looseness=0.8] to (delta.center) [bend left=25] to (-1.center)--(0,2)--(1,3.3)--(3.2,2.8);
\fill[gray!10, out=40, in=10, looseness=2, boundary] (2) to (1,3.4) [in=60, out=190, looseness=1] to (-1)--(0,2)--(1,3.3)--(3.2,2.8)--(2-13.center)--(2-13.west)--(2.west);

\fill[gray!10,out=100, in=10, looseness=1] (5-1.center) to (1,3.6) [in=70, out=190, looseness=1] to (-1.center)--(-1,4)--(5,4);
\fill[gray!10, bend right=30] (-1.center) to (-2.center)[bend right=25] to (-5.center)--(-5,4)--(-1,4);
\fill[gray!10, bend right=20] (5-1.center) to (5.center)[bend left=20] to (6.2,0.5)--(6.2,4)--(5,4);
\fill[gray!10, bend right=20] (-5.center) to (-6.2,0.5)--(-6.2,4)--(-5,4);
\fill[white, path fading=south] (-6.2,4) rectangle (6.2,2);
	\draw[map, boundary, bend right=20] (5-1) to (5);
	\draw[map, bend left=20] (5-2) to (5);
	\draw[map, boundary, bend left=20, dashed] (5) to (6.2,0.5);
	\draw[map, boundary, bend left=20] (5) to (6.2,0.5);
	\draw[map, out=90, in=180] (5) to (6.2,1.5);
	\draw[map, bend right=20, boundary] (-5) to (-6.2,0.5);
	
	\fill[white, path fading=east] (-6.2,4) rectangle (-5,0);
\fill[white, path fading=west] (6.2,4) rectangle (5,0);
	\draw[dashed] (0)--(delta);
		\draw   (-2) to (-2-1);
		\draw   (-2-1) to (-2-11);
		\draw   (-2-1) to (-2-12);
		\draw   (2) to (2-1);
		\draw   (2-1) to (2-13);
		\draw   (2-1) to (2-12);
		\draw   (2-11) to (2-1);
		\draw   (2-111) to (2-11);
		\draw   (2-11) to (2-112);
		\draw   (-5) to (-5-1) to (-5-11) to (-5-111);
		\draw   (5-1) to (5) to (5-2);
		\draw[thick,->]   (0) to (1);
		\draw   (1) to (2);
		\draw   (2) to (3);
		\draw   (3) to (4);
		\draw   (4) to (5) to (right);
		\draw   (0) to (-1) to (-3) to (-4) to (-5) to (left);
	\draw[map, bend left=20,rounded corners=1cm] (-5) to ([xshift=-20pt, yshift=30pt]-5-111.center) [out=20, in=180] to (-2); 
	\draw[map, bend left=20,rounded corners=15pt] (-5-1) to ([xshift=-10pt, yshift=20pt]-5-111.center) [bend left=70] to (-5); 
	\draw[map, bend left=20,rounded corners=10pt] (-5-11) to ([xshift=0pt, yshift=15pt]-5-111.center) [bend left=40] to (-5); 
	\draw[map, bend left=20] (-5-111) to (-5-11);
	\draw[map, bend left=30] (-5-11) to (-5);
	\draw[map, bend left=30] (-5-1) to (-5);
	\draw[map, bend left=25, boundary] (-5) to (-2);
	\draw[map, bend left=25, boundary] (-3) to (-2);
	\draw[map, bend left=30,rounded corners=20pt, bend left=40, looseness=1.2] (-2) to (-2.4,2.7) [bend left=50] to (-1);
	\draw[map, bend left=30,rounded corners=15pt, bend left=60, looseness=1.6] (-2-1) to (-2,2.5) [bend left=40, looseness=1] to (-1);
	\draw[map, bend left=30,rounded corners=5pt, bend left=40, looseness=1] (-2-11) to (-1.8,2.2) [bend left=30, looseness=1] to (-1);
	\draw[map, bend left=30,rounded corners=15pt, bend left=0, looseness=1] (-2-1) to (-2,2.05) [bend left=30, looseness=1] to (-1);
	\draw[map, bend left=25] (-2-12) to (-2-1);
	\draw[map, bend left=25] (-2-1) to (-1);
	\draw[map, bend left=25, boundary] (-2) to (-1);
	
	\draw[map, out=170, in=210, boundary] (2) to (1.1,2.8) [out=30, in=90] to (2-13);
	\draw[map, out=180, looseness=1, rounded corners=15pt, looseness=1.3, in=-110] (2-1) to (1,2.7) [out=0, in=110] to (2-13);
	\draw[map,bend left=30] (2-11) to (2-111);
	\draw[map,bend left=20] (2-112) to (2-11);
	\draw[map,bend left=10] (2-11) to (2-1);
	
	\draw[map, out=90, looseness=1, in=180] (2-11) to ([yshift=5pt]2-112.center) [out=0, in=125, looseness=0.8] to (2-1);
	\draw[map, out=115, looseness=2] (2-1) to (2-13);
	\draw[map,bend left=20] (2-12) to (2-13);
	\draw[map,bend left=20] (2-1) to (2-13);
	\draw[map, out=60, in=0, looseness=1,thick,<-,black] (2-13) to (1.5,3.1) [in=80, out=180, looseness=0.8] to (delta);
	\draw[map, boundary, bend right=20] (-1) to (delta);
	\draw[map, out=70, in=120, looseness=1] (2-111) to (2-13);
	\draw[map, out=20, in=20, looseness=2] (2-1) to (1,3.2) [in=30, out=200, looseness=1] to (-1);
	\draw[map, out=40, in=10, looseness=2, boundary] (2) to (1,3.4) [in=60, out=190, looseness=1] to (-1);
	\draw[map, out=100, in=10, looseness=1, boundary] (5-1) to (1,3.6) [in=70, out=190, looseness=1] to (-1);

	\end{pgfonlayer}
\end{tikzpicture}
\caption{\label{Phi(B_infty)}\small{The construction of the UIHPQ as $\Phi(\B_\infty)$.}}
\end{figure}
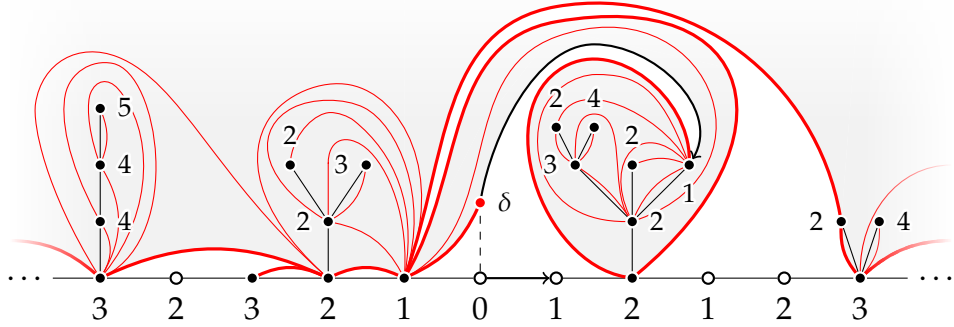
Let us describe this new construction in some detail. We first consider  a random process $(X_{i})_{i \in \mathbb{Z}}$ indexed by $ \mathbb{Z}$ such that $(X_{i})_{i \geq 0}$ and $(X_{-i})_{i \geq 0}$ are two independent, identically distributed, nearest-neighbour random walks on $  \mathbb{N}= \{0,1,2, \ldots\}$, issued from $X_0=0$, with transition probabilities given by
 \begin{eqnarray} \mathbf{p}( n, n-1) = \frac{n}{2(n+2)}, \qquad \mathbf{p}( n, n+1) = \frac{n+4}{2(n+2)}\qquad\mbox{for $n\geq0$}. 
\label{eq:defp}  \end{eqnarray}
The process $(X_{i})_{i \in \mathbb{Z}}$ can be seen as a non-negative labelling of the vertices of a doubly infinite path, indexed by $\mathbb{Z}$. 

This path acts as a baseline on which we graft a sequence of random positive labelled plane trees: each tree is finite, its vertices labelled with (strictly) positive integers so that labels assigned to neighbouring vertices differ by 0,1 or $-1$. For each $i\geq 0$, we denote by $\rho_i^+$ the probability measure that gives weight proportional to $12^{-n}$ to any positive labelled tree with $n$ edges such that the label of the root vertex is $i$ ($\rho_i^+$ is the \emph{Boltzmann distribution on positive labelled trees} with root labelled $i$, see Section~\ref{trees} and~\cite{CD06,LGM10}).

We construct a random ``infinite treed bridge'' $\B_\infty$ as follows: conditionally on $(X_{i})_{i \in \mathbb{Z}}$, on each vertex $j$ of the baseline path such that $X_{j+1}<X_j$ (such indices $j$ will be dubbed \emph{down-steps}) we graft an independent random plane tree of law $\rho_{X_j}^+$ (see Section~\ref{the new construction}). Through a variant of the Schaeffer construction, $\B_\infty$ corresponds to a (random) infinite quadrangulation with an infinite boundary, denoted $\Phi(\mathcal{B}_{\infty})$ (see Figure~\ref{Phi(B_infty)}).  Our first result (Theorem~\ref{new construction}) consists in showing that such a construction yields the UIHPQ with general boundary, that is to say
\begin{eqnarray}\Phi(\mathcal{B}_{\infty}) = \UIHPQ \quad \mbox{ in distribution}.  \label{eq:intronewconstruction}\end{eqnarray}

This new construction provides precious insight into the geometry of the UIHPQ, since vertices of $\Phi( \mathcal{B}_{\infty})$ (with the exception of the root vertex) correspond to vertices of the trees in $\mathcal{B}_{\infty}$, and their respective distances from the root vertex in $\Phi( \mathcal{B}_{\infty})$ are encoded by their labels in $\B_\infty$. The process $(X_{i})_{i \in \mathbb{Z}}$ has an especially simple geometric interpretation: it corresponds to the process of distances to the root vertex read along the left-to-right contour of the boundary of the UIHPQ. An immediate consequence is the fact (nontrivial at first glance) that the two processes of distances from the root vertex read along the boundary of the UIHPQ, starting with the root vertex and proceeding to the right or to the left, are independent.

Although the construction of the UIHPQ given via the positive treed bridge $\B_\infty$ is more involved than that of~\cite{CMboundary}, we shall see that it still leads to simple computations, delivering a substantial amount of geometric information not readily accessible from~\cite{CMboundary}.

\paragraph{Study of the geodesic pencil.} 
A geodesic ray in $\UIHPQ$ is a one-ended infinite geodesic issued from the root vertex of $\UIHPQ$. Thanks to our new construction \eqref{eq:intronewconstruction}, we are able to characterise geodesic rays as sequences of corners in $ \mathcal{B}_{\infty}$ and prove (Theorem~\ref{infinite cut points}) that the ``geodesic pencil'' consisting of all geodesic rays has infinitely many cut-points: in order words, there almost surely exists an infinite set of vertices that all geodesic rays must pass through in order to go to infinity. This result is the exact analogue of the confluence of discrete geodesics established in~\cite{CMMinfini} for the case of the UIPQ. 

We also prove that the geodesic pencil splits the UIHPQ into two independent submaps (whose distributions mirror one another) giving an additional explanation of the fact that the processes of distances to the root vertex read along the left and right halves of the boundary are independent.  This study of geodesics also yields an answer to an open question of~\cite{CMboundary} about the geometric interpretation of labels in the construction of~\cite{CMboundary}, which proves similar to~\cite[Theorem 2.8]{CMMinfini}. 

\begin{figure}[!h, width=1.5\textwidth]
\centering
\begin{tikzpicture}[scale=0.80]
\tikzstyle{real}=[inner sep=1.5pt, draw=white,fill=black, thick, circle]
\tikzstyle{bla}=[thick]
\tikzstyle{geoleft}=[very thick, blue]
\tikzstyle{boundary}=[very thick]
\tikzstyle{root}=[->, very thick]
	\begin{pgfonlayer}{nodelayer}
	\contourlength{1pt}
		\node [style=real] (0) at (0, 0) {};
		\node [style=real] (1) at (2, 0) {};
		\node [style=real] (2) at (4, 0) {};
		\node [style=real] (3) at (-2, 0) {};
		\node [style=real] (4) at (-4, 0) {};
		\node [style=real] (5) at (-6, 0) {};
		\node [style=real] (6) at (-8, 0) {};
		\node (left) at (-9,0) {$\ldots$};
		\node (right) at (9,0) {$\ldots$};
		\node [style=real] (7) at (6, 0) {};
		\node [style=real] (8) at (8, 0) {};
		\node [style=real] (9) at (0.75, -0.75) {};
		\node [fill=blue,style=real] (10) at (-0.25, -1) {};
		\node [fill=blue,style=real] (11) at (0.75, -1.75) {};
		\node [fill=blue,style=real] (12) at (-0.75, -2.5) {};
		\node [fill=blue,style=real, label=below right:$\rho$] (13) at (0.25, -2.75) {};
		\node [style=real] (14) at (-1, -1) {};
		\node [style=real] (15) at (-0.5, -3) {};
		\node [style=real] (16) at (3.25, -0.75) {};
		\node [style=real] (17) at (4.5, -0.75) {};
		\node [style=real] (18) at (4, -0.5) {};
		\node [style=real] (19) at (4, -1) {};
		\node [style=real] (20) at (4, -1.5) {};
		\node [style=real] (21) at (4, -2.25) {};
		\node [style=real] (22) at (3, -2.75) {};
		\node [style=real] (23) at (6, -1) {};
		\node [style=real] (24) at (6.75, -1) {};
		\node [style=real] (25) at (6.75, -1.75) {};
		\node [style=real] (26) at (6, -1) {};
		\node [style=real] (27) at (-4, -1) {};
		\node [style=real] (28) at (-7, -0.75) {};
		\node [style=real] (29) at (-6, -1.25) {};
		\node [style=real] (30) at (-5.5, -0.75) {};
		\node [style=real] (31) at (-5.5, -1.5) {};
		\node [style=real] (32) at (-5, -1) {};
		\node [style=real] (33) at (-5, -2) {};
		\node [style=real] (34) at (-7, -2) {};
		\node [style=real] (35) at (-8, -0.5) {};
		\node [style=real] (36) at (-8.25, -1.25) {};
		\node [style=real] (37) at (-7.5, -1.25) {};
		\node [style=real] (38) at (-0.75, 2) {};
		\node [style=real] (39) at (1.75, 5) {};
		\node [style=real] (41) at (-1, 0.75) {};
		\node [style=real] (42) at (-1, 4) {};
		\node [style=real] (43) at (0.25, 5) {};
		\node [style=real] (44) at (1.5, 5.8) {};
		\node [style=real] (45) at (0.25, 1) {};
		\node [style=real] (46) at (0.25, 1) {};
		\node [style=real] (47) at (-1.5, 3) {};
		\node [style=real] (48) at (0.75, 2.75) {};
		\node [style=real] (49) at (1.5, 3.5) {};
		\node [style=real] (50) at (0, 2.25) {};
		\node [style=real] (51) at (1.5, 4.25) {};
		\node [style=real] (52) at (0.5, 4) {};
		\node [style=real] (53) at (-0.25, 3.75) {};
		\node [style=real] (54) at (2.75, 5.75) {};
	\end{pgfonlayer}
	\begin{pgfonlayer}{edgelayer}
	\draw[line width=12pt, red, rounded corners=1pt] (28.center) to (35.center) to (36.center) to (37.center) to (28.center) to (34.center) 
	to (31.center) to (29.center) to (30.center) to (5.center) to (4.center) to (27.center) to (4.center) to 
	(2.center) to (16.center) to (21.center) to (17.center);
	\draw[line width=12pt, red, rounded corners=1pt, cap=round] (13.center) to (11.center) to (9.center) to (0.center) to (10.center) to (12.center) [bend right=90, looseness=2.6] to (13.center);
	\draw[line width=12pt, red, rounded corners=1pt, cap=round] (33.center)--(31.center)--(32.center);
	\draw[line width=12pt, red, rounded corners=1pt, cap=round] (4.center)--(27.center);
	\draw[line width=12pt, red, rounded corners=1pt, cap=round] (14.center)--(10.center);
	\draw[line width=12pt, red, rounded corners=1pt, cap=round] (21.center)--(22.center);
	
	\draw[line width=10pt, white, rounded corners=1pt, cap=round] (5.center) to (28.center) to (35.center) to (36.center) to (37.center) to (28.center) to (34.center) 
	
	to (31.center) to (29.center) to (30.center) to (5.center) to (4.center) to (27.center) to (4.center) to 
	(2.center) to (17.center) to (21.center) to (16.center) to (2.center);
	\draw[line width=10pt, white, rounded corners=1pt, cap=round] (13.center) to (11.center) to (9.center) to (0.center) to (10.center) to (12.center) [bend right=90, looseness=2.6] to (13.center);
	\draw[line width=10pt, white, rounded corners=1pt, cap=round] (33.center)--(31.center)--(32.center);
	\draw[line width=10pt, white, rounded corners=1pt, cap=round] (4.center)--(27.center);
	\draw[line width=10pt, white, rounded corners=1pt, cap=round] (14.center)--(10.center);
	\draw[line width=10pt, white, rounded corners=1pt, cap=round] (21.center)--(22.center);
	\draw[line width=10pt, white, rounded corners=1pt, cap=round] ([yshift=3pt]5.center) to ([yshift=3pt]2.center);
	\draw[red, ->, line width=1pt] ([xshift=7.2pt]21.center) to ([xshift=7.2pt]17.center);
	
		\fill[blue!20] (13.center) to (11.center) to (9.center) to (0.center) to (10.center) to (12.center) [bend right=90, looseness=2.6] to (13.center);
		\fill[green!10] (2.center) to (17.center) to (21.center) to (16.center) to (2.center);
		\fill[blue!10] (5.center) to (28.center) to (35.center) to (36.center) to (37.center) to (28.center) to (34.center) to (31.center) to (29.center) to (30.center) to (5.center);
		\fill[red!10] (0.center) to (41.center) to (38.center) to (47.center) to (42.center) to (43.center) to (39.center) to (44.center) to (54.center) to (39.center) [bend left] to (49.center) [in=0, out=0] to (48.center) to (50.center) to (38.center) to (45.center) to (0.center) to (9.center) to (11.center) to (13.center) to (12.center) to (10.center) to (0.center);
		\contourlength{2pt}
		
		\fill[blue!20, path fading=west] (0.center) to (41.center) to (38.center) to (47.center) to (42.center) to (43.center) to (39.center) to (44.center) to (-8,6) to (-9,0) to (0.center);
		\fill[green!20, path fading=east] (9,6) to (54.center) to (39.center) [bend left] to (49.center) [in=0, out=0] to (48.center) to (50.center) to (38.center) to (45.center) to (0.center) to (9,0) to (8,6);	
		
		\fill[white, path fading=west] (6,6) rectangle (9,0);
		\fill[white, path fading=east] (-6,6) rectangle (-9,0);
		\draw [style=geoleft] (0) to (41);
		\draw [style=geoleft] (38) to (41);
		\draw [style=geoleft] (42) to (43);
		\draw [style=geoleft] (43) to (39);
		\draw [style=geoleft] (39) to (44);
		\draw [style=geoleft] (38) to (47);
		\draw [style=geoleft] (47) to (42);
		\draw [style=bla, bend left=15] (50) to (42);
		\draw [style=bla, bend left, looseness=1.25] (49) to (39);
		\draw [style=bla] (51) to (49);
		\draw [style=bla, bend left=15, looseness=0.50] (42) to (52);
		\draw [style=bla] (52) to (39);
		\draw [style=bla, bend right=60, looseness=1.25] (42) to (52);
		\draw [style=bla] (53) to (52);
		\draw [style=bla] (52) to (48);
		\draw [style=georight] (0) to (45);
		\draw [style=georight] (45) to (38);
		\draw [style=georight] (50) to (38);
		\draw [style=georight] (48) to (50);
		\draw [style=georight] (48) to (49);
		\draw [style=georight, bend right] (49) to (39);
		\draw [style=georight] (54) to (39);
		
		\draw [style=geoleft] (10) to (0);
		\draw [style=bla] (10) to (11);
		\draw [style=georight] (11) to (9);
		\draw [style=georight] (9) to (0);
		\draw [style=geoleft] (12) to (10);
		\draw [style=bla, bend right=90, looseness=2.25] (12) to (13);
		\draw [style=georight,->] (13) to (11);
		\draw [style=bla] (14) to (10);
		\draw [style=bla] (13) to (15);
		\draw [style=geoleft] (12) to (13);
		\draw [style=bla] (16) to (2);
		\draw [style=bla] (2) to (17);
		\draw [style=bla] (18) to (16);
		\draw [style=bla] (18) to (17);
		\draw [style=bla] (17) to (19);
		\draw [style=bla] (19) to (16);
		\draw [style=bla] (16) to (20);
		\draw [style=bla] (20) to (17);
		\draw [style=bla] (21) to (17);
		\draw [style=bla] (21) to (16);
		\draw [style=bla] (22) to (21);
		\draw [style=bla] (23) to (7);
		\draw [style=bla] (23) to (24);
		\draw [style=bla] (23) to (25);
		\draw [style=bla] (27) to (4);
		\draw [style=bla] (5) to (28);
		\draw [style=bla] (28) to (29);
		\draw [style=bla] (29) to (30);
		\draw [style=bla] (30) to (5);
		\draw [style=bla] (29) to (31);
		\draw [style=bla] (31) to (32);
		\draw [style=bla] (31) to (33);
		\draw [style=bla] (31) to (34);
		\draw [style=bla] (34) to (28);
		\draw [style=bla] (36) to (35);
		\draw [style=bla] (37) to (36);
		\draw [style=bla] (37) to (28);
		\draw [style=bla] (28) to (35);
		\draw [style=boundary] (6) to (5);
		\draw [style=boundary] (5) to (4);
		\draw [style=boundary] (4) to (3);
		\draw [style=boundary] (3) to (0);
		\draw [style=boundary] (0) to (1);
		\draw [style=boundary] (1) to (2);
		\draw [style=boundary] (2) to (7);
		\draw [style=boundary] (7) to (8);
		\draw [style=boundary] (6) to (left);
		\draw [style=boundary] (8) to (right);
	\end{pgfonlayer}
	\fill[white, path fading=south] (-9,6) rectangle (9,0);
	\contourlength{1px}
	\draw [red, thick, densely dashed] (2) .. controls (1,3) and (2,1) .. (0) .. controls (0.5,-0.5) and (0.5,-2) .. (13);
	\draw node at (2,2) {\contour{white}{$X_i$}};
	\pgfresetboundingbox
	\path [use as bounding box] (-9,-4) rectangle (9,6.5);
\end{tikzpicture}
 \caption{\label{intro}\small{The drawing illustrates the pencil decomposition of the UIHPQ; the blue and green paths are the leftmost and rightmost geodesic rays , while $\rho$ is the root vertex and the red arrow represents the boundary contour.}}
 \end{figure}
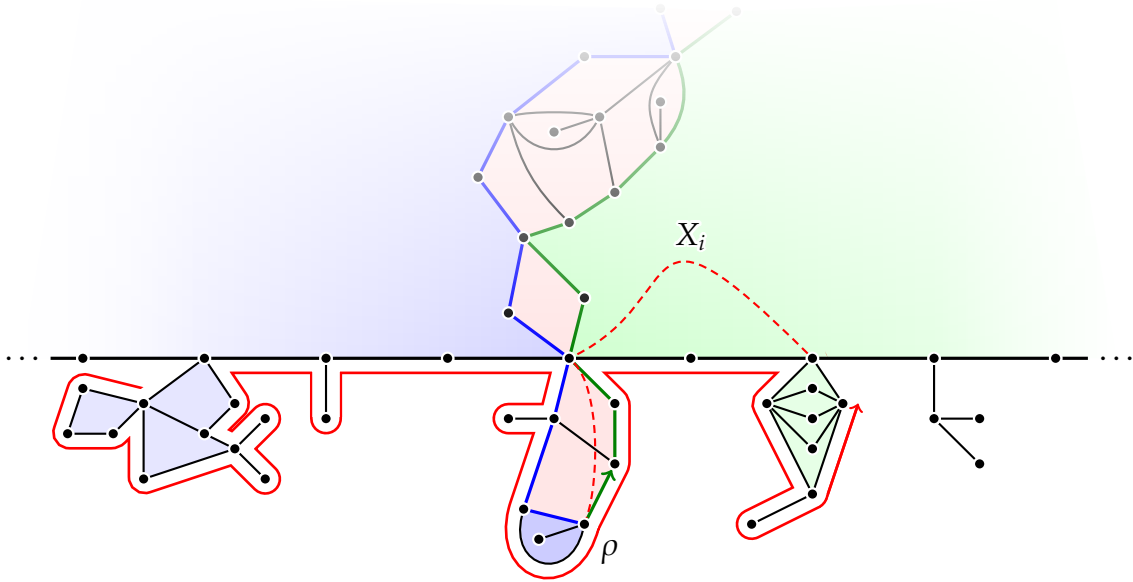
 
\paragraph{Scaling limits for $ \mathcal{B}_{\infty}$.}  
Our description of $\UIHPQ$ as $\Phi(\B_\infty)$ also yields scaling limit results for the UIHPQ: combining the explicit transition probabilities \eqref{eq:defp} with a well known result of Lamperti, we obtain
$$\left(  \frac{X_{[nt]}}{ \sqrt{n}}\right)_{t \in \mathbb{R}} \xrightarrow[n\to\infty]{(d)} (Z_{t})_{t \in \mathbb{R}},$$ where $ (Z_{t})_{t \geq 0}$ and $(Z_{-t})_{t \geq 0}$ are two independent Bessel processes of dimension $5$ started from $0$ (see Proposition~\ref{prop:scalingX}).
It is also possible to describe the scaling limit of the contour functions coding $ \mathcal{B}_{\infty}$, in the spirit of the work of Le Gall and M\'enard~\cite{LGM10}, see Section~\ref{scaling limits} for details. 

Such results yield scaling limits for various geometric quantities associated with the UIHPQ: if $ \#[\UIHPQ]_{n}$ is the number of vertices within distance $n$ from the root vertex in $\UIHPQ$, then (Proposition~\ref{prop:limitlawball}) we have
\begin{eqnarray} \frac{1}{n^4}\# [\UIHPQ]_n \xrightarrow[n\to\infty]{(d)} \mathcal{V}_{h}  \label{introcvballuihpq} \end{eqnarray} 
for a certain explicit limiting law $ \mathcal{V}_{h}$ (where ``$h$'' stands for ``half-plane''). We can compare the expectations of $ \mathcal{V}_{h}$ and of the random variable $ \mathcal{V}_{p}$ from \eqref{eq:cvballuipq}, thus obtaining 
\begin{eqnarray} \frac{\mathbb{E}[ \mathcal{V}_{h}]}{ \mathbb{E}[ \mathcal{V}_{p}]} = \frac{7}{9};  \label{intro:comparisonmean}\end{eqnarray} 
that is, large balls in the UIHPQ are on average $7/9$ times as large as balls of the same radius in the UIPQ. The factor 7/9 should be universal (i.e.~independent of the specific combinatorics of the class of maps being considered) as it can be interpreted directly in terms of the scaling limit.

Finally, by mimicking the Schaeffer construction on the continuous processes describing the scaling limit of  $\mathcal{B}_{\infty}$ (in a way similar to the construction of the Brownian map~\cite{LG07} or the Brownian plane~\cite{CLGHull,CLGplane} from labelled continuous trees) we construct a random locally compact metric space that we name the \emph{Brownian half-plane} and conjecture it to be the scaling limit of the UIHPQ in the local Gromov--Hausdorff sense.

\paragraph{The UIHPQ with a simple boundary.}

All of the scaling limit results obtained for the UIHPQ can be adapted to the case of the \emph{UIHPQ with a simple boundary}. Consider the set ${\sQS}_{n,p}\subseteq \sQ_{n,p}$ of (rooted) quadrangulations having area $n$, perimeter $2p$ and a \emph{simple} boundary (i.e.~the quadrangulations of the $2p$-gon with $n$ internal faces). If $\QS_{n,p}$ is a quadrangulation chosen uniformly at random within ${\sQS}_{n,p}$, then we have the following convergence in distribution for the local distance, shown by Angel~\cite{Ang05}:
$${\QS}_{n,p} \xrightarrow[n\to\infty]{(d)} \QS_{\infty,p} \xrightarrow[p\to\infty]{(d)}  \UIHPQS,$$ 
 where $\QS_{\infty,p}$ is the uniform infinite quadrangulation of the $2p$-gon and $\UIHPQS$ is the Uniform Infinite Half-Planar Quadrangulation with a simple boundary (abbreviated here by UIHPQ$^{(s)}$). The second author and Miermont~\cite{CMboundary} have given a construction of the UIHPQ$^{(s)}$ via a pruning procedure applied to the UIHPQ, which consists in erasing the finite quadrangulations hanging from the (simple) boundary of its infinite core (see Figure~\ref{fig:pruning}). We use this construction to extend all of our scaling limit results for the UIHPQ to the simple boundary case, mainly thanks to the fact that the extra finite quadrangulations do not contribute to the mass of balls of large radius, and simply dilate the contour process by a constant factor when considered at large scales.

Part of the interest of the UIHPQ$^{(s)}$ lies in the fact that one can perform surgery operations by glueing its boundary in various ways: glueing the left half of the boundary to the right half yields a random infinite quadrangulation of the plane endowed with a one-ended self-avoiding walk, while glueing the boundaries of two independent copies of the UIHPQ$^{(s)}$ to one another gives a random infinite quadrangulation of the plane with a two-ended SAW. Thanks to the results on the UIHPQ$^{(s)}$ obtained here, we shall investigate such models of self-avoiding walks on random infinite quadrangulations in a companion paper~\cite{companion}.
\bigskip

This paper is structured as follows. Section~\ref{notation and bijections} introduces notation, touching on useful enumeration results for various combinatorial objects, and describes the Schaeffer-type constructions used throughout the paper. Section~\ref{the new construction} computes local limits for classes of random treed bridges, culminating in the proof of \eqref{eq:intronewconstruction}. Section~\ref{study of geodesic rays} deals with geodesic rays in the UIHPQ and with the so-called ``pencil decomposition''. Finally, Section~\ref{scaling} gives scaling limits for the processes associated with the local limit of random treed bridges and computes the limiting law for the volume of large balls in the UIHPQ, while Section~\ref{simple boundary} extends these results to the case of the UIHPQ with a simple boundary.

\paragraph{Acknowledgements.} The authors wish to thank Jérémie Bouttier for providing them with an alternative derivation of \eqref{intro:comparisonmean} based on the work~\cite{BG09}.

 \section{Schaeffer-type constructions}\label{notation and bijections}
This section will recall two constructions of quadrangulations with a general boundary via Schaeffer-type bijections. To begin with, we review the formalism of plane trees together with a few enumeration results; for additional details, see~\cite{thesis}.

\subsection{Trees, bridges and treed bridges}

We start by introducing the combinatorial objects we shall employ and some necessary notation.

\subsubsection{Trees}\label{trees}
A \emph{plane tree} is a locally finite acyclic connected graph, properly embedded in the plane (up to orientation-preserving homeomorphisms of the plane itself) and endowed with a distinguished corner; the vertex determined by the distinguished corner is called the \emph{root} of the tree, and from it a genealogical structure can be inferred (so that each vertex apart from the root has a parent); moreover, an ordering may be deduced for the vertices in each generation (using the distinguished corner as a starting point). We refer the reader to~\cite{LG05} for Neveu's formalism of plane trees.

Given a plane tree $\tau$, we write $|\tau|$ for the number of edges in $\tau$, also called the \emph{size} of $\tau$. Note that, as opposed to~\cite{CMboundary}, we shall not need to deal with any infinite trees throughout the paper; thus all trees mentioned will be implicitly considered finite. 

We may introduce a metric on the set $\T$ of all finite plane trees, usually called the \emph{local metric}. Given a plane tree $\tau$ and a non-negative integer $h$ denote by $[\tau]_h$ the tree one obtains by erasing from $\tau$ all vertices having graph distance strictly greater than $h$ from the root (and any edges involving such vertices) so that only the first $h$ generations (the 0-th being the root) remain. We then define, for each pair of plane trees $\tau$, $\tau'$ in $\T$,
\begin{eqnarray*} \mathrm{d_{tree}}(\tau,\tau') &=& \big( 1+ \sup\{ h \geq 0 : [\tau]_h= [\tau']_h \}\big) ^{-1}. \end{eqnarray*}

Our trees will often be endowed with labellings; a (finite) \emph{labelled plane tree} is given by 
\begin{itemize}
\item a plane tree $\tau$;
\item a function $l$ from the vertex set of $\tau$ to the integers such that, if $u$ and $v$ are neighbours in $\tau$, then $|l(u)-l(v)|\leq1$.
\end{itemize}

For each integer $k$, we call $\LT_k$ the set of all finite labelled plane trees whose root has label $k$, and $\LT$ the set $\cup_{k\in\mathbb{Z}}\LT_k$ of all finite labelled plane trees. The distance $\mathrm{d_{tree}}$ can still be defined on $\LT$ by declaring equality of labelled trees to imply equality of labels as well as equality of underlying plane trees.

Notice that there is a bijection between the sets $\LT_k$ (for any integer $k$) and $\LT_0$ which simply consists in subtracting $k$ to all labels; as a consequence, one has
$$ \sum_{ \tau \in \mathsf{\LT}_{k}} 12^{-|\tau|} =\sum_{ \tau \in \mathsf{\LT}_{0}} 12^{-|\tau|}=2.$$

The last identity is a simple consequence of the fact that there are $3^n$ labelled trees in $\LT_0$ for each plane tree of size $n$ (since labels vary by 1, 0 or $-1$ along each edge), and that plane trees of given size are counted by Catalan numbers (that is, there are ${2n \choose n}/(n+1)$ plane trees of size $n$). 

One may then introduce the Boltzmann measure  $\rho_k$ on $\LT_k$, defined so that $\rho_k(\{\tau\})=12^{-|\tau|}/2$. The probability measure $\rho_k$ is the law of a critical geometric Galton--Watson tree for which (conditionally on its shape) a uniform random integer $i_e \in\{1,0,-1\}$ is selected independently for each edge $e$; its labels are computed as $l(u)=k+\sum_{e\in P_u}i_e$, where $P_u$ is the unique non-backtracking path leading from the root to the vertex~$u$.

We will also work with \emph{positive} versions of labelled plane trees: for each $k>0$, we define sets $\LT^+_k \subset \LT_k$ and $\LT^+\subset \LT$ by requiring each label to be a (strictly) positive integer.

This time the identity cited above takes the form
\begin{eqnarray}\sum_{ \tau \in \mathsf{\LT}^+_{k}} 12^{-|\tau|}=\frac{2k(k+3)}{(k+1)(k+2)}=:w_k\label{w_k}\end{eqnarray}
as shown in~\cite{CD06,BDFG04}, and we define the Boltzmann measure $\rho_k^+$ on $\LT_k^+$ by $\rho_k^+(\{\tau\})=12^{-|\tau|}/w_k$. Again, this is the law of a multi-type Galton--Watson tree (see~\cite[Theorem 4.6]{CD06}); the offspring of a vertex $u$ labelled $l$ is generated by repeatedly selecting one of the following outcomes at random until the offspring of $u$ is declared complete:
\begin{itemize}
\item with probability $w_{l-1}/12$, a new child labelled $l-1$ is added to the right of the last child of $u$;
\item with probability $w_{l+1}/12$, a new child labelled $l+1$ is added to the right of the last child of $u$;
\item with probability $w_{l}/12$, a new child labelled $l$ is added to the right of the last child of $u$;
\item with probability $1/w_{l}$, the offspring of $u$ is declared complete.
\end{itemize}

\subsubsection{Quadrangulations}\label{quadrangulations}
A quadrangulation with a boundary is a locally finite planar map whose faces are all quadrangles, except for one finite or infinite face which we call the outerface, and whose boundary (which is a path, not necessarily simple) we see as the boundary of the map; the map is rooted by choosing an edge of the boundary, oriented so that the outerface lies to its right.

We say a quadrangulation with a boundary has area $n$ if it has $n+1$ faces (outerface included); it has perimeter $2p$ if such is the length of its boundary. We call $\sQ_{n,p}$ the set of all quadrangulations of area $n$ and perimeter $2p$, and $\sQ$ the set of all rooted quadrangulations with a boundary, which may have finite or infinite area and perimeter (though they are always, as per our definition, locally finite). Notice that the set $\sQ_{n,1}$ can be identified with the set of all ``standard'' quadrangulations with $n$ faces (where this time we mean rooted maps all of whose faces are quadrangles) by collapsing the two edges of the boundary of each of its elements. The word ``quadrangulation'' within this paper will, from this moment onwards, always stand for ``rooted quadrangulation with a boundary'' unless otherwise stated.

We can define a local distance on the set $\sQ$: given a quadrangulation $q\in\sQ$, let $[q]_r$ (for $r\geq1$) be the (rooted) map obtained from $q$ by erasing all vertices at (graph) distance strictly greater than $r$ from the root vertex of $q$ (i.e.~the tail of the root edge) and all edges involving such vertices; for any pair of quadrangulations $q, q'$ in $\sQ$ we define
$$ \mathrm{d_{loc}}(q,q') = \big( 1+ \max\left(0,\sup\{r \geq 1 : [q]_r= [q']_r \}\right)\big) ^{-1}.$$

Furthermore, for $g,z \geq 0$ let $W(g,z)$ be the bi-variate generating function of $\sQ_{n,p}$ with weight $g$ per internal face and $\sqrt{z}$ per edge on the boundary,  that is
 \begin{eqnarray*} W(g,z) := \sum_{n,p \geq 0}\# {\sQ}_{n,p}g^n z^p.  \end{eqnarray*}
A closed form for $W$ can be found in~\cite{BG09}; the radius of convergence of $W$ in $g$ can be seen to be $1/12$, and in particular we have 
\begin{eqnarray}W_{c}(z) = W(1/12,z) = \frac{(1-8z)^{3/2}-1+12z}{24z^2}.\label{W}\end{eqnarray}
Via singularity analysis, one can deduce that
 \begin{eqnarray} [z^p] W_{c}(z)  \underset{p\to\infty}{\sim}  \frac{2}{ \sqrt{\pi}} 8^p p^{-5/2}. \label{eq:asympZp} \end{eqnarray}
 
As before we define the Boltzmann measure $\nu_p$ on the set $\sQ_p$ of all quadrangulations with perimeter $2p$: we set $\nu_p(\{q\})= \frac{1}{[z^p]W_{c}(z)} 12^{-n}$ for each quadrangulation $q$ having perimeter $2p$ and area $n$; notice that $\nu_p|_{\sQ_{n,p}}$ is the uniform probability measure. Throughout the paper we shall write $\BQ_p$ for a random quadrangulation of perimeter $2p$ distributed according to~$\nu_p$.

\subsubsection{Treed bridges}\label{treed bridges}
 A \emph{bridge of length $2p$} is a sequence of integers $ b=(x_0,\ldots,x_{2p-1})$ such that $x_0=0$ and $|x_{i+1}-x_i|=1$ for $i=0,\ldots, 2p-1$, where indices are considered modulo $2p$ (so that $x_{2p}=0$). Notice that, given a bridge $b=(x_0,\ldots,x_{2p-1})$, there are $p$ indices $d_1\leq \cdots \leq d_p$ such that $x_{d_i+1}=x_{d_i}-1$ (again, consider indices modulo $2p$); we call these indices \emph{down-steps} for $b$ and denote their set by $ \DS(b)$. 

Analogously, we define an \emph{infinite bridge} (and declare its length to be $\infty$) to be a doubly infinite sequence $b = (x_{i})_{i \in \mathbb{Z}}$ of integers such that $x_{0}=0$ and that  $|x_{i+1}-x_i|=1$ for each $i \in \mathbb{Z}$; again, we denote by $ \DS(b)$ the set of its down-steps, which may be finite or infinite.

A bridge of length $2p$ will be seen as a simple $2p$-cycle embedded in the plane, with a distinguished edge oriented so that the infinite face lies to its right; labels are assigned to its vertices so that the tail of the root edge has label $x_0=0$, and labels $x_1,\ldots, x_{2p-1}$ are assigned to subsequent vertices in the cycle according to the orientation given by the direction of the root edge. The same interpretation can be given to an infinite bridge, using a doubly infinite path (giving rise to two infinite faces in its planar embedding, which we see as the upper and lower half-planes) instead of a cycle; we order labels left-to-right (so that the tail of the root edge lies left of its head).

\begin{remark} Notice that, while labels in a labelled tree can vary by $\{-1,0,1\}$ between neighbouring vertices, they can only vary by $\{-1,+1\}$ along a bridge.\end{remark}

We introduced the notion of \emph{bridge} in order to discuss that of a \emph{treed bridge}, which we define as follows:
\begin{df}\label{unconstrained treed bridge}For $ p \in \{1,2,\ldots \} \cup \{ \infty\}$, a treed bridge of length $2p$ (or of infinite length in the case where $p=\infty$) is a pair $(b; T)$ such that $b=(x_0,\ldots,x_{2p-1})$ is a bridge of length $2p$ (or $b=(x_i)_{i\in\mathbb{Z}}$ is an infinite bridge if $p=\infty$) and $T$ is a function from $\DS(b)$ to $\LT$, such that $T(i)$ is in $\LT_{x_i}$. The size of a treed bridge $(b;T)$ is computed as $\sum_{i\in \DS(b)}|T(i)|$, i.e.~as the sum of the sizes of its trees.
\end{df}

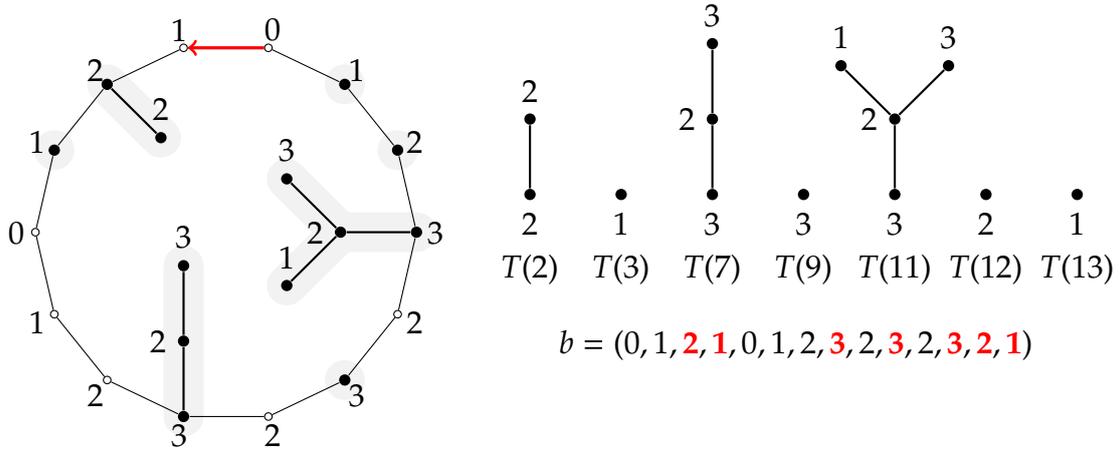
\begin{figure}\centering

\begin{tikzpicture}
\tikzstyle{real}=[inner sep=1.5pt, fill=black, circle]
\tikzstyle{map}=[red]
\def\bridge{{0,1,2,1,0,1,2,3,2,3,2,3,2,1,0}}

\node[draw=none ,minimum size=5cm,regular polygon,regular polygon sides=14] (a) {};
\node[draw=none ,minimum size=5.5cm,regular polygon,regular polygon sides=14] (b) {};
\foreach[evaluate={\lsx=int(\bridge[\x-1])}, evaluate={\ldx=int(\bridge[\x])}, evaluate={\y=int(\x)}] \x in {14,13,...,1} {
\pgfmathifthenelse{\ldx==\lsx-1}{"\noexpand\draw (a.corner \y) node[real] (\y) {};"}{"\noexpand\draw (a.corner \y) node[phantom] (\y) {};"};\pgfmathresult;
  \draw (b.corner \y) node {\lsx};}
  \foreach[evaluate={\target=int(\x-1)}] \x in {2,...,14} {
  \draw (\x) -- (\target); }
  \draw (14)--(1);
  \draw[root] (1)--(2);
 
 \node[real, below right of=3, label=2] (3-1) {};
 \draw[thick] (3)--(3-1);
 
 \node[real, above of=8, label=left:2] (8-1) {};
 \node[real, above of=8-1, label=3] (8-11) {};
 \draw[thick] (8)--(8-1)--(8-11);
 
  \node[real, left of=12, label=left:2] (12-1) {};
 \node[real, below left of=12-1, label=1] (12-11) {};
  \node[real, above left of=12-1, label=3] (12-12) {};
  \draw[thick] (12)--(12-1)--(12-11);
  \draw[thick] (12-1)--(12-12);

\begin{pgfonlayer}{edgelayer}
\draw[gray!10, line width=15pt, line cap=round] (3.center)--(3-1.center);
\draw[gray!10, line width=15pt, line cap=round] (8.center)--(8-11.center);
\draw[gray!10, line width=15pt, line cap=round] (12.center)--(12-1.center)--(12-11.center);
\draw[gray!10, line width=15pt, line cap=round] (12-1.center)--(12-12.center);
\fill[gray!10]  (4.center) circle (7.5pt);
\fill[gray!10]  (10.center) circle (7.5pt);
\fill[gray!10]  (13.center) circle (7.5pt);
\fill[gray!10]  (14.center) circle (7.5pt);
\end{pgfonlayer}

\begin{scope}
\node[real, label=below:2] (3) at (4,0.5) {};
 \node[real, above of=3, label=2] (3-1) {};
 \draw[thick] (3)--(3-1);

\node[real, label=below:1] (4) at (5.2,0.5) {};

\node[real, label=below:3] (8) at (6.4,0.5) {};
 \node[real, above of=8, label=left:2] (8-1) {};
 \node[real, above of=8-1, label=3] (8-11) {};
 \draw[thick] (8)--(8-1)--(8-11);

\node[real, label=below:3] (10) at (7.6,0.5) {};

\node[real, label=below:3] (12) at (8.8,0.5) {};
  \node[real, above of=12, label=left:2] (12-1) {};
 \node[real, above left of=12-1, label=1] (12-11) {};
  \node[real, above right of=12-1, label=3] (12-12) {};
  \draw[thick] (12)--(12-1)--(12-11);
  \draw[thick] (12-1)--(12-12);

\node[real, label=below:2] (13) at (10,0.5) {};

\node[real, label=below:1] (14) at (11.2,0.5) {};

\node at (4,-0.5) {$T(2)$};
\node at (5.2,-0.5) {$T(3)$};
\node at (6.4,-0.5) {$T(7)$};
\node at (7.6,-0.5) {$T(9)$};
\node at (8.8,-0.5) {$T(11)$};
\node at (10,-0.5) {$T(12)$};
\node at (11.2,-0.5) {$T(13)$};

\node at (7.5,-1.5) {$b=(0,1,\mathbf{\color{red} 2},\mathbf{\color{red} 1},0,1,2,\mathbf{\color{red} 3},2,\mathbf{\color{red} 3},2,\mathbf{\color{red} 3},\mathbf{\color{red} 2},\mathbf{\color{red} 1})$};
\end{scope}
\end{tikzpicture}
\caption{\label{treed bridge}A positive treed bridge $(b;T)$ of length 14 seen as a map.}

\end{figure}

Treed bridges -- like bridges -- have a clear geometric interpretation (see Figure~\ref{treed bridge}): each tree $T(i)$, for $i\in\DS(b)$, may be seen as being grafted on the $i$-th vertex of the cycle (or infinite path) which represents $b$; we embed each tree $T(i)$ \emph{inside} the cycle (or within the upper half-plane) so as to preserve its planar structure.

We call $\TB_{p}$ the set of all treed bridges of length $2p$ ($\TB_\infty$ is the set of all treed bridges of infinite length); $\TB_{n,p}$ is the set of all treed bridges of length $2p$ and size $n$. As we did for trees and labelled trees, we may define a local distance on the set $\cup_{p\in\mathbb{Z}^+\cup\{\infty\}}\TB_{p}$. Given a pair of treed bridges $B=(b;T)$ and $B'=(b';T')$ such that $b=(x_i)_{i\in I}$ and $b'=(x'_i)_{i\in I'}$, where $I$ and $I'$ are either $\mathbb{Z}$ or sets of the form $\{0,1,\ldots, 2p-1\}$, we declare that $ \mathrm{d_{loc}}(B,B') \leq (1+r)^{-1}$ if and only if $x'_i=x_i$ when $-r\leq i\leq r$ (as usual indices cycle modulo $2p$) and $[T(j)]_r=[T'(j)]_r$ for $j \in \DS(b)\cap[-r,r]$. 

As we did with labelled trees, we also define subsets $\TB_p^+$ for each $\TB_p$ (where $p\in\mathbb{Z}^+$): a \emph{positive treed bridge} is a treed bridge $(b; T)$, where $b=(x_0,\ldots,x_{2p-1})$, such that for each $i$ in $\DS(b)$ we have $x_i>0$ (that is, the bridge has no negative labels, though it may have more than one null label) and $T(i)\in\LT^+$.

  Our interest in these objects lies in the following constructions, variants of the classical Schaeffer bijection and based on~\cite{BDFG04, BG09}.

\subsection{Finite construction with unconstrained labels}\label{pmconstruction}

\begin{figure}\centering
\begin{tikzpicture}
\tikzstyle{real}=[inner sep=1.5pt, fill=black, circle, draw=white, thick]
\tikzstyle{map}=[red]
\def\bridge{{0,1,2,1,0,-1,0,-1,-2,-1,-2,-1,0,-1,0}}

\node[draw=none ,minimum size=5cm,regular polygon,regular polygon sides=14] (a) {};
\node[draw=none ,minimum size=5.5cm,regular polygon,regular polygon sides=14] (b) {};
\foreach[evaluate={\lsx=int(\bridge[\x-1])}, evaluate={\ldx=int(\bridge[\x])}, evaluate={\y=int(\x)}] \x in {14,13,...,1} {
\pgfmathifthenelse{\ldx==\lsx-1}{"\noexpand\draw (a.corner \y) node[real] (\y) {};"}{"\noexpand\draw (a.corner \y) node[phantom] (\y) {};"};\pgfmathresult;
  \draw (b.corner \y) node {\lsx};}
  \foreach[evaluate={\target=int(\x-1)}] \x in {2,...,14} {
  \draw (\x) -- (\target); }
  \draw (14)--(1);
  \draw[root] (1)--(2);
 
 \contourlength{1pt}
 \node[real, below right of=3, label=\contour{white}{2}] (3-1) {};
 \draw[thick] (3)--(3-1);

 \node[real, right of=5, label=above:\contour{white}{-1}] (5-1) {};
\node[real, right of=5-1, label=above:\contour{white}{-2}] (5-11) {};
 \draw[thick] (5)--(5-1)--(5-11); 
 
\node[real, above of=7, label=above:\contour{white}{1}] (7-1) {};
\node[real, above right of=7, label=right:\contour{white}{-1},xshift=4pt,yshift=-4pt] (7-2) {};
 \draw[thick] (7-1)--(7)--(7-2); 
 
  \node[real, below left of=13, label=\contour{white}{-1},xshift=-4pt,yshift=8pt] (13-1) {};
\draw[thick] (13)--(13-1);

\begin{pgfonlayer}{edgelayer}
\pic [angle radius=12pt, fill=red!10] {angle = 3-1--3--2};
\pic [angle radius=12pt, fill=red!10] {angle = 4--3--3-1};
\pic [angle radius=12pt, fill=red!10] {angle = 5--4--3};
\pic [angle radius=12pt, fill=red!10] {angle = 6--5--4};
\pic [angle radius=12pt, fill=red!10] {angle = 8--7--6};
\pic [angle radius=12pt, fill=red!10] {angle = 9--8--7};
\pic [angle radius=12pt, fill=red!10] {angle = 11--10--9};
\fill [fill=red!10] (3-1) circle (11pt);
\fill [fill=red!10] (5-1) circle (11pt);
\fill [fill=red!10] (5-11) circle (11pt);
\fill [fill=red!10] (7-1) circle (11pt);
\fill [fill=red!10] (7-2) circle (11pt);
\fill [fill=red!10] (13-1) circle (11pt);
\pic [angle radius=12pt, fill=red!10] {angle = 14--13--12};

\draw[white, line width=5pt, line cap=round] (3.center)--(3-1.center);
\draw[white, line width=5pt, line cap=round] (5.center)--(5-1.center)--(5-11.center);
\draw[white, line width=5pt, line cap=round] (7-1.center)--(7.center)--(7-2.center);
\draw[white, line width=5pt, line cap=round] (13.center)--(13-1.center);

  \foreach[evaluate={\target=int(\x-1)}] \x in {2,...,14} {
  \draw[white, line width=5pt, line cap=round] (\x) -- (\target); }
  \draw[white, line width=5pt, line cap=round] (14)--(1);
\end{pgfonlayer}

\node[circle, label=\contour{white}{-3}, label=below:\contour{white}{$\delta$}, draw=black, fill=red, inner sep=2pt] (delta) at (0.5,0) {};
\end{tikzpicture}
\begin{tikzpicture}
\tikzstyle{real}=[inner sep=1.5pt, fill=black, circle, draw=white, thick]
\tikzstyle{map}=[red]
\def\bridge{{0,1,2,1,0,-1,0,-1,-2,-1,-2,-1,0,-1,0}}
\node[draw=none ,minimum size=5cm,regular polygon,regular polygon sides=14] (a) {};
\node[draw=none ,minimum size=5.5cm,regular polygon,regular polygon sides=14] (b) {};
\foreach[evaluate={\lsx=int(\bridge[\x-1])}, evaluate={\ldx=int(\bridge[\x])}, evaluate={\y=int(\x)}] \x in {14,13,...,1} {
\pgfmathifthenelse{\ldx==\lsx-1}{"\noexpand\draw (a.corner \y) node[real] (\y) {};"}{"\noexpand\draw (a.corner \y) node[phantom] (\y) {};"};\pgfmathresult;
  \draw (b.corner \y) node {\lsx};}
  \foreach[evaluate={\target=int(\x-1)}] \x in {2,...,14} {
  \draw (\x) -- (\target); }
  \draw (14)--(1);
  \draw[root] (1)--(2);
 
 \contourlength{1pt}
 \node[real, below right of=3, label=\contour{white}{2}] (3-1) {};
 \draw[thick] (3)--(3-1);

 \node[real, right of=5, label=above:\contour{white}{-1}] (5-1) {};
\node[real, right of=5-1, label=above:\contour{white}{-2}] (5-11) {};
 \draw[thick] (5)--(5-1)--(5-11); 
 
\node[real, above of=7, label=above:\contour{white}{1}] (7-1) {};
\node[real, above right of=7, label=below:\contour{white}{-1},xshift=4pt,yshift=-4pt] (7-2) {};
 \draw[thick] (7-1)--(7)--(7-2); 
 
  \node[real, below left of=13, label=\contour{white}{-1},xshift=-4pt,yshift=8pt] (13-1) {};
\draw[thick] (13)--(13-1);

\node[circle, label={below:\contour{white}{-3}}, label=left:\contour{white}{$\delta$}, draw=black, fill=red, inner sep=2pt] (delta) at (-0.5,-0.3) {};

\begin{pgfonlayer}{edgelayer}
\fill[gray!10] [out=5, in=-30, looseness=6] (3.center) to (4.center) [out=0, in=0, bend right=25, looseness=1] to (3.center);
\fill[gray!10] [bend left=25] (5.center) to (7-2.center) [out=-100, in=-90, looseness=3.3] to (7.center) [in=0,out=0,looseness=1, bend left=25] to (8.center) [out=-60, in=-90, looseness=1.62] to (5-11.center) [bend left=10, looseness=1] to (13-1.center) [bend right=25, looseness=1] to (13.center) [bend right=20] to (5-1.center) [bend right=25] to (5.center);
\draw[thick, map, out=5, in=-30, looseness=5.5] (3) to (4);
\draw[map] (3-1) to (4);
\draw[thick, map, bend left=25] (3) to (4);
\draw[very thick, map, bend left=25, <-] (4) to (5);
\draw[thick, map, bend left=25] (5) to (5-1);
\draw[map, bend left=25] (5-1) to (5-11);
\draw[map] (5-11) to (delta);
\draw[map, out=-60, in=-40, looseness=2.5] (5-1) to (5-11);
\draw[thick, map, bend left=25] (5) to (7-2);
\draw[thick, map, out=120, in=120, looseness=3] (7) to (7-2);
\draw[map, bend left=25] (7-1) to (7);
\draw[map, bend left=25] (7) to (7-2);
\draw[map, out=0, in=-20, looseness=2] (7-2) to (5-11);
\draw[thick, map, bend left=25] (7) to (8);
\draw[thick, map, out=30, in=0, looseness=1.5] (8) to (5-11);
\draw[thick, map, in=10, out=110, looseness=1.2] (10) to (5-11);
\draw[thick, map, bend left=25] (13) to (13-1);
\draw[thick, map, bend right=10] (13-1) to (5-11);
\draw[thick, map, bend right=20] (13) to (5-1);
\end{pgfonlayer}

\end{tikzpicture}

\caption{\small\label{unconstrained bijection}A treed bridge with 19 corners and the construction of its corresponding quadrangulation via $\Phi^\bullet$.}
\end{figure}
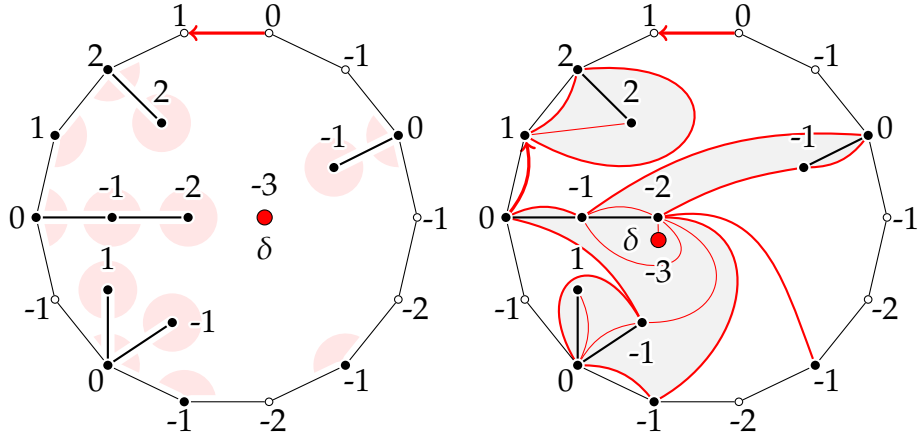

\begin{prop}\label{pmbijection}There is an explicit bijection $\Phi^\bullet$ between the set $\TB_{n,p}$ of all treed bridges of length $2p$ and size $n$, and the set $\sQ_{n,p}^\bullet$ of all pointed quadrangulations with a boundary having perimeter $2p$ and area $n$, that is
$$\sQ_{n,p}^{\bullet}=\left\{(q,\delta) \mid q \in \sQ_{n,p}, \delta\mbox{ is a vertex of }q\right\}.$$
\end{prop}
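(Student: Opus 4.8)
The plan is to construct $\Phi^\bullet$ as a variant of the Cori--Vauquelin--Schaeffer bijection, reading the quadrangulation off a treed bridge by the usual ``successor'' procedure on corners, and then to invert the construction. First I would unfold the treed bridge $(b;T)$ into a single labelled object: view the bridge $b$ as a cycle of length $2p$ carrying labels $(x_0,\dots,x_{2p-1})$, and at each down-step $i \in \DS(b)$ attach the finite labelled tree $T(i) \in \LT_{x_i}$ inside the cycle, as in Figure~\ref{treed bridge}. This produces a finite plane object with a well-defined cyclic sequence of corners, each corner inheriting an integer label, and with total number of corners equal to $2p + 2n$ (the $2p$ vertices of the bridge contribute their corners, and each tree of size $|T(i)|$ contributes $2|T(i)|$ corners, summing to $2n$); crucially, along any edge labels differ by at most $1$, and they differ by exactly $1$ along the bridge edges that are down-steps. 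I would then add an extra vertex $\delta$ in the outer face with label $m-1$, where $m = \min_i x_i$ over the bridge together with all tree labels, and apply the Schaeffer rule: list the corners in the cyclic contour order, and from each corner $c$ with label $\ell$ draw an arc to the next corner in contour order with label $\ell - 1$ (wrapping around), or to $\delta$ if no such corner exists. The root edge of the quadrangulation is the arc issued from the root corner of the bridge, oriented appropriately so that the outer face lies to its right.

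The next step is to check that this yields an element of $\sQ_{n,p}^\bullet$. The standard argument (following \cite{BDFG04,BG09}) shows that the arcs together with the tree edges, after erasing the bridge edges, form a quadrangulation: one verifies that the arcs are non-crossing, that each inner face of the resulting map is a quadrangle, and that the faces bordering the (removed) bridge cycle amalgamate into a single outer face of degree $2p$. Here the special feature is that the bridge edges — unlike tree edges — are not edges of the final map; rather, the $2p$ corners of the bridge become the boundary vertices, and the outer face traced out along the old cycle has the correct perimeter $2p$. Labels on the map equal graph distances to $\delta$ (shifted by $m-1$), as in the classical case, so $\delta$ is a genuine distinguished vertex and the map has area $n$ (number of inner faces) by the corner count above. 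The orientation convention on the root arc makes the map rooted with the outer face on the right, so the image lies in $\sQ_{n,p}^\bullet$.

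For the inverse map one starts from $(q,\delta) \in \sQ_{n,p}^\bullet$, labels every vertex by its graph distance to $\delta$, and performs the inverse Schaeffer surgery on each inner quadrangular face (adding in each face the diagonal or the two ``confluent'' edges according to the label pattern $\ell,\ell+1,\ell,\ell+1$ versus $\ell,\ell+1,\ell+2,\ell+1$), which produces a spanning structure; deleting the original edges of $q$ leaves a forest, and the boundary cycle of $q$ together with the way this forest attaches to it recovers a bridge $b$ (with labels given by distances to $\delta$, normalized so that the root-corner label is $0$ — one checks that consecutive boundary labels differ by exactly $1$ because the boundary is a path in $q$ and $\delta \notin q$ forces strict monotonicity of distances across each boundary edge) and, at each down-step, a labelled tree rooted at the corresponding boundary vertex. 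One then verifies $\Phi^\bullet \circ (\Phi^\bullet)^{-1} = \mathrm{id}$ and conversely, exactly as in the finite boundary case of \cite{BG09}.

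The main obstacle I expect is the bookkeeping at the boundary: showing precisely that the outer face produced by the Schaeffer rule is a single face of degree exactly $2p$, that the arcs emanating from boundary corners never cross into the ``wrong'' side, and — for the inverse — that deleting the map edges leaves a forest whose tree components attach to the boundary cycle exactly at the down-steps and carry labels consistent with the $\LT_{x_i}$ constraint. Equivalently, the delicate point is that the bijection respects the distinction between the $2p$ boundary vertices (which survive as vertices but lose their cycle edges) and the internal tree structure; getting the down-step/up-step dichotomy to match the orientation of arcs at the boundary is where the argument requires care, and is the place where this construction genuinely differs from the sphere case. Everything else — non-crossing of arcs, quadrangularity of inner faces, the label-equals-distance identity — is a local check in each face and follows the template of \cite{BDFG04,BG09} verbatim.
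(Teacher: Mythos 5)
Your construction is not the one that yields the stated bijection, and the discrepancy is fatal rather than cosmetic. In the construction the proposition refers to (following \cite{BG09,CMboundary}), the Schaeffer arcs are drawn \emph{only} from the corners of ``real'' vertices, i.e.\ vertices of the trees $T(i)$ grafted at the down-steps of $b$; the remaining $p$ bridge vertices (the non-down-steps, the ``phantom'' vertices) are erased together with all $2p$ cycle edges, and the added vertex $\delta$ is placed \emph{inside} the bounded face with label $k-1$, where $k$ is the minimum label over real vertices only. You instead apply the successor rule to all $2p+2n$ corners of the bounded face, keep the $2p$ bridge vertices as the boundary vertices of the image, and take the minimum over all labels (bridge included). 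This cannot produce an element of $\sQ^\bullet_{n,p}$: a quadrangulation in $\sQ_{n,p}$ has $n+p+1$ vertices and $2n+p$ edges, whereas your map retains the $p$ phantom vertices and draws $2n+2p$ arcs, giving $n+2p+1$ vertices; the face count then forces either non-quadrangular inner faces or an outer face of the wrong degree. The smallest case already shows the failure: for $p=1$, $n=0$ take the bridge $(0,1)$ with the one-vertex tree labelled $1$ at its down-step. The correct construction links the unique real corner to $\delta$ and yields the single-edge map (perimeter $2$, area $0$, pointed at $\delta$); your rule keeps the phantom vertex labelled $0$, sets $\delta$ to label $-1$, and produces a path of length $2$, i.e.\ a map with three vertices and outer degree $4$, which is not in $\sQ^\bullet_{0,1}$. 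So the missing idea is precisely the real/phantom dichotomy: only tree corners emit arcs, the boundary of $\Phi^\bullet((b;T))$ consists of the $2n+p$ arcs ``parallel'' to the cycle edges (one boundary edge per cycle edge), and the boundary vertices of the image are tree vertices, not bridge vertices.

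Two smaller points in the same vein: placing $\delta$ in the outer face is inconsistent with drawing arcs inside the bounded face (they would have to cross the cycle) -- $\delta$ must sit in the bounded face; and in your sketch of the inverse, the reason consecutive boundary labels differ by exactly $1$ is bipartiteness of quadrangulations, not ``$\delta\notin q$'' (indeed $\delta$ is a vertex of $q$ and may well lie on the boundary, a case the inverse surgery of \cite{CMboundary} has to accommodate). Note also that the paper itself does not reprove bijectivity: it describes $\Phi^\bullet$ as above and defers the inverse to \cite{CMboundary}, so the substance of a self-contained proof would be exactly the boundary bookkeeping you flag at the end -- but it must be carried out for the real-corners-only construction, not for the verbatim CVS rule.
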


The construction is described in~\cite{BG09,CMboundary}; we briefly go through it here, referring the reader to~\cite{CMboundary} for a description of its inverse. 

Consider the geometric interpretation of a treed bridge $(b;T)\in \TB_{n,p}$ as described in the previous section: $(b;T)$ is seen as a rooted map with two faces, one of which is unbounded; the root is oriented counterclockwise. Consider the \emph{counterclockwise contour} of the bounded face, which contains the trees: it defines a (cyclic) sequence of corners, a corner being the angular region corresponding to a pair of edges sharing a vertex (see Figure~\ref{unconstrained bijection}); notice that each vertex may be adjacent to more than one corner. Since they will be preserved by the construction we are about to describe, we call vertices belonging to the trees in $T(\DS(b))$ \emph{real vertices}; other vertices, which belong to the $2p$-cycle but do not correspond to down-steps, are called \emph{phantom vertices}. The classification naturally extends to corners; we also consider corners as being labelled, by having them inherit the label of their vertex.

 Call $k$ the minimum label appearing on \emph{real} vertices of $(b;T)$, and add a vertex labelled $k-1$, which we call $\delta$, within the bounded face of the treed bridge. Now apply the standard Schaeffer construction to the labelled map at hand: link each real corner $c$ bearing label $h>k$ to the next real corner labelled $h-1$ in the counterclockwise contour of the bounded face; each real corner labelled $k$ is then linked to the added vertex $\delta$; notice that all new edges can be drawn in such a way that they do not cross each other. Finally, all phantom vertices, all labels and all original edges of the treed bridge are erased.
 
What one obtains (see~\cite{CMboundary}) is a quadrangulation with a boundary pointed in the vertex $\delta$. It is not hard to see that the boundary has length $2p$ and that each edge of the $2p$-cycle can be made to correspond to an edge of the boundary; we root the map in the edge corresponding to the original root of the treed bridge (preserving its original orientation) to obtain $\Phi^\bullet((b;T))$: see Figure~\ref{unconstrained bijection}.
 
\begin{rem}\label{pmremark}
The construction $\Phi^\bullet$ is such that:
\begin{itemize}
    \item one may identify the real vertices of $(b;T)$ with the vertices of $\Phi^\bullet((b;T))$, if one excludes the vertex $\delta$ in which the map is pointed;
    \item for each real vertex $x$ of $(b;T)$, if we call $l(\delta)$ the label the construction gives to the additional vertex $\delta$, we have $\mathrm{d_{gr}}(x,\delta)= l(x)-l(\delta)$, where the graph distance $\mathrm{d_{gr}}$ is taken in the map $\Phi^\bullet((b;T))$;
    \item the edges of the embedded cycle representing $b$ correspond to the edges on the boundary of $ \Phi^\bullet((b;T))$.
\end{itemize}
\end{rem}

The map $\Phi^\bullet$ is naturally defined on the whole set $\cup_{p>0} \TB_p$ of finite treed bridges, and takes values in the set $\sQ^\bullet$ of (rooted) pointed quadrangulations with a boundary. Composing with the forgetful map from $\sQ^\bullet$ to $\sQ$ (which simply takes elements of $\sQ^\bullet$ and forgets their pointing) we get a mapping $\Phi$ which sends treed bridges in $\cup_{p>0} \TB_p$ to the set $\sQ$ of rooted quadrangulations with a boundary.

\subsection{Finite construction with positive labels}\label{pconstruction}

\begin{figure}\centering
\begin{tikzpicture}
\tikzstyle{real}=[inner sep=1.5pt, fill=black, circle, draw=white, thick]
\tikzstyle{map}=[red]
\def\bridge{{0,1,2,1,0,1,2,3,2,3,2,3,2,1,0}}

\node[draw=none ,minimum size=5cm,regular polygon,regular polygon sides=14] (a) {};
\node[draw=none ,minimum size=5.5cm,regular polygon,regular polygon sides=14] (b) {};
\foreach[evaluate={\lsx=int(\bridge[\x-1])}, evaluate={\ldx=int(\bridge[\x])}, evaluate={\y=int(\x)}] \x in {14,13,...,1} {
\pgfmathifthenelse{\ldx==\lsx-1}{"\noexpand\draw (a.corner \y) node[real] (\y) {};"}{"\noexpand\draw (a.corner \y) node[phantom] (\y) {};"};\pgfmathresult;
  \draw (b.corner \y) node {\lsx};}
  \foreach[evaluate={\target=int(\x-1)}] \x in {2,...,14} {
  \draw (\x) -- (\target); }
  \draw (14)--(1);
  \draw[root] (1)--(2);
 
 \contourlength{1pt}
 \node[real, below right of=3, label=\contour{white}{2}] (3-1) {};
 \draw[thick] (3)--(3-1);
 
 \node[real, above of=8, label=left:\contour{white}{2}] (8-1) {};
 \node[real, above of=8-1, label=\contour{white}{3}] (8-11) {};
 \draw[thick] (8)--(8-1)--(8-11);
  
  \node[real, left of=12, label=left:\contour{white}{2}] (12-1) {};
 \node[real, below left of=12-1, label=\contour{white}{1}] (12-11) {};
  \node[real, above left of=12-1, label=\contour{white}{3}] (12-12) {};
  \draw[thick] (12)--(12-1)--(12-11);
  \draw[thick] (12-1)--(12-12);
  
\begin{pgfonlayer}{edgelayer}
\pic [angle radius=12pt, fill=red!10] {angle = 3-1--3--2};
\fill [fill=red!10] (3-1) circle (11pt);
\pic [angle radius=12pt, fill=red!10] {angle = 4--3--3-1};
\pic [angle radius=12pt, fill=red!10] {angle = 5--4--3};
\pic [angle radius=12pt, fill=red!10] {angle = 8-1--8--7};
\pic [angle radius=12pt, fill=red!10] {angle = 8-11--8-1--8};
\fill [fill=red!10] (8-11) circle (11pt);
\pic [angle radius=12pt, fill=red!10] {angle = 8--8-1--8-11};
\pic [angle radius=12pt, fill=red!10] {angle = 9--8--8-1};
\pic [angle radius=12pt, fill=red!10] {angle = 11--10--9};
\pic [angle radius=12pt, fill=red!10] {angle = 12-1--12--11};
\pic [angle radius=12pt, fill=red!10] {angle = 12-11--12-1--12};
\fill [fill=red!10] (12-11) circle (11pt);
\pic [angle radius=12pt, fill=red!10] {angle = 12-12--12-1--12-11};
\fill [fill=red!10] (12-12) circle (11pt);
\pic [angle radius=12pt, fill=red!10] {angle = 12--12-1--12-12};
\pic [angle radius=12pt, fill=red!10] {angle = 13--12--12-1};
\pic [angle radius=12pt, fill=red!10] {angle = 14--13--12};
\pic [angle radius=12pt, fill=red!10] {angle = 1--14--13};

\draw[white, line width=5pt, line cap=round] (3.center)--(3-1.center);
\draw[white, line width=5pt, line cap=round] (8.center)--(8-11.center);
\draw[white, line width=5pt, line cap=round] (12.center)--(12-1.center)--(12-11.center);
\draw[white, line width=5pt, line cap=round] (12-1.center)--(12-12.center);

  \foreach[evaluate={\target=int(\x-1)}] \x in {2,...,14} {
  \draw[white, line width=5pt, line cap=round] (\x) -- (\target); }
  \draw[white, line width=5pt, line cap=round] (14)--(1);
\end{pgfonlayer}

\node[circle, label=0, label=below:$\delta$, draw=black, fill=red, inner sep=2pt] (delta) at (0,0.5) {};
\end{tikzpicture}
\begin{tikzpicture}
\tikzstyle{real}=[inner sep=1.5pt, fill=black, circle, draw=white, thick]
\tikzstyle{map}=[red]
\def\bridge{{0,1,2,1,0,1,2,3,2,3,2,3,2,1,0}}
\node[draw=none ,minimum size=5cm,regular polygon,regular polygon sides=14] (a) {};
\node[draw=none ,minimum size=5.5cm,regular polygon,regular polygon sides=14] (b) {};
\foreach[evaluate={\lsx=int(\bridge[\x-1])}, evaluate={\ldx=int(\bridge[\x])}, evaluate={\y=int(\x)}] \x in {14,13,...,1} {
\pgfmathifthenelse{\ldx==\lsx-1}{"\noexpand\draw (a.corner \y) node[real] (\y) {};"}{"\noexpand\draw (a.corner \y) node[phantom] (\y) {};"};\pgfmathresult;
  \draw (b.corner \y) node {\lsx};}
  \foreach[evaluate={\target=int(\x-1)}] \x in {2,...,14} {
  \draw (\x) -- (\target); }
  \draw (14)--(1);
  \draw[root] (1)--(2);
 
 \contourlength{1pt}
 \node[real, below right of=3, label=\contour{white}{2}] (3-1) {};
 \draw[thick] (3)--(3-1);
 
 \node[real, above of=8, label=left:\contour{white}{2}] (8-1) {};
 \node[real, above of=8-1, label=\contour{white}{3}] (8-11) {};
 \draw[thick] (8)--(8-1)--(8-11);
  
  \node[real, left of=12, label=left:\contour{white}{2}] (12-1) {};
 \node[real, below left of=12-1, label=\contour{white}{1}] (12-11) {};
  \node[real, above left of=12-1, label={[xshift=3pt, yshift=-2pt]\contour{white}{3}}] (12-12) {};
  \draw[thick] (12)--(12-1)--(12-11);
  \draw[thick] (12-1)--(12-12);
  
  \node[label=0, real] (delta) at (0,0.5) {};

\begin{pgfonlayer}{edgelayer}
\fill[gray!10] [out=5, in=-30, looseness=6] (3.center) to (4.center) [out=0, in=0, bend right=10] to (3.center);
\fill[gray!10] [bend left=25] (8) to (8-1) to (8-1.center) [bend left=0, in=120, out=100, looseness=2.65] to (12-11) to (12-11.center) [in=140, out=0, looseness=0.9] to (delta.center) [bend left=30] to (14) to (14.center) [bend right=25] to (13) to (13.center) to (12) to (12.center) [bend left=25] to (12-1) to (12-1.center) [bend left=28] to (8);
  \draw[thick, red, out=5, in=-30, looseness=5.5] (3) to (4);
    \draw[map] (3-1) to (4);
\draw[thick, red, bend left=15] (3) to (4);
\draw[very thick, red, out=-60, in=190, looseness=1.5,<-] (4) to (delta);
\draw[thick, red, bend left=25] (8) to (8-1);
\draw[thick, red, out=130, in=150, looseness=2.5] (8-1) to (12-11);
\draw[map, bend left=25] (8-11) to (8-1);
\draw[map, bend right=25] (8-1) to (12-11);
\draw[thick, red, bend right=25] (8) to (12-1);
\draw[thick, red] (10) to (12-1);
\draw[thick, red, bend left=25] (12) to (12-1);
\draw[map, bend left=25] (12-1) to (12-11);
\draw[thick, red, out=120, in=-90] (12-11) to (delta);
\draw[map, out=180, in=200, looseness=1.5] (12-1) to (14);
\draw[map, bend left=25] (12-12) to (12-1);
\draw[map] (12-1) to (14);
\draw[thick, red, bend left=25] (12) to (13);
\draw[thick, red, bend left=25] (13) to (14);
\draw[thick, red, bend right=25] (14) to (delta);
\end{pgfonlayer}

\end{tikzpicture}

\caption{\small\label{positive bijection}The positive treed bridge $(b;T)$ from Figure~\ref{treed bridge} is depicted with its 19 corners, which become the 19 edges of $\Phi((b;T))$; notice that the quadrangulation is the same as that of Figure~\ref{unconstrained bijection}.}
\end{figure}
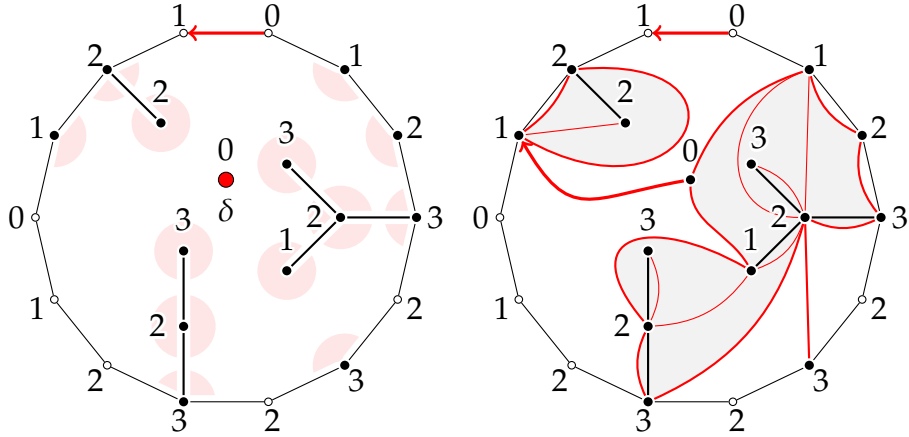

Consider the restriction of $\Phi$ (as defined above) to the set $\cup_{p>0}\TB_p^+$ of finite positive treed bridges. 

\medskip
Notice that, if we apply $\Phi^\bullet$ to a positive treed bridge $(b;T)$, the label of the added vertex $\delta$ is~0: the trees in $T(\DS(b))$ are in $\LT^+$, so their vertices bear only strictly positive labels; furthermore, $2p-1$ is always a down-step for any positive bridge of length $2p$; the root of the tree $T(2p-1)$ is thus always labelled $1$, hence the minimum label that does not appear on real vertices is $0$. Since the root vertex of the quadrangulation $\Phi^\bullet((b;T))$ corresponds to the root vertex of the treed bridge $(b;T)$ and thus has the same (null) label, the added vertex $\delta$ in which $\Phi^\bullet((b;T))$ is pointed must also be its root vertex; hence no information is lost by forgetting the pointing and taking $\Phi((b;T))$ instead of $\Phi^\bullet((b;T))$.

As a consequence we have the following:

\begin{prop} The restriction of $\Phi$ to the set $\TB^{+}_{n,p}$ of all positive treed bridges of length $2p$ and size $n$ is a bijection with the set $\sQ_{n,p}$ of rooted quadrangulations with a boundary of length $2p$ and area $n$.
\end{prop}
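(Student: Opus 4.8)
The plan is to deduce the statement from Proposition~\ref{pmbijection} together with the two facts established just before it: that $\Phi^\bullet$ sends a positive treed bridge to a quadrangulation pointed \emph{at its root vertex}, and that such a pointing can always be read off canonically from the rooted (unpointed) quadrangulation. Write $\pi\colon\sQ^\bullet\to\sQ$ for the forgetful map, so that $\Phi=\pi\circ\Phi^\bullet$. \emph{Injectivity} on $\TB^+_{n,p}$ is then immediate: if $\Phi(B)=\Phi(B')$ with $B,B'\in\TB^+_{n,p}$, then $\Phi^\bullet(B)$ and $\Phi^\bullet(B')$ are quadrangulations pointed at their root vertex that coincide after applying $\pi$, hence are equal, and Proposition~\ref{pmbijection} gives $B=B'$. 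Since $\Phi^\bullet$ preserves perimeter $2p$ and area $n$, both sides of the claimed bijection indeed carry the indices $(n,p)$.

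For \emph{surjectivity}, fix $q\in\sQ_{n,p}$ and let $\rho$ be its root vertex, so $(q,\rho)\in\sQ^\bullet_{n,p}$; by Proposition~\ref{pmbijection} there is a unique $B=(b;T)=(\Phi^\bullet)^{-1}(q,\rho)\in\TB_{n,p}$, and it suffices to prove that $B$ is \emph{positive}, for then $\Phi(B)=\pi(q,\rho)=q$. By the first item of Remark~\ref{pmremark}, the real vertices of $B$ are exactly the vertices of $q$ other than the distinguished vertex $\delta=\rho$; thus every real vertex $x$ of $B$ satisfies $x\neq\rho$. The inverse of $\Phi^\bullet$ recovers the labels of $B$ as graph distances to $\delta$ in $q$, normalised so that the tail of the root edge of $q$ — which is here $\delta=\rho$ itself — receives label $0$; concretely, the second item of Remark~\ref{pmremark} reads $\mathrm{d_{gr}}(x,\rho)=l(x)-l(\delta)$ with $l(\delta)=0$, whence $l(x)=\mathrm{d_{gr}}(x,\rho)\geq 1$ for every real vertex $x$, so every tree $T(i)$ lies in $\LT^+$. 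Finally, no label of $b=(x_0,\dots,x_{2p-1})$ can be negative: were some $x_i<0$, then reading $b$ cyclically from $x_0=0$ there would be a first index $j$ with $x_{j+1}=-1$, which forces $x_j=0$ and $x_{j+1}=x_j-1$, so $j\in\DS(b)$ and the down-step vertex at position $j$ would be a real vertex carrying label $0$, contradicting the previous sentence. Hence $b$ has no negative labels and $B\in\TB^+_{n,p}$.

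The only step that is not pure bookkeeping around Proposition~\ref{pmbijection} and Remark~\ref{pmremark} is the positivity verification above, and within it the delicate point is the identification of $\delta$ with the root vertex $\rho$ together with the fact that, for this pointing, the label of $\delta$ is $0$ (equivalently, the minimal real label equals $1$): making this fully rigorous requires unwinding the inverse of $\Phi^\bullet$ as described in~\cite{CMboundary} and checking that the up-step carrying the root of the treed bridge is replaced, in $q$, by an edge whose tail is precisely $\delta$ — which is exactly the mechanism already used for the inclusion $\Phi^\bullet(\cup_p\TB_p^+)\subseteq\{\text{root-pointed quadrangulations}\}$ (recall that for a positive bridge of length $2p$ the index $2p-1$ is always a down-step, which is what pins the minimal real label to $1$). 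Once these identifications are in place, Remark~\ref{pmremark} turns the labels of $B$ into graph distances to $\rho$ in $q$, and the positivity of both the trees and the bridge follows at once as above.
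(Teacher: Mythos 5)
Your argument is correct and follows essentially the same route as the paper: factor $\Phi=\pi\circ\Phi^\bullet$, invoke Proposition~\ref{pmbijection}, and use the observation made just before the statement that for a positive treed bridge the added vertex $\delta$ receives label $0$ and coincides with the root vertex, so that the pointing can be forgotten and canonically recovered. The only difference is that you spell out the surjectivity/positivity check (pointing an arbitrary $q$ at its root vertex, pulling back via $(\Phi^\bullet)^{-1}$, and verifying via Remark~\ref{pmremark} and the normalisation $l(\delta)=0$ that the trees and the bridge are positive, with your down-step argument excluding negative bridge labels), a step the paper leaves implicit in its ``as a consequence'' and delegates to the inverse construction of~\cite{CMboundary}; your treatment of it is sound.
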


Remark~\ref{pmremark} is updated as follows: 

\begin{rem}\label{premark} When applying the construction $\Phi$ to a (finite) positive treed bridge,
\begin{itemize}
    \item there is a correspondence between real vertices of the positive treed bridge $(b;T)$ and vertices of its image $\Phi((b;T))$, if one excludes its root vertex;
    \item the label of each real vertex of $(b;T)$ represents its distance from the root vertex in the quadrangulation $\Phi((b;T))$;
    \item the edges of the embedded cycle representing $b$ correspond to the edges on the boundary of $ \Phi((b;T))$; as a consequence, the bridge labels $(x_0,\ldots,x_{2p-1})$ correspond to the distances from the root vertex of vertices on the boundary of $\Phi((b;T))$, read counterclockwise along its contour.
\end{itemize}
\end{rem}

\subsection{Infinite construction}\label{infinite construction}

Notice that the construction $\Phi$ can easily be extended to certain classes of treed bridges with an infinite boundary in such a way that it still yields elements of $\sQ$.

Let $\TB^{-\infty}$ be the set of all infinite treed bridges $(b;T)$, with $b=(x_i)_{i\in\mathbb{Z}}$, such that $\liminf_{i\to\pm\infty}x_i=-\infty$. Given such a bridge $(b;T)$, we may order its real corners according to the left-to-right contour of the upper face (similarly to how we took the counterclockwise contour of the inner face in the finite case). We can list real corners of $(b;T)$, in this order, as a sequence $(c_i)_{i\in I}$, where $I$ is either $\mathbb{Z}$ (the sequence is two-ended if there is an infinite number of real corners on each side of the root vertex, as will essentially be the case for all ``unconstrained'' treed bridges within this paper) or $\mathbb{N}$ (which covers the case where only a finite number of real corners lie left of the root vertex). Notice that $\liminf_{i\to\infty}x_i=-\infty$ implies that there is always an infinite number of real corners lying right of the root vertex, since $\DS(b)\cap \mathbb{N}$ must be infinite.

We now define $\Phi((b;T))$ by analogy with the construction performed on finite treed bridges: for each $i$, set $j=\min\{h>i \mid l(c_h)=l(c_i)-1\}$ (such a minimum always exists, since the requirement on bridge labels implies $\liminf_{i\to+\infty}l(c_i)=-\infty$); link the real corner $c_i$ to the real corner $c_j$, which we shall call the \emph{successor} of $c_i$ and denote by $s(c_i)$. The map obtained after erasing all phantom vertices and all edges of the original treed bridge is then rooted as in Section~\ref{pmconstruction} to obtain $\Phi((b;T))$, which is a locally finite (rooted) quadrangulation with an infinite boundary.

\begin{rem}\label{continuity} Notice that the mapping $\Phi:\cup_{p>0}\TB_p\cup\TB^{-\infty}\to\sQ$ is continuous. This is stated in~\cite{CMboundary} and exploited in order to construct the uniform infinite quadrangulation with an infinite general boundary from an infinite labelled treed bridge (see Section~\ref{Boltzmann} and~\cite[Section 6]{CMboundary}). A full proof can be found in~\cite{thesis}.\end{rem}

The bijection presented in Section~\ref{pconstruction} induces us to also consider an extension of $\Phi$ to the set of all positive treed bridges of infinite length where each label appears a finite number of times, which we call $\TB_\infty^+$. Given a treed bridge $(b;T)$ in $\TB_\infty^+$, we list its real corners as a sequence $(c_i)_{i\in I}$ ordered according to the left-to-right contour of the upper face, where this time $I$ is either $\mathbb{Z}$ or $\mathbb{Z}_{\leq 0}$ (the fact that each label appears a finite number of times implies divergence to $+\infty$ of labels along the boundary, hence $\DS(b)\cap \mathbb{Z}_{\leq 0}$ is infinite). Note that, while we shall mostly consider positive treed bridges for which we can take $I=\mathbb{Z}$, we will see some examples of positive treed bridges whose sequence of real corners is one-ended in Sections~\ref{decomposition} and~\ref{leftmost} (see Corollary~\ref{laws of two sides of gammaright} and Proposition~\ref{law of middle section}).

We build $\Phi((b;T))$ exactly as described above (joining each corner to its successor), except that this time the set $\{h>i \mid l(c_h)=l(c_i)-1\}$ might very well be empty; when this is the case and $l(c_i)>1$, we consider $j'=\min\{h \mid l(c_h)=l(c_i)-1\}$ and set $s(c_i)=c_{j'}$; finally, we add an extra vertex labelled 0 within the upper face of the treed bridge, setting its one corner to be the successor of every real corner labelled 1. Joining each real corner to their successor, erasing phantom vertices and original edges of the treed bridge and rooting appropriately yields a quadrangulation with an infinite boundary $\Phi((b;T))$; notice that all properties from Remark~\ref{premark} still hold: in particular, labels of real vertices in $(b;T)$ represent distances from the root vertex, so that the condition on the number of appearances of each label entails local finiteness of $\Phi((b;T))$, which thus belongs to the set $\sQ$.

Let $\TB^+=\bigcup_{p\in\mathbb{Z}^+\cup\{\infty\}}\TB^+_p$ be the set of all positive treed bridges with a finite or infinite perimeter, such that each label appears a finite number of times. The question of whether $\Phi$ is continuous (for the local metric) as a function from $\TB^+$ to $\sQ$ will be very relevant (continuity would allow us to proceed as in~\cite{CMboundary}, whose proofs of convergence rely on Remark~\ref{continuity}); we have the following:

\begin{rem}\label{non continuity}
The mapping $\Phi$ is \emph{not} continuous (for the local metric) on $\TB^+$; in fact, it is discontinuous at every point of $\TB_\infty^+$.

The main problem comes from the possible occurrence of small labels far from the root of a treed bridge: two treed bridges $B$ and $B'$ may be very near for the local metric, in the sense that their bridges and trees coincide up to a great distance from the root (say $[B]_r=[B']_r$ for some large $r$); if, however, a (real) vertex labelled 1 appears in $B\setminus[B]_r$ and does not appear in $B'\setminus [B']_r$, then $[\Phi(B)]_1\neq [\Phi(B')]_1$, so the two images are far apart.

This problem is exactly the same as that arising for the definition of the UIPQ via the positive bijection from~\cite{CD06}, and needs to be addressed as Ménard did in~\cite{Men08} by giving bounds for the probability that small labels appear very far from the root, a statement which will be made precise in Section~\ref{control on small labels}.
\end{rem}

\section{The new construction}\label{the new construction}
Before introducing our new construction based on the bijection from Section~\ref{pconstruction}, we shall briefly review the construction of the UIHPQ given in~\cite{CMboundary}. We will then prove that the UIHPQ can alternatively be obtained as the local limit of Boltzmann quadrangulations with a boundary whose perimeter is sent to infinity, without the need for a double limit such as that of \eqref{intro:UIHPQ as double limit}, which will render our new construction of the UIHPQ considerably simpler.
\subsection{Convergences to the UIHPQ}\label{Boltzmann}
The construction of the UIHPQ given in~\cite{CMboundary} is illustrated by the following commutative diagram (see~\cite{CMboundary} for the proof), where the second line is obtained by applying the (unpointed) Schaeffer mapping $\Phi$ to the first one:
\begin{equation}\label{diagram} \begin{array}{ccccc}
  \B_{n,p}^\pm &\xrightarrow[n\to\infty]{(d)}& \B_{\infty,p}^\pm &\xrightarrow[p\to\infty]{(d)} & \B_{\infty}^\pm\\ 
  \Phi\downarrow\phantom{\Phi} && \Phi\downarrow\phantom{\Phi} && \Phi\downarrow\phantom{\Phi} \\
  \Q_{n,p} &\xrightarrow[n\to\infty]{(d)}& \Q_{\infty,p} &\xrightarrow[p\to\infty]{(d)} & \UIHPQ\\
 \end{array}
\end{equation}
In this diagram, $ \B_{n,p}^\pm$ is a uniform treed bridge of length $2p$ and size $n$ (with no positivity constraint on the labels). As per Proposition~\ref{pmbijection}, its image via $\Phi^\bullet$ is $\Q^\bullet_{n,p}$, a uniformly random pointed quadrangulation with a boundary of perimeter $2p$ and area $n$ (i.e.~a uniform element of $\sQ^\bullet_{n,p}$); forgetting the pointing of $\Q^\bullet_{n,p}$ yields the random quadrangulation with a boundary $\Q_{n,p}=\Phi(\B_{n,p}^\pm)$. Notice now that, since each quadrangulation in $\sQ_{n,p}$ has exactly $n+p+1$ vertices by Euler's formula, the law of $\Q_{n,p}$ is uniform on the set $\sQ_{n,p}$.

We shall not describe the intermediate bridge $ \B_{\infty,p}^\pm$ or its image $\Q_{\infty,p}$, since they are irrelevant for our purpose; we will, however, recall the definition of $\B_{\infty}^\pm$, which will be used below and again in Section~\ref{Extension of results from CMMinfini}:

\begin{df}\label{unconstrained infinite bridge}
The random treed bridge $\B_\infty^\pm$ is the pair $((X^\pm_i)_{i\in\mathbb{Z}};T^\pm)$, where $(X^\pm_i)_{i\geq 0}$ and $(X^\pm_{-i})_{i\geq 0}$ are two independent uniform simple random walks issued from $X^\pm_0=0$ and, conditionally on the bridge $X^\pm$, the trees $T^{\pm}(i)$, for $i\in\DS(X^\pm)$, are independent and distributed according to $\rho_{X^\pm_i}$.
\end{df} 

Notice that we have $\liminf_{i\rightarrow\pm\infty}X^\pm_i=-\infty$ almost surely, hence $\B^\pm_\infty$ almost surely belongs to the set $\TB^{-\infty}$ as defined in Section~\ref{infinite construction}. Given the first line of the diagram \eqref{diagram} (that is the convergence of random treed bridges) one can thus invoke the continuity of $\Phi$ on $\bigcup_{p>0}\TB_p\cup\TB^{-\infty}$ (see Remark~\ref{continuity}) to produce the convergences from the second line, thus deducing that
  \begin{eqnarray} \Phi( \B_{\infty}^\pm) = \UIHPQ,   \label	{eq:constructionCMboundary}\end{eqnarray}
  which is how the UIHPQ $\UIHPQ$ was introduced in~\cite{CMboundary}. 
  
  We now use the same argument to prove that the UIHPQ can alternatively be seen as the local limit in $p$ of the random Boltzmann quadrangulations $\BQ_{p}$ from  Section~\ref{quadrangulations}:
\begin{proposition}\label{boltzmann convergence}  The UIHPQ is the local limit of Boltzmann quadrangulations with a boundary of perimeter $p$, when $p$ is sent to infinity:
$$ \BQ_{p} \xrightarrow[p \to \infty]{(d)} \UIHPQ.$$
\end{proposition}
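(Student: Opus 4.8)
The plan is to transport the known convergence $\Q_{n,p} \xrightarrow[n\to\infty]{(d)} \Q_{\infty,p} \xrightarrow[p\to\infty]{(d)} \UIHPQ$ from~\cite{CMboundary} to the Boltzmann setting by a de-Poissonization/mixing argument. The key observation is that, by definition of $\nu_p$, the Boltzmann quadrangulation $\BQ_p$ is simply $\Q_{N_p,p}$, where $N_p$ is an independent random area whose law is given by $\P(N_p=n)=\frac{\#\sQ_{n,p}}{[z^p]W_c(z)}12^{-n}$; that is, $\BQ_p$ is a mixture over $n$ of the uniform quadrangulations $\Q_{n,p}$. Since we already know $\Q_{n,p}\xrightarrow[n\to\infty]{(d)}\Q_{\infty,p}$ for each fixed $p$, the first step is to check that $N_p\to\infty$ in probability as $p\to\infty$: concretely, for any fixed $K$, $\P(N_p\le K)=\frac{1}{[z^p]W_c(z)}\sum_{n\le K}\#\sQ_{n,p}12^{-n}$, and using the singularity analysis estimate $[z^p]W_c(z)\sim \frac{2}{\sqrt\pi}8^p p^{-5/2}$ together with the (much slower, sub-exponential in $p$) growth of $\#\sQ_{n,p}$ for bounded $n$, this probability tends to $0$. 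Combining this with a standard "random index" lemma for convergence in distribution then gives $\BQ_p=\Q_{N_p,p}\xrightarrow[p\to\infty]{(d)}\Q_{\infty,p}$ — but here one must be slightly careful because the limit $\Q_{\infty,p}$ itself depends on $p$.

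Because of this dependence, I would instead argue via the treed-bridge side of the diagram~\eqref{diagram}, which is cleaner. Under the bijection $\Phi$ (really $\Phi^\bullet$ followed by forgetting the pointing), $\BQ_p$ corresponds to a random treed bridge $\B_p$ of perimeter $2p$ whose law is the Boltzmann law: conditionally on the area $n$ it is uniform in $\TB_{n,p}$, so $\B_p$ is obtained by taking a random bridge $b$ of length $2p$ and, conditionally on $b$, grafting independent $\rho_{x_i}$-distributed labelled trees on the down-steps $i\in\DS(b)$, with the bridge $b$ itself having the appropriate Boltzmann-tilted law. The second step is then to identify the local limit of these random treed bridges: one shows that $\B_p\xrightarrow[p\to\infty]{(d)}\B_\infty^\pm$ for the local metric on $\cup_{p}\TB_p\cup\TB^{-\infty}$. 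This amounts to a local-limit computation for the finite Boltzmann bridges: near the root, the bridge increments converge to i.i.d.\ fair $\pm1$ steps in the two directions, and the grafted trees are $\rho$-Boltzmann trees already at finite $p$, so the only thing to verify is that the tilting by area (equivalently, by the partition function $W_c$) washes out locally — this is where~\eqref{eq:asympZp} enters, controlling the ratio of generating-function coefficients at perimeters $p$ and $p\pm O(1)$.

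The final step is to apply the continuity of $\Phi$ on $\cup_{p>0}\TB_p\cup\TB^{-\infty}$ (Remark~\ref{continuity}), together with the fact that $\B_\infty^\pm\in\TB^{-\infty}$ almost surely, to conclude $\BQ_p=\Phi(\B_p)\xrightarrow[p\to\infty]{(d)}\Phi(\B_\infty^\pm)=\UIHPQ$, the last equality being~\eqref{eq:constructionCMboundary}. I expect the main obstacle to be the middle step: establishing the local convergence $\B_p\to\B_\infty^\pm$ rigorously, i.e.\ showing that conditioning a size-biased/Boltzmann treed bridge of perimeter $2p$ to look like a fixed pattern near the root has asymptotically the same cost as under the pure symmetric-walk-plus-$\rho$-trees measure. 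This requires writing $\P(\mathrm{d_{loc}}(\B_p,\cdot)\text{-ball}=\text{fixed config})$ as a ratio of Boltzmann partition functions at nearby perimeters and invoking~\eqref{W}--\eqref{eq:asympZp} to see the ratio converges to the product of the correct local transition probabilities; the bridge part (which is not just a simple random walk at finite $p$, being area-tilted) is the delicate piece, whereas the tree part is immediate since the $\rho_k$ weights are already built into $W_c$. All other steps are routine manipulations of convergence in distribution and of the known results quoted in the excerpt.
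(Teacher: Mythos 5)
Your overall skeleton is indeed the paper's: pull $\BQ_p$ back to an unconstrained treed bridge, prove local convergence of these bridges to $\B_\infty^\pm$ of Definition~\ref{unconstrained infinite bridge}, and conclude via the continuity of $\Phi$ on $\cup_{p>0}\TB_p\cup\TB^{-\infty}$ (Remark~\ref{continuity}) together with \eqref{eq:constructionCMboundary} (and you are right to abandon the random-index idea of your first paragraph, since the intermediate limit $\Q_{\infty,p}$ depends on $p$). The middle step, however, is set up incorrectly. The unpointed map $\Phi$ is not a bijection onto $\sQ_{n,p}$: it is $(n+p+1)$-to-one, so to pull $\BQ_p$ back you must first add a \emph{uniformly chosen} distinguished vertex and invert $\Phi^\bullet$; the paper sets $\B_p^\pm=(\Phi^\bullet)^{-1}(\BPQ_p)$. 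This uniform pointing produces the factor $1/(n+p+1)$ in the law \eqref{law of Bpmp}, and the resulting structure is the opposite of what you assert: the bridge $X^{p,\pm}$ is \emph{exactly} uniform over bridges of length $2p$ (there is no area tilt on it, since the weight $12^{-n}/(n+p+1)$ depends only on the trees) and is independent of the trees, whereas the trees are \emph{not} independent $\rho_{x_i}$-distributed at finite $p$: they are i.i.d.\ $\rho_0$ (shifted) jointly size-biased by $1/(\sum_i|\tau_i|+p+1)$.

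Consequently the difficulty sits exactly where you say it does not. The bridge part is classical (a uniform bridge of length $2p$ converges locally to the two-sided simple random walk), and what requires proof is that the joint size-biasing of the trees washes out, i.e.\ that $\E[f(T^\pm_1,\ldots,T^\pm_k)]\to\E[f(\theta_1,\ldots,\theta_k)]$ for bounded local $f$, with $\theta_i$ i.i.d.\ $\rho_0$. Your proposed tool---ratios of partition functions at perimeters $p$ and $p\pm O(1)$ via \eqref{W}--\eqref{eq:asympZp}---does not address this: the perimeter of the pulled-back bridge is exactly $2p$ whatever the configuration near the root, and the tilt lives in the area/pointing, not in the boundary length. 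The paper instead writes $\E[f(T^\pm_1,\ldots,T^\pm_k)]$ as $\E\bigl[f(\theta_1,\ldots,\theta_k)\,(\sum_{i\leq p}|\theta_i|+p+1)^{-1}\bigr]\big/\E\bigl[(\sum_{i\leq p}|\theta_i|+p+1)^{-1}\bigr]$ and shows the first $k$ trees contribute negligibly to the denominator. Finally, the finite-$p$ law you describe (exact Boltzmann trees grafted on the down-steps of a non-SRW, conditioned bridge) is that of the \emph{positive} treed bridge of Proposition~\ref{Bp}; that route cannot be closed by a continuity argument, because $\Phi$ is not continuous on $\TB^+$ (Remark~\ref{non continuity})---this is precisely why Theorem~\ref{new construction} needs the Ménard-type control on small labels of Section~\ref{control on small labels}.
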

\proof For each $p>0$, we shall denote by $\BPQ_p$ the random pointed map obtained by  selecting a vertex of $\BQ_p$ uniformly at random (conditionally on the quadrangulation itself). Consider now the random treed bridge obtained by taking the pre-image of $\BPQ_p$ via the pointed Schaeffer construction $\Phi^\bullet$ from Section~\ref{pconstruction}: set $\B_{p}^\pm := (\Phi^\bullet)^{-1}(\BPQ_p)$.

We claim that $\B_{p}^\pm$ converges in law to $\B_{\infty}^\pm$, when $p$ is sent to infinity, for the local distance on the set of treed bridges; as above, by Remark~\ref{continuity} this entails the convergence of $\Phi(\B_p^\pm)$ to $\Phi(\B_\infty^\pm)=\UIHPQ$; since $\Phi(\B_p^\pm)=\BQ_p$ by construction, this would establish the proposition.

Recall that quadrangulations in $\sQ_{n,p}$ have $n+p+1$ vertices, so that for any treed bridge $(b;T)$ of length $2p$ the definition of the Boltzmann law (see Section~\ref{quadrangulations}) implies
\begin{eqnarray}\label{law of Bpmp}\P(\B^{\pm}_p=(b;T))=\P\left(\BPQ_p=\Phi^\bullet((b;T))\right) = \frac{12^{-\sum_{i\in\DS(b)}|T(i)|}}{[z^p]W_c(z)(\sum_{i\in\DS(b)}|T(i)|+p+1)}.\end{eqnarray}

Notice that bridges in $\TB_{p}$ can be represented as $(p+1)$-tuples $(b, \tau_1,\ldots,\tau_p)$, where $b=(x_0,\ldots,x_{2p-1})$ is any bridge of length $2p$ and $\tau_1,\ldots,\tau_p$ are labelled trees in $\LT_0$: one recovers the treed bridge as presented in Section~\ref{treed bridges} by grafting tree $\tau_i$ on the $i$-th down-step $d\in\DS(b)$ and shifting all of its labels by $x_d$. 

It is clear from \eqref{law of Bpmp} that, if we see $\B^\pm_p=(X^{p,\pm}, T^\pm_1,\ldots, T^\pm_p)$ as being presented this way, the trees $T^\pm_i$ (for $i=1,\ldots,p$) are independent of the random bridge $X^{p,\pm}$, which is distributed uniformly over all bridges of length $2p$. Similarly, one can present $\B^\pm_\infty$ as $((X^\pm_i)_{i\in\mathbb{Z}},(\theta_i)_{i\in\mathbb{Z}})$, where $(X^\pm_i)_{i\in\mathbb{Z}}$ is as in Definition~\ref{unconstrained infinite bridge}, and $\theta_i$ is distributed according to $\rho_0$; one recovers $T^{\pm}(j)$, for $j\in\DS(X^\pm)$, by shifting labels of $\theta_j$ by $X^\pm_j$.

The fact that the local limit of $X^{p,\pm}$ is the infinite two-sided simple random walk $X^\pm$ is a classical result; all we need in order to prove local convergence of $\B_{p}^\pm$ to $\B_{\infty}^\pm$ is thus the fact that the trees $T_{i}^\pm$ are asymptotically independent and distributed according to $\rho_{0}$.  

Take any non-negative continuous function $f$ on $p$-tuples in $\LT_0^p$; let $T^\pm_1,\ldots, T^\pm_p$ be the $p$-tuple of random trees of $\B^\pm_p$ and let $\theta_1,\ldots,\theta_p$ be $p$ independent trees of law $\rho_0$; notice that by combining the fact that $\rho_0(\{\tau\})=12^{-|\tau|}/2$ with \eqref{law of Bpmp} we get
$$\E[f(T^\pm_1,\ldots, T^\pm_p)]=\frac{2^p}{[z^p]W_c(z)}\sum_{\tau_1,\ldots,\tau_p \in \LT_0}\frac{f(\tau_1,\ldots,\tau_n)}{\sum_{i=1}^p|\tau_i|+p+1}{\prod_{i=1}^p \frac{12^{-|\tau_i|}}{2}}=\frac{2^p}{[z^p]W_c(z)}\E\left[\frac{f(\theta_1,\ldots,\theta_p)}{\sum_{i=1}^p|\theta_i|+p+1}\right].$$
Since the above expected value must be 1 when $f$ is taken to be the constant 1, we have $1=\frac{2^p}{[z^p]W_c(z)}\E\left[\frac{1}{\sum_{i=1}^p|\theta_i|+p+1}\right]$, and we can write
$$\E[f(T^\pm_1,\ldots, T^\pm_p)]={\E\left[\frac{f(\theta_1,\ldots,\theta_p)}{\sum_{i=1}^p|\theta_i|+p+1}\right]}{\E\left[\frac{1}{\sum_{i=1}^p|\theta_i|+p+1}\right]^{-1}}.$$

Choose any $k>0$; by renumbering trees $T^\pm_1,\ldots, T^\pm_p$ appropriately, we may suppose that, in addition to them being grafted in counterclockwise order within the bridge, the root vertex lies between the trees $T^\pm_{[k/2]}$ and $T^\pm_{[k/2]+1}$ (possibly coinciding with the root of $T^\pm_{[k/2]}$). Suppose that $f$ is a bounded non-negative function that only depends on $T^\pm_1,\ldots, T^\pm_k$; we want to investigate $\lim_{p\to\infty}\E[f(T^\pm_1,\ldots,T^\pm_k)]$, which can be rewritten as 
\begin{eqnarray}\label{limit}\lim_{p\to\infty}\E\left[\frac{f(\theta_1,\ldots,\theta_k)}{\sum_{i=1}^p|\theta_i|+p+1}\right]\E\left[\frac{1}{\sum_{i=1}^p|\theta_i|+p+1}\right]^{-1}\end{eqnarray}
where the trees $(\theta_i)_{i>0}$ are independent and distributed according to $\rho_0$.

Using the fact that $\lim_{p\to\infty} \P\left(\frac{\sum_{i=k+1}^p|\theta_i|+p+1}{\sum_{i=1}^p|\theta_i|+p+1}<1-\epsilon\right)=0$ for all $\epsilon>0$, straightforward computations lead to the identity
$$\lim_{p\to\infty}\E\left[\frac{f(\theta_1,\ldots,\theta_k)}{\sum_{i=1}^p|\theta_i|+p+1}\right]=\lim_{p\to\infty}\E\left[\frac{f(\theta_1,\ldots,\theta_k)}{\sum_{i=k+1}^p|\theta_i|+p+1}\right];$$
since numerator and denominator are now independent, by splitting the expected value and applying the identity again to the constant function 1 two copies of $\E\left[\left(\sum_{i=k+1}^p|\theta_i|+p+1\right)^{-1}\right]$ cancel out from the expression \eqref{limit}, thus proving that
$$\lim_{p\to\infty}\E[f(T^\pm_1,\ldots, T^\pm_k)]=\E\left[{f(\theta_1,\ldots,\theta_k)}\right].$$

The above identity, together with the convergence of bridges, implies the convergence in law for the local topology of the random treed bridges $\B_p^{\pm}$ to the random treed bridge $\B^\pm_\infty$, which was our claim; this completes the proof of the proposition.
\endproof
\subsection{The Boltzmann positive treed bridges}\label{Boltzmann quadrangulations and statement of the theorem}

Our aim is to present $\UIHPQ$ as the image via $\Phi$ of the random infinite positive treed bridge $\B_\infty$ mentioned in the Introduction. In view of Proposition~\ref{boltzmann convergence}, we start by describing the law of the finite positive treed bridge that corresponds to a Boltzmann quadrangulation of perimeter $2p$. Recall the transition probabilities $\mathbf{p}$ from \eqref{eq:defp}:
   \begin{eqnarray} \label{eq:p} \forall n\geq 0,\quad \mathbf{p}(n,n-1)=\frac{n}{2(n+2)}; \quad \mathbf{p}(n,n+1)=\frac{n+4}{2(n+2)}. \end{eqnarray}
 We will occasionally call $\P_x$ the law of a nearest neighbour random walk with transition probabilities given by $\mathbf{p}$, issued from $x$. The following property of $\mathbf{p}$, relating it to the quantity $w_n$ as defined in \eqref{w_k}, shall be especially useful: we have 
\begin{eqnarray}\label{eq:w=pp}w_{n} = 8  \mathbf{p}(n,n-1) \mathbf{p}(n-1,n)\end{eqnarray}
as can be easily checked by substituting the expression $\frac{2n(n+3)}{(n+1)(n+2)}$ for $w_n$.
 
This being said, we can now prove the following:
\begin{prop}\label{Bp} Given $p>0$, consider the bijection $\Phi|_{\TB_{p}^+}:\TB_p^+\to\sQ_p$ between the set of all positive treed bridges of length $2p$ and the set of all finite rooted quadrangulations with a boundary of perimeter $2p$; write $\Phi^{-1}$ for its inverse. Let $\BQ_p$ be a Boltzmann quadrangulation with a boundary of length $2p$ (which is a random variable taking values in $\sQ_p$), and set $\B_p:=\Phi^{-1}(\BQ_p)$. Then the random positive treed bridge $\B_p=(X^p,T^p)$ can be described as follows:
\begin{itemize}
\item the random bridge $X^p=(X^p_{0}, \ldots , X^p_{2p-1})$ has the same law as the initial segment of a nearest neighbour random walk with transition probabilities given by $\mathbf{p}$, issued from 0 and conditioned on hitting 0 at time $2p$;
\item conditionally on the bridge $X^p$, the random trees $T^p(i)$, for $i \in\DS(X^p)$, are independent and distributed according to $\rho^+_{X^{p}_i}$.
\end{itemize}
\label{prop:loibridgeboltz}
\end{prop}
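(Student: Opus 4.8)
The plan is to write down the law of $\B_p=(X^p,T^p)$ explicitly from the bijection $\Phi|_{\TB_p^+}$ and the definition of $\nu_p$, and then to recognise the two announced descriptions after a short sequence of elementary rewritings, the only non-bookkeeping input being a crossing identity for bridges.

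First I would fix a positive treed bridge $(b;T)\in\TB_p^+$, with $b=(x_0,\dots,x_{2p-1})$, and compute $\P(\B_p=(b;T))$. Since $\Phi|_{\TB_p^+}$ is a size-preserving bijection onto $\sQ_p$ (the size of $(b;T)$, namely $\sum_{i\in\DS(b)}|T(i)|$, equals the area of $\Phi((b;T))$), and since $\nu_p$ gives a quadrangulation of area $n$ weight $12^{-n}/[z^p]W_c(z)$, one obtains
\[
\P\big(\B_p=(b;T)\big)=\frac{1}{[z^p]W_c(z)}\prod_{i\in\DS(b)}12^{-|T(i)|}.
\]
Summing this over all admissible choices of trees (i.e.\ $T(i)\in\LT^+_{x_i}$ for each $i\in\DS(b)$) and using $\sum_{\tau\in\LT^+_k}12^{-|\tau|}=w_k$ from \eqref{w_k} yields the marginal law of the bridge,
\[
\P(X^p=b)=\frac{1}{[z^p]W_c(z)}\prod_{i\in\DS(b)}w_{x_i},
\]
and dividing the two displays gives $\P(T^p=T\mid X^p=b)=\prod_{i\in\DS(b)}12^{-|T(i)|}/w_{x_i}=\prod_{i\in\DS(b)}\rho^+_{x_i}(\{T(i)\})$. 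This is exactly the second bullet of the statement: conditionally on $X^p$, the trees $T^p(i)$, $i\in\DS(X^p)$, are independent with $T^p(i)\sim\rho^+_{X^p_i}$.

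It then remains to identify the law of $X^p$. Here I would substitute $w_n=8\,\mathbf{p}(n,n-1)\mathbf{p}(n-1,n)$ from \eqref{eq:w=pp}; since a bridge of length $2p$ has exactly $p$ down-steps, this rewrites $\prod_{i\in\DS(b)}w_{x_i}$ as $8^p\prod_{i\in\DS(b)}\mathbf{p}(x_i,x_i-1)\mathbf{p}(x_i-1,x_i)$. The key point is the following crossing identity: because $b$ starts and ends at $0$, for every level $v\ge 1$ the number of down-steps of $b$ from $v$ to $v-1$ equals the number of up-steps from $v-1$ to $v$; matching these factors level by level gives
\[
\prod_{i\in\DS(b)}\mathbf{p}(x_i,x_i-1)\mathbf{p}(x_i-1,x_i)=\prod_{i=0}^{2p-1}\mathbf{p}(x_i,x_{i+1}),
\]
whose right-hand side is the probability that the nearest-neighbour walk with transitions $\mathbf{p}$ started from $0$ follows the trajectory $b$. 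Hence $\P(X^p=b)$ is proportional to this walk probability; summing over all positive bridges of length $2p$ — which, since $\mathbf{p}(0,-1)=0$, are precisely the trajectories from $0$ to $0$ in time $2p$ of the $\mathbf{p}$-walk — the left-hand side is $1$, so the proportionality constant equals $\P_0(X_{2p}=0)^{-1}$ and $\P(X^p=b)=\P_0(\,\text{walk}=b\mid X_{2p}=0\,)$. Thus $X^p$ is distributed as the $\mathbf{p}$-walk issued from $0$ conditioned on hitting $0$ at time $2p$, which is the first bullet. (As a consistency check this forces $[z^p]W_c(z)=8^p\,\P_0(X_{2p}=0)$, in agreement with \eqref{eq:asympZp} and the $p^{-5/2}$ return probability of the dimension-$5$ Bessel-type walk $\mathbf{p}$.)

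The only step that is more than routine bookkeeping is the crossing identity, and even that is elementary; the one point I would be careful about is simply that ``positive bridge of length $2p$'' and ``non-negative $\mathbf{p}$-trajectory of length $2p$ from $0$ to $0$'' describe the very same finite set, so that the normalisation by $\P_0(X_{2p}=0)$ counts each trajectory exactly once.
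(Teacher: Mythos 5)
Your proposal is correct and follows essentially the same route as the paper: compute $\P(\B_p=(b;T))$ from the bijection and the Boltzmann weights, read off the conditional tree laws via $w_k$, and convert $\prod_{i\in\DS(b)}w_{x_i}$ into $8^p\prod_{i=0}^{2p-1}\mathbf{p}(x_i,x_{i+1})$ using \eqref{eq:w=pp} together with the down-step/up-step matching (your ``crossing identity'' is exactly the bijection $f$ used in the paper). The only cosmetic difference is that you make the normalisation explicit, which the paper records separately as the byproduct identity $\P_0(X_{2p}=0)=8^{-p}[z^p]W_c(z)$.
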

\proof Suppose $(b ; T)$ is a given positive treed bridge with $b=(x_0,\ldots,x_{2p-1})$; using the fact that $\Phi|_{\TB_{n,p}^+}$ is a bijection with the set $\sQ_{n,p}$ and that the size of a treed bridge is the sum of the sizes of its trees, we have
$$ \mathbb{P}(\B_p=(b;T))=\P(\BQ_p=\Phi((b;T)))=\frac{1}{[z^p] W_{c}(z)} \prod_{i\in \DS(b)} 12^{-|T(i)|} = \frac{1}{[z^p] W_{c}(z)} \prod_{i\in \DS(b)} w_{x_{i}} \prod_{i \in \DS(b)} \rho_{x_i}^+(\{T(i)\}).$$

It is clear from the rightmost expression that the random trees $T^p(i)$ (for $i\in\DS(X^p)$) are independent and distributed according to $\rho_{X^p_i}^+$ conditionally on the labels of their roots. As for the law of the bridge, it follows from \eqref{eq:w=pp} that 
$$ \prod_{i\in \DS(b)} w_{x_{i}} =  8^p \prod_{i\in\DS(b)}  \mathbf{p}(x_{i}, x_{i}-1) \mathbf{p}(x_{i}-1, x_{i});$$
notice now that, in order for the bridge to ``close'' (i.e.~to attain the value 0 at time $2p$), there must be a bijection $f$ between $\DS(b)$ and $\{0,\ldots,2p-1\}\setminus \DS(b)$ such that $x_{f(i)}=x_i-1$ (each down-step from a label $x$ must have a corresponding up-step from $x-1$), hence 
$$8^p \prod_{i\in\DS(b)}  \mathbf{p}(x_{i}, x_{i}-1) \mathbf{p}(x_{i}-1, x_{i})=8^p \prod_{i\in\DS(b)}  \mathbf{p}(x_{i}, x_{i}-1)\prod_{i\notin \DS(b)}\mathbf{p}(x_{i}, x_{i}+1)
=8^p \prod_{i=0}^{2p-1}  \mathbf{p}(x_{i}, x_{i+1}),$$ 
and this completes the proof of the proposition.\endproof

As a byproduct of the proof of the preceding proposition one may compute the probability that $X_{2p}=0$ for a nearest neighbour random walk $(X_i)_{i\geq 0}$, issued from 0 and with transition probabilities $\mathbf{p}$;  we have obtained that $\P_0(X_{2p}=0)=8^{-p} [z^p]W_{c}(z)$; hence, using \eqref{eq:asympZp}, we have

\begin{eqnarray} \label{eq:0-0} \mathbb{P}_{0}(X_{2p}=0)  \underset{p\to\infty}{\sim}  \frac{2}{ \sqrt{\pi}} p^{-5/2},  \end{eqnarray}
which will be useful later.
  
\begin{remark} Notice that, since the bridge labels may be interpreted as distances between boundary vertices and the root vertex of $\BQ_p$, read along the contour of the map, invariance of their law by time-reversal is clear (though it is not evident in the description of the process given above). In other words, $(0, X^p_1, \ldots,X^p_{2p-1})$ has the same law as $(0, X^p_{2p-1},\ldots,X^p_{1})$.
\end{remark}

As we have seen with Proposition~\ref{boltzmann convergence}, the UIHPQ is the local limit in $p$ of Boltzmann quadrangulations $\BQ_p$; we wish to show that it has the same law as the image via $\Phi$ of the local limit $\B_\infty$ of the random treed bridges $\B_p$ from Proposition~\ref{prop:loibridgeboltz}, when $p$ is sent to infinity. We give now a description of $\B_\infty$, whose validity we shall prove in the following section:

\begin{prop}\label{limitinp} The local limit (in law) of the sequence of random positive treed bridges $\B_p$, for $p\rightarrow\infty$, is the random positive treed bridge $\B_{\infty}=(X; T)$ such that:
\begin{itemize}
\item the two process $(X_{i})_{i \geq 0}$ and $(X_{-i})_{i \geq 0}$ are independent nearest neighbour random walks with transition probabilities given by $\mathbf{p}$, issued from $X_0=0$;
\item conditionally on the bridge $X$, the random trees $T(i)$, for $i\in\DS(X)$, are independent and distributed according to $\rho^+_{X_i}$.
\end{itemize}
\end{prop}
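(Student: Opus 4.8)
The plan is to obtain the local limit of $\B_p = (X^p, T^p)$ by combining a local limit statement for the bridges $X^p$ with the fact that, conditionally on the bridge, the trees are i.i.d.\ Boltzmann-distributed with parameters read off from the bridge. By Proposition~\ref{prop:loibridgeboltz}, $X^p$ has the law of the $\mathbf p$-random walk started from $0$ and conditioned to return to $0$ at time $2p$, and conditionally on $X^p$ the trees $T^p(i)$, $i\in\DS(X^p)$, are independent with laws $\rho^+_{X^p_i}$. Since the tree data is a deterministic kernel applied to the bridge, it suffices to show two things: (i) the conditioned walk $X^p$, viewed from time $0$ (i.e.\ the two-sided sequence $(X^p_i)_{-p\le i\le p}$ read with indices centred at $0$), converges in law for the local topology to a pair of independent unconditioned $\mathbf p$-walks $(X_i)_{i\ge 0}$, $(X_{-i})_{i\ge 0}$ issued from $0$; and (ii) this convergence lifts through the tree-grafting kernel, which is automatic because the kernel is continuous for the relevant topologies (grafting only depends, locally, on finitely many bridge values and the trees are attached independently).

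First I would establish (i) by a standard absolute-continuity / Markov-chain argument. Fix $r\ge 1$ and a finite admissible path $(w_{-r},\dots,w_r)$ with $w_0=0$. Using the Markov property of the $\mathbf p$-walk and the time-reversal symmetry noted in the Remark following Proposition~\ref{prop:loibridgeboltz} (the bridge law is invariant under $i\mapsto 2p-i$, equivalently under reflecting the centred sequence), one writes
$$
\P\big((X^p_{-r},\dots,X^p_r)=(w_{-r},\dots,w_r)\big)
= \Big(\prod_{i=-r}^{r-1}\mathbf p(w_i,w_{i+1})\Big)\,
\frac{\P_{w_r}(X_{2p-2r-?}=0)\,\P_{0\to w_{-r}}(\cdots)}{\P_0(X_{2p}=0)},
$$
or more cleanly: conditioning on $X^p_0=0$ is vacuous, and the probability above equals the product of one-step transitions along the path times a ratio of ``return'' probabilities of the unconditioned walk between the endpoints $w_{-r}, w_r$ and $0$ over the time remaining. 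Using the asymptotic $\P_0(X_{2p}=0)\sim \tfrac{2}{\sqrt\pi}p^{-5/2}$ from \eqref{eq:0-0}, together with the analogous local limit asymptotics $\P_x(X_m=0)\sim c\, x\cdot m^{-5/2}$ for the $\mathbf p$-walk (which follows from the same singularity analysis, or from the fact that $\mathbf p$ is — via Lamperti — in the domain of attraction of a Bessel(5) process), the ratio tends to $1$ as $p\to\infty$ for any fixed $r$ and fixed endpoints. Hence the left-hand side converges to $\prod_{i=-r}^{r-1}\mathbf p(w_i,w_{i+1})$, which is exactly the probability that a pair of independent $\mathbf p$-walks issued from $0$ agrees with $(w_{-r},\dots,w_r)$ on $[-r,r]$ (the two halves $i\ge 0$ and $i\le 0$ factorise precisely because the product of transitions splits at $i=0$). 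This is the desired local convergence of the bridges.

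Finally I would assemble the full statement. Given (i), take a bounded continuous test function on treed bridges depending only on $[\cdot]_r$; such a function depends only on $(X^p_i)_{-r\le i\le r}$ and on the finitely many trees $T^p(j)$ with $j\in\DS(X^p)\cap[-r,r]$, truncated at height $r$. Conditioning on the bridge restricted to $[-r,r]$, the relevant truncated trees are, in both $\B_p$ and the conjectural limit $\B_\infty$, independent with laws $\rho^+_{X_j}$ (this is exactly the second bullet of Proposition~\ref{prop:loibridgeboltz} for finite $p$, and of the statement for $\B_\infty$), so the conditional expectations coincide as functions of the bridge; integrating against the converging bridge laws from (i) and using dominated convergence (the integrand is bounded) yields convergence of the expectations. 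By a standard criterion this gives the claimed local convergence in law of $\B_p$ to $\B_\infty=(X;T)$.

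The main obstacle is step (i), and more precisely pinning down the local limit asymptotics $\P_x(X_m=0)\sim c\,x\,m^{-5/2}$ (with the correct linear-in-$x$ prefactor) uniformly enough to control the ratio for fixed endpoints as $p\to\infty$; everything else — the factorisation into two independent halves, the time-reversal symmetry, and the lifting through the tree kernel — is then essentially bookkeeping. One convenient way to get that asymptotic cleanly is to note that $8^{-p}[z^p]W_c(z)=\P_0(X_{2p}=0)$ from the proof of Proposition~\ref{prop:loibridgeboltz}, and to derive the shifted version from an analogous generating-function identity for first-passage / return probabilities of the $\mathbf p$-walk, again via \eqref{eq:asympZp}-type singularity analysis; alternatively one invokes the Bessel(5) scaling limit together with a local central limit theorem for this explicit birth–death chain.
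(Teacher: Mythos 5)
Your reduction to the bridge and the lifting through the tree kernel are fine and match the paper's own one-line reduction, but step (i) as you state it is wrong, and this is the heart of the proof. Writing $\mathbf p^{(m)}(i,j)$ for the $m$-step transition probability of the $\mathbf p$-walk, the window probability factorises as
$$\P\big((X^p_i)_{-r\le i\le r}=(w_i)_{-r\le i\le r}\big)=\Big(\prod_{i=-r}^{r-1}\mathbf p(w_i,w_{i+1})\Big)\,\frac{\mathbf p^{(2p-2r)}(w_r,w_{-r})}{\mathbf p^{(2p)}(0,0)},$$
and the ratio does \emph{not} tend to $1$. By Lemma~\ref{lem:asympbridge}, $\mathbf p^{(m)}(i,j)\sim\frac{2}{\sqrt\pi}\,C(j)\,m^{-5/2}$ with $C(j)=\frac{1}{12}(j+1)(j+2)^2(j+3)$: the asymptotic depends only on the \emph{terminal} point, so in particular $\P_x(X_m=0)\sim\frac{2}{\sqrt\pi}m^{-5/2}$ with no linear-in-$x$ prefactor, contrary to the asymptotic $c\,x\,m^{-5/2}$ you invoke, and the ratio above converges to $C(w_{-r})\neq 1$ whenever $w_{-r}\ge 1$. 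Compounding this, the product $\prod_{i=-r}^{r-1}\mathbf p(w_i,w_{i+1})$ is \emph{not} the probability assigned to the window by the limit $\B_\infty$: the left half of $\B_\infty$ is a $\mathbf p$-walk from $0$ read \emph{outward}, so the target is $\prod_{i=0}^{r-1}\mathbf p(w_i,w_{i+1})\prod_{i=0}^{r-1}\mathbf p(w_{-i},w_{-(i+1)})$, and since $\mathbf p$ is not symmetric the orientation of the left factor matters; the claim that "the two halves factorise because the product splits at $i=0$" glosses over exactly this.

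The missing ingredient that fixes both defects at once is the reversibility identity \eqref{telescopic}, $C(j)\,\mathbf p(j,j-1)=C(j-1)\,\mathbf p(j-1,j)$, which telescopes to $C(w_{-r})\prod_{i=-r}^{-1}\mathbf p(w_i,w_{i+1})=\prod_{i=0}^{r-1}\mathbf p(w_{-i},w_{-(i+1)})$; combined with the limit $C(w_{-r})$ of the ratio, this yields exactly the two-independent-walks probability, which is precisely the computation in the paper. As written, your argument establishes neither the asserted limit nor the asserted identification; the two errors happen to differ by the same factor $C(w_{-r})$, so your final conclusion is true but unproved. Note also that the input you flag as the "main obstacle" is not the right one: what is needed is not a local limit theorem with an $x$-prefactor but the endpoint-only asymptotic of Lemma~\ref{lem:asympbridge}, which the paper obtains by an elementary one-step recursion from \eqref{eq:0-0} (itself a consequence of \eqref{eq:asympZp}), and in fact only the recursion $C(j)=\mathbf p(j+1,j)C(j+1)+\mathbf p(j-1,j)C(j-1)$ with $C(0)=1$ is ever used.
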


We then claim that the UIHPQ can be constructed from $\B_\infty$ via $\Phi$ as claimed in \eqref{eq:intronewconstruction}:
\begin{theorem}[The new construction of the UIHPQ]\label{new construction} We have $\UIHPQ =  \Phi(\B_{\infty})$ in distribution.
\end{theorem}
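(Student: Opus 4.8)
The plan is to deduce the identity from the two convergences already at our disposal: Proposition~\ref{boltzmann convergence} gives $\BQ_p \to \UIHPQ$ for the local topology, and Proposition~\ref{limitinp} gives $\B_p \to \B_\infty$ for the local topology on treed bridges; since $\BQ_p = \Phi(\B_p)$ by Proposition~\ref{prop:loibridgeboltz}, it would suffice to pass $\Phi$ to the limit, i.e.\ to check that $\Phi(\B_p)\to\Phi(\B_\infty)$. If $\Phi$ were continuous on $\TB^+$ this would be immediate, but by Remark~\ref{non continuity} it is discontinuous at \emph{every} point of $\TB^+_\infty$, and $\B_\infty\in\TB^+_\infty$ almost surely (the walk $X$ is transient and the grafted Boltzmann trees are finite, so each label occurs only finitely often). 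Thus the whole difficulty lies in controlling this discontinuity along the sequence $\B_p$ and at its limit $\B_\infty$.

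The discontinuity of $\Phi$ stems solely from real vertices carrying small labels that sit far (in the treed bridge) from the root, and the remedy is the one introduced by Ménard~\cite{Men08} for the UIPQ. Fix $r\ge1$; for a positive treed bridge $B$ and $R\ge r+1$ let $\mathcal{G}_{r,R}(B)$ be the event that no real vertex of $B$ lying outside the truncation $[B]_R$ carries a label $\le r+1$. On $\mathcal{G}_{r,R}(B)$ one checks directly from the definition of $\Phi$ in Section~\ref{infinite construction} that $[\Phi(B)]_r$ is a measurable function $\Psi_{r,R}$ of $[B]_R$ alone: by Remark~\ref{premark} the vertices of $[\Phi(B)]_r$ are exactly the real vertices of label $\le r$, and its edges are successor arrows between corners of label $\le r+1$, all of which --- together with their cyclic order along the boundary contour --- are already recorded in $[B]_R$ on this event. (This is the familiar statement that a suitably truncated version of $\Phi$ is continuous; a precise form is the object of Section~\ref{control on small labels}.)

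What must then be established is a quantitative bound: for every $r\ge1$ and $\epsilon>0$ there is $R=R(r,\epsilon)$ with
\[
\sup_{p>0}\ \P\big(\mathcal{G}_{r,R}(\B_p)^c\big)\ \le\ \epsilon
\qquad\text{and}\qquad
\P\big(\mathcal{G}_{r,R}(\B_\infty)^c\big)\ \le\ \epsilon .
\]
This is exactly the content of the small-label estimate of Section~\ref{control on small labels}; it will be extracted from the explicit descriptions of $\B_p$ (Proposition~\ref{prop:loibridgeboltz}) and of $\B_\infty$ (Proposition~\ref{limitinp}), using that the kernel $\mathbf{p}$ has a strong upward drift near $0$ --- so that neither the conditioned bridge $X^p$ (uniformly in $p$, with the help of the local-limit estimate \eqref{eq:0-0}) nor the free bridge $X$ revisits small values far from time $0$ --- together with the fact that a Boltzmann positive tree rooted at a high label reaches down to level $r+1$ only with exponentially small probability, summable over the grafted trees. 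Granting this, the proof concludes in the standard way: fixing $r$ and $\epsilon$ and choosing $R$ as above, Proposition~\ref{limitinp} gives $[\B_p]_R\to[\B_\infty]_R$ in law, hence $\Psi_{r,R}([\B_p]_R)\to\Psi_{r,R}([\B_\infty]_R)$ by the continuous mapping theorem; since on the events $\mathcal{G}_{r,R}$ these variables equal $[\Phi(\B_p)]_r$ and $[\Phi(\B_\infty)]_r$ respectively, letting $\epsilon\to0$ yields $[\Phi(\B_p)]_r\to[\Phi(\B_\infty)]_r$ in law for every $r$, i.e.\ $\Phi(\B_p)\to\Phi(\B_\infty)$ for the local topology. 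Comparing with $\Phi(\B_p)=\BQ_p\to\UIHPQ$ from Proposition~\ref{boltzmann convergence} then gives $\UIHPQ=\Phi(\B_\infty)$ in distribution.

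The main obstacle is precisely the uniform-in-$p$ small-label estimate: the rest of the argument is soft (local convergence of treed bridges plus continuous mapping). Controlling $\P(\mathcal{G}_{r,R}(\B_p)^c)$ uniformly in $p$ requires a genuine comparison of the $\mathbf{p}$-bridge conditioned to close at time $2p$ with the unconditioned walk on windows of fixed size, a drift/tail estimate ruling out deep excursions toward small values far from the root, and a union bound over the infinitely many grafted Boltzmann trees and all their possible depths; this is where essentially all of the technical work lies, and it is carried out in Section~\ref{control on small labels}.
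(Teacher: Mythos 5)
Your proposal is correct and follows essentially the same route as the paper: your event $\mathcal{G}_{r,R}$ and map $\Psi_{r,R}$ are exactly the paper's set $\TB(k,r)$ and Lemma~\ref{kinda continuity}, the uniform-in-$p$ small-label bound you isolate is Proposition~\ref{small labels} (proved via Lemma~\ref{small roots} and packaged as Corollary~\ref{likely kinda continuity}), and your concluding limit argument combining Propositions~\ref{limitinp} and~\ref{boltzmann convergence} is the one carried out in the paper's proof of Theorem~\ref{new construction}. One small correction to your heuristic: the probability that a $\rho^+_x$-Boltzmann tree reaches down to a fixed label $m$ is only polynomially small, namely $1-w_{x-m}/w_x=O(x^{-3})$, not exponentially small — this rate is still summable against the $|i|^{1/2-\varepsilon}$ lower bound on the bridge labels, so the argument goes through unchanged.
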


Establishing such a theorem given Proposition~\ref{limitinp} and Proposition~\ref{boltzmann convergence} would be straightforward if $\Phi$ were continuous on $\TB^+$, which -- as we have seen in Remark~\ref{non continuity} -- is not the case, even if we restrict ourselves to a set of probability 1 under the law of $\B_\infty$. As anticipated, we need the same kind of computation that Ménard used in~\cite{Men08} to show that the two constructions of the UIPQ (from~\cite{CD06} and~\cite{Kri05}) are equivalent; we shall set it up in Section~\ref{control on small labels}.

\subsection{$ \mathcal{B}_{\infty}$ is the local limit of the $ \mathcal{B}_{p}$'s}

This section is devoted to showing Proposition~\ref{limitinp}. Since the trees in $\B_p$ and $\B_\infty$ are conditionally independent given the bridges, with laws independent of $p$, all we need to show is that the local limit of the bridge $X^p=(X^p_0,\ldots,X^p_{2p-1})$, which is distributed as the initial segment of a nearest neighbour random walk with law $\mathbb{P}_0$ conditioned on hitting 0 after time $2p$, is simply an infinite bridge $(X_i)_{i\in\mathbb{Z}}$, where $(X_i)_{i\geq 0}$ and $(X_{-i})_{i\geq 0}$ are independent nearest neighbour random walks with law $\P_0$. In order to show this, we first prove a lemma giving an estimate for the probability that such a random walk transitions from $i$ to $j$ in time $p$.

\begin{lemma}  \label{lem:asympbridge} Let $i,j$ and $p$ be non-negative integers such that $i+j$ has the same parity as $p$; call $\S^{(p)}(i,j)$ the probability that a walk with transition probabilities given by $\mathbf{p}$ moves from $i$ to $j$ in time $p$, that is
$$\S^{(p)}(i,j) = \sum_{}\prod_{k=0}^{p-1}  \mathbf{p}(x_{k},x_{k+1})$$
where the sum is taken over all sequences $(x_0,\ldots, x_p)$ of non-negative integers such that $x_0=i$, $x_p=j$ and $|x_{k+1}-x_k|=1$ (for $k=0,\ldots,p-1$).
Then, when $p$ is sent to infinity (along values having the same parity as $i+j$), we have
\begin{eqnarray}\label{S(i,j,p)} \S^{(p)}(i,j) \sim \frac{1}{ 6\sqrt{\pi}} (j+1)(j+2)^2(j+3)  p^{-5/2}.\end{eqnarray}
\end{lemma}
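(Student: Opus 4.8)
The plan is to treat $\mathbf{p}$ as a reversible birth--death chain and to read off the local behaviour of its $p$-step transition probabilities from the singularity of its Green function at the edge of the spectrum, anchoring the computation on the explicit generating function for the return probabilities at $0$ that was already obtained above. The very first step is reversibility: a one-line check of detailed balance shows that $\mathbf{p}$ is reversible with respect to $\mu(n)=(n+1)(n+2)^2(n+3)$, since $\mu(n)\mathbf{p}(n,n+1)=\tfrac12(n+1)(n+2)(n+3)(n+4)=\mu(n+1)\mathbf{p}(n+1,n)$. As $\mu(n)\sim n^4$, this measure is the discrete shadow of the speed measure $y^4\,dy$ of a $5$-dimensional Bessel process, and it is exactly the prefactor appearing in \eqref{S(i,j,p)}. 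In particular $\mu(i)\S^{(p)}(i,j)=\mu(j)\S^{(p)}(j,i)$, so \eqref{S(i,j,p)} is equivalent to the assertion that $p^{5/2}\S^{(p)}(i,j)/\mu(j)$ converges, as $p\to\infty$ along the parity of $i+j$, to a limit depending on neither $i$ nor $j$.

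Next I would carry out a generating-function analysis. Let $G_{i,j}(z)=\sum_{p\ge0}\S^{(p)}(i,j)z^p$ be the Green function. From $\P_0(X_{2p}=0)=8^{-p}[z^p]W_c(z)$ (obtained above as a by-product of Proposition~\ref{prop:loibridgeboltz}) one gets the fully explicit identity $G_{0,0}(z)=W_c(z^2/8)$, and by \eqref{W} its only singularities on the closed unit disk are at $z=\pm1$, where $G_{0,0}(z)=G_{0,0}(\pm1)+\mathrm{const}\cdot(1-z^2)^{3/2}+o\!\big((1-z^2)^{3/2}\big)$, the exponent $3/2$ coming directly from the $(1-8w)^{3/2}$ term in $W_c$. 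For general $i,j$ one uses the birth--death structure to express $G_{i,j}$ through the two solutions of the three-term recurrence $z\big(\mathbf{p}(n,n-1)f(n-1)+\mathbf{p}(n,n+1)f(n+1)\big)=f(n)$ — the solution $v_z$ pinned by the behaviour at $0$ and the solution $u_z$ that decays at $+\infty$ — namely $G_{i,j}(z)=\mu(j)\,u_z(i\vee j)\,v_z(i\wedge j)/\omega(z)$, with $\omega(z)$ the ($n$-independent) Wronskian. Far from $0$ the chain is a simple random walk, so its spectral radius is $1$ and the spectral edge sits at $z=\pm1$; there the recurrence degenerates to $\mathbf{p}f=f$, whose only everywhere-positive solution up to scaling is the constant function, the constraint $\mathbf{p}(0,1)=1$ (which forces $f(1)=f(0)$) ruling out the other interior harmonic direction $A_\ast(n)=1-\tfrac{6}{(n+1)(n+2)(n+3)}$. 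Feeding this into the factorisation shows that the $(1-z^2)^{3/2}$-part of $G_{i,j}$ near $z=\pm1$ has the form $\mu(j)\cdot c\cdot(1-z^2)^{3/2}$ with $c$ a constant free of $i$ and $j$; a transfer theorem (Flajolet--Odlyzko) then yields $\S^{(p)}(i,j)\sim C\,\mu(j)\,p^{-5/2}$, the contributions of $z=1$ and $z=-1$ adding up exactly when $p\equiv i+j\pmod2$ and cancelling otherwise (consistent with $\S^{(p)}(i,j)=0$ in that case). The universal constant $C$ — hence the exact value in \eqref{S(i,j,p)} — is then pinned down by specialising to $i=j=0$, where $G_{0,0}(z)=W_c(z^2/8)$ is explicit and $\mu(0)=12$.

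The main obstacle is the last part of the previous paragraph: controlling $u_z$, $v_z$, $\omega(z)$ — equivalently $G_{i,j}(z)$ — uniformly as $z\to\pm1$, and genuinely verifying that the coefficient of $(1-z^2)^{3/2}$ factors as $\mu(j)$ times an $i$-independent constant. This is the real "dimension-$5$" content: at leading order the probability of being back at a fixed level $j$ at the large time $p$ must not feel the starting level $i$, because the walk has climbed to scale $\sqrt p$ whatever $i$ is, which is precisely the statement that the spectral edge is carried by the constant harmonic function. An alternative route to this same point — cleaner in spirit but with its own technical overhead — is a local-limit-theorem bootstrap off the convergence of $\big(X_{[nt]}/\sqrt n\big)_t$ to a $5$-dimensional Bessel process (Proposition~\ref{prop:scalingX}): decompose a path $i\to j$ of length $p$ into an initial excursion up to the first hitting of level $\lfloor\varepsilon\sqrt p\rfloor$ (probability tending to $1$, duration $o(p)$), a Bessel-governed bulk, and a time-reversed final excursion from $j$ up to level $\lfloor\varepsilon\sqrt p\rfloor$, and identify the limit with the $t\to\infty$, $x,y$ fixed asymptotics of the Bessel$(5)$ transition density $p_t^{(5)}(x,y)\sim\mathrm{const}\cdot y^4 t^{-5/2}$ (again free of $x$), matching the discrete normalisation $\mu(n)\sim n^4$ to recover \eqref{S(i,j,p)}.
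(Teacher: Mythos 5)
The parts of your plan that you actually verify are correct and give a nice structural reading of the statement: detailed balance does hold with $\mu(n)=(n+1)(n+2)^2(n+3)$, the identity $G_{0,0}(z)=W_c(z^2/8)$ follows from $\P_0(X_{2p}=0)=8^{-p}[z^p]W_c(z)$ together with \eqref{W}, and \eqref{S(i,j,p)} is indeed equivalent to saying that $p^{5/2}\,\S^{(p)}(i,j)/\mu(j)$ has a limit free of $i$ and $j$. But as a proof there is a genuine gap, and it is exactly the one you flag yourself: everything rests on the claim that, for general $i,j$, the Green function $G_{i,j}(z)$ continues analytically past the unit circle away from $z=\pm1$ and has there a singular expansion with \emph{no} $(1-z^2)^{1/2}$ term, its leading singular part being $c\,\mu(j)\,(1-z^2)^{3/2}$ with $c$ independent of $i$ and $j$. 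The remark that the spectral edge is ``carried by the constant harmonic function'' is a heuristic for why this should be true, not a derivation: to make it a proof you would need to construct $u_z$, $v_z$ and the Wronskian, expand them in powers of $\sqrt{1-z^2}$ uniformly enough to see the cancellation at order $(1-z^2)^{1/2}$ in $u_z(i\vee j)v_z(i\wedge j)/\omega(z)$, and check the $\Delta$-domain analyticity required by the transfer theorem. Since the absence of the square-root singularity and the $i$-independence of the coefficient are precisely what \eqref{S(i,j,p)} asserts, the write-up does not yet establish the lemma; the alternative bootstrap off Proposition~\ref{prop:scalingX} is in the same situation, since functional convergence alone does not yield pointwise $p^{-5/2}$ asymptotics without a genuine local limit theorem. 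One further caveat if you pin the constant at $i=j=0$: \eqref{eq:0-0} is an asymptotic in $p$ for the walk at time $2p$, whereas in Lemma~\ref{lem:asympbridge} the superscript counts steps, so a factor $2^{5/2}$ is easily gained or lost in the matching.

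For comparison, the paper's proof is entirely elementary and bypasses all of this: it takes the case $i=j=0$ from \eqref{eq:0-0}, uses the first-step decomposition $\S^{(p)}(i,0)=\mathbf{p}(i,i+1)\S^{(p-1)}(i+1,0)+\mathbf{p}(i,i-1)\S^{(p-1)}(i-1,0)\mathbf{1}_{i\geq1}$ to see that the asymptotics does not depend on the starting point, and the analogous decomposition at the terminal point to get $\S^{(p)}(i,j)\sim \frac{2}{\sqrt{\pi}}C(j)p^{-5/2}$ with $C(0)=1$ and $C(j)=\mathbf{p}(j+1,j)C(j+1)+\mathbf{p}(j-1,j)C(j-1)$, solved by $C(j)=\frac{1}{12}(j+1)(j+2)^2(j+3)$ (and only this recursion, via \eqref{telescopic}, is used later in the paper). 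Your reversing measure is exactly $12\,C(j)$, so the detailed-balance observation is a good consistency check and explains structurally why only $j$ appears in the answer; but the analytic work in the paper is carried by these one-step recursions, not by edge-of-spectrum analysis of the Green function, and it is that missing analysis which your proposal would still have to supply.
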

\begin{proof}Notice that the right hand side of \eqref{S(i,j,p)} only depends on $j$, and that we have already dealt with the case $i=j=0$, see \eqref{eq:0-0}. For $\S^{(p)}(i,0)$, one has the recursive decomposition
$$\S^{(p)}(i,0) = \S^{(p-1)}(i+1,0) \mathbf{p}(i,i+1) + \S^{(p-1)}(i-1,0) \mathbf{p}(i,i-1) \mathbf{1}_{i \geq 1}$$
which inductively yields that for all $i\geq 0$ we have $\S^{(p)}(i,0)\sim \S^{(p)}(0,0) \sim \frac{2}{ \sqrt{\pi}} p^{-5/2}$ as $p\to\infty$ along values with the right parity.

An analogous expression can be written for $\S^{(p)}(i,j)$, using recurrence in $j$:
$$\S^{(p)}(i,j) =  \S^{(p-1)}(i,j+1) \mathbf{p}(j+1,j) + \S^{(p-1)}(i,j-1) \mathbf{p}(j-1,j) \mathbf{1}_{j \geq 1};$$ 
this time, one obtains by induction on $j$ that $\S^{(p)}(i,j) \sim \frac{2}{ \sqrt{\pi}}C(j)p^{-5/2}$ (along values of $p$ having the same parity as $i+j$), where $C(j)$ only depends on $j$ and satisfies $C(0)= 1$, as well as $C(j) = \mathbf{p}(j+1,j) C(j+1) + \mathbf{p}(j-1,j) C(j-1)$ for $j\geq 1$. The solution for $C(j)$ is indeed
$$C(j) = \frac{1}{ 12}(j+1)(j+2)^2(j+3) $$
as can be checked by induction.
\end{proof}

In fact, all that we shall use from the above lemma is a property of the quantity $C(j)$ that is easily deduced from the recursion
$C(j) = \mathbf{p}(j+1,j) C(j+1) + \mathbf{p}(j-1,j) C(j-1)$ shown within the proof of the lemma,
even without the explicit expression for $C(j)$: dividing both terms of the equality by $C(j)$ yields $\mathbf{p}(j+1,j) \frac{C(j+1)}{C(j)}=1-\mathbf{p}(j-1,j)\frac{C(j-1)}{C(j)}$, which by induction (using the fact that $C(0)=1$) implies 
\begin{eqnarray}\label{telescopic}\mathbf{p}(j,j-1) \frac{C(j)}{C(j-1)}=\mathbf{p}(j-1,j).\end{eqnarray}

\begin{proof}[Proof of Proposition~\ref{limitinp}]
Let $(l_{-r},\ldots,l_0,\ldots,l_r)$ be a sequence of non-negative integers such that $l_0=0$, $l_{-r}=i$, $l_r=j$ and $|l_{h+1}-l_h|=1$ for all $h$ such that $-r\leq h<r$; we wish to compute the limit in $p$ of the probability that $(X^p_i)_{-r\leq i\leq r}=(l_i)_{-r\leq i\leq r}$, where as usual indices in the sequence $X^p$ are read modulo $2p$. 

We can express such a probability, thanks to the description of $X^p$ given in Proposition~\ref{Bp}, as the probability that $X_i=l_i$ and that $X_{2p-i}=l_{-i}$ for $i=0,\ldots,r$, conditioned on the fact that $X_{2p}=0$; since the probability that $X_{2p}=0$ is $\S^{(2p)}(0,0)$, the limit of $\mathbb{P}\left((X^p_i)_{-r\leq i\leq r}=(l_i)_{-r\leq i\leq r}\right)$ as $p\to\infty$ can be expressed as
\begin{eqnarray}\label{eq}\lim_{p\rightarrow\infty}\left(\prod_{i=-r}^{r-1}\mathbf{p}(l_{i},l_{i+1})\right)\frac{\S^{(2p-2r)}(l_r,l_{-r})}{\S^{(2p)}(0,0)}=C(l_{-r})\prod_{i=-r+1}^{0}\mathbf{p}(l_{i-1},l_{i}) \prod_{i=0}^{r-1}\mathbf{p}(l_{i},l_{i+1}).\end{eqnarray}

We know that $\frac{C(l_{i-1})}{C(l_i)}\mathbf{p}(l_{i-1},l_{i})=\mathbf{p}(l_{i},l_{i-1})$ from \eqref{telescopic}; hence
$$C(l_{-r})\prod_{i=-r+1}^{0}\mathbf{p}(l_{i-1},l_{i})=\prod_{i=-r+1}^{0}\frac{C(l_{i-1})}{C(l_i)}\prod_{i=-r+1}^{0}\mathbf{p}(l_{i-1},l_{i})=\prod_{i=-r+1}^{0}\mathbf{p}(l_{i},l_{i-1})=\prod_{i =0}^{r-1}\mathbf{p}(l_{-i},l_{-(i+1)}).$$
Substituting in \eqref{eq} yields
$$\lim_{p\to\infty}\mathbb{P}\left((X^p_i)_{-r\leq i\leq r}=(l_i)_{-r\leq i\leq r}\right)=\mathbb{P}(X_0=l_0, X_1=l_1,\ldots,X_r=l_r) \mathbb{P}(X_0=l_0,X_{-1}=l_{-1},\ldots,X_{-r}=l_{-r}),$$
which proves the proposition.
\end{proof}

\subsection{Control on small labels and proof of Theorem~\ref{new construction}}\label{control on small labels}

In order to complete the proof of Theorem~\ref{new construction} and to overcome the obstructions on the continuity of $\Phi$ on $\TB^+$ that we discussed in Section~\ref{infinite construction} we must ensure that, \emph{uniformly in $p$}, the probability that there are small labels far from the root of the treed bridges $\B_p$ is small (Proposition~\ref{small labels}). Once that is done, the proof of Theorem~\ref{new construction} can proceed in a way analogous to~\cite{Men08}. 

Our first estimate bounds the rate of growth of labels along the bridge $(X_i)_{i\in\mathbb{Z}}$ of $\B_\infty$. Since the right and left half of the bridge behave as nearest neighbour random walks with transition probabilities given by $\mathbf{p}$, one can invoke general results for such random walks (see~\cite{CFR09}); in particular, the processes $(X_i)_{i\geq 0}$ and $(X_{-i})_{i\geq 0}$ are transient \cite[Theorem A]{CFR09}; more precisely, by~\cite[Theorem 6.1]{CFR09}, we have:

\begin{prop} \label{prop:transience} Let $ \mathcal{B}_{\infty}=( (X_{i})_{i \in \mathbb{Z}}; T)$ be the random infinite treed bridge defined in Proposition~\ref{limitinp}; then for every $\varepsilon>0$ we have
$$ X_{|i|} \geq |i|^{ \frac{1}{2}- \varepsilon}$$
almost surely, for all $i$ large enough.
\end{prop}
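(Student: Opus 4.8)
The plan is to reduce the statement to a one-sided lower-envelope estimate for a Lamperti-type nearest-neighbour walk on $\mathbb{N}$ and then to quote \cite[Theorem~6.1]{CFR09}; I will also indicate how the estimate can be obtained from scratch using the scale function of the walk.

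First I would carry out the reduction and identify the walk. By Proposition~\ref{limitinp} the processes $(X_i)_{i\ge 0}$ and $(X_{-i})_{i\ge 0}$ are independent, each distributed as a nearest-neighbour random walk on $\mathbb{N}$ started at $0$ with transition probabilities $\mathbf{p}$ as in \eqref{eq:defp}; since the intersection of the two corresponding full-measure events is again of full measure, it suffices to prove that, for each fixed $\varepsilon>0$, almost surely $X_n\ge n^{1/2-\varepsilon}$ for all $n$ large enough. I would then record the relevant asymptotics of $\mathbf{p}$: one has $2\mathbf{p}(n,n+1)-1=\tfrac{2}{n+2}$, so the walk has radial drift $\sim 2/n$ and sits strictly on the transient side of Lamperti's classification. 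Transience is \cite[Theorem~A]{CFR09}, and \cite[Theorem~6.1]{CFR09}, applied to a walk with exactly this asymptotic behaviour, delivers the lower envelope $X_n\ge n^{1/2-\varepsilon}$ eventually. For this route the only thing to verify is that the normalisation conventions of \cite{CFR09} match \eqref{eq:defp} and that the regularity hypotheses there (an expansion of $\mathbf{p}(n,n+1)$ to the required order, and possibly eventual monotonicity of $n\mapsto\mathbf{p}(n,n+1)-\tfrac12$) hold; both are routine.

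If one prefers not to invoke the precise form of \cite[Theorem~6.1]{CFR09}, the estimate follows from a short Borel--Cantelli argument. Let $h\colon\mathbb{N}\to[0,\infty)$ be the scale function of the walk, normalised by $h(0)=0$ and $h(1)=1$, so that $h(m+1)-h(m)=\tfrac{\mathbf{p}(m,m-1)}{\mathbf{p}(m,m+1)}\bigl(h(m)-h(m-1)\bigr)=\prod_{k=1}^{m}\tfrac{k}{k+4}=\tfrac{24}{(m+1)(m+2)(m+3)(m+4)}$; summing the resulting telescoping series gives that $h$ is bounded with $h(\infty)-h(n)\asymp n^{-3}$. Hence $h(X_n)$ is a bounded martingale, it converges to $h(\infty)$ (re-proving transience), and optional stopping at a hitting time yields, for $\ell<j$, that the walk started from $j$ ever visits level $\ell$ with probability $\tfrac{h(\infty)-h(j)}{h(\infty)-h(\ell)}\asymp(\ell/j)^{3}$. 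Combining this with a uniform small-ball bound for the one-time marginal, namely $\mathbb{P}(X_n\le\eta\sqrt n)\lesssim\eta^{\alpha}$ for some $\alpha>0$ uniformly in $n$ --- which follows either from the Lamperti convergence to a Bessel process of dimension $5$ recorded in Proposition~\ref{prop:scalingX} (its time-one law having density vanishing like $x^{4}$ near the origin) or from reversibility (the reversing measure being $\propto C(k)\asymp k^{4}$, with $C$ as in Lemma~\ref{lem:asympbridge}) together with an upper bound on $\S^{(n)}(k,0)$ uniform for $k\lesssim\sqrt n$ --- one argues along $n_i=2^{i}$: first $X_{n_i}\ge n_i^{1/2-\delta}$ for all large $i$ (the failure probability being $\lesssim 2^{-\alpha\delta i}$, summable), and then, conditionally on that, no excursion of $(X_n)_{n_i\le n\le n_{i+1}}$ ever goes below $n_i^{1/2-2\delta}$ for all large $i$ (conditional failure probability $\lesssim 2^{-3\delta i}$, summable). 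Since $n\le n_{i+1}$ forces $n_i\ge n/2$, this interpolates to $X_n\ge c\,n^{1/2-2\delta}$ for all large $n$, and taking $\delta=\varepsilon/3$ and absorbing the constant gives $X_n\ge n^{1/2-\varepsilon}$ eventually.

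The only real obstacle is the following. Along the citation route, the proof reduces to the bookkeeping in the second paragraph and there is no genuine difficulty. In the self-contained version, the scale-function computation makes the level-crossing probability exact, so the single quantitative input needing care is the uniform small-ball estimate $\mathbb{P}(X_n\le\eta\sqrt n)\lesssim\eta^{\alpha}$: extracting it from Proposition~\ref{prop:scalingX} means upgrading convergence in law to a bound uniform in $n$ and in small $\eta$, while extracting it from Lemma~\ref{lem:asympbridge} means strengthening that lemma's pointwise-in-$k$ asymptotics to an upper bound uniform over $k\lesssim\sqrt n$. Neither is deep --- the whole proposition is a soft consequence of transience together with diffusive $\sqrt n$ scaling --- but this uniformity is the one place where a little work is required.
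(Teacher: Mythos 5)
Your first route is precisely the paper's proof: the paper offers no argument beyond observing that each half of the bridge is a nearest-neighbour walk with transition kernel $\mathbf{p}$ and invoking \cite[Theorem A]{CFR09} for transience and \cite[Theorem 6.1]{CFR09} for the lower envelope, so the reduction and normalisation bookkeeping you describe is all that is needed. Your self-contained alternative is also sound and goes beyond the paper: the scale-function computation and the hitting estimate $\asymp(\ell/j)^{3}$ are correct, and the uniform small-ball bound $\mathbb{P}(X_{n}\leq \eta\sqrt{n})\lesssim \eta^{5}$ follows cleanly from reversibility (the reversing measure is, up to constants, the $C(\cdot)$ of Lemma~\ref{lem:asympbridge}) together with the monotone-coupling bound $\S^{(n)}(k,0)\lesssim n^{-5/2}$ that the paper itself uses in the proof of Lemma~\ref{small roots}, so the one uniformity issue you flag is genuinely routine.
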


We shall need an analogous result for finite bridges, which reads as follows:

\begin{lemma}\label{small roots} We have
$$ \lim_{k \to \infty} \sup_{p \geq 1} \mathbb{P}_{0}\left( \exists i : k \leq i \leq p,\ X_{i} < i^{1/2 - \varepsilon}  \mid X_{2p}=0\right) =0.$$
\end{lemma}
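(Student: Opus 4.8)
The plan is to reduce the statement about the conditioned walk to a statement about the free walk $(X_i)_{i\ge0}$ under $\mathbb{P}_0$ by paying the price of a Radon--Nikodym factor coming from the conditioning. Concretely, for fixed $k\le i\le p$ and a fixed value $x < i^{1/2-\eps}$, the Markov property gives
\begin{equation*}
\mathbb{P}_0\bigl(X_i = x,\ X_{2p}=0\bigr) = \S^{(i)}(0,x)\,\S^{(2p-i)}(x,0),
\end{equation*}
so that, dividing by $\mathbb{P}_0(X_{2p}=0) = \S^{(2p)}(0,0)$,
\begin{equation*}
\mathbb{P}_0\bigl(X_i = x \mid X_{2p}=0\bigr) = \frac{\S^{(i)}(0,x)\,\S^{(2p-i)}(x,0)}{\S^{(2p)}(0,0)}.
\end{equation*}
By Lemma~\ref{lem:asympbridge}, $\S^{(m)}(x,0)\sim \frac{2}{\sqrt\pi}m^{-5/2}$ uniformly as long as $m\to\infty$ (the limit does not depend on the starting point), and more importantly the recursive argument there shows $\S^{(m)}(x,0)\le C\,m^{-5/2}$ for a \emph{universal} constant $C$, for all $x$ and all $m$ of the right parity. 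Hence for $i\le p$ (so $2p-i\ge p\ge i$ and in particular $2p-i\ge k$), the ratio $\S^{(2p-i)}(x,0)/\S^{(2p)}(0,0)$ is bounded by a universal constant, \emph{uniformly in $p$, in $i\le p$, and in $x$}. This is the crux: the conditioning costs only a bounded factor on the relevant range of times, so
\begin{equation*}
\mathbb{P}_0\bigl(\exists i: k\le i\le p,\ X_i < i^{1/2-\eps}\ \big|\ X_{2p}=0\bigr) \le C'\sum_{i=k}^{p}\mathbb{P}_0\bigl(X_i < i^{1/2-\eps}\bigr)
\end{equation*}
after a union bound --- but this crude union bound over $i$ is too lossy, so instead one should run the union bound only over dyadic blocks and use the free-walk transience estimate of Proposition~\ref{prop:transience} (equivalently \cite[Theorem~6.1]{CFR09}) together with monotonicity within a block.

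More precisely, I would argue as follows. Split $\{k,\dots,p\}$ into dyadic blocks $\{2^n,\dots,2^{n+1}\}$ for $n$ ranging from about $\log_2 k$ up. On the event that $X_i < i^{1/2-\eps}$ for some $i$ in the block $[2^n,2^{n+1})$, we have $X_i < 2^{(n+1)(1/2-\eps)}$; it then suffices to control, for each block, the probability that the free walk ever drops below $2^{(n+1)(1/2-\eps)}$ at a time in $[2^n, 2^{n+1})$, conditioned on $X_{2p}=0$. Applying the Markov property at the left endpoint $2^n$ of the block and again dominating the terminal factor $\S^{(\cdot)}(\cdot,0)/\S^{(2p)}(0,0)$ by a universal constant (valid because the remaining time is $\ge p\ge 2^{n+1}$, hence large, hence the asymptotics of Lemma~\ref{lem:asympbridge} apply with a uniform constant), we bound the block's conditional probability by a constant times
\begin{equation*}
\mathbb{P}_0\bigl(\exists i\in[2^n,2^{n+1}):\ X_i < 2^{(n+1)(1/2-\eps)}\bigr).
\end{equation*}
Now this is a \emph{free}-walk quantity. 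By the transience estimate \cite[Theorem~6.1]{CFR09} --- the same input used for Proposition~\ref{prop:transience} --- the free walk satisfies $X_i\ge i^{1/2-\eps/2}$ eventually a.s., and in fact the quantitative tail bounds in \cite{CFR09} give $\mathbb{P}_0(X_i < i^{1/2-\eps})\le c\, i^{-\alpha}$ for some $\alpha=\alpha(\eps)>0$ on the whole block, which after summing over the block (of length $2^n$) and then over dyadic scales $n\ge\log_2 k$ yields a bound that is summable in $n$ and tends to $0$ as $k\to\infty$, uniformly in $p$. (If one prefers to avoid invoking a quantitative tail from \cite{CFR09}, the same conclusion follows from the a.s.\ transience statement of Proposition~\ref{prop:transience} via Borel--Cantelli applied to the free walk, rephrased as: $\mathbb{P}_0(\exists i\ge k:\ X_i<i^{1/2-\eps})\to0$ as $k\to\infty$; one then only needs the universal-constant domination of the conditioning factor to transfer this, with the dyadic decomposition handling the fact that the ``time left'' $2p-i$ varies with $i$.)

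The main obstacle, and the step that requires care rather than routine work, is establishing the \emph{uniformity in $p$} of the domination $\S^{(2p-i)}(x,0)/\S^{(2p)}(0,0)\le C$ over the full range $k\le i\le p$ and over all $x$ occurring. The asymptotic $\S^{(m)}(x,0)\sim \frac{2}{\sqrt\pi}m^{-5/2}$ from Lemma~\ref{lem:asympbridge} is pointwise in $x$; one needs to revisit its proof --- the recursions $\S^{(p)}(i,0)=\S^{(p-1)}(i+1,0)\mathbf{p}(i,i+1)+\S^{(p-1)}(i-1,0)\mathbf{p}(i,i-1)\mathbf{1}_{i\ge1}$ --- to extract a genuine \emph{upper bound} $\S^{(m)}(x,0)\le C m^{-5/2}$ with $C$ independent of $x$ (here one uses that $\S^{(m)}(x,0)\le \mathbf{p}(x,x-1)\S^{(m-1)}(x-1,0)+\mathbf{p}(x,x+1)\S^{(m-1)}(x+1,0)$ and an induction together with the reference point at $x$ comparable to $i^{1/2-\eps}$, which stays small relative to $\sqrt m$). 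I expect the argument of \cite{CFR09} already provides such a local limit theorem with explicit error control; if so, this step is immediate, and the rest is the dyadic bookkeeping sketched above. Once the uniform domination is in hand, combining it with the free-walk lower-deviation estimate and summing over dyadic scales completes the proof.
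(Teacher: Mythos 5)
Your overall strategy---pay for the conditioning with a uniformly bounded factor and then invoke transience of the free walk---is the same as the paper's, but the two steps you leave open or only sketch are precisely where the content lies, and as written they do not close. First, the uniform domination $\S^{(m)}(x,0)\le C\,m^{-5/2}$ (uniformly in the starting point $x$), which you yourself flag as the main obstacle and hope to extract by revisiting the recursion of Lemma~\ref{lem:asympbridge} or from~\cite{CFR09}, is obtained in the paper by a one-line monotone coupling: two nearest-neighbour walks started from $j\le i$ of the same parity can be coupled so as to stay ordered, whence $\mathbb{P}_i(X_m=0)\le\mathbb{P}_j(X_m=0)$; one then only needs \eqref{eq:0-0} and Lemma~\ref{lem:asympbridge} at the two starting points $0$ and $1$, with remaining time at least $p$. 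Without this observation (or a proved substitute) your Radon--Nikodym bound is an assumption, not a step.

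Second, the decoupling of the bad event from the terminal conditioning is not correctly implemented. You rightly discard the per-time union bound (indeed $\mathbb{P}_0(X_i<i^{1/2-\eps})$ is only of order $i^{-5\eps}$, not summable for small $\eps$), but your dyadic repair has two defects: applying the Markov property at the \emph{left} endpoint of a block does not separate the block event, which lives after that time, from $\{X_{2p}=0\}$, so the claimed bound by a constant times the free-walk block probability does not follow; and your quantitative route---summing a pointwise bound $c\,i^{-\alpha}$ over a block of length $2^n$---gives $2^{n(1-\alpha)}$, which is summable over scales only if $\alpha>1$, i.e.\ it fails exactly in the relevant regime of small $\eps$. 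The paper avoids all of this by applying the strong Markov property at the single stopping time $\tau_k=\inf\{i\ge k: X_i<i^{1/2-\eps}\}$: on $\{\tau_k\le p\}$ the remaining time $2p-\tau_k$ is at least $p$, so one bound on one terminal factor suffices, and the free-walk quantity left over is $\mathbb{P}_0(\tau_k<\infty)$, which tends to $0$ by Proposition~\ref{prop:transience}, with no dyadic bookkeeping and no quantitative hitting estimates. Your scheme could be repaired (stop at the first dip inside each block, or at the block's right endpoint, and prove a genuine hitting-probability bound for the free walk), but at that point the single global stopping time already does the whole job.
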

\proof Introduce the stopping time $\tau_k = \inf\{ i \geq k : X_{i} < i^{1/2- \varepsilon}\}$. We have 
 \begin{eqnarray*} \mathbb{P}_{0}(\exists i : k \leq i \leq p,\ X_{i} < i^{1/2 - \varepsilon} \mid X_{2p}=0)  &=& \mathbb{P}_{0}( \{\tau_k\leq p\} \mid X_{2p}=0)\\
 & = & (\mathbb{P}_{0}(X_{2p}=0))^{-1} \mathbb{P}_{0}( \{\tau_k\leq p\}  \cap \{X_{2p} =0\})\\ 
 & \underset{ \mathrm{Markov}}{=} & (\mathbb{P}_{0}(X_{2p}=0))^{-1} \mathbb{E}_{0}[  \mathbf{1}_{\{\tau_k \leq p\}} \  \mathbb{P}_{X_{\tau_k}}( \tilde{X}_{2p-\tau_k}=0)],  \end{eqnarray*}
where $(\tilde{X}_i)_{i\geq 0}$ is an independent copy of $(X_i)_{i\geq 0}$.
 
 Now notice that by an easy coupling argument we have $ \mathbb{P}_{i}( X_{n}=0) \leq \mathbb{P}_{j}( X_{n}=0)$ if $j \leq i$ have the same parity. Hence we can bound $\mathbb{P}_{X_{\tau_k}}( \tilde{X}_{2p-\tau_k}=0)$ from above by $\mathbb{P}_{0/1}(\tilde{X}_{2p-\tau_k}=0)$, where $0/1$ depends on the parity of ${X}_{\tau_k}$ and $\tau_k$. Using \eqref{eq:0-0} and Lemma~\ref{lem:asympbridge} together with the fact that $2p - \tau_k  \geq p$ we deduce that there exists a constant $C>0$ (which does not depend on $p$) such that $\mathbb{P}_{X_{\tau_k}}(\tilde{X}_{2p-\tau_k}=0) \leq C p^{-5/2}$. Coming back to the last display and using \eqref{eq:0-0} again, we obtain the following expression, where the constant $C$ is not necessarily the same as above:
 $$\mathbb{P}_{0}( \{ \tau_k \leq p\} \mid X_{2p}=0) \leq C \mathbb{P}_{0}(\{ \tau_k \leq p\}).$$
 We have found that for all $p$ we have $\mathbb{P}_{0}\left( \exists i : k \leq i \leq p,\  X_{i} < i^{1/2 - \varepsilon}  \mid X_{2p}=0\right)\leq C\mathbb{P}_{0}(\{ \tau_k \leq p\})\leq C\mathbb{P}_{0}(\{ \tau_k <\infty\})$. Proposition~\ref{prop:transience} then clearly entails that $\mathbb{P}_0( \{ \tau_k <\infty\})\to 0$ as $k\to\infty$, which implies the desired result.
 \endproof
 
We now use the above bounds on the rate of growth of labels along the bridge to prove that labels in the random treed bridges $\B_p$, provided they do not belong to trees grafted near the root, are unlikely to be small (see~\cite[Section 4.3]{Men08} for analogous results in the case of the UIPQ).

 \begin{proposition} \label{small labels}
 Consider the random treed bridges $\B_p=(X^p;T^p)$ as defined in Proposition~\ref{Bp}. For each $i\in\DS(X^p)$, call $L^p_i$ the minimum label appearing in $T^p(i)$. For any $\epsilon>0$ and $m\geq 0$ we can find $k>0$ such that for all $p$, assuming we see $i$ modulo $2p$,\ 
 $$\P\left(\min_{i\in\DS(X^p)\setminus[-k,k]}L^p_i \leq m\right)\leq\epsilon.$$. 
 \end{proposition}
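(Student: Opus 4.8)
The plan is to combine the growth of the bridge labels far from the root (Lemma~\ref{small roots}, together with the time-reversal symmetry of the law of $X^p$) with a sharp, $p$-free bound on the quantity $\rho^+_\ell(\min l\le m)$ — the probability that a labelled tree of law $\rho^+_\ell$ has minimum label at most $m$ — and then to conclude by a first-moment estimate. The delicate point is precisely this tree bound: the crude comparison $\rho^+_\ell\le\tfrac32\,\rho_\ell$ only gives $\rho^+_\ell(\min l\le m)=O(\ell^{-2})$, which—since typically $X^p_i\asymp\sqrt{|i|}$—would leave a divergent harmonic-type sum; one really needs one extra power of $\ell$, and this is where the positivity of the grafted trees genuinely enters. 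Throughout I fix a constant $a\in(0,\tfrac16)$, so that $\tfrac32-3a>1$, and I use $a$ in place of $\varepsilon$ in Lemma~\ref{small roots}.

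First I would prove the tree bound. For integers $1\le m<\ell$ one has the exact identity
$$\rho^+_\ell(\min l\le m)=1-\frac{w_{\ell-m}}{w_\ell},$$
obtained by writing, for trees in $\LT^+_\ell$, $\{\min l\le m\}=\{\min l\ge 1\}\setminus\{\min l\ge m+1\}$, by computing $\rho_\ell(\{\min l\ge 1\})=\rho_\ell(\LT^+_\ell)=w_\ell/2$ and $\rho_\ell(\{\min l\ge m+1\})=\rho_{\ell-m}(\LT^+_{\ell-m})=w_{\ell-m}/2$ (shift all labels down by $m$), and by using $\rho^+_\ell=\tfrac{2}{w_\ell}\,\rho_\ell(\,\cdot\cap\LT^+_\ell)$. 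Since $w_n=2-\tfrac{4}{(n+1)(n+2)}$, a short computation gives $w_\ell-w_{\ell-m}=O(m\,\ell^{-3})$ as $\ell\to\infty$, while $w_\ell\ge w_1=\tfrac43$; hence there is a constant $C(m)$ with $\rho^+_\ell(\min l\le m)\le C(m)\,\ell^{-3}$ for all $\ell\ge 2m$.

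Next I would introduce a good event: for $k\ge1$ and $p\ge1$, set
$$G_p^{(k)}=\big\{\,X^p_i\ge |i|^{1/2-a}\ \text{ for every }i\text{ with }k\le|i|\le p\,\big\},$$
where indices are read modulo $2p$ and $|i|=\min(i,2p-i)$ denotes cyclic distance. Splitting $\{i:k\le|i|\le p\}$ at $i=p$ and using the time-reversal invariance $(X^p_0,\dots,X^p_{2p-1})\eqdist(X^p_0,X^p_{2p-1},\dots,X^p_1)$ to turn the ``second half'' $\{p,\dots,2p-k\}$ into a copy of the ``first half'', Lemma~\ref{small roots} (with $\varepsilon=a$) yields $\lim_{k\to\infty}\sup_{p\ge1}\P\big((G_p^{(k)})^{c}\big)=0$.

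Finally I would glue the pieces together. Conditionally on $X^p$, the trees $T^p(i)$, $i\in\DS(X^p)$, are independent with $T^p(i)\sim\rho^+_{X^p_i}$, so $\P(L^p_i\le m\mid X^p)=\rho^+_{X^p_i}(\min l\le m)$. Take $k$ so large that $k^{1/2-a}\ge 2m$; then on $G_p^{(k)}$ every down-step $i$ with $|i|>k$ has $X^p_i\ge|i|^{1/2-a}\ge 2m$, so the tree bound gives $\rho^+_{X^p_i}(\min l\le m)\le C(m)|i|^{-(3/2-3a)}$ there, and a union bound yields
$$\P\Big(G_p^{(k)}\cap\big\{\exists\, i\in\DS(X^p),\ |i|>k:\ L^p_i\le m\big\}\Big)\le \E\Big[\mathbf{1}_{G_p^{(k)}}\!\!\sum_{\substack{i\in\DS(X^p)\\ |i|>k}}\!\!\rho^+_{X^p_i}(\min l\le m)\Big]\le 2\,C(m)\!\sum_{n>k}n^{-(3/2-3a)},$$
using that each value $n\in\{k+1,\dots,p\}$ of $|i|$ is attained by at most two indices $i$. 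As $\tfrac32-3a>1$, the tail of this convergent series tends to $0$ as $k\to\infty$, uniformly in $p$. Choosing $k$ with $k^{1/2-a}\ge 2m$ and with both this tail and $\sup_p\P((G_p^{(k)})^c)$ at most $\epsilon/2$, and observing that $\{\min_{i\in\DS(X^p)\setminus[-k,k]}L^p_i\le m\}=\{\exists\, i\in\DS(X^p),\ |i|>k:\ L^p_i\le m\}$, we obtain $\P(\min_{i\in\DS(X^p)\setminus[-k,k]}L^p_i\le m)\le\epsilon$ for every $p$, as claimed.
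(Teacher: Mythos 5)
Your proof is correct and follows essentially the same route as the paper's: Lemma~\ref{small roots} plus time-reversal symmetry to force $X^p_i\gtrsim |i|^{1/2-\varepsilon}$ away from the root, the exact identity $\rho^+_\ell(\min\le m)=1-w_{\ell-m}/w_\ell=O(\ell^{-3})$ via the label-shift bijection, and a first-moment/union bound summing a convergent series uniformly in $p$. The only (welcome) refinement is your explicit choice $a\in(0,1/6)$ guaranteeing the exponent $3/2-3a>1$, which makes the convergence of the series transparent.
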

 
 \begin{proof}
 Fix $m \geq 0$ and $ \varepsilon>0$. Throughout this proof we shall write the random bridge $X^p$ as $$(X_0^p, X_1^p,\ldots, X_p^p, X_{-(p-1)}^p, X_{-(p-2)}^p,\ldots X_{-1}^p)$$ and see $\DS(X^p)$ as a subset of $\{-(p-1), -(p-2),\ldots, 0, 1,\ldots p\}$. This way Lemma~\ref{small roots} (together with a symmetry argument) ensures that
 $$\lim_{k\to\infty}\sup_{p\geq 1}\P\left(\exists{i\in\DS(X^p)\setminus[-k,k]} : X^p_i\leq|i|^{1/2-\epsilon}\right)= 0.$$
 That is, we can assume $k$ is such that $\P\left(\exists {i\in\DS(X^p)\setminus[-k,k]} : X^p_i\leq |i|^{1/2-\epsilon}\right)\leq \epsilon/2$ for all $p$.
 
 Therefore we can write
  $$\P\left(\min_{i\in\DS(X^p)\setminus[-k,k]}L^p_i \leq m\right)\leq\epsilon/2+\P\left(\min_{i\in\DS(X^p)\setminus[-k,k]}L^p_i \leq m \bigm\vert \forall {i\in\DS(X^p)\setminus[-k,k]}, X^p_i>|i|^{1/2-\epsilon}\right).$$
  
Since the trees grafted on the bridge are conditionally independent given $(X_i^p)_{i\geq 0}$, the second term on the right hand side of the inequality is at most
$$\sum_{i\in\DS(X^p)\setminus[-k,k]}\P\left(L_i^p\leq m \bigm \vert X^p_i>|i|^{1/2-\epsilon}\right).$$

Let now $\LT_x^{>m}\subset \LT_x^+$ be the set of all positive labelled plane trees such that their root is labelled $x$ and the minimum label appearing in the tree is strictly greater than  $m$. For each index $i$ in $\DS(X^p)\setminus[-k,k]$, we have 
$$\P\left(L_i^p\leq m \bigm \vert X^p_i>|i|^{1/2-\epsilon}\right)\leq \sup_{x>|i|^{1/2-\epsilon}}\P\left(L_i^p\leq m \bigm \vert X^p_i=x\right)= \sup_{x>|i|^{1/2-\epsilon}}1-\rho^+_{x}(\LT^{>m}_{x}).$$ Since $m$ is fixed, we may assume that $k>m^{1/(1/2-\epsilon)}$; we know that $|i|>k$, so we may assume that the supremum is taken over values of $x$ greater than $m$. If $x>m$, it's quite clear that there is a bijection between $\LT_x^{>m}$ and $\LT^+_{x-m}$ (given by simply subtracting $m$ from all labels), hence $\rho_x^+(\LT_x^{>m})=\frac{w_{x-m}}{w_x}$. This shows that the probability $\P\left(L_i^p\leq m \bigm \vert X^p_i=x\right) $ is $$1-\frac{w_{x-m}}{w_x}=1-\frac{(x-m)(x-m+3)(x+1)(x+2)}{x(x+3)(x-m+1)(x-m+2)}$$ which is 
less than $Cx^{-3}$ for some constant $C$ only depending on $m$.

As a consequence,
$$ \sup_{x>|i|^{1/2-\epsilon}}\P\left(L_i^p\leq m \bigm \vert X^p_i=x\right)\leq \P\left(L_i^p\leq m \bigm \vert X^p_i=\lceil |i|^{1/2-\epsilon}\rceil \right)<C|i|^{-3/2-3\epsilon}$$
for some constant $C$ (only depending on $m$). Now, summing over $i$ in $\DS(X^p)\setminus[-k,k]$, one gets
$$\sum_{i\in\DS(X^p)\setminus[-k,k]}\P\left(L_i^p\leq m \bigm \vert X^p_i>|i|^{-1/2-\epsilon}\right)<2\sum_{i=k}^pCi^{-3/2-3\epsilon}<2C\sum_{i=k}^\infty i^{-3/2-3\epsilon}.$$
Since the last sum is infinitesimal for $k\to\infty$ and does not depend on $p$, we can choose $k$ such that the original expression
$$\P\left(\min_{i\in\DS(X^p)\setminus[-k,k]}L^p_i \leq m \bigm\vert \forall {i\in\DS(X^p)\setminus[-k,k]}\;\; X^p_i>|i|^{1/2-\epsilon}\right)$$
is at most $\epsilon/2$ for all $p$, and thus establish the proposition.
 \end{proof}
 
 Finally, we shall give the last ingredients needed in order to incorporate our estimates into a proof of Theorem~\ref{new construction}. The following lemma (analogous to~\cite[Proposition 4]{Men08}) gives the property of $\Phi$ that will act as a surrogate for continuity; immediately after, we give a corollary of Proposition~\ref{small labels} that relates Lemma~\ref{kinda continuity} to the previous estimates for the growth of labels.
 
 \begin{lem}\label{kinda continuity}Let $\TB(k,r)$ the set of all (finite or infinite) positive treed bridges $B$ such that the minimum label appearing on vertices in $B\setminus [B]_k$ is at least $r+2$. Then for any pair of treed bridges $B$, $B'$ in $\TB(k,r)$, the equality $[B]_k=[B']_k$ implies $[\Phi(B)]_r=[\Phi(B')]_r$.
 \end{lem}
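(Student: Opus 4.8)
The plan is to pin down exactly which part of a positive treed bridge $B$ is needed to reconstruct the combinatorial ball $[\Phi(B)]_r$, and to check that, under the small-label hypothesis defining $\TB(k,r)$, this part is contained in $[B]_k$. I will use the description of $\Phi$ from Remark~\ref{premark} together with its extension in Section~\ref{infinite construction}: the non-root vertices of $\Phi(B)$ are the real vertices of $B$, the label of a real vertex equals its graph distance to the root vertex $\rho$ of $\Phi(B)$, and every edge of $\Phi(B)$ joins a real corner $c$ with $l(c)\ge1$ to its successor $s(c)$, where $l(s(c))=l(c)-1$ and $s(c)$ is read along the left-to-right contour of the upper face of $B$ (with $s(c)=\rho$ when $l(c)=1$). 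From this I read off that the vertex set of $[\Phi(B)]_r$ is $\{\rho\}$ together with the real vertices of label at most $r$, and that its edge set is $\{\,\{c,s(c)\}\ :\ c\ \text{a real corner of}\ B,\ 1\le l(c)\le r\,\}$ — an edge $\{c,s(c)\}$ joins vertices at distances $l(c)$ and $l(c)-1$, so it lies in the ball of radius $r$ precisely when $l(c)\le r$. The task then reduces to showing that each of these ingredients is a function of $[B]_k$ alone; given that, identifying $[B]_k$ with $[B']_k$ yields $[\Phi(B)]_r=[\Phi(B')]_r$.

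For the vertex set this will be immediate: a real vertex of label $\le r<r+2$ cannot belong to $B\setminus[B]_k$, all of whose vertices carry label $\ge r+2$ by hypothesis, so it lies in $[B]_k$; and $\rho$, being read off the root of the treed bridge, together with the distances (i.e.\ the labels), is part of the data of $[B]_k$. The heart of the matter — and the step I expect to be the main obstacle, since a priori the star of a vertex of $B$ need not be recoverable from $[B]_k$ — is the following observation: every vertex of $[B]_k$ that has a $B$-neighbour outside $[B]_k$ must carry label $\ge r+1$, because that neighbour has label $\ge r+2$ and labels along a treed bridge vary by at most $1$. Since $B\setminus[B]_k$ is a disjoint union of pendant sub-treed-bridges, each attached to $[B]_k$ at a single such ``boundary'' vertex (the portions of the bridge of index outside $[-k,k]$, with their trees, hang off the bridge vertices of index $\pm k$, and the erased part of each grafted tree $T(j)$ with $|j|\le k$ splits into subtrees hanging off the depth-$k$ vertices of $T(j)$), this will give that every real vertex $v$ with $l(v)\le r$ is interior to $[B]_k$: its incident edges, its corners, and the cyclic order of edges around it coincide in $B$ and in $[B]_k$, and every real corner of $B$ of label $\le r$ is a real corner of $[B]_k$ with the same label.

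It then remains to compute successors. Fix a real corner $c$ of $B$ with $1\le l(c)\le r$; then $s(c)$ is the first real corner after $c$ along the contour of $B$ bearing the label $l(c)-1$ (or, in the wrap-around case of the infinite positive construction, the first such corner overall, and $s(c)=\rho$ when $l(c)=1$). Since $l(c)-1\le r-1<r+2$, no corner visited while the contour excursions into a pendant piece of $B\setminus[B]_k$ bears the label $l(c)-1$, so these excursions can be suppressed without changing $s(c)$; combined with the previous step — which ensures that $c$ and all candidate target corners are interior corners, hence that their relative contour order survives the suppression — this shows that $s(c)$ is determined by the contour of $[B]_k$, the labels on it, and the position of $c$ on it. Hence $s(c)$ is the same corner of $[B]_k=[B']_k$ in $B$ and in $B'$, so $\{c,s(c)\}$ is the same edge of $\Phi(B)$ and of $\Phi(B')$; together with the coincidence of the vertex sets, of the roots, and of the (locally prescribed, planar) way the Schaeffer construction draws these successor edges, this gives $[\Phi(B)]_r=[\Phi(B')]_r$ as rooted maps. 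The overall structure mirrors the proof of~\cite[Proposition 4]{Men08}; the points requiring extra care beyond that reference are precisely the boundary-vertex observation above and the degenerate wrap-around successors specific to the infinite positive construction.
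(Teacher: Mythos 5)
Your proposal is correct and follows essentially the same route as the paper's proof: identify $[\Phi(B)]_r$ as determined by the contour subsequence of real corners with labels at most $r$, and observe that under the $\TB(k,r)$ hypothesis all such corners, their labels and their relative contour order survive in $[B]_k$, since only vertices of label at least $r+1$ can border the erased pendant pieces. Your extra care about full stars of small-label vertices and about the wrap-around successors of the infinite positive construction is a welcome elaboration of what the paper states more tersely, but it is the same argument.
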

 
 \begin{proof}
 Suppose $B$ is a treed bridge in $\TB(k,r)$; the ball $[\Phi(B)]_r$ is the submap of $\Phi(B)$ spanned by vertices of $\Phi(B)$ having label at most $r$ in $B$ (its edges are those that the construction $\Phi$ draws between such vertices). Notice that, if $(c_i)_{i\in{I}}$ is the sequence of real corners of $B$, ordered according to the left-to-right contour of the upper face, then $[\Phi(B)]_r$ is determined by the subsequence $(c_{i_j})_{j=1}^N$ of corners bearing labels not exceeding $r$. Such a subsequence is in turn determined by $[B]_k$ if $B$ is in $\TB(k,r)$, since all real vertices bearing label $k$ or smaller and all edges of the treed bridge involving such vertices (whose endpoints, by definition of a treed bridge, have to bear labels no greater than $k+1$) are not erased in the construction of $[B]_k$; hence corners labelled $k$ or less (along with their ordering) can be recovered from $[B]_k$ alone. As a consequence, $[B]_k=[B']_k$ implies $[\Phi(B)]_r=[\Phi(B')]_r$ for any pair of treed bridges $B,B'$ in $\TB(k,r)$.
 \end{proof}

 \begin{cor}\label{likely kinda continuity}Given $r\geq 0$ and $\epsilon>0$, there is an integer $k>0$ such that, for each $p$ in $\mathbb{Z}^+\cup \{\infty\}$, we have $\P(\B_p\notin\TB(k,r))\leq \epsilon$. \end{cor}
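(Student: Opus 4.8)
The plan is a union bound over the three kinds of vertex of $\B_p$ that can fail to lie in $[\B_p]_k$: a bridge vertex $X^p_i$ with $|i|>k$ (type a); any vertex of a tree $T^p(j)$ grafted at a down-step $j$ with $|j|>k$ (type b); or a vertex of a tree $T^p(j)$ grafted at a down-step $j$ with $|j|\le k$ but lying at tree-height $>k$ inside $T^p(j)$ (type c). Accordingly $\{\B_p\notin\TB(k,r)\}$ is contained in the union of the events ``some vertex of type a/b/c carries a label $\le r+1$'', and it suffices to bound each of these by $\epsilon/3$ for $k$ large, uniformly in $p\in\mathbb{Z}^+\cup\{\infty\}$.

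Type a is immediate: once $k^{1/2-\varepsilon}>r+1$, a bridge vertex labelled $\le r+1$ at an index $|i|>k$ forces $X^p_i<|i|^{1/2-\varepsilon}$, so Lemma~\ref{small roots} (together with the time-reversal symmetry of the bridge, to handle negative indices) bounds this probability uniformly in finite $p$, while Proposition~\ref{prop:transience} covers $p=\infty$. Type b is exactly Proposition~\ref{small labels} with $m=r+1$: for $|j|>k$ the whole tree $T^p(j)$ lies outside $[\B_p]_k$, so we need its minimum label $L^p_j$ to exceed $r+1$, which the proposition gives uniformly in $p$.

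The one genuinely new point is type c, where $|j|\le k$ so $X^p_j$ is only of order $1$ and $\P(L^p_j\le r+1)$ need not be small; the saving grace is that only the part of $T^p(j)$ at tree-height $>k$ lies outside $[\B_p]_k$. I will split the window $[-k,k]$ at $\lfloor\sqrt k\rfloor$. For $\lfloor\sqrt k\rfloor<|j|\le k$, I weaken ``some deep vertex of $T^p(j)$ is small'' to ``some vertex of $T^p(j)$ is small'', so that these indices --- \emph{together} with those of type b --- are controlled by Proposition~\ref{small labels} applied with the threshold $\lfloor\sqrt k\rfloor$ in place of $k$. For $|j|\le\lfloor\sqrt k\rfloor$ there are at most $2\lfloor\sqrt k\rfloor+1$ trees, and the probability that $T^p(j)$ has height $>k$ is at most $\sup_{x\ge 1}\P_{\rho^+_x}(\mathrm{height}>k)$; since $w_n\le 2$ for all $n$, the offspring law of $\rho^+_x$ at every vertex is stochastically dominated by a geometric law of parameter $1/2$, so $T^p(j)$ is dominated by a critical $\rho_0$-tree and $\sup_{x\ge 1}\P_{\rho^+_x}(\mathrm{height}>k)=O(1/k)$ by Kolmogorov's classical estimate; the resulting contribution is $O(k^{-1/2})\to 0$, uniformly in $p$. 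Summing the three bounds gives the corollary for finite $p$; since each of the events above is open for the local topology, Proposition~\ref{limitinp} and the portmanteau theorem transfer the bound to $p=\infty$ (alternatively, one checks that $\B_\infty\in\TB^+$ almost surely, by transience of $X$ and Borel--Cantelli, so that $\TB(k,r)$ increases to a set of full $\B_\infty$-measure).

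The main obstacle is exactly type c: the naive bound for trees grafted near the root does not decay, and one must observe both that only their deep parts are relevant (turning the question into a height-tail estimate) and that there are only $O(\sqrt k)$ such trees, so the $O(1/k)$ height tail is enough. Everything else is bookkeeping layered on Lemma~\ref{small roots}, Proposition~\ref{prop:transience} and Proposition~\ref{small labels}.
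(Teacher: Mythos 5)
Your proof is correct, and it actually supplies more than the paper does: the paper states that the corollary is ``straightforward from Proposition~\ref{small labels}'' and defers the details to M\'enard and the thesis, so there is no written argument to compare against line by line. The part of your write-up that goes genuinely beyond the cited ingredients is exactly your type~c: Proposition~\ref{small labels} says nothing about vertices at tree-height $>k$ in the trees grafted at down-steps \emph{inside} $[-k,k]$, and these vertices do lie in $\B_p\setminus[\B_p]_k$ by the definition of the local truncation, while their root labels are of order $1$ so the $w_{x-m}/w_x$ bound gives nothing there. Your fix --- split the window at $\lfloor\sqrt k\rfloor$, absorb the outer part into Proposition~\ref{small labels} (using that the bad event is monotone in the threshold), and kill the $O(\sqrt k)$ remaining trees with a uniform height-tail bound --- is sound: since $w_l<2$, the offspring law of the multi-type Galton--Watson description of $\rho^+_l$ is dominated, uniformly in $l$, by the mean-one geometric law, so $\sup_x\P_{\rho^+_x}(\mathrm{height}>k)=O(1/k)$ by Kolmogorov's estimate (alternatively, $\rho^+_x(\mathrm{height}>k)\le \tfrac{2}{w_x}\,\rho_0(\mathrm{height}>k)$ gives the same bound directly), and the crucial point that this holds conditionally on the bridge, uniformly in the root label, makes the union bound $O(k^{-1/2})$ uniform in $p$. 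Types a and b are handled as in the paper's own proof of Proposition~\ref{small labels} (Lemma~\ref{small roots} plus time-reversal, Proposition~\ref{prop:transience} for $p=\infty$), and your transfer to $p=\infty$ is legitimate: the complement of $\TB(k,r)$ is a union of cylinder events, hence open for $\mathrm{d_{loc}}$, so the open-set portmanteau inequality applied to Proposition~\ref{limitinp} gives $\P(\B_\infty\notin\TB(k,r))\le\liminf_p\P(\B_p\notin\TB(k,r))\le\epsilon$ for the same $k$ (your alternative via $\B_\infty\in\TB^+$ a.s.\ also works). In short: correct, and it fills in precisely the step the paper's one-line justification glosses over.
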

 
 The proof of this corollary is straightforward from Proposition~\ref{small labels} and left to the reader: we refer to~\cite{Men08} and~\cite{thesis} for further details. We now proceed to show Theorem~\ref{new construction}.

\begin{proof}[Proof of Theorem~\ref{new construction}]
We shall show that, for each $r>0$ and for each $q$, where $q=[Q]_r$ for some rooted quadrangulation $Q$ with an infinite boundary,
$$\lim_{p\to\infty}\P\left([\Phi(\B_p)]_r=q\right)=\P\left([\Phi(\B_\infty)]_r=q\right);$$
this will prove that the law of the ball $[\Phi(\B_\infty)]_r$ is the limit in $p$ of the laws of $[\Phi(\B_p)]_r$ for every $r\geq 0$, hence that $\Phi(\B_\infty)$ is the local limit of the random quadrangulations $\Phi(\B_p)$, that is to say the Boltzmann quadrangulations $\BQ_p$, for $p\to\infty$. By Proposition~\ref{boltzmann convergence}, this implies that $\Phi(\B_\infty)$ is indeed distributed as the UIHPQ $\UIHPQ$.

Fix $\epsilon>0$; by Corollary~\ref{likely kinda continuity}, we can find $k$ such that for all $p$ we have $\P(\B_p\notin\TB(k,r))\leq\epsilon$, and also $\P(\B_\infty\notin\TB(k,r))\leq\epsilon$. Consider the set $\mathsf{A}$ of bridges $B'$ in $\TB(k,r)$ such that $[\Phi(B')]_r=q$, and define the set $\mathsf{A}_k$ to be the set of maps $[B']_k$ for $B'$ in $\mathsf{A}$; by Lemma~\ref{kinda continuity} a bridge $B''$ in $\TB(k,r)$ is such that $[\Phi(B'')]_r=q$ if and only if $[B'']_k\in \mathsf{A}_k$.
Now consider
$$\left|\lim_{p\to\infty}\P\left([\Phi(\B_p)]_r=q\right)-\P\left([\Phi(\B_\infty)]_r=q\right)\right|;$$
by taking intersections with the events $\B_p\in \TB(k,r)$ and $\B_\infty\in \TB(k,r)$, whose complements have probability at most $\epsilon$, the above can be bounded by
$$2\epsilon+\left|\lim_{p\to\infty}\P\left([\Phi(\B_p)]_r=q\mbox{ and } \B_p\in \LT(k,r)\right)
-\P\left([\Phi(\B_\infty)]_r=q\mbox{ and } \B_\infty\in \LT(k,r)\right)\right|$$
that is to say
$$2\epsilon+\left|\lim_{p\to\infty}\P\left([\B_p]_k\in \mathsf{A}_k\mbox{ and } \B_p\in \LT(k,r)\right)
-\P\left([\B_\infty]_k\in \mathsf{A}_k \mbox{ and } \B_\infty\in \LT(k,r)\right)\right|.$$
Up to losing another $2\epsilon$, one can now add terms of the kind $\P\left([\B_p]_k\in \mathsf{A}_k\mbox{ and } \B_p\notin \LT(k,r)\right)$ and $\P\left([\B_\infty]_k\in \mathsf{A}_k\mbox{ and } \B_p\notin \LT(k,r)\right)$ within the absolute value, thus finally bounding the quantity in question by
$$4\epsilon+\left|\lim_{p\to\infty}\P\left([\B_p]_k\in \mathsf{A}_k\right)-\P\left([\B_\infty]_k\in \mathsf{A}_k \right)\right|,$$
which is $4\epsilon$ by Proposition~\ref{limitinp} (since the law of $[\B_\infty]_k$ is the limit of the laws of $[\B_p]_k$ when $p$ is sent to infinity). Since $\epsilon$ is arbitrary, we have shown the initial desired equality.
\end{proof}

\section{A study of geodesic rays in $\UIHPQ$}\label{study of geodesic rays}

Recall that a geodesic $\gamma$ in a planar map is a path, finite or infinite, that visits a sequence of vertices $(\gamma(i))_{i\in I}$, where $I$ may be $\mathbb{Z}$, $\mathbb{N}$ or the set $\{0,1,\ldots,n\}$ (in which case we say the geodesic has length~$n$), such that for each $i,j\in I$ we have $\mathrm{d_{gr}}(\gamma(i),\gamma(j))=|i-j|$. Notice that, since the maps we consider are not necessarily simple and since $\gamma$ is formally seen as a sequence of concatenated edges, it is \emph{not} determined by the sequence $(\gamma(i))_{i\in I}$ of the vertices it visits.

Given a vertex $x_0$ in an infinite map we call a geodesic $\gamma$ a \emph{geodesic ray issued from $x_0$} if it is one-ended and starts with $x_0$ (that is, if it visits a sequence of vertices $(\gamma(i))_{i\in\mathbb{N}}$, with $\gamma(0)=x_0$). In a rooted infinite map, we simply say ``geodesic ray'' for geodesic rays issued from the root vertex, as all geodesic rays in Sections~\ref{decomposition} and~\ref{leftmost} will be.

We shall investigate the (random) set of geodesic rays in the UIHPQ $\UIHPQ$; in order to do this, given a quadrangulation with a boundary $q\in\sQ$ that is the image via $\Phi$ of a positive treed bridge $(b;T)$ in $\TB^+$, we reconstruct geodesics issued from the root vertex in $q$ as coded by $(b;T)$. A path in $q$ can be expressed in $(b;T)$ as a sequence of corners, since real corners of $(b;T)$ correspond to edges of $q=\Phi((b;T))$. Given a real corner $c$ in $(b;T)$, we shall write $s(c)$ for its successor (as defined in Section~\ref{infinite construction}) and $v(c)$ for its corresponding vertex (which, as remarked in Section \ref{pconstruction}, can be seen both as a vertex of $(b;T)$ and as a vertex of $q$). Thanks to the properties that $\Phi$ displays when applied to positive treed bridges, a sequence $(c_i)_{i\in I}$ of real corners in $(b;T)$ encodes a geodesic $\gamma$ issued from the root vertex in $q$ provided that:

\begin{itemize}
\item $I$ is either the set $\{1,\ldots, n\}$ (for a geodesic of length $n$) or $\mathbb{N}^+=\{1,2,3,\ldots\}$ (for a geodesic ray);
\item for each $i\in I$, we have $l(c_i)=i$ ;
\item for each $i\in I\setminus\{1\}$, we have $v(s(c_i))=v(c_{i-1})$.
\end{itemize}

The geodesic $\gamma$ is the path obtained by concatenating the edges drawn by $\Phi$ between each corner $c_i$ and its successor $s(c_i)$; in particular, the first edge of $\gamma$ joins corner $c_1$ to the corner around the added vertex $\delta$ (that is the root vertex in $\Phi((b;T))$). 

We shall make this identification implicitly in all that follows: we suppose that the UIHPQ is constructed as $\UIHPQ=\Phi(\B_\infty)$, thus reducing the problem of studying geodesic rays in the UIHPQ to that of investigating geodesic rays in the infinite random treed bridge $\B_\infty$.

\subsection{The pencil decomposition}\label{decomposition}
\begin{figure}\centering
\begin{tikzpicture}
\tikzset{every path/.style={thick}}
\tikzstyle{real}=[inner sep=1.5pt, fill=black, circle, draw=white, thick]
\tikzset{every node/.style={font=\scriptsize}}
\useasboundingbox (-6,-3.5) rectangle (6,1.5);
\contourlength{1pt}
\contournumber{100}
	\begin{pgfonlayer}{nodelayer}
		\node [style=real] (0) at (0, -2) {};
		\node [style=real] (1) at (1.5, -2) {};
		\node [style=real] (2) at (2.5, -2) {};
		\node [style=real] (3) at (3.5, -2) {};
		\node [style=real] (4) at (4, -2) {};
		\node [style=real] (5) at (4, 0) {};
		\node [style=real] (6) at (5, -2) {};
		\node [style=real] (7) at (-1.25, -2) {};
		\node [style=real] (8) at (-2, -2) {};
		\node [style=real] (9) at (-3, -2) {};
		\node [style=real] (10) at (-4, -2) {};
		\node [style=real] (11) at (-4.75, -2) {};
		\node [style=real] (12) at (0, -2.5) {};
		\node [style=real] (13) at (0, -3) {};
		\node [style=real] (14) at (-0.75, -2.5) {};
		\node [style=real] (15) at (-0.75, -3.25) {};
		\node [style=real] (16) at (1.25, -2.75) {};
		\node [style=real] (17) at (1.75, -2.75) {};
		\node [style=real] (18) at (1, -0.75) {};
		\node [style=real] (19) at (1.25, -1.25) {};
		\node [style=real] (20) at (0.25, -0.75) {};
		\node [style=real] (21) at (-0.5, -1.25) {};
		\node [style=real] (22) at (3, -1.75) {};
		\node [style=real] (23) at (1.75, -1.5) {};
		\node [style=none] (24) at (0, -1.65) {\contour{white}0};
		\node [style=none] (25) at (1, -0.4) {\contour{white}1};
		\node [style=none] (26) at (1.2, -2.3) {\contour{white}1};
		\node [style=none] (27) at (-0.5, -0.95) {\contour{white}1};
		\node [style=none] (28) at (1.2, -1.6) {\contour{white}2};
		\node [style=none] (29) at (2.5, -2.3) {\contour{white}2};
		\node [style=real] (30) at (2, 0.25) {};
		\node [style=real] (31) at (3, 0) {};
		\node [style=real] (32) at (2.5, 0.25) {};
		\node [style=real] (33) at (4.25, -1.25) {};
		\node [style=none] (34) at (4, -2.3) {\contour{white}2};
		\node [style=none] (35) at (3.9, -0.3) {\contour{white}2};
		\node [style=real] (36) at (-2.5, -1.75) {};
		\node [style=real] (37) at (-3.25, -0.5) {};
		\node [style=real] (38) at (-1.25, 0) {};
		\node [style=real] (39) at (-0.25, 0) {};
		\node [style=real] (40) at (-4, -2.75) {};
		\node [style=real] (41) at (-3, -2.75) {};
		\node [style=real] (42) at (-3, -3.5) {};
		\node [style=real] (43) at (-3, -3) {};
		\node [style=real] (44) at (-2.5, -2.5) {};
		\node [style=real] (45) at (-2.25, 0.25) {};
		\node [style=none] (46) at (-1.25, -2.3) {\contour{white}1};
		\node [style=none] (47) at (-0.9, -2.5) {\contour{white}1};
		\node [style=none] (48) at (0.15, -2.5) {\contour{white}1};
		\node [style=none] (49) at (0, -3.3) {\contour{white}1};
		\node [style=none] (50) at (0.25, -1.05) {\contour{white}2};
		\node [style=none] (51) at (-0.75, -3.6) {\contour{white}2};
		\node [style=none] (52) at (-2, -2.3) {\contour{white}2};
		\node [style=none] (53) at (1.1, -3) {\contour{white}2};
		\node [style=none] (54) at (1.9, -3) {\contour{white}2};
		\node [style=none] (55) at (2, -0.1) {\contour{white}2};
		\node [style=none] (56) at (-4.15, -2.3) {\contour{white}2};
		\node [style=none] (66) at (-1.25, 0.35) {\contour{white}2};
		\node [style=none] (66) at (1.5, -1.4) {\contour{white}2};
		\node [style=none] (57) at (-4.15, -3) {\contour{white}3};
		\node [style=none] (58) at (-3.2, -0.8) {\contour{white}3};
		\node [style=none] (59) at (-3.25, -2.3) {\contour{white}3};
		\node [style=none] (60) at (-2.75, -1.75) {\contour{white}3};
		\node [style=none] (61) at (-2.5, 0.25) {\contour{white}3};
		\node [style=none] (62) at (-0.30, -0.25) {\contour{white}3};
		\node [style=none] (63) at (3.5, -2.3) {\contour{white}3};
		\node [style=none] (64) at (5, -2.3) {\contour{white}3};
		\node [style=none] (65) at (2.9, -0.25) {\contour{white}3};
		\node [style=none] (65) at (2.8, -1.8) {\contour{white}3};
		\node [style=none] (67) at (-3.25, -2.75) {\contour{white}4};
		\node [style=none] (68) at (-2.35, -2.75) {\contour{white}4};
		\node [style=none] (69) at (4, -1.25) {\contour{white}4};
		\node [style=none] (70) at (2.45, 0.5) {\contour{white}4};
		\node [style=none] (71) at (-3, -3.25) {\contour{white}5};
		\node [style=none] (71) at (-3, -3.75) {\contour{white}5};
		\node [style=real] (72) at (-4.5, -1.25) {};
		\node [style=none] (73) at (-4.25, -1.25) {\contour{white}3};
		\contourlength{2px}
		\node [style=none] (74) at (-5.25, -0.75) {\small\contour{white}{{\color{blue}$\gamma^{\mathrm{left}}$}}};
		\node [style=none] (75) at (-4.75, -2.3) {3};
		\node [style=none, fill=white] (76) at (-5.75, -2) {\small{$\ldots$}};
		\node [style=none] (77) at (5.75, -1.25) {\small\contour{white}{\color{green!50!black}$\gamma^{\mathrm{right}}$}};
		\node [style=none, fill=white] (78) at (5.75, -2) {\small{$\ldots$}};
		\node [style=none] (79) at (-4, -0.5) {};
		\node [style=none] (80) at (-1, 0.75) {};
		\node [style=none] (81) at (-0.25, 0.5) {};
		\node [style=none] (82) at (4.5, 0.25) {};
		\node [style=real] (83) at (0.5, 0.25) {};
		\contourlength{1px}
		\node [style=none] (84) at (0.7, 0.4) {\contour{white}2};
	\end{pgfonlayer}
	\begin{pgfonlayer}{edgelayer}
	\fill[orange!10] (74.center) to (72.center) to (10.center) [in=135, out=45] to (21.center) [bend right=0] to (0.center) to (18.center) [bend left=45] to (4.center) [bend left=0] to (6.center) to (77.center) [in=90] to (82.center) [out=-60, in=180] to (80.center) [out=-80, in=-40] to (79.center) [in=-20, out=80] to (74.center);
		\fill[blue!10] (0.center) to (21.center) [out=135, in=45] to (10.center) to [bend left=0] (0.center);
		\fill[blue!10, bend left=60, looseness=1.4] (41.center) to (42.center) [bend left=60, looseness=1.4] to (41.center);
		\fill[blue!10] (76.center) [bend left=20] to (74.center) [bend right=0] to (72.center) to (10.center) to (11.center) to (76.center);
		\fill [green!50!black!10] (0.center) to (18.center) [bend left=45] to (4.center) [bend left=0] to (0.center);
		\fill [green!50!black!10, bend right=45, looseness=1.3] (0.center) to (13.center) [bend right=45, looseness=1.3] to (0.center);
		\fill [green!50!black!10] (6.center) to (77.center) to (78.center) to (6.center);
		
		\draw (0) to (14);
		\draw [style=root, bend right=45, looseness=1.25] (0) to (13);
		\draw (12) to (0);
		\draw [bend right=45, looseness=1.25] (13) to (0);
		\draw (7) to (0);
		\draw (0) to (1);
		\draw (8) to (7);
		\draw [bend left=90, looseness=2.00] (9) to (8);
		\draw (10) to (9);
		\draw (15) to (14);
		\draw (16) to (1);
		\draw (1) to (17);
		\draw (1) to (2);
		\draw [bend left=75, looseness=2.00] (2) to (3);
		\draw [style=georight] (18) to (0);
		\draw (20) to (18);
		\draw [style=geoleft] (0) to (21);
		\draw (21) to (20);
		\draw (18) to (19);
		\draw [bend left=90, looseness=2.50] (19) to (1);
		\draw [in=90, out=24, looseness=1.25] (18) to (2);
		\draw (22) to (3);
		\draw (2) to (3);
		\draw (23) to (1);
		\draw (19) to (1);
		\draw (3) to (4);
		\draw (32) to (31);
		\draw (18) to (30);
		\draw (31) to (4);
		\draw [bend right=15] (30) to (31);
		\draw [bend right=60, looseness=1.25] (31) to (30);
		\draw (33) to (31);
		\draw (33) to (6);
		\draw (5) to (6);
		\draw [bend left=75, looseness=1.50] (18) to (5);
		\draw (21) to (8);
		\draw [style=geoleft, in=135, out=45] (10) to (21);
		\draw (36) to (8);
		\draw (9) to (8);
		\draw (10) to (37);
		\draw (38) to (21);
		\draw [bend left=75, looseness=1.50] (37) to (38);
		\draw (38) to (39);
		\draw (39) to (20);
		\draw (10) to (11);
		\draw (10) to (40);
		\draw [bend right=60, looseness=1.25] (42) to (41);
		\draw (9) to (41);
		\draw [bend left=60, looseness=1.25] (42) to (41);
		\draw (43) to (41);
		\draw (44) to (9);
		\draw (45) to (38);
		\draw (37) to (38);
		\draw (31) to (5);
		\draw (11) to (76.center);
		\draw (6) to (78.center);
		\draw (37) to (79.center);
		\draw (38) to (80.center);
		\draw (5) to (82.center);
		\draw (81.center) to (39);
		\draw (39) to (83);
		\draw (83) to (18);
		\draw [style=georight] (6) to (4);
		\draw [style=geoleft] (72) to (10);
		\draw [style=geoleft] (72) to (74.center);
		\draw [style=georight] (6) to (77.center);
		\draw [style=georight, bend left=45] (18) to (4);
	\end{pgfonlayer}
\end{tikzpicture}
\caption{\label{geodesic rays} \small A neighbourhood of the root vertex in a quadrangulation with an infinite boundary, with its leftmost and rightmost geodesic rays. Any geodesic ray is contained in the orange zone between them. Notice that in this case the root vertex belongs to the infinite ``core''; for a more general situation, see Figure~\ref{intro}.}
\end{figure}
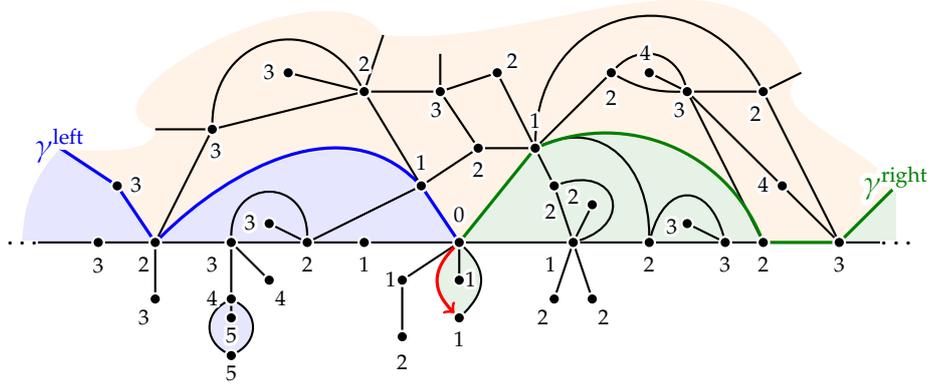

Given two geodesic rays $\gamma, \gamma'$ in a rooted (infinite) quadrangulation with a boundary $q\in\sQ$ we write $\gamma\preceq\gamma'$ if $\gamma$ lies \emph{to the left} of $\gamma'$; that is, if $\gamma$ lies within the quadrangulation $q|{\gamma'}$ obtained from $q$ by erasing all vertices that lie strictly to the right of $\gamma'$ and all edges involving such vertices; equivalently, $\gamma\preceq\gamma'$ if $\gamma'$ lies within $\gamma|q$, the quadrangulation obtained by erasing all vertices and edges of $q$ lying strictly to the left of $\gamma$. Notice it is easy to prove existence and uniqueness of maximal and minimal elements for the partial order $\succ$: we call such elements the \emph{rightmost} and \emph{leftmost} geodesic rays, denoted by $\gamma^{\mathrm{right}}$ and $\gamma^{\mathrm{left}}$ (where dependence on $q$ is implicit). Given any treed bridge $(b;T)\in\TB^+_\infty$, we shall speak of $\gamma^{\mathrm{right}}$ and $\gamma^{\mathrm{left}}$ in $(b;T)$ (as the sequences of corners corresponding to the rightmost and leftmost geodesic rays of $\Phi((b;T))$).

In this section we shall investigate the three random quadrangulations with a boundary one obtains by ``cutting up'' the UIHPQ along its leftmost and rightmost geodesic rays.

To start with, we shall give a description of the rightmost geodesic ray $\gamma^{\mathrm{right}}$ in an infinite positive treed bridge from $\TB^+_\infty$.

\begin{lem}The rightmost geodesic ray $\gamma^{\mathrm{right}}$ in a treed bridge $(b;T)\in\TB^+_\infty$ is the sequence of corners $(c^r_i)_{i\in\mathbb{N}^+}$ such that for each $i>0$  corner $c^r_i$ is the \emph{leftmost} real corner labelled $i$ to be found in $(b;T)$ (which is well defined since each label appears a finite number of times in $(b;T)$).\end{lem}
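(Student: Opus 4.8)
The plan is to verify that the corner sequence $(c^r_i)_{i\geq 1}$ satisfies the three conditions recalled above that characterise the codings of geodesic rays issued from the root, and then to check that the geodesic ray it codes is the maximal one for the order $\preceq$. Note first that $c^r_i$ is well defined for every $i\geq 1$ precisely because $(b;T)\in\TB^+_\infty$ forces each label to occur only finitely often; moreover every positive integer does occur as a real label, since for a real corner $c$ of arbitrarily large label $h$ the successor chain $c,s(c),s(s(c)),\ldots,\delta$ runs through all intermediate labels $h,h-1,\ldots,1$. Hence $(c^r_i)_{i\geq1}$ is genuinely an infinite (one-ended) sequence, and once the coding conditions are checked it codes a bona fide geodesic ray $\gamma^r$.

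The conditions $l(c^r_i)=i$ hold by construction, and for $i=1$ the condition $v(s(c^r_1))=\delta$ holds because, by the definition of $\Phi$ on $\TB^+_\infty$ (Section~\ref{infinite construction}), the corner around $\delta$ is declared to be the successor of \emph{every} real corner labelled $1$. The substantive point is the relation $v(s(c^r_i))=v(c^r_{i-1})$ for $i\geq 2$, which I would deduce from the following claim: \emph{for every $h\geq 2$, the successor of the leftmost real corner labelled $h$ is incident to the same vertex as the leftmost real corner labelled $h-1$}. When no real corner labelled $h-1$ occurs strictly to the right of $c^r_h$ in the contour order, the definition of the successor makes $s(c^r_h)$ literally equal to $c^r_{h-1}$ and there is nothing to prove; otherwise $s(c^r_h)$ is the first such corner, and I would identify its vertex with $v(c^r_{h-1})$ using the standard feature of Schaeffer-type constructions that along the portion of contour running from a corner $c$ to its successor $s(c)$ one only meets corners of label $\geq l(c)$ --- applied here with some care, since one reasons with real corners while the phantom vertices get erased --- together with the fact that the edges $\{c,s(c)\}$ drawn by $\Phi$ form a non-crossing family in the upper face.

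To see that $\gamma^r$ is maximal, let $\gamma'=(c'_i)_{i\geq1}$ be any geodesic ray. For each $i$, $c'_i$ is a real corner labelled $i$, hence lies weakly to the right of the \emph{leftmost} real corner $c^r_i$ labelled $i$ in the contour order, for every $i$ at once. I would then invoke planarity of the non-crossing family of successor edges to upgrade this level-by-level domination in the contour order into the domination $\gamma'\preceq\gamma^r$ of the corresponding paths in the embedded map, the orientation being reversed, so that ``leftmost in the contour'' produces ``rightmost among geodesics''. Since the text grants existence and uniqueness of a maximal geodesic ray, exhibiting the one coded by $(c^r_i)$ identifies it with $\gamma^{\mathrm{right}}$.

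The delicate point --- the one I expect to be the main obstacle --- is exactly this orientation-reversing dictionary between the contour order of the coding corners and the left/right position of geodesics in the embedded map, i.e.\ the precise planar argument explaining why leftmost coding corners yield the rightmost geodesic. It is cleanest when the root vertex lies in the infinite core; in the general situation (part of the boundary screened off by finite quadrangulations, as in Figure~\ref{intro}) and in the presence of wrap-around successors or of the one-ended corner sequences allowed by $\TB^+_\infty$, the argument has to be set up so as to reduce to the core case.
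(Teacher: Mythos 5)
Your outline has the right overall shape (first check that $(c^r_i)_{i\geq1}$ codes a geodesic ray, then check maximality), but both substantive steps are left open, and the first is set up in a way that cannot be completed as written. For the geodesic-ray property you split into cases according to whether some corner labelled $h-1$ occurs to the right of $c^r_h$, and in that case you hope to identify the vertex of $s(c^r_h)$ with $v(c^r_{h-1})$ by an appeal to ``standard features'' and non-crossing arcs. The fact you actually need -- and which you never state -- is that the leftmost corner labelled $h$ lies strictly to the \emph{left} of the leftmost corner labelled $h-1$. This follows because along the left-to-right contour of real corners the label can decrease by at most $1$ at each step (inside a tree labels vary by at most $1$, and after the last corner of the tree grafted at a down-step $i$, whose label is $x_i$, the next real corner is the root of the next grafted tree, whose label is at least $x_i-1$), while labels diverge far to the left since each value occurs finitely often; hence the first occurrence of $h$ precedes the first occurrence of $h-1$. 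With this, \emph{every} corner labelled $h-1$ lies to the right of $c^r_h$, so $s(c^r_h)$ is literally the corner $c^r_{h-1}$ and no vertex identification is needed; without it, the configuration you are implicitly allowing (a corner labelled $h-1$ to the left of $c^r_h$ and another to its right) would make your claimed identity $v(s(c^r_h))=v(c^r_{h-1})$ false, so no planarity argument can substitute for this observation.

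For maximality you explicitly defer ``the delicate point'', which is the heart of the lemma, so the proposal does not prove it; moreover, level-by-level domination in contour order plus non-crossing arcs is not enough on its own, because a corner of an arbitrary geodesic ray could a priori lie in a tree grafted to the \emph{right} of the root vertex, hence to the right of all the $c^r_i$, and your weak domination says nothing there. The paper first excludes this by a counting argument: the successor of a corner lying in a tree $T(j)$ with $j<0$ again lies in such a tree, so if some corner $c_k$ of a geodesic ray lay in $T(n_k)$ with $n_k>0$, all subsequent corners would be confined to $\bigcup_{i\in\DS(b)\cap[0,n_k]}T(i)$, which contains only finitely many corners -- a contradiction with the ray being infinite. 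Once all corners of a geodesic ray $\gamma$ are known to lie in left trees, each of its edges $(c,s(c))$ sits between two consecutive staircase corners $c^r_a$ and $c^r_{a-1}$, satisfies $l(c)\geq a$, and is therefore drawn below the arc $(c^r_a,c^r_{a-1})$ (using $s(c^r_a)=c^r_{a-1}$); this nesting is exactly the ``orientation-reversing dictionary'' you were looking for, and it is what converts the corner-order information into $\gamma\preceq\gamma^{\mathrm{right}}$. To complete your plan you would need to supply both ingredients: the confinement of geodesic-ray corners to the trees left of the root, and the nesting of successor arcs below the staircase $(c^r_a,c^r_{a-1})$.
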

\begin{proof}
The sequence of corners $(c^r_i)_{i\in\mathbb{N}^+}$ is indeed a geodesic ray, since for $i>0$ we have $l(c^r_i)=i$ by definition, and $c^r_i$ is the successor of $c^r_{i+1}$ (hence $v(s(c^r_{i+1}))=v(c^r_i)$).

Now consider any geodesic ray $\gamma=(c_i)_{i\in \mathbb{N}^+}$ in $(b;T)$; we shall see that all corners of $\gamma$ belong to trees lying left of the root vertex.

Notice that, for any real corner $c$ of $(b;T)$ such that $v(c)$ belongs to a tree $T(j)$ with $j<0$, $v(s(c))$ belongs to a tree $T(j')$, again with $j'<0$. Now suppose by contradiction that some corner $c_k$ in $\gamma=(c_i)_{i\in\mathbb{N}^+}$ belongs to a tree $T(n_k)$, with $n_k>0$; since we know that $v(s(c_{k+1}))=v(c_k)$, the vertex $v(c_{k+1})$ must belong to a tree $T(n_{k+1})$ with $0<n_{k+1}\leq n_k$; inductively, the same result applies to all corners $c_i$ with $i>k$. This, however, is in contradiction with the fact that there is only a finite number of corners whose vertices belong to $\cup_{i\in\DS(b)\cap [0,n_k]}T(i)$.

Consider now any edge $(c, s(c))$ of $\gamma$: $c$ belongs to a tree $T(i)$ with $i<0$, hence there must be two corners $c^r_a$ and $c^r_{a-1}$ in $(c^r_i)_{i\in\mathbb{N}^+}$, such that $c$ is between $c^r_a$ and $c^r_{a-1}$ in the (left-to-right) contour (and $c$ is not $c^r_{a-1}$); since $s(c^r_a)=c^r_{a-1}$ as remarked, $c$ must have label at least $a$, and $s(c)$ must be a corner between $c$ and $c^r_{a-1}$; as a consequence, the edge $(c, s(c))$ of $\gamma$ is drawn below the edge $(c^r_a,c^r_{a-1})$, and the geodesic ray $\gamma$ is contained in the portion of $\Phi((b;T))$ lying left of the geodesic $(c^r_i)_{i\in\mathbb{N^+}}$, which is therefore the rightmost geodesic ray.
\end{proof}

\begin{figure}
\centering
\begin{tikzpicture}[scale=.9]
\tikzset{every path/.style={thick}}
\tikzset{every node/.append style = {font = \footnotesize}}
\tikzstyle{real}=[inner sep=1.5pt, fill=black, circle, draw=white, thick]
\useasboundingbox (-7.5,-4) rectangle (6,3);
\node (qleft) at (-4, -0.5) {\color{blue}$q|\gamma^{\mathrm{left}}$};
\node (qright) at (4.5, -0.5) {\color{green!50!black}$\gamma^{\mathrm{right}}|q$};
\node (mid) at (-0.5, 3.5) {\color{orange}$\gamma^{\mathrm{left}}|q|\gamma^{\mathrm{right}}$};
	\begin{pgfonlayer}{nodelayer}
	\node [style=real] (0) at (0, -2) {};

		\node [style=real] (0) at (0, -2) {};
		\node [style=real] (1) at (1.5, -2) {};
		\node [style=real] (2) at (2.5, -2) {};
		\node [style=real] (3) at (3.5, -2) {};
		\node [style=real] (4) at (4, -2) {};
		\node [style=real] (5) at (5, -2) {};
		\node [style=real] (6) at (0, -2.5) {};
		\node [style=real] (7) at (0, -3) {};
		\node [style=real] (8) at (1.25, -2.75) {};
		\node [style=real] (9) at (1.75, -2.75) {};
		\node [style=real] (10) at (1, -0.75) {};
		\node [style=real] (11) at (1.25, -1.25) {};
		\node [style=real] (12) at (3, -1.75) {};
		\node [style=real] (13) at (1.75, -1.5) {};
		\node [style=none, fill=white] (25) at (5.75, -1.25) {$\ldots$};
		\node [style=none, fill=white] (26) at (5.75, -2) {$\ldots$};
		\node [style=real] (27) at (-0.25, 2.5) {};
		\node [style=real] (33) at (3.5, 1) {};
		\node [style=real] (34) at (1.75, 2.5) {};
		\node [style=real] (35) at (-1, 2.25) {};
		\node [style=real] (36) at (-2, 2.25) {};
		\node [style=real] (37) at (-0.5, 1.5) {};
		\node [style=real] (39) at (0.25, 1.5) {};
		\node [style=real] (40) at (-3, 2.5) {};
		\node [style=real] (41) at (1.25, 2.5) {};
		\node [style=real] (43) at (3.25, 2.25) {};
		\node [style=none, fill=white] (45) at (-6, 1.5) {$\ldots$};
		\node [style=none] (49) at (-1, 2.75) {};
		\node [style=real] (51) at (3.25, 0.25) {};
		\node [style=none] (55) at (3.75, 2.5) {};
		\node [style=real] (57) at (4.25, 0.25) {};
		\node [style=real] (58) at (-0.75, 0.25) {};
		\node [style=real] (59) at (-4, 1.75) {};
		\node [style=none] (60) at (-4.75, 1.75) {};
		\node [style=real] (62) at (-5.25, 1) {};
		\node [style=none] (64) at (-1.75, 2.75) {};
		\node [style=none, fill=white] (65) at (5, 1) {$\ldots$};
		\node [style=real] (66) at (2.25, 2.25) {};
		\node [style=real] (67) at (-4.75, 0.25) {};
		\node [style=real] (68) at (-1.25, 1) {};
		\node [style=real] (70) at (-4, -2.5) {};
		\node [style=none, fill=white] (76) at (-7.25, -2) {$\ldots$};
		\node [style=real] (77) at (-4.5, -3) {};
		\node [style=real] (78) at (-2.25, -3.25) {};
		\node [style=real] (79) at (-4.5, -2.75) {};
		\node [style=real] (81) at (-2.75, -2) {};
		\node [style=real] (82) at (-6.25, -2) {};
		\node [style=real] (83) at (-3.5, -2) {};
		\node [style=real] (84) at (-4, -1.75) {};
		\node [style=none, fill=white] (85) at (-6.75, -0.75) {$\ldots$};
		\node [style=real] (87) at (-4.5, -2) {};
		\node [style=real] (88) at (-2.25, -2.5) {};
		\node [style=real] (91) at (-1.5, -2) {};
		\node [style=real] (94) at (-4.5, -3.5) {};
		\node [style=real] (97) at (-6, -1.25) {};
		\node [style=real] (99) at (-5.5, -2.75) {};
		\node [style=real] (100) at (-5.5, -2) {};
		\node [style=real] (101) at (-2, -1.25) {};
		\node [style=none] (102) at (0, -2) {};
		\node [style=none] (103) at (1, -0.75) {};
		\node [style=none] (104) at (4, -2) {};
	\end{pgfonlayer}
	\begin{pgfonlayer}{edgelayer}
	\fill[blue!2, rounded corners=5] (-8,-3.5) rectangle (-1,-0.5);
	\fill[green!50!black!2, rounded corners=5] (-0.5,-3.5) rectangle (6.5,-0.5);
	\fill[orange!2, rounded corners=5] (-7,0) rectangle (5.5,3.5);
	
		\fill [green!50!black!10] (102.center) to (103.center) [bend left=45] to (104.center) [bend left=0] to (102.center);
		\fill [green!50!black!10, bend right=45, looseness=1.25] (0.center) to (7.center) [bend right=45, looseness=1.25] to (0.center);
		\fill [green!50!black!10] (5.center) to (25.center) to (26.center) to (5.center);
		\fill (5) to (26.center);
		\draw [style=root, bend right=45, looseness=1.25] (0) to (7);
		\draw (6) to (0);
		\draw [bend right=45, looseness=1.25] (7) to (0);
		\draw (0) to (1);
		\draw (8) to (1);
		\draw (1) to (9);
		\draw (1) to (2);
		\draw [bend left=75, looseness=2.00] (2) to (3);
		\draw [style=georight] (10) to (0);
		\draw (10) to (11);
		\draw [bend left=90, looseness=2.50] (11) to (1);
		\draw [in=90, out=24, looseness=1.25] (10) to (2);
		\draw (12) to (3);
		\draw (2) to (3);
		\draw (13) to (1);
		\draw (11) to (1);
		\draw [style=georight, bend left=45] (10) to (4);
		\draw (3) to (4);
		\draw [style=georight] (5) to (4);
		\draw [style=georight] (5) to (25.center);
		\draw (5) to (26.center);
		
		\fill[orange!10] (45.center) to (62.center) to (67.center) [in=135, out=45] to (68.center) [bend right=0] to (58.center) to (39.center) [bend left=45] to (51.center) [bend left=0] to (57.center) to (65.center) [in=90] to (55.center) [out=-60, in=180] to (64.center) [out=-80, in=-40] to (60.center) [in=-20, out=80] to (45.center);
		\draw (37) to (39);
		\draw [style=geoleft] (58) to (68);
		\draw (68) to (37);
		\draw (34) to (66);
		\draw (39) to (41);
		\draw (66) to (51);
		\draw [bend right=15] (41) to (66);
		\draw [bend right=60, looseness=1.25] (66) to (41);
		\draw (33) to (66);
		\draw (33) to (57);
		\draw [style=georight] (57) to (51);
		\draw (43) to (57);
		\draw [bend left=75, looseness=1.50] (39) to (43);
		\draw [style=geoleft, in=135, out=45] (67) to (68);
		\draw (67) to (59);
		\draw (36) to (68);
		\draw [bend left=75, looseness=1.50] (59) to (36);
		\draw (36) to (35);
		\draw (35) to (37);
		\draw (40) to (36);
		\draw (59) to (36);
		\draw (66) to (43);
		\draw [style=geoleft] (62) to (67);
		\draw [style=geoleft] (62) to (45.center);
		\draw [style=georight] (57) to (65.center);
		\draw (59) to (60.center);
		\draw (36) to (64.center);
		\draw (43) to (55.center);
		\draw (49.center) to (35);
		\draw (35) to (27);
		\draw (27) to (39);
		\draw [style=georight, bend left=45] (39) to (51);
		
		\fill[blue!10] (91.center) to (101.center) [out=135, in=45] to (100.center) to [bend left=0] (91.center);
		\fill[blue!10, bend left=65, looseness=1.35] (94.center) to (79.center) [bend left=65, looseness=1.35] to (94.center);
		\fill[blue!10] (85.center) [bend right=20] to (76.center) [bend right=0] to (82.center) to (100.center) to (97.center) to (85.center);
		\draw (91) to (88);
		\draw (81) to (91);
		\draw (83) to (81);
		\draw [bend left=90, looseness=2.00] (87) to (83);
		\draw (100) to (87);
		\draw (78) to (88);
		\draw [style=root] (91) to (101);
		\draw (101) to (83);
		\draw [style=geoleft, in=135, out=45] (100) to (101);
		\draw (84) to (83);
		\draw (87) to (83);
		\draw (100) to (82);
		\draw (100) to (99);
		\draw [bend right=60, looseness=1.25] (94) to (79);
		\draw (87) to (79);
		\draw [bend left=60, looseness=1.25] (94) to (79);
		\draw (77) to (79);
		\draw (70) to (87);
		\draw [style=geoleft] (97) to (100);
		\draw [style=geoleft] (97) to (85.center);
		\draw (82) to (76.center);
		\draw [style=root] (58) to (39);
		
	\end{pgfonlayer}
\end{tikzpicture}
\caption{\label{pencil decomposition}\small The three quadrangulations in the pencil decomposition of $q$; notice that the root of $\gamma^{\mathrm{right}}|q$ is the root of $q$, while $q|\gamma^{\mathrm{left}}$ and $\gamma^{\mathrm{left}}|q|\gamma^{\mathrm{right}}$ are rooted in the first edge of the leftmost and rightmost geodesic rays respectively.}
\end{figure}
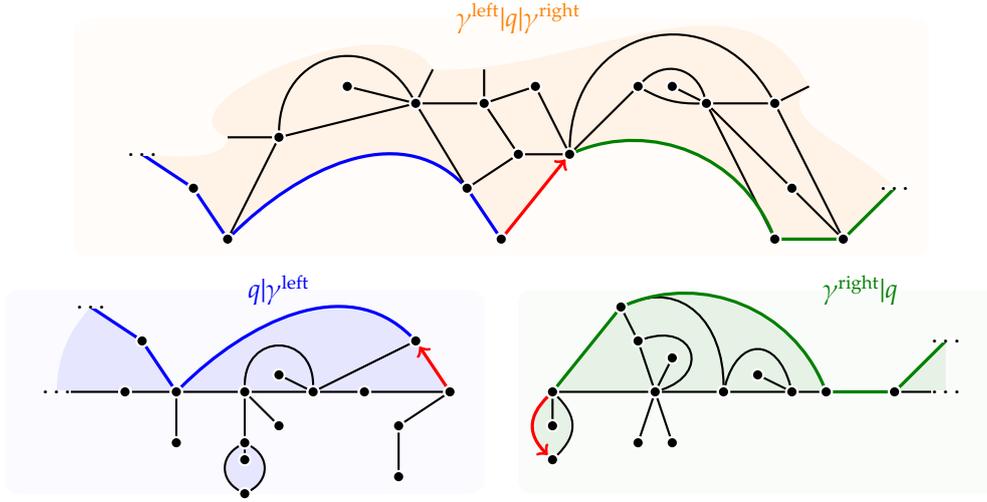

Consider the portion of the UIHPQ that lies to the right of the rightmost geodesic ray $\gamma^{\mathrm{right}}$; this is a rooted quadrangulation with a boundary (we consider any ``bubbles'' attached to the root vertex lying to the right of the root edge as being part of such a quadrangulation,  so that it also contains the root edge of the UIHPQ, see Figure~\ref{pencil decomposition}), which we call $\gamma^{\mathrm{right}}|\UIHPQ$. It is now easy to describe it in terms of a random positive treed bridge built from $\B_\infty$:

\begin{cor}\label{laws of two sides of gammaright}Consider the random infinite treed bridges $\B^r_\infty=((X^r_i)_{i\in\mathbb{Z}}; T^r)$ and $\B^l_\infty=((X^l_i)_{i\in\mathbb{Z}}; T^l)$ obtained from $\B_\infty=((X_i)_{i\in\mathbb{Z}}; T)$ as follows:
\begin{itemize}
\item for all $i<0$, $X^r_i=|i|$ and $T(i)$ is the labelled tree which consists of only a root vertex labelled $|i|$; on the other hand, $X^l_i=X_i$ and $T^l(i)=T(i)$;
\item for all $i>0$, $X^r_i=X_i$ and $T^r(i)=T(i)$; on the other hand, $X^l_i=i$ and $T^l(i)$ consists of only a root vertex labelled $i$.
\end{itemize}
Notice that $\B^r_\infty$ and $\B^l_\infty$ belong to $\TB^+_\infty$ almost surely.
If we consider $\UIHPQ$ as being $\Phi(\B_\infty)$, then we have $\gamma^{\mathrm{right}}|\UIHPQ
{=}\Phi(\B^r_\infty)$ and $\UIHPQ|\gamma^{\mathrm{right}}
{=}\Phi(\B^l_\infty)$. In particular, the two random variables $\gamma^{\mathrm{right}}|\UIHPQ$ and $\UIHPQ|\gamma^{\mathrm{right}}$ are independent.
\end{cor}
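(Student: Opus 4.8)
The plan is to deduce everything from two identifications of \emph{rooted maps},
\[
\gamma^{\mathrm{right}}|\UIHPQ=\Phi(\B^r_\infty)\qquad\text{and}\qquad \UIHPQ|\gamma^{\mathrm{right}}=\Phi(\B^l_\infty),
\]
read inside the fixed coupling $\UIHPQ=\Phi(\B_\infty)$ of this section. Granting these, independence is immediate: by Proposition~\ref{limitinp} the two walks $(X_i)_{i\ge 0}$ and $(X_{-i})_{i\ge 0}$ are independent and, conditionally on the bridge, the grafted trees are independent; since $\B^r_\infty$ is a deterministic functional of $\big((X_i)_{i\ge 0},(T(i))_{i\in\DS(X),\,i>0}\big)$ while $\B^l_\infty$ is a deterministic functional of $\big((X_{-i})_{i\ge 0},(T(i))_{i\in\DS(X),\,i<0}\big)$ (recall $0\notin\DS(X)$ and $X_{-1}=1$ a.s., so $-1\in\DS(X)$), these two treed bridges are independent, and hence so are $\Phi(\B^r_\infty)=\gamma^{\mathrm{right}}|\UIHPQ$ and $\Phi(\B^l_\infty)=\UIHPQ|\gamma^{\mathrm{right}}$.

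For the first identification I would first read off the shape of $\Phi(\B^r_\infty)$: the part of $\B^r_\infty$ left of the root is the staircase of labels $\dots,3,2,1,0$ carrying only single-vertex trees, so writing $d_j$ for the unique real corner at baseline position $-j$ (it has label $j$), one has $s(d_j)=d_{j-1}$ for $j\ge 2$ and $s(d_1)=\delta$; thus $\delta,d_1,d_2,\dots$ is a geodesic ray, and since $d_j$ is visibly the leftmost real corner labelled $j$ in $\B^r_\infty$, the preceding lemma applied to $\B^r_\infty$ identifies it as the rightmost geodesic ray of $\Phi(\B^r_\infty)$; the part of $\B^r_\infty$ right of the root is unchanged. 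I would then compare successors corner by corner: for a real corner $c$ in a tree $T(i)$ with $i>0$, its successor is the first corner labelled $l(c)-1$ to its right in the left-to-right contour, which is \emph{literally the same corner} in $\B_\infty$ and in $\B^r_\infty$ whenever it lies in the common right-hand part; otherwise the construction wraps it to the globally leftmost corner labelled $l(c)-1$, which is $c^r_{l(c)-1}$ in $\B_\infty$ and $d_{l(c)-1}$ in $\B^r_\infty$ (or $\delta$ in both when $l(c)=1$). Matching $\delta$ with $\delta$, each right-hand tree vertex with itself, and $v(d_j)$ with the $j$-th vertex $v(c^r_j)$ of $\gamma^{\mathrm{right}}$, this furnishes an isomorphism between $\Phi(\B^r_\infty)$ and the submap of $\Phi(\B_\infty)$ formed by $\delta$, the right-hand trees and the geodesic $\gamma^{\mathrm{right}}$, compatible with boundaries and rootings. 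That this submap is exactly $\gamma^{\mathrm{right}}|\UIHPQ$ — i.e.\ that no vertex of a left-hand tree other than those on $\gamma^{\mathrm{right}}$ sits to its right — follows from the nesting property established inside the proof of that same lemma: a left-hand-tree corner $c$ between $c^r_a$ and $c^r_{a-1}$ in the contour has $l(c)\ge a$ and $s(c)$ again between $c$ and $c^r_{a-1}$, so the arc $(c,s(c))$ is drawn underneath the geodesic edge $(c^r_a,c^r_{a-1})$, hence strictly left of $\gamma^{\mathrm{right}}$. The second identification is obtained by the mirror-image bookkeeping, the only new point being that in $\B^l_\infty$ the part right of the root is a bare increasing staircase carrying no tree and made entirely of phantom vertices, which therefore disappears under $\Phi$, so that $\Phi(\B^l_\infty)$ reduces to $\delta$, the left-hand trees and their successor edges — precisely what remains of $\UIHPQ$ after deleting everything strictly right of $\gamma^{\mathrm{right}}$.

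I expect the main obstacle to be this separation/no-mixing statement: checking rigorously that $\gamma^{\mathrm{right}}$ cuts the half-plane into a ``left-trees'' region and a ``right-trees'' region, together with the routine but slightly delicate matching of the two infinite boundary arcs and of the distinguished root edge under the isomorphism (in particular handling the real corner forced at baseline position $-1$). The remaining hypothesis $\B^r_\infty,\B^l_\infty\in\TB^+_\infty$ almost surely, needed for $\Phi$ to be defined on them, is routine: the left staircase of $\B^r_\infty$ uses each label once, its right part is that of $\B_\infty$, and $\B_\infty\in\TB^+_\infty$ a.s.\ because transience of the bridge (Proposition~\ref{prop:transience}) together with the control on small labels (Proposition~\ref{small labels}) forces every label to occur only finitely often; symmetrically for $\B^l_\infty$.
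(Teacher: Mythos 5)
Your proposal is correct and follows exactly the route the paper intends: the paper states this corollary without proof as an immediate consequence of the preceding lemma characterising $\gamma^{\mathrm{right}}$ as the chain of leftmost corners, and your corner-by-corner successor matching plus the nesting argument is precisely the elaboration of that, with independence then read off from Proposition~\ref{limitinp} as you do. The points you flag as delicate (rooting conventions, the one-ended corner sequence of $\B^l_\infty$, membership in $\TB^+_\infty$) are handled in your sketch at least as carefully as in the paper itself.
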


\begin{figure}\centering
\begin{tikzpicture}
\tikzset{every node/.style={font=\scriptsize}}
\useasboundingbox (-4,-3.5) rectangle (10,3);
\begin{scope}[cm={-1,0,0,1,(3,0)}]
\useasboundingbox (-6,-3.5) rectangle (6,1.5);
\tikzset{every path/.style={thick}}
\tikzstyle{real}=[inner sep=1.5pt, fill=black, circle, draw=white, thick]
\useasboundingbox (-6,-3.5) rectangle (6,1.5);
\contourlength{1pt}
\contournumber{100}
	\begin{pgfonlayer}{nodelayer}
\node [style=real] (0) at (0, -2) {};
		\node [style=real] (1) at (1.5, -2) {};
		\node [style=real] (2) at (2.5, -2) {};
		\node [style=real] (3) at (3.5, -2) {};
		\node [style=real] (4) at (4, -2) {};
		\node [style=real] (5) at (4, 0) {};
		\node [style=real] (6) at (5, -2) {};
		\node [style=real] (7) at (-1.25, -2) {};
		\node [style=real] (8) at (-2, -2) {};
		\node [style=real] (9) at (-3, -2) {};
		\node [style=real] (10) at (-4, -2) {};
		\node [style=real] (11) at (-4.75, -2) {};
		\node [style=real] (12) at (0, -2.5) {};
		\node [style=real] (13) at (0, -3) {};
		\node [style=real] (14) at (-0.75, -2.5) {};
		\node [style=real] (15) at (-0.75, -3.25) {};
		\node [style=real] (16) at (1.25, -2.75) {};
		\node [style=real] (17) at (1.75, -2.75) {};
		\node [style=real] (18) at (1, -0.75) {};
		\node [style=real] (19) at (1.25, -1.25) {};
		\node [style=real] (20) at (0.25, -0.75) {};
		\node [style=real] (21) at (-0.5, -1.25) {};
		\node [style=real] (22) at (3, -1.75) {};
		\node [style=real] (23) at (1.75, -1.5) {};
		\node [style=none] (24) at (0, -1.65) {\contour{white}0};
		\node [style=none] (25) at (1, -0.4) {\contour{white}1};
		\node [style=none] (26) at (1.2, -2.3) {\contour{white}1};
		\node [style=none] (27) at (-0.5, -0.95) {\contour{white}1};
		\node [style=none] (28) at (1.2, -1.6) {\contour{white}2};
		\node [style=none] (29) at (2.5, -2.3) {\contour{white}2};
		\node [style=real] (30) at (2, 0.25) {};
		\node [style=real] (31) at (3, 0) {};
		\node [style=real] (32) at (2.5, 0.25) {};
		\node [style=real] (33) at (4.25, -1.25) {};
		\node [style=none] (34) at (4, -2.3) {\contour{white}2};
		\node [style=none] (35) at (3.9, -0.3) {\contour{white}2};
		\node [style=real] (36) at (-2.5, -1.75) {};
		\node [style=real] (37) at (-3.25, -0.5) {};
		\node [style=real] (38) at (-1.25, 0) {};
		\node [style=real] (39) at (-0.25, 0) {};
		\node [style=real] (40) at (-4, -2.75) {};
		\node [style=real] (41) at (-3, -2.75) {};
		\node [style=real] (42) at (-3, -3.5) {};
		\node [style=real] (43) at (-3, -3) {};
		\node [style=real] (44) at (-2.5, -2.5) {};
		\node [style=real] (45) at (-2.25, 0.25) {};
		\node [style=none] (46) at (-1.25, -2.3) {\contour{white}1};
		\node [style=none] (47) at (-0.9, -2.5) {\contour{white}1};
		\node [style=none] (48) at (0.15, -2.5) {\contour{white}1};
		\node [style=none] (49) at (0, -3.3) {\contour{white}1};
		\node [style=none] (50) at (0.25, -1.05) {\contour{white}2};
		\node [style=none] (51) at (-0.75, -3.6) {\contour{white}2};
		\node [style=none] (52) at (-2, -2.3) {\contour{white}2};
		\node [style=none] (53) at (1.1, -3) {\contour{white}2};
		\node [style=none] (54) at (1.9, -3) {\contour{white}2};
		\node [style=none] (55) at (2, -0.1) {\contour{white}2};
		\node [style=none] (56) at (-4.15, -2.3) {\contour{white}2};
		\node [style=none] (66) at (-1.25, 0.35) {\contour{white}2};
		\node [style=none] (66) at (1.5, -1.4) {\contour{white}2};
		\node [style=none] (57) at (-4.15, -3) {\contour{white}3};
		\node [style=none] (58) at (-3.2, -0.8) {\contour{white}3};
		\node [style=none] (59) at (-3.25, -2.3) {\contour{white}3};
		\node [style=none] (60) at (-2.75, -1.75) {\contour{white}3};
		\node [style=none] (61) at (-2.5, 0.25) {\contour{white}3};
		\node [style=none] (62) at (-0.30, -0.25) {\contour{white}3};
		\node [style=none] (63) at (3.5, -2.3) {\contour{white}3};
		\node [style=none] (64) at (5, -2.3) {\contour{white}3};
		\node [style=none] (65) at (2.9, -0.25) {\contour{white}3};
		\node [style=none] (65) at (2.8, -1.8) {\contour{white}3};
		\node [style=none] (67) at (-3.25, -2.75) {\contour{white}4};
		\node [style=none] (68) at (-2.35, -2.75) {\contour{white}4};
		\node [style=none] (69) at (4, -1.25) {\contour{white}4};
		\node [style=none] (70) at (2.45, 0.5) {\contour{white}4};
		\node [style=none] (71) at (-3, -3.25) {\contour{white}5};
		\node [style=none] (71) at (-3, -3.75) {\contour{white}5};
		\node [style=real] (72) at (-4.5, -1.25) {};
		\node [style=none] (73) at (-4.25, -1.25) {\contour{white}3};
		\contourlength{2px}
		\node [style=none] (74) at (-5.25, -0.75) {\small\contour{white}{{\color{blue}$\gamma^{\mathrm{left}}$}}};
		\node [style=none] (75) at (-4.75, -2.3) {3};
		\node [style=none, fill=white] (76) at (-5.75, -2) {\small{$\ldots$}};
		\node [style=none] (77) at (5.75, -1.25) {\small\contour{white}{\color{green!50!black}$\gamma^{\mathrm{right}}$}};
		\node [style=none, fill=white] (78) at (5.75, -2) {\small{$\ldots$}};
		\node [style=none] (79) at (-4, -0.5) {};
		\node [style=none] (80) at (-1, 0.75) {};
		\node [style=none] (81) at (-0.25, 0.5) {};
		\node [style=none] (82) at (4.5, 0.25) {};
		\node [style=real] (83) at (0.5, 0.25) {};
		\contourlength{1px}
		\node [style=none] (84) at (0.7, 0.4) {\contour{white}2};
	\end{pgfonlayer}
	\begin{pgfonlayer}{edgelayer}
	\fill[orange!10] (74.center) to (72.center) to (10.center) [in=135, out=45] to (21.center) [bend right=0] to (0.center) to (18.center) [bend left=45] to (4.center) [bend left=0] to (6.center) to (77.center) [in=90] to (82.center) [out=-60, in=180] to (80.center) [out=-80, in=-40] to (79.center) [in=-20, out=80] to (74.center);
		\fill[blue!10] (0.center) to (21.center) [out=135, in=45] to (10.center) to [bend left=0] (0.center);
		\fill[blue!10, bend left=60, looseness=1.4] (41.center) to (42.center) [bend left=60, looseness=1.4] to (41.center);
		\fill[blue!10] (76.center) [bend left=20] to (74.center) [bend right=0] to (72.center) to (10.center) to (11.center) to (76.center);
		\fill [green!50!black!10] (0.center) to (18.center) [bend left=45] to (4.center) [bend left=0] to (0.center);
		\fill [green!50!black!10, bend right=45, looseness=1.3] (0.center) to (13.center) [bend right=45, looseness=1.3] to (0.center);
		\fill [green!50!black!10] (6.center) to (77.center) to (78.center) to (6.center);
		
		\draw [gray,->, very thick, bend right=45, looseness=1.25] (0) to (13);
		\draw [] (12) to (0);
		\draw [bend right=45, looseness=1.25] (13) to (0);
		\draw (7) to (0);
		\draw (0) to (1);
		\draw (8) to (7);
		\draw [bend left=90, looseness=2.00] (9) to (8);
				\draw[root] (0) to (14);
		\draw (10) to (9);
		\draw (15) to (14);
		\draw (16) to (1);
		\draw (1) to (17);
		\draw (1) to (2);
		\draw [bend left=75, looseness=2.00] (2) to (3);
		\draw [style=georight] (18) to (0);
		\draw (20) to (18);
		\draw [style=geoleft] (0) to (21);
		\draw (21) to (20);
		\draw (18) to (19);
		\draw [bend left=90, looseness=2.50] (19) to (1);
		\draw [in=90, out=20, looseness=1.05] (18) to (2);
		\draw (22) to (3);
		\draw (2) to (3);
		\draw (23) to (1);
		\draw (19) to (1);
		\draw (3) to (4);
		\draw (32) to (31);
		\draw (18) to (30);
		\draw (31) to (4);
		\draw [bend right=15] (30) to (31);
		\draw [bend right=60, looseness=1.25] (31) to (30);
		\draw (33) to (31);
		\draw (33) to (6);
		\draw (5) to (6);
		\draw [bend left=75, looseness=1.50] (18) to (5);
		\draw (21) to (8);
		\draw [style=geoleft, in=135, out=45] (10.center) to (21.center);
		\draw (36) to (8);
		\draw (9) to (8);
		\draw (10) to (37);
		\draw (38) to (21);
		\draw [bend left=75, looseness=1.50] (37) to (38);
		\draw (38) to (39);
		\draw (39) to (20);
		\draw (10) to (11);
		\draw (10) to (40);
		\draw [bend right=60, looseness=1.25] (42) to (41);
		\draw (9) to (41);
		\draw [bend left=60, looseness=1.25] (42) to (41);
		\draw (43) to (41);
		\draw (44) to (9);
		\draw (45) to (38);
		\draw (37) to (38);
		\draw (31) to (5);
		\draw (11) to (76.center);
		\draw (6) to (78.center);
		\draw (37) to (79.center);
		\draw (38) to (80.center);
		\draw (5) to (82.center);
		\draw (81.center) to (39);
		\draw (39) to (83);
		\draw (83) to (18);
		\draw [style=georight] (6) to (4);
		\draw [style=geoleft] (72) to (10);
		\draw [style=geoleft] (72) to (74.center);
		\draw [style=georight] (6) to (77.center);
		\draw [style=georight, bend left=45] (18) to (4);
	\end{pgfonlayer}
 \node (q) at (5,0.5) {\small$q^\flip$};
  \draw [->,very thick, label=right:\small$\flip$] (6.5,1.5) --(6.5,0)-- (5,0); 
  \end{scope}

 \begin{scope}[cm={0.5,0,0,0.5,(-1,3)}]
 
\tikzstyle{real}=[inner sep=1.5pt, fill=black, circle, draw=white, thick]
  \begin{pgfonlayer}{nodelayer}
		\node [style=real] (0) at (0, -2) {};
		\node [style=real] (1) at (1.5, -2) {};
		\node [style=real] (2) at (2.5, -2) {};
		\node [style=real] (3) at (3.5, -2) {};
		\node [style=real] (4) at (4, -2) {};
		\node [style=real] (5) at (4, 0) {};
		\node [style=real] (6) at (5, -2) {};
		\node [style=real] (7) at (-1.25, -2) {};
		\node [style=real] (8) at (-2, -2) {};
		\node [style=real] (9) at (-3, -2) {};
		\node [style=real] (10) at (-4, -2) {};
		\node [style=real] (11) at (-4.75, -2) {};
		\node [style=real] (12) at (0, -2.5) {};
		\node [style=real] (13) at (0, -3) {};
		\node [style=real] (14) at (-0.75, -2.5) {};
		\node [style=real] (15) at (-0.75, -3.25) {};
		\node [style=real] (16) at (1.25, -2.75) {};
		\node [style=real] (17) at (1.75, -2.75) {};
		\node [style=real] (18) at (1, -0.75) {};
		\node [style=real] (19) at (1.25, -1.25) {};
		\node [style=real] (20) at (0.25, -0.75) {};
		\node [style=real] (21) at (-0.5, -1.25) {};
		\node [style=real] (22) at (3, -1.75) {};
		\node [style=real] (23) at (1.75, -1.5) {};
		\node [style=none] (24) at (0, -1.75) {};
		\node [style=none] (25) at (1, -0.5) {};
		\node [style=none] (26) at (1.25, -2.25) {};
		\node [style=none] (27) at (-0.75, -1.25) {};
		\node [style=none] (28) at (1.25, -1.5) {};
		\node [style=none] (29) at (2.5, -2.25) {};
		\node [style=real] (30) at (2, 0.25) {};
		\node [style=real] (31) at (3, 0) {};
		\node [style=real] (32) at (2.5, 0.25) {};
		\node [style=real] (33) at (4.25, -1.25) {};
		\node [style=none] (34) at (4, -2.25) {};
		\node [style=none] (35) at (4, -0.25) {};
		\node [style=real] (36) at (-2.5, -1.75) {};
		\node [style=real] (37) at (-3.25, -0.5) {};
		\node [style=real] (38) at (-1.25, 0) {};
		\node [style=real] (39) at (-0.25, 0) {};
		\node [style=real] (40) at (-4, -2.75) {};
		\node [style=real] (41) at (-3, -2.75) {};
		\node [style=real] (42) at (-3, -3.5) {};
		\node [style=real] (43) at (-3, -3) {};
		\node [style=real] (44) at (-2.5, -2.5) {};
		\node [style=real] (45) at (-2.25, 0.25) {};
		\node [style=none] (46) at (-1.25, -2.25) {};
		\node [style=none] (47) at (-1, -2.5) {};
		\node [style=none] (48) at (0, -2.75) {};
		\node [style=none] (49) at (0, -3.25) {};
		\node [style=none] (50) at (0.25, -1) {};
		\node [style=none] (51) at (-0.75, -3.5) {};
		\node [style=none] (52) at (-2, -2.25) {};
		\node [style=none] (53) at (1, -3) {};
		\node [style=none] (54) at (1.75, -3) {};
		\node [style=none] (55) at (2, 0) {};
		\node [style=none] (56) at (-4.25, -2.25) {};
		\node [style=none] (57) at (-4.25, -3) {};
		\node [style=none] (58) at (-3.5, -0.5) {};
		\node [style=none] (59) at (-3.25, -2.25) {};
		\node [style=none] (60) at (-2.75, -1.75) {};
		\node [style=none] (61) at (-2.5, 0.25) {};
		\node [style=none] (62) at (-0.25, -0.25) {};
		\node [style=none] (63) at (3.5, -2.25) {};
		\node [style=none] (64) at (5, -2.25) {};
		\node [style=none] (65) at (2.75, -0.25) {};
		\node [style=none] (66) at (-1.25, 0.25) {};
		\node [style=none] (67) at (-3.25, -2.75) {};
		\node [style=none] (68) at (-2.25, -2.75) {};
		\node [style=none] (69) at (4, -1.25) {};
		\node [style=none] (70) at (2.25, 0.25) {};
		\node [style=none] (71) at (-3, -3.25) {};
		\node [style=real] (72) at (-4.5, -1.25) {};
		\node [style=none] (73) at (-4.25, -1.25) {};
		\contourlength{1px}
		\node [style=none] (74) at (-5.25, -0.75) {\contour{white}{\color{blue}$\gamma^{\mathrm{left}}$}};
		\node [style=none] (75) at (-4.75, -2.25) {};
		\node [style=none, fill=white] (76) at (-5.75, -2) {$\ldots$};
		\node [style=none] (77) at (5.75, -1.25) {\contour{white}{\color{green!50!black}$\gamma^{\mathrm{right}}$}};
		\node [style=none, fill=white] (78) at (5.75, -2) {$\ldots$};
		\node [style=none] (79) at (-4, -0.5) {};
		\node [style=none] (80) at (-1, 0.75) {};
		\node [style=none] (81) at (-0.25, 0.5) {};
		\node [style=none] (82) at (4.5, 0.25) {};
		\node [style=real] (83) at (0.5, 0.25) {};
		\node [style=none] (84) at (0.75, 0.5) {};
	\end{pgfonlayer}
	\begin{pgfonlayer}{edgelayer}
	\fill[orange!10] (74.center) to (72.center) to (10.center) [in=135, out=45] to (21) [bend right=0] to (0) to (18) [bend left=45] to (4) [bend left=0] to (6) to (77.center) [in=90] to (82.center) [out=-60, in=180] to (80.center) [out=-80, in=-40] to (79.center) [in=-20, out=80] to (74.center);
		\fill[blue!10] (0.center) to (21.center) [out=135, in=45] to (10.center) to [bend left=0] (0.center);
		\fill[blue!10, bend left=60, looseness=1.25] (41.center) to (42.center) [bend left=60, looseness=1.25] to (41.center);
		\fill[blue!10] (76.center) [bend left=20] to (74.center) [bend right=0] to (72.center) to (10.center) to (11.center) to (76.center);
		\fill [green!50!black!10] (0.center) to (18.center) [bend left=45] to (4.center) [bend left=0] to (0.center);
		\fill [green!50!black!10, bend right=45, looseness=1.25] (0) to (13) [bend right=45, looseness=1.25] to (0);
		\fill [green!50!black!10] (6.center) to (77.center) to (78.center) to (6.center);
		
		\draw (0) to (14);
		\draw [style=root, bend right=45, looseness=1.25] (0) to (13);
		\draw (12) to (0);
		\draw [bend right=45, looseness=1.25] (13) to (0);
		\draw (7) to (0);
		\draw (0) to (1);
		\draw (8) to (7);
		\draw [bend left=90, looseness=2.00] (9) to (8);
		\draw (10) to (9);
		\draw (15) to (14);
		\draw (16) to (1);
		\draw (1) to (17);
		\draw (1) to (2);
		\draw [bend left=75, looseness=2.00] (2) to (3);
		\draw [style=georight] (18) to (0);
		\draw (20) to (18);
		\draw [style=geoleft] (0) to (21);
		\draw (21) to (20);
		\draw (18) to (19);
		\draw [bend left=90, looseness=2.50] (19) to (1);
		\draw [in=90, out=24, looseness=1.25] (18) to (2);
		\draw (22) to (3);
		\draw (2) to (3);
		\draw (23) to (1);
		\draw (19) to (1);
		\draw (3) to (4);
		\draw (32) to (31);
		\draw (18) to (30);
		\draw (31) to (4);
		\draw [bend right=15] (30) to (31);
		\draw [bend right=60, looseness=1.25] (31) to (30);
		\draw (33) to (31);
		\draw (33) to (6);
		\draw [style=georight] (6) to (4);
		\draw (5) to (6);
		\draw [bend left=75, looseness=1.50] (18) to (5);
		\draw (21) to (8);
		\draw [style=geoleft, in=135, out=45] (10) to (21);
		\draw (36) to (8);
		\draw (9) to (8);
		\draw (10) to (37);
		\draw (38) to (21);
		\draw [bend left=75, looseness=1.50] (37) to (38);
		\draw (38) to (39);
		\draw (39) to (20);
		\draw (10) to (11);
		\draw (10) to (40);
		\draw [bend right=60, looseness=1.25] (42) to (41);
		\draw (9) to (41);
		\draw [bend left=60, looseness=1.25] (42) to (41);
		\draw (43) to (41);
		\draw (44) to (9);
		\draw (45) to (38);
		\draw (37) to (38);
		\draw (31) to (5);
		\draw [style=geoleft] (72) to (10);
		\draw [style=geoleft] (72) to (74.center);
		\draw [style=georight] (6) to (77.center);
		\draw (11) to (76.center);
		\draw (6) to (78.center);
		\draw (37) to (79.center);
		\draw (38) to (80.center);
		\draw (5) to (82.center);
		\draw (81.center) to (39);
		\draw (39) to (83);
		\draw (83) to (18);
		\draw [style=georight, bend left=45] (18) to (4);
		\end{pgfonlayer}
  \end{scope}
\end{tikzpicture}
\caption{\label{flip} \small The flipped version $q^\flip$ of the quadrangulation $q$ from Figure~\ref{geodesic rays}; notice how the rightmost and leftmost geodesic rays are exchanged, and the root is rotated so as to be oriented clockwise with respect to the outerface.}
\end{figure}
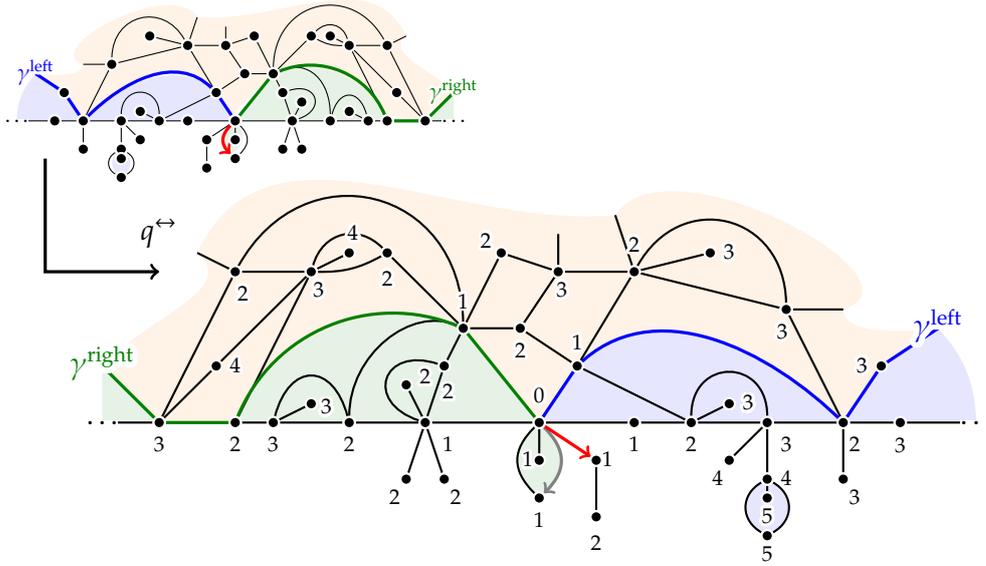
Consider now, given $q\in\sQ$, the ``flip'' (Figure~\ref{flip}) performed by applying a reflection to $q$ and then rerooting the map thus obtained in the following way: consider the image of the root edge; reroot on the next edge to be found by rotating around the (image of) the root vertex counterclockwise. We call the image of $q$ via this flip $q^\flip$.

The flip is an involution on the set $\sQ$, and is easily seen to be continuous for the local distance. Notice that the rightmost and leftmost geodesic rays become exchanged in $q^\flip$; it is easy to see that $(\gamma^{\mathrm{right}}|q)^\flip=q^\flip|\gamma^{\mathrm{left}}$, where $\gamma^{\mathrm{right}}|q$ is the quadrangulation obtained by taking what lies to the right of the rightmost geodesic ray of $q$ (including bubbles lying to the right of the root edge, as before), and $q^\flip|\gamma^{\mathrm{left}}$ the part of $q^\flip$ that lies left of the leftmost geodesic ray (rooted on the first edge of $\gamma^{\mathrm{left}}$). This, since $\UIHPQ^\flip$ has the same law as $\UIHPQ$, implies that $\UIHPQ|\gamma^{\mathrm{left}}$ has the same law as $(\gamma^{\mathrm{right}}|\UIHPQ)^\flip$.

Using the notation $\gamma^{\mathrm{left}}|\UIHPQ|\gamma^{\mathrm{right}}$ for the random quadrangulation $(\gamma^{\mathrm{left}}|\UIHPQ)|\gamma^{\mathrm{right}}$ (which is the same as $\gamma^{\mathrm{left}}|(\UIHPQ|\gamma^{\mathrm{right}})$), this yields:

\begin{prop}[Pencil decomposition]
Let $\UIHPQ$ be the UIHPQ; then the random variables $\UIHPQ|\gamma^{\mathrm{left}}$, $\gamma^{\mathrm{right}}|\UIHPQ$, $\gamma^{\mathrm{left}}|\UIHPQ|\gamma^{\mathrm{right}}$ defined as above (see Figure~\ref{pencil decomposition}) are independent.
\end{prop}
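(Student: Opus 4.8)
The plan is to deduce the three-fold independence from Corollary~\ref{laws of two sides of gammaright} by invoking that single binary splitting twice: once for $\UIHPQ$ and once for its mirror image $\UIHPQ^{\flip}$. This is legitimate because $\flip$ is a measurable involution of $\sQ$, because $\UIHPQ^{\flip}\eqdist\UIHPQ$, and because the independence asserted in Corollary~\ref{laws of two sides of gammaright} is a property of the joint law of a deterministic functional of the map, hence holds verbatim with $\UIHPQ$ replaced by $\UIHPQ^{\flip}$. Write $A=\UIHPQ|\gamma^{\mathrm{left}}$, $B=\gamma^{\mathrm{left}}|\UIHPQ|\gamma^{\mathrm{right}}$ and $C=\gamma^{\mathrm{right}}|\UIHPQ$. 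I shall use two elementary facts throughout: first, removing from a quadrangulation the part strictly to one side of a geodesic ray leaves distances to the root unchanged for the remaining vertices, so that $\gamma^{\mathrm{left}}$ (resp.\ $\gamma^{\mathrm{right}}$) is still the leftmost (resp.\ rightmost) geodesic ray of any submap of $\UIHPQ$ produced by cutting along the other one; second, as already noted in the text, cutting successively along $\gamma^{\mathrm{left}}$ and $\gamma^{\mathrm{right}}$ produces the same triple $A,B,C$ regardless of the order, so that in particular $A=(\UIHPQ|\gamma^{\mathrm{right}})|\gamma^{\mathrm{left}}$, $B=\gamma^{\mathrm{left}}|(\UIHPQ|\gamma^{\mathrm{right}})=(\gamma^{\mathrm{left}}|\UIHPQ)|\gamma^{\mathrm{right}}$ and $C=\gamma^{\mathrm{right}}|(\gamma^{\mathrm{left}}|\UIHPQ)$, where in each occurrence $\gamma^{\mathrm{left}}$ and $\gamma^{\mathrm{right}}$ denote the leftmost and rightmost geodesic rays of the map being cut.

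The first step is to show that $C$ is independent of the pair $(A,B)$. By Corollary~\ref{laws of two sides of gammaright}, $C=\gamma^{\mathrm{right}}|\UIHPQ$ and $\UIHPQ|\gamma^{\mathrm{right}}$ are independent; and by the facts above both $A=(\UIHPQ|\gamma^{\mathrm{right}})|\gamma^{\mathrm{left}}$ and $B=\gamma^{\mathrm{left}}|(\UIHPQ|\gamma^{\mathrm{right}})$ are deterministic functions of $\UIHPQ|\gamma^{\mathrm{right}}$. Hence $C\perp(A,B)$, and in particular $C\perp B$.

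The second step is to show that $A$ is independent of the pair $(B,C)$. Corollary~\ref{laws of two sides of gammaright} applied to $\UIHPQ^{\flip}$ gives that $\gamma^{\mathrm{right}}|\UIHPQ^{\flip}$ and $\UIHPQ^{\flip}|\gamma^{\mathrm{right}}$ are independent. From the identity $(\gamma^{\mathrm{right}}|q)^{\flip}=q^{\flip}|\gamma^{\mathrm{left}}$ recorded above, together with the involutivity of $\flip$ and the fact that $\flip$ is a reflection exchanging $\gamma^{\mathrm{left}}$ and $\gamma^{\mathrm{right}}$, one obtains the companion identities $(q|\gamma^{\mathrm{left}})^{\flip}=\gamma^{\mathrm{right}}|q^{\flip}$ and $(\gamma^{\mathrm{left}}|q)^{\flip}=q^{\flip}|\gamma^{\mathrm{right}}$; taking $q=\UIHPQ$ these read $\gamma^{\mathrm{right}}|\UIHPQ^{\flip}=A^{\flip}$ and $\UIHPQ^{\flip}|\gamma^{\mathrm{right}}=(\gamma^{\mathrm{left}}|\UIHPQ)^{\flip}$. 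Since $\flip$ is a measurable bijection, it follows that $A$ is independent of $\gamma^{\mathrm{left}}|\UIHPQ$; and by the facts in the first paragraph both $B=(\gamma^{\mathrm{left}}|\UIHPQ)|\gamma^{\mathrm{right}}$ and $C=\gamma^{\mathrm{right}}|(\gamma^{\mathrm{left}}|\UIHPQ)$ are deterministic functions of $\gamma^{\mathrm{left}}|\UIHPQ$, so $A\perp(B,C)$. Combining the two steps, $\P(A\in\cdot,B\in\cdot,C\in\cdot)=\P(A\in\cdot)\,\P(B\in\cdot,C\in\cdot)=\P(A\in\cdot)\,\P(B\in\cdot)\,\P(C\in\cdot)$, which is the desired mutual independence.

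All the probabilistic substance is already contained in Corollary~\ref{laws of two sides of gammaright}; the step I expect to demand the most attention is the purely combinatorial bookkeeping around the geodesic rays, namely verifying that the rerootings appearing above match those prescribed in Figure~\ref{pencil decomposition}, that $\gamma^{\mathrm{left}}$ and $\gamma^{\mathrm{right}}$ (together with their extremality) really are preserved under the iterated cuttings, and that the companion flip identities hold literally and not merely up to the orientation of the root edge. None of this requires ideas beyond those already used to prove Corollary~\ref{laws of two sides of gammaright} and to set up the flip $\flip$, but it must be carried out carefully so that the two invocations of the corollary fit together.
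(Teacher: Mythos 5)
Your proof is correct and follows essentially the same route as the paper: the only probabilistic inputs are Corollary~\ref{laws of two sides of gammaright} and the distributional invariance $\UIHPQ\eqdist\UIHPQ^{\flip}$ together with the flip identities, exactly as in the paper's argument. The paper merely packages the two applications differently (factorizing expectations of products $f_l f_c f_r$ rather than stating the two independences $C\perp(A,B)$ and $A\perp(B,C)$ and combining them), so no further changes are needed.
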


\begin{proof}
Notice that $\gamma^{\mathrm{left}}|\UIHPQ$ and $\gamma^{\mathrm{left}}|\UIHPQ|\gamma^{\mathrm{right}}$ are Borel-measurable functions of the whole $\UIHPQ|\gamma^{\mathrm{right}}$, which is independent of $\gamma^{\mathrm{right}}|\UIHPQ$ by Corollary~\ref{laws of two sides of gammaright}.

Let $f_l$, $f_c$, $f_r$ be bounded Borel-measurable functions on the space $\sQ$; then thanks to the above observation we have
$$\mathbb{E}[f_l(\UIHPQ|\gamma^{\mathrm{left}})f_c(\gamma^{\mathrm{left}}|\UIHPQ|\gamma^{\mathrm{right}})f_r(\gamma^{\mathrm{right}}|\UIHPQ)]=\mathbb{E}[f_l(\UIHPQ|\gamma^{\mathrm{left}})f_c(\gamma^{\mathrm{left}}|\UIHPQ|\gamma^{\mathrm{right}})]\mathbb{E}[f_r(\gamma^{\mathrm{right}}|\UIHPQ)].$$
On the other hand, since $\UIHPQ|\gamma^{\mathrm{left}}=(\gamma^{\mathrm{right}}|\UIHPQ^\flip)^\flip$ and $\gamma^{\mathrm{left}}|\UIHPQ|\gamma^{\mathrm{right}}=(\gamma^{\mathrm{left}}|\UIHPQ^\flip|\gamma^{\mathrm{right}})^\flip$, we have
$$\mathbb{E}[f_l(\UIHPQ|\gamma^{\mathrm{left}})f_c(\gamma^{\mathrm{left}}|\UIHPQ|\gamma^{\mathrm{right}})]=\mathbb{E}[f_l((\gamma^{\mathrm{right}}|\UIHPQ^\flip)^\flip)f_c((\gamma^{\mathrm{left}}|\UIHPQ^\flip|\gamma^{\mathrm{right}})^\flip)];$$
since $\UIHPQ$ has the same law as $\UIHPQ^\flip$, the above is the same as
$$\mathbb{E}[f_l((\gamma^{\mathrm{right}}|\UIHPQ)^\flip)f_c((\gamma^{\mathrm{left}}|\UIHPQ|\gamma^{\mathrm{right}})^\flip)]=\mathbb{E}[f_l((\gamma^{\mathrm{right}}|\UIHPQ)^\flip)]\mathbb{E}[f_c((\gamma^{\mathrm{left}}|\UIHPQ|\gamma^{\mathrm{right}})^\flip)]=$$
$$=\mathbb{E}[f_l(\UIHPQ|\gamma^{\mathrm{left}})]\mathbb{E}[f_c(\gamma^{\mathrm{left}}|\UIHPQ|\gamma^{\mathrm{right}})],$$ which proves the proposition.
\end{proof}

In this pencil decomposition, the quadrangulations $\gamma^{\mathrm{right}}|\UIHPQ$ and $\UIHPQ|\gamma^{\mathrm{left}}$ have boundaries that are ``free'' on one side of the root vertex (the right and left side respectively), while the other half of the boundary is a strict geodesic, in the sense that there is no way to join any two of its vertices by a geodesic other than following the boundary itself. This boundary condition is reminiscent of the works by Bouttier \& Guitter \cite{BG12} and Le Gall \cite{LG11} on quadrangulations with geodesic boundaries involved in the ``slice decomposition''.

 We shall now study the geometry of the quadrangulation $\gamma^{\mathrm{left}}|\UIHPQ|\gamma^{\mathrm{right}}$ and show that it does look like a ``pencil'' at large scales; in order to do this, we need a more detailed description of $\gamma^{\mathrm{left}}$.

\subsection{The leftmost geodesic ray}\label{leftmost}

\begin{prop}\label{leftmost geodesic ray as a limit}Given a treed bridge $(b;T)\in\TB_\infty^+$, define a sequence of (finite) geodesics $(\gamma^n)_{n\geq 1}$ so that $\gamma^n=(c^n_i)_{1\leq i\leq n}$, where $v(c^n_n)$ is the root of the leftmost tree in $T(\DS(b))$ having root label $n$, $c_n^n$ is its rightmost corner, and for each $i=1,\ldots, n-1$ the corner $c^n_i$ is the rightmost among those such that $v(c^n_i)=v(s(c^n_{i+1}))$. Then $\gamma^n$ converges to $\gamma$ as $n\to\infty$, where $\gamma$ is the leftmost geodesic ray in $\Phi((b;T))$, monotonically with respect to the left-to-right order $\prec$.\end{prop}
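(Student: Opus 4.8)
The plan is to verify three things: that each $\gamma^n$ is a well-defined geodesic of length $n$ from the root vertex of $\Phi((b;T))$; that the sequence $(\gamma^n)_{n\ge 1}$ is monotone, hence convergent, for $\prec$; and that its limit is $\gamma^{\mathrm{left}}$. For the first point I would begin by noting that a tree with root label $n$ exists in $(b;T)$ for every $n\ge 1$: since $(b;T)\in\TB^+_\infty$ uses each label only finitely often, the bridge labels tend to $+\infty$ on the side indexed by $\mathbb{Z}_{\leq 0}$, and on the way down from $+\infty$ to $x_0=0$ the bridge must perform a down-step from $n$ to $n-1$, i.e.\ graft a tree with root label $n$. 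The phrase ``rightmost corner'' is then unambiguous because each label occurs on only finitely many real corners. Reading off labels gives $l(c^n_i)=i$; the edge $\Phi$ draws from $c^n_{i+1}$ to $s(c^n_{i+1})$ lands on $v(c^n_i)$; the first edge lands on the root vertex $\delta$; so by Remark~\ref{premark} (labels are graph distances to the root) $\gamma^n$ is a geodesic of length $n$, ending at $v_n:=v(c^n_n)$, the root of the leftmost tree with root label $n$. The point worth isolating here is that $c^n_i$ is, for every $i<n$, the \emph{rightmost} corner at its own vertex, so every prefix $(c^n_i)_{1\le i\le m}$ is produced by the same rightmost-corner rule and coincides with the canonical such geodesic from the root to $v(c^n_m)$.

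For the monotonicity the main tool is the classical fact that the rightmost-corner geodesic from the root to a fixed vertex $w$ is the $\preceq$-minimum among all geodesics from the root to $w$ — a consequence of the planarity of the successor edges in the Schaeffer construction, which I would cite or re-derive in a line. Combined with the prefix remark above, this forces $\gamma^n$ and $\gamma^{n+1}$ not to cross: on any common vertex both prefixes equal the leftmost geodesic from the root to that vertex, so one of the two paths lies weakly to the left of the other. To fix the direction I would use that, scanning the bridge from $\mathbb{Z}_{\leq 0}$ inwards, every descent from $+\infty$ down to level $n$ must cross the transition $n+1\to n$ before reaching level $n$, so the leftmost down-step with label $n+1$ precedes the leftmost one with label $n$; hence $v_{n+1}$ lies weakly to the left of $v_n$ along the left-to-right contour, and a short planarity argument then places $v_n$ weakly to the right of $\gamma^{n+1}$, giving $\gamma^{n+1}\preceq\gamma^n$. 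Since $\UIHPQ$ is locally finite and the first corner $c^n_1$ can only drift weakly leftwards among the finitely many corners of the finitely many neighbours of the root, $c^n_1$ is eventually constant; iterating along the geodesic, each $c^n_i$ stabilises, and the common value $(c_i)_{i\ge 1}$ is a geodesic ray $\gamma_\infty$ with $\gamma_\infty\preceq\gamma^n$ for all $n$.

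It remains to identify $\gamma_\infty$ with $\gamma^{\mathrm{left}}$. Since $\gamma^{\mathrm{left}}$ is the $\preceq$-minimum over geodesic rays, it suffices to show $\gamma_\infty\preceq\gamma'$ for every geodesic ray $\gamma'$, and for this it is enough to prove that, for all large $n$, $\gamma^n$ lies weakly to the left of the length-$n$ prefix of $\gamma'$. This is the delicate step, and I expect it to be the main obstacle: the endpoint $\gamma'(n)$ of that prefix need not be a tree root — it may sit inside a finite tree grafted far to the left of $v_n$, which a priori would let $\gamma'$ dip strictly to the left of $\gamma_\infty$. The resolution is that a geodesic ray cannot stay inside a finite tree, so beyond a certain level $\gamma'$ must pass through the roots of successive grafted trees; combined with the transience estimate of Proposition~\ref{prop:transience} (the bridge labels grow like $|i|^{1/2-\varepsilon}$, so trees far to the left carry only large labels, and their internal vertices labelled $n$ lie far from $-\infty$), this shows $\gamma'$ cannot remain strictly to the left of $\gamma_\infty$, whence $\gamma_\infty=\gamma^{\mathrm{left}}$ and the asserted monotone convergence follows. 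Everything outside this last step is planarity bookkeeping together with the elementary analysis of the bridge described above.
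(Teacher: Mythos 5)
Your skeleton is close to the paper's: well-definedness of the $\gamma^n$, the observation that the rightmost-corner rule is deterministic below any vertex (so two such geodesics coincide below any common vertex and cannot cross), monotonicity plus stabilisation of initial segments, and identification of the limit by comparison with an arbitrary geodesic ray. But two things go wrong, one of them fatally. First, the direction of monotonicity is backwards. Since $v_{n+1}$ lies further out than $v_n$ along the left part of the boundary, the path $\gamma^{n+1}$ together with the boundary segment joining the root vertex to $v_{n+1}$ encloses a \emph{finite} region, and $v_n$ sits on that boundary segment; hence $v_n$, and in fact all of $\gamma^n$, lies weakly to the \emph{left} of $\gamma^{n+1}$, not to the right. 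The sequence is increasing, $\gamma^n\preceq\gamma^{n+1}$, and approaches $\gamma^{\mathrm{left}}$ from the left (a configuration with a tree of root label $n+1$ grafted between $v_{n+1}$ and $v_n$ and containing a vertex labelled $n$ shows that $\gamma^{n+1}$ genuinely passes outside $\gamma^n$). This is repairable, but it indicates the planar picture was not pinned down.

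The serious gap is the step you yourself flag as delicate: showing that no geodesic ray lies strictly to the left of the limit (equivalently, that every geodesic ray lies weakly to the right of each $\gamma^n$). The proposition is a deterministic statement about \emph{every} treed bridge in $\TB^+_\infty$, so the almost-sure estimate of Proposition~\ref{prop:transience} is simply not available; and even in the random setting it only controls the labels of the bridge, not the labels inside the grafted trees: a tree grafted very far to the left, with a huge root label, may well contain vertices labelled $n$ (controlling this is precisely the content of Proposition~\ref{small labels}, and only with high probability), and a geodesic ray has no reason to pass through the roots of the trees it traverses. So your proposed resolution does not exclude a ray dipping strictly to the left. The missing, and much simpler, idea is that the region strictly to the left of $\gamma^n$ is \emph{finite}, because $\gamma^n$ joins the root vertex to the boundary vertex $v_n$ and, together with the boundary segment between them, encloses only finitely many vertices. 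An infinite geodesic ray $\gamma'$ therefore cannot stay in that region; if part of it lay strictly to the left of $\gamma^n$, one could take the largest index $i$ at which $\gamma'$ passes through $v(c^n_i)$ (necessarily at distance $i$ from the root, both paths being geodesics issued from the root vertex) and leaves it by an edge lying strictly to the left of $\gamma^n$, contradicting the fact that $c^n_i$ is the rightmost corner of its vertex, so that no edge issued from $v(c^n_i)$ lies strictly to the left of the edge $(c^n_i,s(c^n_i))$. This purely combinatorial argument is how the paper concludes, and it must replace the probabilistic step in your proposal.
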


\begin{figure}
\vspace{-2.5cm}
\begin{tikzpicture}
\tikzstyle{real}=[fill=black, draw=white, very thick, circle, inner sep=2pt]
\tikzset{every path/.style = {thick}}
	\begin{pgfonlayer}{nodelayer}
	\contourlength{.8pt}
		\node [style=phantom] (0) at (6, -2) {};
		\node [style=real] (1) at (4.5, -2) {};
		\node [style=real] (2) at (3, -2) {};
		\node [style=real] (3) at (1.5, -2) {};
		\node [style=phantom] (4) at (0, -2) {};
		\node [style=real] (5) at (-1.5, -2) {};
		\node [style=real] (6) at (-3, -2) {};
		\node [style=none] (7) at (4.5, -2.25) {0};
		\node [style=none] (8) at (3, -2.25) {\contour{white}1};
		\node [style=none] (9) at (0, -2.25) {\contour{white}1};
		\node [style=none] (10) at (-1.5, -2.25) {\contour{white}2};
		\node [style=none] (11) at (1.5, -2.25) {\contour{white}2};
		\node [style=real] (12) at (3.25, -1.25) {};
		\node [style=real] (13) at (3.25, -0.5) {};
		\node [style=real] (14) at (1.5, -1.25) {};
		\node [style=real] (15) at (-1.5, -1.25) {};
		\node [style=real] (16) at (-1.5, -0.5) {};
		\node [style=phantom] (17) at (-4.5, -2) {};
		\node [style=none] (18) at (-3, -2.25) {\contour{white}3};
		\node [style=none] (19) at (1.25, -1.5) {\contour{white}1};
		\node [style=none] (20) at (3.5, -1.25) {\contour{white}1};
		\node [style=none] (21) at (6, -2.25) {\contour{white}1};
		\node [style=none] (22) at (-1.5, -0.25) {\contour{white}2};
		\node [style=none] (23) at (-1.25, -1.25) {\contour{white}3};
		\node [style=none] (24) at (3, -0.5) {\contour{white}2};
		\node [style=none] (25) at (-4.5, -2.25) {\contour{white}2};
		\node [style=real] (26) at (-6, -2) {};
		\node [style=real] (27) at (-6, -1.25) {};
		\node [style=real] (28) at (-6.5, -0.5) {};
		\node [style=real] (29) at (-5.5, -0.5) {};
		\node [style=real] (30) at (-3, -1.25) {};
		\node [style=real] (31) at (-3, -0.5) {};
		\node [style=real] (32) at (-3.5, 0.5) {};
		\node [style=none] (33) at (-2.75, -0.5) {\contour{white}2};
		\node [style=none] (34) at (-3.25, -1.25) {\contour{white}3};
		\node [style=none] (35) at (-3.75, 0.5) {\contour{white}1};
		\node [style=none] (36) at (-6, -2.25) {\contour{white}3};
		\node [style=none] (37) at (-6.25, -1.25) {\contour{white}2};
		\node [style=none] (38) at (-5.25, -0.25) {\contour{white}2};
		\node [style=none] (39) at (-6.75, -0.25) {\contour{white}3};
		\node [fill=white] (40) at (-7.5, -2) {$\ldots$};
		\node [fill=white] (41) at (7.5, -2) {$\ldots$};
		\contourlength{1pt}
		\contournumber{100}
		\node (42) at (3.75, -1.6) {\contour{white}{$\gamma^1$}};
		\node (43) at (2, 0.25) {\contour{white}{$\gamma^2$}};
		\node (44) at (-2.1, 0.8) {\contour{white}{$\gamma^3$}};
	\end{pgfonlayer}
	\begin{pgfonlayer}{edgelayer}
			\fill [blue!10,  in=72, out=152, looseness=7.3] (2.center) to (1.center) [bend right=65, looseness=1.2] to (2.center);
			\fill [green!10, bend left=75, looseness=1.85] (31.center) to (14.center) [bend right=10, looseness=1] to (5.center) [bend right=30] to (6.center) [bend left=90, looseness=1.2] to (31.center);
		\draw (2) to (12);
		\draw [style=map, bend right=60] (12) to (13);
		\draw [style=geoone, bend left=60] (2) to (1);
		\draw [style=map, in=165, out=60] (3) to (2);
		\draw (3) to (14);
		\draw (3) to (4);
		\draw (4) to (5);
		\draw [style=map, bend right=60, looseness=1.25] (5) to (15);
		\draw [style=map, bend left=60, looseness=1.25] (15) to (16);
		\draw (17) to (6);
		\draw [style=geotwo, bend left=15, looseness=0.75] (5) to (14);
		\draw [style=geotwo, bend left=105, looseness=2.50] (14) to (1);
		\draw [style=map, bend left] (16) to (14);
		\draw [style=map, bend left=60] (12) to (1);
		\draw [bend right=15, looseness=0.00] (2) to (1);
		\draw (1) to (0);
		\draw (15) to (5);
		\draw (3) to (2);
		\draw [style=map, in=105, out=119, looseness=2.75] (5) to (14);
		\draw (15) to (16);
		\draw (13) to (12);
		\draw [style=map, in=90, out=135, looseness=3.75] (12) to (1);
		\draw [style=map, in=75, out=150, looseness=6.25] (2) to (1);
		\draw (26) to (17);
		\draw (27) to (29);
		\draw [style=geothree, bend left=15, looseness=0.75] (26) to (31);
		\draw [style=geothree, bend left=75, looseness=1.75] (31) to (14);
		\draw [style=map, bend left=75] (6) to (31);
		\draw [style=map, bend left=45, looseness=1.25] (30) to (31);
		\draw [style=map, bend left=15] (30) to (5);
		\draw [style=map, bend left] (6) to (5);
		\draw [style=map, bend right=60] (32) to (31);
		\draw [style=map, in=45, out=60, looseness=1.25] (32) to (1);
		\draw (30) to (6);
		\draw (31) to (30);
		\draw (32) to (31);
		\draw [style=map, bend left=120, looseness=2.50] (27) to (32);
		\draw [style=map, bend left=60, looseness=1.25] (29) to (32);
		\draw [style=map, in=120, out=90, looseness=1.75] (27) to (32);
		\draw [style=map, bend left=45, looseness=0.75] (28) to (27);
		\draw [style=map, bend right=15, looseness=0.75] (27) to (32);
		\draw [style=map, bend left=90, looseness=1.25] (26) to (27);
		\draw (26) to (27);
		\draw (28) to (27);
		\draw (26) to (40.center);
		\draw (0) to (41.center);
		\draw (6) to (5);
	\end{pgfonlayer}
\end{tikzpicture}
\caption{\small The sequence of increasing geodesics $\gamma^1, \gamma^2, \gamma^3,\ldots$.}
\end{figure}
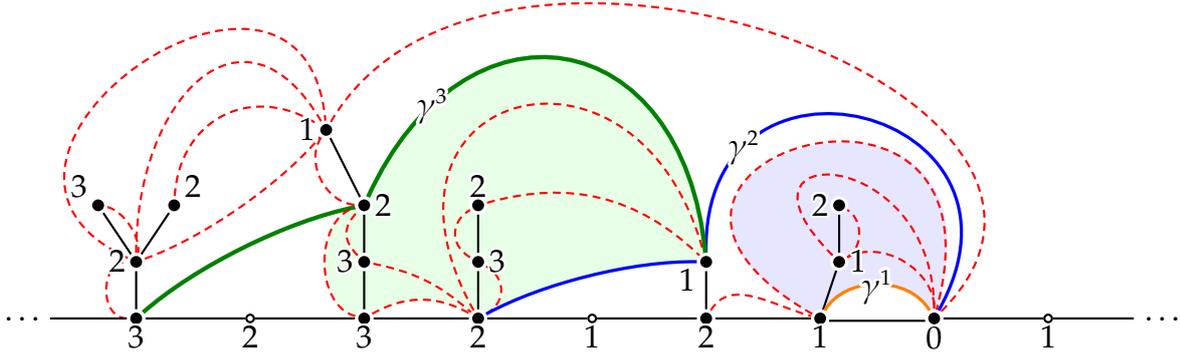

\begin{proof}
The sequence of geodesics $(\gamma^n)_{n\geq 1}$ is indeed increasing for $\prec$, in the sense that $\gamma^{n}$ lies left of $\gamma^{n+1}$ for each $n>0$. As a consequence, finite initial segments stabilise after finite time: for each $k$ there is $n_k$ such that, for $n\geq n_k$, we have $c^n_j=c^{n_k}_j$ for $j=1,\ldots,k$ (that is, the geodesics $\gamma^n$, for $n\geq n_k$, coincide up to distance $k$ from the root).

Consider now the sequence of corners $\gamma=(c_k^{n_k})_{k>0}$; such a sequence is a geodesic ray (since for any $k>1$ we must have $v(s(c_k^{n_k}))=v(c_{k-1}^{n_k})=v(c_{k-1}^{n_{k-1}})$), lies to the right of each geodesic in the sequence $(\gamma^n)_{n\geq 1}$ and is the leftmost geodesic with this property.

We now claim that $\gamma$ is the leftmost geodesic ray in $(b;T)$; all we need to show is that there cannot be a geodesic ray lying left of $\gamma$. Equivalently, we show that for each geodesic $\gamma^n$, all geodesic rays lie to the right of $\gamma^n$. Suppose $\gamma'=(c'_i)_{i\in\mathbb{N}^+}$ is a geodesic ray, and suppose part of it lies strictly to the left of $\gamma^n$ for some $n>0$. Since the region of $\Phi((b;T))$ lying left of $\gamma^n$ is finite, by following $\gamma'$ away from the root vertex, we must eventually leave it. This amounts to saying that we can take 
$$m=\max\left\{i\in\{1,\ldots,n\}\bigm\vert v(c'_i)=v(c^n_i) \mbox{ and the edge $(c'_i, s(c'_i))$ lies strictly left of $\gamma^n$}\right\},$$
since if the set above is empty, then $\gamma'$ lies to the right of $\gamma^n$. This, however, is a contradiction: $c^n_i$ is defined as the rightmost corner of $v(c^n_i)$, hence there is no edge issued from $v(c^n_i)$ lying strictly to the left of the edge $(c^n_i, s(c^n_i))$, which belongs to $\gamma^n$.
\end{proof}

\begin{prop}\label{law of middle section}
Let $\B^c_\infty=((X^c_i)_{i\in\mathbb{Z}};T^c)$ be a random treed bridge such that
\begin{itemize}
\item $X^c_i=|i|$ for all $i\in\mathbb{Z}$;
\item the trees $T^c(i)$ are independent and distributed according to $\rho^+_{|i|}$ for all $i<0$.
\end{itemize}
Notice that $\B^c_\infty$ belongs to $\TB^+_\infty$ almost surely. Then $\gamma^{\mathrm{left}}|\UIHPQ|\gamma^{\mathrm{right}}$ has the same law as $\Phi(\B^c_\infty)$.
\end{prop}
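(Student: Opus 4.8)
The plan is to reduce the statement to a decomposition of the Boltzmann positive labelled forest along its leftmost geodesic, using Corollary~\ref{laws of two sides of gammaright} and Proposition~\ref{leftmost geodesic ray as a limit} for the reduction. Realise $\UIHPQ=\Phi(\B_\infty)$ with $\B_\infty=((X_i)_{i\in\mathbb{Z}};T)$ as in Proposition~\ref{limitinp}, and let $\B^l_\infty$ be the treed bridge of Corollary~\ref{laws of two sides of gammaright}, so that $\UIHPQ|\gamma^{\mathrm{right}}=\Phi(\B^l_\infty)$ and $\gamma^{\mathrm{left}}|\UIHPQ|\gamma^{\mathrm{right}}=\gamma^{\mathrm{left}}|\Phi(\B^l_\infty)$. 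As in the proof of the lemma identifying $\gamma^{\mathrm{right}}$ in Section~\ref{decomposition}, every geodesic ray of $\Phi((b;T))$ uses only corners lying in the trees grafted on \emph{negative} indices of $(b;T)$; since $\B_\infty$ and $\B^l_\infty$ coincide on that negative half, and since the leftmost geodesic ray is, by Proposition~\ref{leftmost geodesic ray as a limit}, the monotone limit of the explicit geodesics $\gamma^n$ — each of which depends only on the negative half — the leftmost geodesic ray of $\Phi(\B^l_\infty)$ is literally $\gamma^{\mathrm{left}}$. Hence it suffices to describe, for an arbitrary positive treed bridge $(b;T)\in\TB^+_\infty$, the quadrangulation lying to the right of its leftmost geodesic ray, and then feed in the law of the negative half of $\B^l_\infty$, which by Corollary~\ref{laws of two sides of gammaright} is that of the negative half of $\B_\infty$, i.e.\ a $\mathbf{p}$-walk on $\mathbb{N}$ carrying independent $\rho^+$ trees on its down-steps.

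Next I would unwind, in treed-bridge coordinates, the operation $(b;T)\mapsto \gamma^{\mathrm{left}}|\Phi((b;T))$. Using $\gamma^{\mathrm{left}}=\lim_n\gamma^n$ and the fact that each $\gamma^n$ is obtained by going to the root of the leftmost tree of root-label $n$ and then following, at each vertex, its rightmost corner through the successor relation, one reads off exactly which corners of $(b;T)$ end up strictly to the left of $\gamma^{\mathrm{left}}$ and which survive on its right; the surviving corners assemble into the old positive half of $(b;T)$ (untouched), the geodesic $\gamma^{\mathrm{left}}$ itself as new left boundary, and a collection of labelled subtrees hanging off that boundary. Because $\gamma^{\mathrm{left}}$ is a \emph{strict} geodesic, the bridge labels of this new left boundary are forced to be $\ldots,3,2,1,0$, i.e.\ $X^c_i=|i|$; for $\B^l_\infty$ the old positive half is already the bare strict geodesic $0,1,2,\ldots$ with trivial trees, matching the positive half of $\B^c_\infty$. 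What remains is to identify the conditional law of the labelled subtrees attached to the new left boundary.

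The core of the argument is then a spinal decomposition of the Boltzmann positive labelled forest: conditionally on $\gamma^{\mathrm{left}}$, the labelled subtrees lying to its right are independent, the one grafted at distance $k$ from the root being distributed according to $\rho^+_k$. This is the analogue, in the present ``positive'' setting, of the Bismut-type spine decompositions used for the UIPQ in~\cite{LGM10,CMMinfini,Men08}: the leftmost geodesic ray singles out a distinguished ray in the coding forest, and the branching structure of the multi-type Galton--Watson trees recalled in Section~\ref{trees} — combined with the fact that the spine labels are prescribed, increasing by one at each step since the spine is a geodesic — turns the subtrees on the right of the spine into fresh $\rho^+_k$ trees. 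Granting this, plugging in the law of $\B^l_\infty$ produces exactly $\B^c_\infty$, and one concludes the identity in distribution, taking care of the rooting convention and of the ``bubbles'' attached to the root vertex that lie between $\gamma^{\mathrm{left}}$ and $\gamma^{\mathrm{right}}$, so that the equality $\gamma^{\mathrm{left}}|\UIHPQ|\gamma^{\mathrm{right}}=\Phi(\B^c_\infty)$ holds on the nose.

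The step I expect to be the main obstacle is precisely this re-randomisation. The leftmost geodesic ray is not determined by any bounded region of the treed bridge — one must look arbitrarily far left to certify that a ray is the leftmost — so conditioning on it is delicate and the branching property cannot be invoked naively. I would circumvent this the way Ménard does in~\cite{Men08}: carry out the cut along the finite geodesics $\gamma^n$, for which the operation is genuinely measurable and essentially local, compute the law of the finite piece cut off at level $n$ using the explicit Boltzmann weights \eqref{w_k}--\eqref{eq:w=pp} and the Markov property of the $\mathbf{p}$-walk, and then let $n\to\infty$, using the monotonicity furnished by Proposition~\ref{leftmost geodesic ray as a limit} together with the control on small labels from Section~\ref{control on small labels} to justify passing to the limit. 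A secondary, more bookkeeping difficulty is matching the rootings and orientations on both sides of the claimed identity.
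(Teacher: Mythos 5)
Your plan is essentially the paper's own proof: cut along the finite geodesics $\gamma^n$ from Proposition~\ref{leftmost geodesic ray as a limit}, use the multi-type Galton--Watson structure of the $\rho^+$ trees to identify the subtrees above $\gamma^n$ as independent with laws $\rho^+_1,\ldots,\rho^+_n$ (the paper gets this from the observation that $\gamma^n$ depends only on the part of $\B_\infty$ lying below it, so the trees above can be resampled), and then pass to the limit via the monotone stabilization of the $\gamma^n$'s, where dominated convergence suffices and the small-label control of Section~\ref{control on small labels} is not actually needed. Your detour through $\B^l_\infty$ and the alternative explicit Boltzmann-weight computation are harmless reformulations of the same argument, so no genuinely different route is involved.
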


\begin{proof}
Let $(\gamma^n)_{n\geq 1}$ be the (random) sequence of geodesics in $\B_\infty$ defined within Proposition~\ref{leftmost geodesic ray as a limit}, and $\gamma^\mathrm{left}=(c_i)_{i\in\mathbb{N}^+}$ be the leftmost geodesic ray in $\B_\infty$. Call $T'(-i)$, for $i>0$, the tree of descendants of $v(c_i)$ that lie ``above'' the geodesic ray; then $\gamma^{\mathrm{left}}|\Phi(\B_\infty)|\gamma^{\mathrm{right}}$ is the image via $\Phi$ of the treed bridge $(b;T')$, where $b=(|i|)_{i\in\mathbb{Z}}$. All we need to show is, therefore, that for each $i>0$ the tree $T'(-i)$ has law $\rho^+_i$, and that all such trees are independent.

Given any fixed $n>0$, consider the trees $t^n_1,\ldots,t^n_{n}$ lying above the geodesic $\gamma^n$, rooted in $v(c^{n}_1),\ldots,v(c^{n}_{n})$; we show they are independent and distributed according to $\rho^+_1,\ldots,\rho^+_{n}$. This is essentially a consequence of the fact, remarked upon in Section~\ref{trees}, that $\rho^+_i$ is the law of a multi-type Galton--Watson tree. Consider the part of $\B_\infty$ that lies below $\gamma^{n}$ and notice that it is independent of trees $t^n_1,\ldots,t^n_{n}$ (given an instance of $\B_\infty$, substituting such trees for another $n$-tuple $t'^n_1,\ldots,t'^n_{n}$ would still yield the same geodesic $\gamma^{n}$, since $s(c^n_i)$, for $i=1,\ldots,n$ only depends on corners which lie below $\gamma^{n}$). From this observation the claim easily follows.

Now, given any positive integer $k$, consider the trees $T'(-1),\ldots, T'(-k)$ and let $f_1,\ldots,f_k$ be measurable bounded functions on $\LT^+$. We have
$$\mathbb{E}\left[\prod_{i=1}^kf_i(T'(-i))\right]=\lim_{n\to\infty}\mathbb{E}\left[\prod_{i=1}^kf_i(t^n_i)\right]=\lim_{n\to\infty}\prod_{i=1}^k\mathbb{E}\left[f_i(t^n_i)\right],$$
where the first equality holds by dominated convergence and for the second we use independence of the trees $t_1^n,\ldots,t_n^n$.
Now, since $\lim_{n\to\infty}\mathbb{E}\left[f_i(t^n_i)\right]=\mathbb{E}\left[f_i(T'(-i))\right]$ for each $i=1,\ldots,k$, one obtains that the trees $T'(-1),\ldots, T'(-k)$ are independent and distributed according to $\rho^+_1,\ldots,\rho^+_k$, as wanted.
\end{proof}

The following result is analogous to~\cite{CMMinfini}:
\begin{cor}\label{infinite cut points}
The quadrangulation $\gamma^{\mathrm{left}}|\UIHPQ|\gamma^{\mathrm{right}}$ almost surely has an infinite number of pinch points; in other words, the leftmost and rightmost geodesic rays in the UIHPQ $\UIHPQ$ almost surely have an infinite number of vertices in common (through which all geodesic rays in $\UIHPQ$ must pass).
\end{cor}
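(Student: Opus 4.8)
The plan is to deduce the corollary from Proposition~\ref{law of middle section} together with an argument in the spirit of the confluence of discrete geodesics established in~\cite{CMMinfini}. By Proposition~\ref{law of middle section} it is enough to show that the random quadrangulation $\Phi(\B^c_\infty)$, where $\B^c_\infty=((X^c_i)_{i\in\mathbb{Z}};T^c)$ is the treed bridge with $X^c_i=|i|$ and with $T^c(i)$ independent of law $\rho^+_{|i|}$ for $i<0$, almost surely has infinitely many vertices lying on both of its boundary arcs. Indeed, since $X^c_i=|i|$, Remark~\ref{premark} (and its extension in Section~\ref{infinite construction}) shows that both boundary arcs of $\Phi(\B^c_\infty)$ are geodesic rays issued from the root vertex; since all the trees of $\B^c_\infty$ sit to the left of the root, the right arc is the maximal geodesic ray for the order $\prec$ of Section~\ref{decomposition} and the left one the minimal one, i.e.\ they are $\gamma^{\mathrm{right}}$ and $\gamma^{\mathrm{left}}$, and their vertex at graph distance $n$ from the root is, on the left, the root $u_n$ of $T^c(-n)$. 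Thus the corollary reduces to showing that $u_n$ belongs to $\gamma^{\mathrm{right}}$ for infinitely many $n$ almost surely, i.e.\ that the $n$-th vertices of the two boundary geodesics of $\Phi(\B^c_\infty)$ coincide for infinitely many $n$.

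The core of the argument is to exhibit, for each $n$, an event $A_n$ depending only on the trees of $\B^c_\infty$ and forcing $u_n\in\gamma^{\mathrm{right}}$. The natural choice is: $A_n$ is the event that $T^c(-n)$ is reduced to its root and that every tree $T^c(-i)$ with $i>n$ has all of its labels strictly larger than $n$. Following the successor rule of Section~\ref{infinite construction}, on $A_n$ the only real corner of $\B^c_\infty$ carrying label $n$ that is not separated, along the right end of the boundary contour, from the root corner by a smaller label is the corner at $u_n$ itself; hence the edge of $\gamma^{\mathrm{right}}$ arriving at distance $n$ must land at $u_n$, so $u_n$ lies on both boundary geodesics and is a pinch point of $\Phi(\B^c_\infty)$. (One may have to strengthen $A_n$ by also asking that no tree $T^c(-j)$ with $j\le n$ reaches label $n$, in order to rule out that a small label occurring in a tree grafted near the root ``comes back'' next to the distance-$n$ vertex of the right arc; this leaves the estimates below unchanged.) This step is the analogue, for positive half-planar treed bridges, of the simultaneous label records used in~\cite{CMMinfini}.

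It then remains to bound $\P(A_n)$ from below and to conclude. Using the conditional independence of the trees given the bridge, the value $\rho^+_n(\{\text{one-vertex tree}\})=1/w_n$, and the identity $\rho^+_x(\LT_x^{>m})=w_{x-m}/w_x$ obtained inside the proof of Proposition~\ref{small labels}, one gets
\[
\P(A_n)\;=\;\frac{1}{w_n}\prod_{i>n}\frac{w_{i-n}}{w_i}\;=\;\frac12\prod_{j=1}^{n-1}\frac{w_j}{2}\;\xrightarrow[n\to\infty]{}\;\frac12\prod_{j\ge 1}\frac{w_j}{2}\;=:\;c\;>\;0,
\]
where the infinite product converges and is positive because $w_j/2=1-\tfrac{2}{(j+1)(j+2)}$ and $\sum_j\tfrac{1}{(j+1)(j+2)}<\infty$; in particular $\P(A_n)\ge c$ for every $n$. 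The event ``$\Phi(\B^c_\infty)$ has infinitely many pinch points'' contains $\limsup_n A_n$ and, being measurable with respect to the tail $\sigma$-field of the independent family $(T^c(-i))_{i\ge 1}$, it has probability $0$ or $1$ by Kolmogorov's law; since $\P(\limsup_n A_n)=\lim_N\P\big(\bigcup_{n\ge N}A_n\big)\ge\limsup_n\P(A_n)\ge c>0$, it equals $1$. Carrying this back through Proposition~\ref{law of middle section} establishes the corollary.

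The main obstacle is the combinatorial step of the second paragraph: one has to pin down, at the level of the Schaeffer successor structure, exactly which configuration of the trees of $\B^c_\infty$ forces the distance-$n$ vertices of the two boundary geodesics to coincide, and to phrase it as an event that is at once strong enough (controlling the far-away low labels that could otherwise create an alternative distance-$n$ vertex adjacent to the right arc) and dependent only on the far-left trees, so that $\{A_n\text{ i.o.}\}$ is a genuine tail event. This is exactly the type of careful bookkeeping carried out in~\cite{CMMinfini} in the case of the UIPQ, here adapted to the positive half-planar treed bridge.
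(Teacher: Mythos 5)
Your proof is correct and follows essentially the same route as the paper: reduce to $\Phi(\B^c_\infty)$ via Proposition~\ref{law of middle section}, observe that the rightmost geodesic passes through the level-$n$ vertex of the left (geodesic) boundary as soon as no tree $T^c(-i)$ with $i>n$ carries the label $n$, bound this probability below by a telescoping product of the $w_{i-n}/w_i$, and conclude by reverse Fatou plus Kolmogorov's $0$--$1$ law for the tail event. Your extra requirements (that $T^c(-n)$ be trivial, and the optional control of trees $T^c(-j)$, $j\le n$) are unnecessary --- the paper's event drops them and gets the cleaner bound $p_k=\tfrac13\tfrac{k+3}{k+1}$ --- but they are harmless and leave the argument valid.
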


\begin{proof}
The two rays meet in a vertex labelled $k$ if the leftmost corner labelled $k$ in the treed bridge $\B^c_\infty=((X^c_i)_{i\in\mathbb{Z}},T^c)$ is actually the vertex $X^c_{-k}$ of the bridge of $\B^c_\infty$, which corresponds to the leftmost geodesic ray.

Since the bridge of $\B^c_\infty$ is geodesic, the leftmost tree whose root is labelled $k$ is $T^c(-k)$; the above event  thus corresponds to the intersection of events $\cap_{i>k}\{T^c(-i)\mbox{ has no vertices labelled $k$}\}$, which (by Proposition~\ref{law of middle section}) has probability
$$p_k=\prod_{i>k} \mathbb{P}(T^c(-i)\mbox{ has no vertices labelled $k$})=\prod_{i>k}w_{i-k}/w_i=2^{-k}\prod_{i=1}^kw_i=\frac{1}{3}\frac{k+3}{k+1}.$$

Hence the probability that the two geodesics have an infinite number of vertices in common is at least $\liminf_{k\rightarrow\infty}p_k$, which is $1/3$ (thus strictly positive). As a consequence, since the event is a tail event for the sequence of independent random variables $(T^c(-i))_{i\geq 1}$, such a probability is 1 by Kolmogorov's 0-1 law.\end{proof}

\subsection{Extension of results from~\cite{CMMinfini}}\label{Extension of results from CMMinfini}

As remarked in Section~\ref{Boltzmann quadrangulations and statement of the theorem}, the UIHPQ can be presented as $\Phi(\B^\pm_\infty)$, where $\B^\pm_\infty$ is the uniform infinite treed bridge from Definition~\ref{unconstrained infinite bridge}.

Notice that, while in the construction $\UIHPQ=\Phi(\B_\infty)$ labels in the treed bridge have a clear geometric interpretation (as distances to the root vertex in $\UIHPQ$), interpreting labels of $\B^\pm_\infty$ as a function of $\UIHPQ$ is less immediate. The following result is analogous to what one obtains in the case of the UIPQ~\cite{CMMinfini}; it is stated as an open question in~\cite{CMboundary}, and this section will be devoted to providing a proof.

\begin{thm}\label{labels as limit}
Consider the UIHPQ as $\UIHPQ=\Phi(\B^{\pm}_\infty)$ (where $\B^\pm_\infty$ is as in Definition~\ref{unconstrained infinite bridge}); then almost surely, for each pair of vertices $x,y$ of $\UIHPQ$, writing $l(x)$ and $l(y)$ for the labels they bear in $\B^\pm_\infty$, we have
\begin{equation}\label{eq:labels as limit}l(x)-l(y)=\lim_{z\to\infty}\mathrm{d_{gr}}(x,z)-\mathrm{d_{gr}}(y,z),\end{equation}
where by $z\to\infty$ we mean that $z$ leaves any finite region of $\UIHPQ$.
\end{thm}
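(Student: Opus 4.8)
The plan is to reduce the statement to a corresponding convergence of labels in the \emph{unconstrained} treed bridge $\B^\pm_\infty=((X^\pm_i)_{i\in\mathbb{Z}};T^\pm)$, and then to identify the limit $\lim_{z\to\infty}\mathrm{d_{gr}}(x,z)-\mathrm{d_{gr}}(y,z)$ by choosing a convenient sequence of vertices $z$ going to infinity. First I would recall (from Remark~\ref{pmremark}) that in the pointed construction $\Phi^\bullet((b;T))$ one has $\mathrm{d_{gr}}(x,\delta)=l(x)-l(\delta)$ for every real vertex $x$; since the difference $l(x)-l(y)$ does not depend on the additive normalization, it suffices to exhibit, for a.e.\ instance of $\B^\pm_\infty$, a sequence $z_n\to\infty$ along which $\mathrm{d_{gr}}(x,z_n)=l(x)+o(1)$ relative to a common reference, i.e.\ $\mathrm{d_{gr}}(x,z_n)-\mathrm{d_{gr}}(y,z_n)\to l(x)-l(y)$, \emph{and} to show this is in fact the limit for \emph{every} sequence leaving finite regions (so one also needs a matching lower bound valid for all far-away $z$).

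The upper bound is the easier half: given $z$ far from the root, I would construct a path from $x$ to $z$ by following successor edges. In the Schaeffer-type construction each real corner $c$ is joined to its successor $s(c)$, whose label is $l(c)-1$; iterating from a corner at $x$ produces a path of length roughly $l(x)$ down to a low-label region, and symmetrically from $y$. Comparing the two paths, their portions through the low-label ``core'' near where the pointing vertex $\delta$ would sit can be made to coincide (or to differ by a bounded amount independent of $z$, then shown to vanish), giving $\mathrm{d_{gr}}(x,z)-\mathrm{d_{gr}}(y,z)\le l(x)-l(y)+o(1)$; applying the same argument with $x$ and $y$ exchanged yields the reverse inequality, hence the limit. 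The precise mechanism mirrors \cite[Theorem 2.8]{CMMinfini} and \cite[Section 2]{CMMinfini}: one uses that, because $\liminf_{i\to\pm\infty}X^\pm_i=-\infty$, successor chains from $x$ and from $y$ eventually merge, and that the labels along the boundary — which encode nothing directly here since labels in $\B^\pm_\infty$ are \emph{not} distances — nonetheless control the combinatorial structure via the cactus/bound $\mathrm{d_{gr}}(u,v)\ge l(u)+l(v)-2\max(\min_{[\text{arc}]}l)$ type inequalities.

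The main obstacle, and the step I would spend the most care on, is the \emph{lower bound} $\mathrm{d_{gr}}(x,z)-\mathrm{d_{gr}}(y,z)\ge l(x)-l(y)-o(1)$ uniformly over all $z$ leaving finite regions. Here one cannot simply exhibit a good path; one must rule out shortcuts. The standard tool is the ``cactus bound'' for quadrangulations built from labelled objects: for real vertices $u,v$ with a corner of $u$ and a corner of $v$ delimiting an arc $A$ of the contour, $\mathrm{d_{gr}}(u,v)\ge l(u)+l(v)-2\min_{w\in A}l(w)$. Combined with Proposition~\ref{prop:transience} (labels along the bridge grow at least like $|i|^{1/2-\varepsilon}$) and a tail/Borel--Cantelli argument à la Ménard (Proposition~\ref{small labels} and Corollary~\ref{likely kinda continuity}) controlling how small labels can be far from the root, one shows that for $z$ far away every contour arc separating $z$ from a fixed neighborhood of $x$ and $y$ must contain only large labels, so the cactus bound forces $\mathrm{d_{gr}}(x,z)$ and $\mathrm{d_{gr}}(y,z)$ to be dominated by the ``local'' cost $l(x)$ (resp.\ $l(y)$) up to an error that is uniform in $z$ and tends to $0$. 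Assembling the two bounds gives \eqref{eq:labels as limit}. I would also note that, since the statement is almost sure over pairs $(x,y)$ and the vertex set is countable, it is enough to fix $x,y$ and take a countable intersection of the a.s.\ events produced above; the transfer back from the pointed picture $\Phi^\bullet(\B^\pm_\infty)$ to the rooted UIHPQ is harmless because the pointing vertex, if present, lies in a region that $z\to\infty$ eventually avoids.
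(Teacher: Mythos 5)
There is a genuine gap, and it sits exactly at the step you yourself flag as the hard one. Your ``no shortcuts'' lower bound is argued via the cactus bound together with Proposition~\ref{prop:transience} and Proposition~\ref{small labels}, but those statements concern the \emph{positive} treed bridges $\B_\infty$ and $\B_p$, whose labels are distances to the root and grow like a Bessel process along the boundary. Theorem~\ref{labels as limit} is about the \emph{unconstrained} bridge $\B^\pm_\infty$ of Definition~\ref{unconstrained infinite bridge}, whose boundary labels are a two-sided simple random walk with $\liminf_{i\to\pm\infty}X^\pm_i=-\infty$; in that labelling, arbitrarily small labels occur arbitrarily far from the root on both sides (this is precisely what makes successors well defined), so the assertion that every contour arc separating a far-away $z$ from a neighbourhood of $x,y$ carries only large labels is false, and the cactus bound cannot produce an error term uniform over all distant $z$. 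The ``easy'' half has a similar problem: concatenating successor chains from $x$ and $y$ through their merge point $w$ only yields \emph{upper} bounds $\mathrm{d_{gr}}(x,z)\le (l(x)-l(w))+\mathrm{d_{gr}}(w,z)$ and likewise for $y$; to turn this into $\mathrm{d_{gr}}(x,z)-\mathrm{d_{gr}}(y,z)\le l(x)-l(y)+o(1)$ you already need a matching \emph{lower} bound on $\mathrm{d_{gr}}(y,z)$, i.e.\ the very statement that geodesics from $y$ to $z$ cannot undercut the label cost, so the symmetric swap of $x$ and $y$ does not close the argument. (A small further confusion: in the infinite construction on $\TB^{-\infty}$ no pointed vertex $\delta$ is added, so there is no pointing to ``transfer away from''.)

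The paper's proof supplies the missing control through the coalescence of geodesic rays obtained from the new positive construction, and it explicitly identifies this as the ingredient absent from~\cite{CMboundary}, where the question was left open even though cactus-type bounds were available there. Concretely, Corollary~\ref{infinite cut points} gives a.s.\ infinitely many vertices through which \emph{every} geodesic ray from the root passes; this yields Proposition~\ref{all geodesic rays are proper} (every geodesic ray from the root is a successor chain in $\B^\pm_\infty$), and then the two confluence lemmas, Lemma~\ref{confluence to the root} and Lemma~\ref{confluence to infinity}, which force every geodesic between $x$ (or $\rho$) and any sufficiently distant $z$ to pass through a fixed point $\gamma^0(r)$ of the reference proper ray. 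Splicing geodesics at $\gamma^0(r)$ then gives the \emph{exact} identity $\mathrm{d_{gr}}(z,x)-\mathrm{d_{gr}}(z,\rho)=\mathrm{d_{gr}}(\gamma^0(r),x)-r=l(x)$ for all $z$ with $\mathrm{d_{gr}}(z,\rho)$ large, not merely an asymptotic bound, after which the general pair $(x,y)$ follows by differencing. A proof along your lines would need either to import this confluence statement (which itself rests on the pencil decomposition of Section~\ref{decomposition}) or to find a genuine substitute for it; label-growth and small-label estimates for the positive bridge cannot play that role.
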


In order to prove this result we work along the same lines as in~\cite{CMMinfini}; one of the ingredients that were missing in~\cite{CMboundary} in order to obtain Theorem~\ref{labels as limit} is the coalescence of geodesic rays implied by our new construction of the UIHPQ.

To begin with, we need to have a look at how geodesic rays appear in $\B^\pm_\infty$; we call a \emph{proper geodesic ray issued from $x$} in $\B^\pm_\infty$, where $x$ is a (real) vertex, a sequence of corners $\gamma=(c_i)_{i\geq 0}$ such that $v(c_0)=x$ and that $v(s(c_i))=v(c_{i+1})$ for all $i\geq 0$, so that $l(c_{i})=l(x)-i$. Notice that, while proper geodesic rays issued from a vertex in $\B^\pm_\infty$ do correspond to geodesics in $\UIHPQ$ (because $\mathrm{d_{gr}}(x,y)\geq |l(x)-l(y)|$ for all vertices in $\UIHPQ$), it is not clear that every geodesic ray issued from a vertex in $\UIHPQ$ corresponds to a proper geodesic ray issued from that vertex in $\B_\infty^\pm$; we know thanks to Corollary~\ref{infinite cut points}, however, that this is the case for geodesic rays issued from the root vertex:

\begin{prop}\label{all geodesic rays are proper}
Every geodesic ray (issued from the root vertex) in $\UIHPQ=\Phi(\B_\infty^\pm)$ is \emph{proper}.
\end{prop}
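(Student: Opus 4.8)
The plan is to exploit the two different Schaeffer-type codings of the UIHPQ side by side: the ``positive'' coding $\UIHPQ = \Phi(\B_\infty)$, in which every geodesic ray issued from the root is \emph{automatically} proper (indeed the corners of a root geodesic ray are forced, at each step, to be a corner whose label decreases by one and whose successor sits at the previous vertex — this is exactly the characterisation of geodesic rays recalled at the start of Section~\ref{study of geodesic rays}), and the ``unconstrained'' coding $\UIHPQ = \Phi(\B_\infty^\pm)$, in which properness of a root geodesic ray is the statement we actually want. So the first step is to recall that the two constructions produce \emph{the same} random quadrangulation (with the same distinguished origin and the same oriented root edge), and to set up a measurable correspondence between the vertex sets of $\B_\infty$ and $\B_\infty^\pm$ that both identify (modulo the pointing vertex $\delta$) with the vertex set of $\UIHPQ$. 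Under this identification a geodesic ray in the map is a sequence of \emph{edges} of the map; what we must show is that, when we write those edges as arcs drawn by $\Phi$ in $\B_\infty^\pm$, each such edge joins a corner to \emph{its} successor, i.e.\ the arc goes ``downhill by exactly one label'' in $\B_\infty^\pm$.

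The key geometric input, and the reason this is not a triviality, is the coalescence/cut-point result: by Corollary~\ref{infinite cut points} the leftmost and rightmost geodesic rays of $\UIHPQ$ share infinitely many vertices, and every root geodesic ray is squeezed between them (this is the pencil picture of Figure~\ref{geodesic rays}), so every root geodesic ray passes through the same infinite increasing sequence of cut-vertices $\rho = v_0, v_1, v_2, \dots$ with $v_n$ at graph distance, say, $r_n \to \infty$ from $\rho$. The plan is then to argue locally between two consecutive cut-vertices: fix a geodesic ray $\gamma$ in the map, take any initial edge $e = \gamma(i)\gamma(i+1)$, and look at the first cut-vertex $v_n$ with $r_n > i+1$. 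The portion of $\UIHPQ$ ``enclosed'' up to $v_n$ is a \emph{finite} pointed (or boundaried) submap, and on a finite piece the unconstrained Schaeffer construction behaves like the genuine bijection of Proposition~\ref{pmbijection}: in a finite pointed quadrangulation the labels of $\B^\pm$ encode distances to the distinguished vertex $\delta$ and every geodesic toward $\delta$ is proper. Concretely, because $v_n$ is a cut-point and lies on every root geodesic ray, the restriction of $\gamma$ from $\rho$ to $v_n$ is a geodesic toward $v_n$ inside that finite submap; one then wants to run a finite Schaeffer-bijection argument — or, cleaner, Remark~\ref{pmremark} — to conclude that along this finite geodesic the labels of $\B_\infty^\pm$ decrease by exactly $1$ at each step and the edges are successor-edges, which is precisely properness for the edge $e$. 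Letting $i$ range over all of $\mathbb{N}$ gives properness of all of $\gamma$, and letting $\gamma$ range over the (countable, by the pencil description) set of root geodesic rays, intersecting the corresponding almost-sure events, finishes the proof.

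There is a subtlety I would be careful about: the labels in $\B_\infty^\pm$ record distances to ``infinity'' (see~\cite[Theorem 2.8]{CMMinfini} and the discussion preceding Theorem~\ref{labels as limit}), not distances to a fixed vertex, so the naive statement ``labels decrease along geodesics toward the root'' is false — what is true is that $l(x) - l(y)$ equals the asymptotic distance difference $\lim_{z\to\infty}(\mathrm{d_{gr}}(x,z)-\mathrm{d_{gr}}(y,z))$, which is exactly Theorem~\ref{labels as limit}. So the logically clean route is: (i) first establish, using the finite-submap argument above together with the cut-point structure, that the labels of $\B_\infty^\pm$ satisfy $l(v_n) - l(\rho) = -r_n$ along the cut-vertices and, more generally, decrease by $1$ along any root geodesic ray between cut-points — this is a statement purely about the combinatorics of the successor map restricted to a finite enclosed region, where the ``$\delta$'' of the \emph{local} finite Schaeffer bijection is the far cut-vertex; (ii) deduce that every edge of a root geodesic ray is a successor-edge in $\B_\infty^\pm$, i.e.\ the ray is proper. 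Since the paper places Proposition~\ref{all geodesic rays are proper} \emph{before} Theorem~\ref{labels as limit} and then presumably uses the former to prove the latter, the proof of the proposition should \emph{not} invoke Theorem~\ref{labels as limit}; instead it should rely only on Corollary~\ref{infinite cut points}, on the explicit Schaeffer description of geodesic rays as successor-chains, and on a finite-region comparison between $\Phi$ acting on $\B_\infty^\pm$ and the honest bijection $\Phi^\bullet$ of Proposition~\ref{pmbijection}.

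The main obstacle, then, is making rigorous the ``finite enclosed region'' reduction: one must show that the ball of radius $r$ in $\UIHPQ$, or the region cut off by a cut-vertex $v_n$, is coded in $\B_\infty^\pm$ by a \emph{finite} sub-treed-bridge (up to finitely many trees hanging off the bridge), so that the successor map there coincides with the successor map of an honest finite treed bridge whose $\Phi^\bullet$-image is a finite pointed quadrangulation — this is morally the same bookkeeping that underlies Ménard's argument in~\cite{Men08} and the continuity discussion of Remark~\ref{continuity}, and the non-continuity phenomenon of Remark~\ref{non continuity} warns that ``small labels far away'' could in principle spoil such a localisation. The way around this is that we only need localisation \emph{for geodesics toward the root}, where labels of $\B_\infty$ (which \emph{are} distances to $\rho$) grow, so the relevant corners all have bounded label and live in a bounded-label, hence finite, portion of $\B_\infty$; transferring this finiteness to $\B_\infty^\pm$ via the identification of vertex sets and the cut-point structure is the technical heart. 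Once that localisation is in hand, the rest is the classical finite Schaeffer computation, which I would only sketch, citing Remark~\ref{pmremark}.
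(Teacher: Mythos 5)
There is a genuine gap at the step you yourself flag as ``the technical heart''. Your plan hinges on the claim that the region of $\UIHPQ$ enclosed by a cut-vertex $v_n$ is coded by a finite sub-treed-bridge of $\B_\infty^\pm$ on which $\Phi$ behaves like the finite pointed bijection $\Phi^\bullet$ \emph{with $v_n$ playing the role of $\delta$}, so that the labels of $\B_\infty^\pm$ decrease by exactly one along a root geodesic headed towards $v_n$. Nothing in Remark~\ref{pmremark} gives this: the labels of $\B_\infty^\pm$ are not distances to $v_n$ (nor to any fixed vertex), and the restriction of the successor map to the enclosed region is not the successor map of an honest finite treed bridge pointed at $v_n$ — making that identification precise is essentially equivalent to proving the label interpretation of Theorem~\ref{labels as limit} inside the region, which, as you note, cannot be invoked here. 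A second, lesser, problem is your opening move of ``identifying the vertex sets'' of $\B_\infty$ and $\B_\infty^\pm$: the equality $\Phi(\B_\infty)=\UIHPQ=\Phi(\B_\infty^\pm)$ is an equality in distribution, so no canonical coupling of the two codings is available; fortunately the only input you really need from the positive side, namely Corollary~\ref{infinite cut points}, is an almost-sure property of the law of $\UIHPQ$ and hence transfers to $\Phi(\B_\infty^\pm)$ without any coupling.

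The missing idea — and the paper's actual argument, which is much shorter — is that a proper geodesic ray \emph{exists trivially} in $\B_\infty^\pm$: start from a corner adjacent to the root vertex and iterate the successor map; along this ray $\gamma$ the label drops by exactly one at each step, so $l(\gamma(j))=-j$. Since by Corollary~\ref{infinite cut points} the infinite set $I$ of cut-vertices lies on \emph{every} geodesic ray, it lies on $\gamma$, which pins $l(v)=-\mathrm{d_{gr}}(v,\rho)$ for every $v\in I$. Now take an arbitrary geodesic ray $\gamma'$: it also visits all of $I$, so $l(\gamma'(i))=-i$ for infinitely many $i$; since labels vary by at most one along any path, this forces $l(\gamma'(i))=-i$ for all $i$, and a map edge across which the label drops by exactly one is necessarily a successor edge, so $\gamma'$ is proper. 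No localisation to finite regions, no comparison with the positive coding, and no countability of the family of rays (your intersection of almost-sure events over rays) is needed: the argument is deterministic on the almost-sure event of Corollary~\ref{infinite cut points}.
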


\begin{proof}
This follows from Corollary~\ref{infinite cut points} by the same argument as in~\cite{CMMinfini}: we know that there is (almost surely) an infinite set $I$ of vertices in $\UIHPQ$ such that every geodesic ray visits all of the vertices in the set. Notice that there exists a proper geodesic ray in $\UIHPQ$, since one is obtained by simply iterating the successor function starting from a corner adjacent to the root vertex; call such a geodesic ray $\gamma$, and call $\gamma(i)$, for $i\in\mathbb{N}$, the $i$-th vertex visited by $\gamma$. We have $I\subseteq \{\gamma(i)\mid i\geq 0\}$, so for every vertex $v$ in $I$ we have $v=\gamma(j)$ (where $j$ is the graph distance between $v$ and the root vertex) hence $l(v)=-j$. Now take any geodesic ray $\gamma'$; we know all vertices in $I$ belong to $\gamma'$, hence the set $\{i\mid l(\gamma'(i))=-i\}$ is infinite; but then, since labels vary by at most one along a geodesic ray (indeed, along any path), $l(\gamma'(i))=-i$ for all $i\geq 0$, and $\gamma'$ is proper. \end{proof}

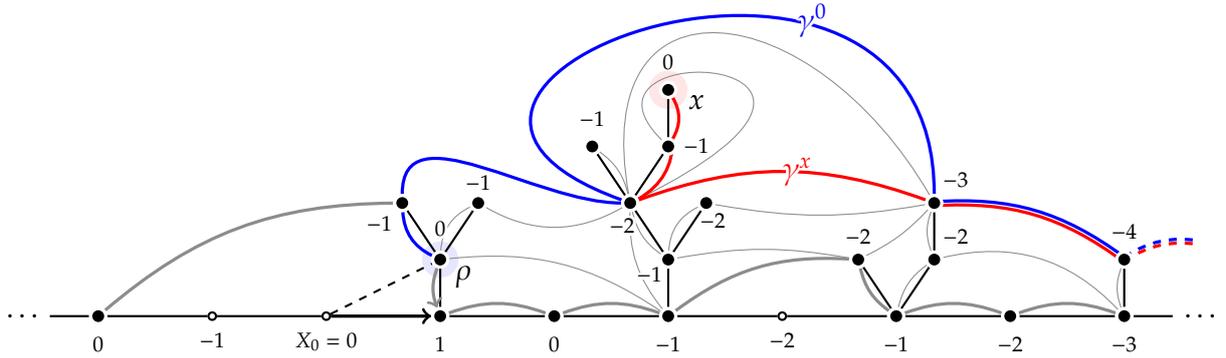
\begin{figure}
\centering
\begin{tikzpicture}
\tikzstyle{real}=[fill=black, draw=white, very thick, circle, inner sep=2pt]
\tikzstyle{boundary}=[very thick, gray!90]
\tikzstyle{map}=[thin, gray!90]
\tikzset{every path/.style = {thick}}
\tikzset{every node/.style={font=\scriptsize}}
\begin{pgfonlayer}{nodelayer}
\contourlength{1px}
		\node [style=real, label=below:$0$] (0) at (-4, 0) {};
		\node [style=phantom, label=below:$-1$] (1) at (-2.5, 0) {};
		\node [style=phantom, label={below:$X_0=0$}] (2) at (-1, 0) {};
		\node [style=real, label=below:$1$] (3) at (0.5, 0) {};
		\node [style=real, label=below:$0$] (4) at (2, 0) {};
		\node [style=real, label=below:$-1$] (5) at (3.5, 0) {};
		\node [style=phantom, label=below:$-2$] (6) at (5, 0) {};
		\node [style=real, label=below:$-1$] (7) at (6.5, 0) {};
		\node [style=real, label=below:$-2$] (8) at (8, 0) {};
		\node [style=real, label=below:$-3$] (9) at (9.5, 0) {};
		\node [style=real, label=\contour{white}{$0$}] (3-1) at (0.5, 0.75) {};
		\node [style=real, label={[xshift=3pt,yshift=2pt]below left:\contour{white}{$-1$}}] (3-11) at (0, 1.5) {};
		\node [style=real, label={[yshift=-4pt]\contour{white}{$-1$}}] (3-12) at (1, 1.5) {};
		\node [style=real, label={[xshift=-7pt,yshift=5pt]below:\contour{white}{$-1$}}] (5-1) at (3.5, 0.75) {};
		\node [style=real, label={[yshift=2pt,xshift=-3pt]below:\contour{white}{$-2$}}] (5-11) at (3, 1.5) {};
		\node [style=real, label={[xshift=-10pt,yshift=-7pt]right:\contour{white}{$-2$}}] (5-12) at (4, 1.5) {};
		\node [style=real, label=$-1$] (5-111) at (2.5, 2.25) {};
		\node [style=real, label={[xshift=-2pt]right:\contour{white}{$-1$}}] (5-112) at (3.5, 2.25) {};
		\node [style=real, label=$0$] (5-1121) at (3.5, 3) {};
		\node [style=real, label={[yshift=-2pt]$-2$}] (7-1) at (6, 0.75) {};
		\node [style=real, label={[xshift=8pt,yshift=-3pt]\contour{white}{$-2$}}] (7-2) at (7, 0.75) {};
		\node [style=real, label={[xshift=8pt,yshift=-3pt]\contour{white}{$-3$}}] (7-21) at (7, 1.5) {};
		\node [style=real, label=$-4$] (9-1) at (9.5, 0.75) {};
		\node (left) at (-5,0) {\small$\ldots$};
		\node (right) at (10.5,0) {\small$\ldots$};
		\node at (5.4, 3.9) {\contour{white}{\color{blue}\small$\gamma^{0}$}};
		\node at (5.2, 1.9) {\contour{white}{\color{red}\small$\gamma^{x}$}};
	\end{pgfonlayer}
	\begin{pgfonlayer}{edgelayer}
	\node[fill=blue!10, circle, inner sep=.1cm, label={[yshift=7pt,xshift=-4pt]below right:\small$\rho$}] (rho) at (0.5,0.75) {x};
	\node[fill=red!10, circle, inner sep=.18cm, label={[yshift=8pt,xshift=-2pt]below right:\small$x$}] (x) at (5-1121) {};
	\draw (left)--(0)--(1)--(2)--(3)--(4)--(5)--(6)--(7)--(8)--(9)--(right);
	\draw[very thick, ->] (2)--(3);
	
	\draw[dashed] (2)--(3-1);
	
	\draw (3)--(3-1)--(3-11);
	\draw (3-1)--(3-12);
	\draw (5)--(5-1)--(5-11)--(5-111);
	\draw (5-1)--(5-12);
	\draw (5-11)--(5-112)--(5-1121);
	\draw (7-1)--(7)--(7-2)--(7-21);
	\draw (9)--(9-1);
	
	\draw [map,bend left=30] (3-1) to (3-12);
	\draw [map,bend right=30] (3-12) to (5-11);
	\draw [map, bend left=20] (3-1) to (5);
	\draw [map,bend left=20] (5) to (5-11);
	\draw [map,bend left=20] (5-1) to (5-11);
	\draw [map,bend left=20] (5-111) to (5-11);
	\draw [map, out=100, in=120, looseness=2] (5-11) to (7-21);
	\draw [map] (5-112) .. controls ([xshift=-1.5cm,yshift=1.5cm]5-112)  and ([xshift=4cm,yshift=2cm]5-11) .. (5-11);
	\draw [map,bend left=30] (5-1) to (5-12);
	\draw [map,bend right=10] (5-12) to (7-21);
	\draw [map,bend left=10] (5-1) to (7-1);
	\draw [map,bend right=10] (7-1) to (7-21);
	\draw [map,bend left=20] (7) to (7-2);
	\draw [map,bend left=20] (7-2) to (7-21);
	\draw [map,bend left=30] (7-2) to (9);
	\draw [map,bend left=20] (9) to (9-1);
	
	\draw[very thick, blue, bend left=30] (3-1) to (3-11);
	\draw[very thick, blue, out=90, in=180] (3-11) to (5-11);
	\draw[very thick, blue, out=160, in=90, looseness=2.8] (5-11) to (7-21);
	\draw[very thick, blue, bend left=20] (7-21)+(0,0.5pt) to ([yshift=1pt]9-1);
	\draw[very thick, blue, bend left=20,dashed] (9-1)+(0,1pt) to (10.5,1);
	\draw[very thick, red, bend left=30] (5-1121) to (5-112) to (5-11);
	\draw[very thick, red, bend left=20] (5-11) to (7-21);
	\draw[very thick, red, bend left=20] (7-21)+(0,-1pt) to ([yshift=-1pt]9-1);
	\draw[very thick, red, bend left=20,dashed] (9-1)+(0,-0.5pt) to (10.5,0.95);
	\draw[boundary, bend left=20] (0) to (3-11);
	\draw[boundary, bend right=20, ->] (3-1) to (3);
	\draw[boundary, bend left=20] (3) to (4);
	\draw[boundary, bend left=20] (4) to (5);
	\draw[boundary, bend left=20] (5) to (7-1);
	\draw[boundary, bend right=20] (7-1) to (7);
	\draw[boundary, bend left=20] (7) to (8);
	\draw[boundary, bend left=20] (8) to (9);
	\end{pgfonlayer}
	\pgfresetboundingbox
	\path [use as bounding box] (-5,-0.5) rectangle (10.5,4);
\end{tikzpicture}
\caption{\label{proper geodesic rays}\small{The proper geodesic rays $\gamma^0$ and $\gamma^x$.}}
\end{figure}

Considering the UIHPQ as $\UIHPQ=\Phi(\B_\infty^\pm)$, we shall call $\gamma^0$ the proper geodesic ray described within the above proof, which we obtain by taking the leftmost corner around the real vertex in $\B_\infty^\pm$ which corresponds to the root vertex $\rho$ of $\UIHPQ$ and iterating the successor function, thereby generating a geodesic ray along which labels decrease strictly at each step. Notice that one may do the same for every real vertex: pick the leftmost corner adjacent to a real vertex $x$ in $\B_\infty^\pm$ and iterate the successor function, building an infinite path $\gamma^x$ in $\UIHPQ$ along which labels decrease strictly at each step, which is therefore a proper geodesic ray (issued from $x$).

Also notice that for any real vertex $x$ the two proper geodesic rays $\gamma^0=(c^0_i)_{i\geq 0}$ and $\gamma^x=(c^x_i)_{i\geq 0}$ (which is issued from $x$) must eventually meet (and, indeed, coincide from a certain point onwards, see Figure~\ref{proper geodesic rays}).

We now prove a simple lemma about a property of confluence to the root that geodesic rays display in the UIHPQ; the statement is presented in terms of $\gamma^0$, but any geodesic ray issued from the root could be used as reference.
\begin{lem}\label{confluence to the root}
For each positive integer $r$ there is a positive integer $R$ such that, if $z$ is a vertex of $\UIHPQ$ for which $\mathrm{d_{gr}}(z,\rho)>R$ (where $\rho$ is the root vertex), then there is a (finite) geodesic $\gamma^{0z}$ joining $\rho$ to $z$ that coincides with $\gamma^0$ up to the $r$-th step (hence in particular $\gamma^{0z}(i)=\gamma^0(i)$ for $i=0,\ldots,r$).
\end{lem}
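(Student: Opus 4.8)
The plan is to deduce the statement from the confluence of geodesic rays (Corollary~\ref{infinite cut points}) combined with a compactness argument for geodesic segments issued from $\rho$: one first locates a cut-point $v^\ast$ lying far along $\gamma^0$, then shows that all but finitely many vertices admit a geodesic to $\rho$ passing through $v^\ast$, and finally reroutes such a geodesic through the initial segment of $\gamma^0$.

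First I would fix $r$ and pick a convenient cut-point. By Corollary~\ref{infinite cut points} there is, almost surely, an infinite set $\mathcal C$ of vertices through which \emph{every} geodesic ray issued from $\rho$ must pass. Since $\gamma^0$ is such a geodesic ray, $\mathcal C$ is contained in the set of vertices visited by $\gamma^0$; as $\mathcal C$ is infinite and $\gamma^0$ visits distinct vertices at distinct steps, we may fix $m>r$ with $v^\ast:=\gamma^0(m)\in\mathcal C$. Because $\gamma^0$ is a geodesic ray issued from $\rho=\gamma^0(0)$, we have $\mathrm{d_{gr}}(\rho,v^\ast)=m$ and the restriction of $\gamma^0$ to $\{0,\dots,m\}$ is a geodesic from $\rho$ to $v^\ast$.

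The heart of the argument is to prove that the set
$$B:=\bigl\{\,z\text{ a vertex of }\UIHPQ:\ \mathrm{d_{gr}}(\rho,v^\ast)+\mathrm{d_{gr}}(v^\ast,z)>\mathrm{d_{gr}}(\rho,z)\,\bigr\}$$
of vertices admitting no geodesic from $\rho$ passing through $v^\ast$ is finite. If it were infinite, then---since balls of $\UIHPQ$ are finite---it would contain a sequence $(z_n)$ with $\mathrm{d_{gr}}(\rho,z_n)\to\infty$; choosing for each $n$ a geodesic $\sigma_n$ from $\rho$ to $z_n$ (none visiting $v^\ast$) and extracting successively along the finitely many edges available at $\rho$, then at its chosen neighbour, and so on (König's lemma, using local finiteness of $\UIHPQ$), one gets a geodesic ray $\Gamma$ issued from $\rho$ and a subsequence $(n_k)$ such that $\sigma_{n_k}$ and $\Gamma$ share their first $k$ edges; $\Gamma$ is genuinely a geodesic ray because each of its finite prefixes is a prefix of some $\sigma_n$, hence a geodesic. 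By Corollary~\ref{infinite cut points} the ray $\Gamma$ passes through $v^\ast$, necessarily at step $m=\mathrm{d_{gr}}(\rho,v^\ast)$, so $\Gamma(m)=v^\ast$; but then $\sigma_{n_k}(m)=v^\ast$ for all $k\ge m$, contradicting $z_{n_k}\in B$. Hence $B$ is finite.

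To conclude, put $R:=\max\{\mathrm{d_{gr}}(\rho,z):z\in B\}$ (and $R:=0$ if $B=\emptyset$). If $z$ satisfies $\mathrm{d_{gr}}(\rho,z)>R$ then $z\notin B$, so there is a geodesic from $\rho$ to $z$ passing through $v^\ast$; writing it as a geodesic $\rho\to v^\ast$ of length $m$ followed by a geodesic $v^\ast\to z$, and replacing the first half by the restriction of $\gamma^0$ to $\{0,\dots,m\}$, produces a path $\gamma^{0z}$ from $\rho$ to $z$ of length $m+\mathrm{d_{gr}}(v^\ast,z)=\mathrm{d_{gr}}(\rho,z)$, hence a geodesic, which coincides with $\gamma^0$ up to step $m>r$; in particular $\gamma^{0z}(i)=\gamma^0(i)$ for $i=0,\dots,r$. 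The only non-routine point is the extraction of the geodesic ray $\Gamma$: one must be careful that it is a bona fide geodesic ray issued from $\rho$, so that Corollary~\ref{infinite cut points} applies to it, which rests on local finiteness of $\UIHPQ$ and on the fact that prefixes of geodesics are geodesics; everything else is formal manipulation of the triangle inequality.
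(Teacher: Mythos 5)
Your proof is correct and follows essentially the same route as the paper's: both pick a cut-point $\gamma^0(m)$ with $m\geq r$ supplied by Corollary~\ref{infinite cut points}, use local finiteness (a K\"onig-type extraction of a geodesic ray) to control the finitely many vertices not served by a geodesic through that cut-point, and then reroute via the initial segment of $\gamma^0$. The only difference is organizational -- you bound the ``bad'' set $B$ directly, while the paper bounds the lengths of geodesics through the non-cut vertices of the sphere of radius $k$ -- which is immaterial.
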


\begin{proof}
Consider $k=\min\{i\geq r \mid \gamma^0(i)\in I\}$, where $I$ is the set of vertices at which the leftmost and rightmost geodesic rays in $\UIHPQ$ meet (which is infinite by Corollary~\ref{infinite cut points}, so that $k$ is well defined). Consider then the (finite) set of all vertices at distance $k$ from the root vertex, with the exception of vertex $\gamma^0(k)$; since all geodesic rays issued from $\rho$ go through $\gamma^0(k)$, no vertex in the set belongs to a geodesic ray issued from $\rho$. By local finiteness, this implies that there is $R\geq 0$ such that no finite geodesic issued from $\rho$ having length more than $R$ can go through any of the vertices in the set. Hence, if a vertex $z$ is such that $\mathrm{d_{gr}}(z,\rho)>R$, then any geodesic $\gamma$ joining $\rho$ to $z$ must go through $\gamma^0(k)$. It is therefore enough to build $\gamma^{0z}$ by following $\gamma^0$ from $\rho$ to $\gamma^0(k)$, then $\gamma$ from $\gamma^0(k)$ to $z$; since $k\geq r$, $\gamma^{0z}$ satisfies the requirement of the lemma.
\end{proof}

From here we can prove a key confluence property of all ``long'' geodesics: given any vertex $x$, geodesics issued from $x$ which reach far enough must eventually intersect $\gamma^0$; more precisely: 
\begin{lem}\label{confluence to infinity}
Given a vertex $x$ in $\UIHPQ=\Phi(\B^\pm_\infty)$ there are positive integers $r$ and $R$ such that for any vertex $z$ for which $\mathrm{d_{gr}}(z,\rho)>R$, all geodesics joining $x$ to $z$ meet $\gamma^0$ before its $r$-th step.
\end{lem}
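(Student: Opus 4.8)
The plan is to mimic the strategy of~\cite{CMMinfini}, turning the coalescence of geodesic rays established in Corollary~\ref{infinite cut points} into a planar confinement statement. The first step is a compactness reduction: it suffices to prove that \emph{every geodesic ray issued from $x$ meets $\gamma^0$}. Indeed, if Lemma~\ref{confluence to infinity} failed for $x$, then for each $n$ one could pick a vertex $z_n$ with $\mathrm{d_{gr}}(\rho,z_n)>n$ together with a geodesic $\eta^{(n)}$ from $x$ to $z_n$ avoiding $\gamma^0(0),\dots,\gamma^0(n-1)$; since $|\eta^{(n)}|=\mathrm{d_{gr}}(x,z_n)\geq \mathrm{d_{gr}}(\rho,z_n)-\mathrm{d_{gr}}(\rho,x)\to\infty$, local finiteness of $\UIHPQ$ permits a diagonal extraction along which the $\eta^{(n)}$ stabilise on every ball centred at $x$, producing a geodesic ray $\gamma^\infty$ issued from $x$. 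For any fixed $r$ the segment $\gamma^0|_{[0,r-1]}$ lies in a fixed ball about $x$, on which $\eta^{(n)}$ coincides with $\gamma^\infty$ once $n$ is large, and $\eta^{(n)}$ avoids $\gamma^0|_{[0,r-1]}$ as soon as $n>r$; hence $\gamma^\infty$ avoids $\gamma^0|_{[0,r-1]}$ for every $r$, i.e.\ it never meets $\gamma^0$, contradicting the reduced claim.

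To establish that every geodesic ray issued from $x$ meets $\gamma^0$, recall that the proper geodesic ray $\gamma^x$ issued from $x$ does meet $\gamma^0$ — it eventually coincides with it — say first at $w=\gamma^0(k_0)=\gamma^x(m_0)$. Since $\gamma^0$ is a simple arc from $\rho\in\partial\UIHPQ$ to infinity, it separates $\UIHPQ$ into two one-ended regions; if $x\in\gamma^0$ there is nothing to prove, so assume $x$ lies in one of them, $\mathcal H$, which then also contains the segment $\gamma^x|_{[0,m_0)}$. Suppose some geodesic ray $\gamma'$ issued from $x$ avoided $\gamma^0$; it would then stay inside $\mathcal H$, and the arc $\gamma'\cup\bigl(\gamma^x|_{[0,m_0]}\bigr)$ — running from infinity through $x$ to $w$ — would be a boundary-to-boundary arc splitting $\mathcal H$ into two pieces, with all but finitely many cut points $\gamma^0(j)\in I$ (those with $j>k_0$, which lie on the common tail of $\gamma^x$ and $\gamma^0$) on the boundary of one piece, while $\rho$ lies on the closure of the other. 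Now take a geodesic ray from $\rho$ on the same side of $\gamma^0$ as $x$ that differs from $\gamma^0$ — namely $\gamma^{\mathrm{left}}$ if $x$ is to the left of $\gamma^0$, or $\gamma^{\mathrm{right}}$ if to the right (both are proper by Proposition~\ref{all geodesic rays are proper} and pass through every vertex of $I$); starting at $\rho$ and later visiting a far cut point it must cross the separating arc, hence it meets $\gamma'$ or the finite segment $\gamma^x|_{[0,m_0]}$, and a comparison of the labels/graph distances along these geodesics then contradicts one of them being a geodesic. Taking $R$ exactly as in the proof of Lemma~\ref{confluence to the root} (so that, for $z$ far enough, the finitely many finite geodesics from $\rho$ missing a fixed cut point are ruled out) and $r=k_0+1$ completes the argument.

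The main obstacle is precisely this planar separation step: to make rigorous, in a non-simple locally finite map, that a geodesic ray $\gamma'$ issued from $x$ cannot escape to infinity "behind" the tripod $\gamma^0\cup\gamma^x$, one must show that such an escape forces a geodesic ray from $\rho$ to avoid a cut point through which it is obliged to pass. This is exactly where the full force of Corollary~\ref{infinite cut points} — coalescence of \emph{all} geodesic rays from $\rho$, not just the existence of one — is used, and it is the part that requires the most care; once it is in place, the compactness reduction of the first paragraph makes the passage to all long finite geodesics automatic.
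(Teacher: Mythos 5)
Your first paragraph (the compactness/diagonal extraction reducing the statement to ``every geodesic ray issued from $x$ meets $\gamma^0$'') is correct and is exactly the second half of the paper's proof. The genuine gap is in the reduced claim itself. Your planar-separation argument ends with the assertion that the geodesic ray from $\rho$ on the same side as $x$ ``meets $\gamma'$ or the finite segment $\gamma^x|_{[0,m_0]}$, and a comparison of the labels/graph distances along these geodesics then contradicts one of them being a geodesic.'' No such contradiction follows: geodesic rays issued from two different basepoints ($\rho$ and $x$) can intersect without any inconsistency (indeed $\gamma^x$ itself eventually merges with $\gamma^0$), and the labels along a proper ray from $x$ are $l(x)-i$ while those along a proper ray from $\rho$ are $-j$, which are perfectly compatible at a common vertex. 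To extract a contradiction you would have to upgrade the crossing into a new geodesic ray issued from $\rho$ that avoids a cut point of $I$, e.g.\ by concatenating $\gamma^{\mathrm{left}}$ up to the crossing with $\gamma'$ beyond it, and there is no reason that concatenation is a geodesic. So the step you yourself flag as ``the main obstacle'' is in fact missing, and with it the whole separation machinery (which is also delicate to set up rigorously for non-simple maps, one-endedness of the two sides of $\gamma^0$, behaviour at the boundary, etc.) does not yet yield the claim.

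The paper avoids all of this with a short metric argument you could substitute directly: for any geodesic ray $\gamma$ issued from $x$, the integer sequence $n\mapsto \mathrm{d_{gr}}(\gamma(n),\rho)-n$ is nonincreasing (each step changes $\mathrm{d_{gr}}(\cdot,\rho)$ by at most $1$) and bounded below by $-\mathrm{d_{gr}}(x,\rho)$, hence is constant from some $n_0$ on; replacing $\gamma|_{[0,n_0]}$ by a geodesic from $\rho$ to $\gamma(n_0)$ then produces a geodesic ray issued from $\rho$ whose tail coincides with that of $\gamma$. By Corollary~\ref{infinite cut points} that ray passes through the infinite set of common cut points, so $\gamma$ meets $\gamma^0$ (infinitely often). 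This two-line argument, combined with your first paragraph, gives the lemma; note also that your closing sentence about choosing $R$ ``as in the proof of Lemma~\ref{confluence to the root}'' is unnecessary, since the contradiction scheme of your first paragraph already produces the pair $(r,R)$ nonconstructively.
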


\begin{proof}
We start by showing that all geodesic rays issued from $x$ meet $\gamma^0$. Let $\gamma$ be any such geodesic ray; since the sequence $\mathrm{d_{gr}}(\gamma(n),\rho)-n$ is decreasing in $n$ (for each $n$ one has $\mathrm{d_{gr}}(\gamma(n+1),\rho)-\mathrm{d_{gr}}(\gamma(n),\rho)\leq 1$), it must be constant from some $n_0$ onwards. Thus replacing the part of $\gamma$ that joins $x$ to $\gamma(n_0)$ with a finite geodesic joining $\rho$ to $\gamma(n_0)$ yields a geodesic ray issued from the root. By Corollary~\ref{infinite cut points}, this implies that $\gamma$ meets $\gamma^0$ an infinite number of times.

Suppose now by contradiction that for all positive integers $r$ and $n$ there is a vertex $z_{n,r}$ such that $\mathrm{d_{gr}}(z_{n,r},\rho)>n$ and that there is a geodesic $\gamma_{n,r}$ joining $x$ to $z_{n,r}$ that does not visit any vertex in the sequence $\gamma^0(0),\ldots,\gamma^0(r)$. Then for each $k>0$ we can build a geodesic ray $\gamma_k$ issued from $x$ that does not go through $\gamma^0(0),\ldots,\gamma^0(k)$ (since for every $i$ there is, by local finiteness, an infinite subsequence of $(\gamma_{n,k})_{n>0}$ such that all of its members coincide up to the $i$-th step). Similarly, for each $r$ there is an infinite subsequence of $({\gamma}_k)_{k>0}$ consisting of geodesic rays that coincide up to the $r$-th step, hence a geodesic ray issued from $x$ that never meets $\gamma^0$. The lemma is thus proven by contradiction. 
\end{proof}

The lemmas presented above are enough to establish Theorem~\ref{labels as limit}:

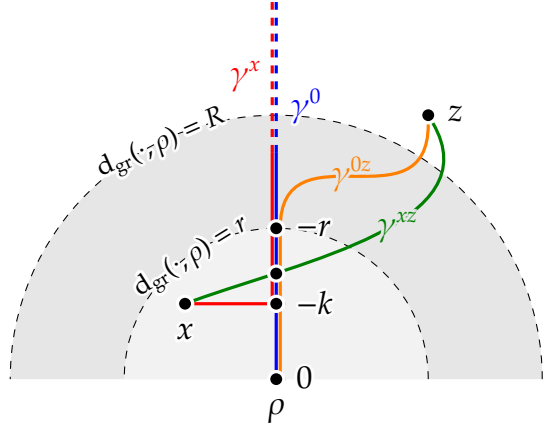
\begin{figure}
\centering
\begin{tikzpicture}
\contourlength{1px};
\tikzstyle{real}=[fill=black, draw=white, very thick, circle, inner sep=2pt]
\node[real, label=below:$\rho$, label=right:\contour{white}{$0$}] (rho) at (0,0) {};
\node[real, label=below:\contour{white}{$x$}] (x) at (-1.2,1) {};
\node[real, label=right:\contour{white}{$-k$}] (y) at (0,1) {};
\node[real, label=right:\contour{white}{$-r$}] (y') at (0,2) {};
\node[real, label=right:\contour{white}{$z$}] (z) at (2,3.5) {};
\node[real] (t) at (0,1.4) {};
\begin{pgfonlayer}{edgelayer}
\filldraw[dashed, draw=black, fill=gray!20] (3.5,0) arc (0:180:3.5);
\fill[dashed, draw=black, fill=gray!10] (2,0) arc (0:180:2);
\node[rotate=24] at (-1.5, 3.1) {\contour{white}{\footnotesize$\mathrm{d_{gr}}(\cdot,\rho)=R$}};
\node[rotate=32] at (-1.1, 1.6) {\contour{white}{\footnotesize$\mathrm{d_{gr}}(\cdot,\rho)=r$}};
\draw[very thick, orange] (rho) to ([xshift=1.5pt]rho) to ([xshift=1.5pt]y);
\draw[very thick, orange] (y) to ([xshift=1.5pt]y) to ([xshift=1.5pt]y');
\draw[very thick, orange, out=90,in=-90,looseness=1.5] (y')+(1.5pt,0) to (z);
\node at (1, 2.7) {\contour{gray!20}{\color{orange}$\gamma^{0z}$}};
\draw[very thick, blue] (rho) to (y);
\draw[very thick, blue] (y) to ([xshift=0pt]y) to ([xshift=0pt]y') to ([xshift=0pt]0,3);
\draw[very thick, blue, dashed] ([xshift=0pt]0,3) to ([xshift=0pt]0,5);
\node at (0.4, 3.6) {\contour{white}{\color{blue}$\gamma^{0}$}};
\draw[very thick,red] (x) to ([xshift=-1.5pt]y) to ([xshift=-1.5pt]y') to ([xshift=-1.5pt]0,3);
\draw[very thick, red, dashed] [xshift=-1.5pt](0,3) to (0,5);
\node at (-0.4, 4) {\contour{white}{\color{red}$\gamma^{x}$}};
\draw[very thick,green!50!black, out=20, in=-60, looseness=1] (x) to (z);
\node at (1.6, 2) {\contour{gray!20}{\color{green!50!black}$\gamma^{xz}$}};
\end{pgfonlayer}
\end{tikzpicture}
\caption{\label{proof of theorem labels as limit}\small{The geodesics and geodesic rays from the proof of Theorem~\ref{labels as limit}: note that $\gamma^x$ and $\gamma^0$ are proper, and that $\gamma^{xz}$ is a generic geodesic joining $x$ to $z$.}}
\end{figure}

\begin{proof}[Proof of Theorem~\ref{labels as limit}]
Without loss of generality, we can prove the required equality for pairs involving the root vertex $\rho$ of the UIHPQ; the statement for a pair of vertices $(x,y)$ trivially follows by applying \eqref{eq:labels as limit} to $(x,\rho)$ and $(\rho, y)$.

Consider a vertex $x$ of $\UIHPQ$; build the geodesic rays $\gamma^0$ and $\gamma^x$, and suppose they coincide from $\gamma^0(k)$ onwards for some $k$. By Lemma~\ref{confluence to infinity} there are positive integers $r$ and $R_1$ such that, if $\mathrm{d_{gr}}(z,\rho)>R_1$, then any geodesic joining $x$ to $z$ must meet $\gamma^0$ before its $r$-th step (and, clearly, we may assume $r\geq k$). On the other hand, by Lemma~\ref{confluence to the root} one can find $R\geq R_1$ such that, if $\mathrm{d_{gr}}(z,\rho)>R$, then there is a geodesic $\gamma^{0z}$ joining $\rho$ to $z$ which coincides with $\gamma^0$ up to $\gamma^0(r)$.

Take $z$ such that $\mathrm{d_{gr}}(z,\rho)>R$ and consider a geodesic $\gamma^{xz}$ joining $x$ to $z$. Notice that there is a geodesic joining $x$ to $z$ that goes through $\gamma^0(r)$: suppose that $\gamma^{xz}$ meets $\gamma^0$ at $\gamma^0(i)$, with $i<r$; then we may follow the geodesic $\gamma^{xz}$ from $x$ up to $\gamma^0(i)$, and then follow the geodesic $\gamma^{0z}$ between $\gamma^0(i)$ and $z$, thus obtaining a geodesic path from $x$ to $z$ that goes through $\gamma^0(r)$ (see Figure~\ref{proof of theorem labels as limit}).

As a consequence, we can compute the distance between $x$ and $z$ as $\mathrm{d_{gr}}(z,x)=\mathrm{d_{gr}}(z,\gamma^0(r))+\mathrm{d_{gr}}(\gamma^0(r),x)$, while we have $\mathrm{d_{gr}}(z,\rho)=\mathrm{d_{gr}}(z,\gamma^0(r))+r$ (since the geodesic $\gamma^{0z}$ joins $\rho$ to $z$ and goes through $\gamma^0(r)$). Subtracting the two expressions yields
$$\mathrm{d_{gr}}(z,x)-\mathrm{d_{gr}}(z,\rho)=\mathrm{d_{gr}}(\gamma^0(r),x)-r.$$
Now, since $\gamma^0$ and $\gamma^x$ are proper and they both go through $\gamma^0(r)$, we must have $l(\gamma^0(r))=-r=l(x)-\mathrm{d_{gr}}(\gamma^0(r),x)$, hence 
$$\mathrm{d_{gr}}(z,x)-\mathrm{d_{gr}}(z,0)=l(x)$$
as wanted.
\end{proof}

\section{Scaling limits}\label{scaling}
In this section we compute the scaling limit of the infinite positive treed bridge $ \mathcal{B}_{\infty}$ introduced in Proposition~\ref{limitinp}, which encodes the UIHPQ. The definitions, results and proofs in this section are very close to those of~\cite{LGM10}; we thus use the same notation and presentation so that the reader may easily compare the relevant sections in the two papers. From the scaling limit of $\B_\infty$ we derive the limiting law for the the volume of balls, suitably rescaled, in the UIHPQ. We then extend such results to the UIHPQ with a simple boundary, using the pruning operation from~\cite{CMboundary}; they shall be used in a companion paper to study the model of a self-avoiding walk on the UIPQ. Finally, as a byproduct of the computation of these scaling limits we construct a new random locally compact metric space, the \emph{Brownian half-plane}, which -- we believe -- describes the scaling limit of the UIHPQ in the local Gromov--Hausdorff topology.

\subsection{Scaling limits for $ \mathcal{B}_{\infty}$}\label{scaling limits}

Let $ \B_{\infty}=((X_{i})_{i \in \mathbb{Z}};T)$ be the positive infinite treed bridge from Proposition~\ref{limitinp}. Recall that the bridge $X$ is given by the concatenation of two independent walks with step distribution $ \mathbf{p}$, issued from 0, and that in the construction of $ \UIHPQ$ as $\Phi( \mathcal{B}_{\infty})$ the process $X$ encodes distances between the root vertex and vertices found along the boundary of $\UIHPQ$.

\begin{proposition}[Scaling limit for distances along the boundary] \label{prop:scalingX} We have the following convergence in distribution, uniformly on every compact subset of $ \mathbb{R}$:
$$ \left( \frac{X_{[nt]}}{ \sqrt{n}} \right)_{t \in \mathbb{R}}  \xrightarrow[n\to\infty]{(d)} \left(Z_{t}\right)_{t \in \mathbb{R}},$$ where $(Z_{t})_{ t \geq 0}$ and $(Z_{-t})_{t \geq 0}$ are two independent Bessel processes of dimension $5$.
\end{proposition}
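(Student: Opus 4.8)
The plan is to reduce the two‑sided statement to a one‑sided convergence, recognise the walk $(X_i)_{i\ge 0}$ as a Lamperti‑type random walk with asymptotically vanishing drift, and then invoke (or reprove, via the martingale problem) the corresponding diffusion invariance principle. Since $(X_i)_{i\ge 0}$ and $(X_{-i})_{i\ge 0}$ are independent by Proposition~\ref{limitinp}, and joint convergence in law of independent families follows from convergence of each marginal family, it suffices to show that
$$\left(\frac{X_{[nt]}}{\sqrt n}\right)_{t\ge 0}\xrightarrow[n\to\infty]{(d)}(Z_t)_{t\ge 0}$$
in $D([0,\infty),\mathbb{R})$, where $(Z_t)_{t\ge 0}$ is a Bessel process of dimension $5$ started from $0$; the two‑sided convergence in $D(\mathbb{R},\mathbb{R})$ then follows by gluing the two (independent) halves at $t=0$, using that the limiting process is a.s.\ continuous and vanishes at $0$.

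Next I would read off the local characteristics of the walk from \eqref{eq:p}: for every $x\ge 1$,
$$\mathbb{E}\big[X_{k+1}-X_k\mid X_k=x\big]=\mathbf{p}(x,x+1)-\mathbf{p}(x,x-1)=\frac{2}{x+2},\qquad \mathbb{E}\big[(X_{k+1}-X_k)^2\mid X_k=x\big]=1,$$
while from $x=0$ the walk moves deterministically to $1$, and all higher increment moments are trivially bounded since $|X_{k+1}-X_k|\le 1$. Thus $(X_i)$ is a nearest‑neighbour walk on $\mathbb{N}$ with unit quadratic variation per step and asymptotically zero drift of order $2/x$ — exactly the regime of Lamperti's invariance principle. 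The natural candidate for the scaling limit is the diffusion on $[0,\infty)$ with generator $\mathcal{L}f=\tfrac12 f''+\tfrac{2}{x}f'$, which is the Bessel process of dimension $\delta$ solving $\tfrac{\delta-1}{2}=2$, i.e.\ $\delta=5$. Since $\delta=5\ge 2$, this diffusion never returns to $0$, so matching the initial condition $X_0=0$ causes no boundary difficulty.

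For the convergence itself I would argue through the martingale problem. Writing $X^{(n)}_t=X_{[nt]}/\sqrt n$, the process $f(X^{(n)}_t)-\sum_{k=0}^{[nt]-1}\mathbb{E}[\,f(X^{(n)}_{(k+1)/n})-f(X^{(n)}_{k/n})\mid\mathcal{F}_k\,]$ is a martingale for every bounded smooth $f$; Taylor‑expanding $f$ and inserting the characteristics above shows that its compensator converges to $\int_0^t\mathcal{L}f(X^{(n)}_s)\,ds+o(1)$, uniformly on compact subsets of $\{x>0\}$. Tightness of $(X^{(n)})$ follows from Aldous' criterion using the uniformly bounded increments together with the drift bound, and Proposition~\ref{prop:transience} (which gives $X_{|i|}\ge |i|^{1/2-\varepsilon}$ eventually) is used to confine $X^{(n)}$ to a region $[\eta,\infty)$ — on which $\mathcal{L}$ is non‑singular — on each time interval bounded away from $0$, with probability tending to $1$. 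By uniqueness of the solution to the $\mathrm{BES}(5)$ martingale problem, every subsequential limit of $X^{(n)}$ is a Bessel process of dimension $5$ started from $0$, which yields the one‑sided convergence, and hence, by the first step, the proposition.

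The main obstacle is the behaviour near the origin: one must control the singular drift $2/x$ appearing in the generator and establish the convergence uniformly up to time $0$, where $X^{(n)}$ starts from $0$. This turns out to be tractable precisely because the discrete walk leaves $0$ deterministically and, by Proposition~\ref{prop:transience}, escapes at least polynomially fast, so that with high probability $X^{(n)}$ lies in a region on which the generator is well behaved on each interval $[\eta,T]$; combined with the continuity at $0$ of the Bessel path, this closes the argument.
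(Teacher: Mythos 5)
Your overall route (reduce to one side by independence, read off the drift $2/(x+2)$ and unit quadratic variation from \eqref{eq:p}, identify the generator $\tfrac12 f''+\tfrac2x f'$ of $\mathrm{BES}(5)$, and conclude via the martingale problem) is a reasonable self-contained alternative to what the paper does — the paper simply invokes Lamperti's invariance principle~\cite{Lam61} for walks with drift $\sim c/x$ and bounded increments. However, there is a genuine gap in the one step you yourself single out as the crux: the treatment of the origin. Proposition~\ref{prop:transience} gives the almost sure lower envelope $X_i\geq i^{1/2-\varepsilon}$, and after rescaling this only yields $X^{(n)}_t\geq t^{1/2-\varepsilon}\,n^{-\varepsilon}$, which tends to $0$ as $n\to\infty$ for every fixed $t$. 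It therefore does \emph{not} confine $X^{(n)}$ to a fixed region $[\eta,\infty)$ on time intervals bounded away from $0$, and the singularity of the rescaled drift — which per unit macroscopic time is $2/(x+2/\sqrt n)$, of order $\sqrt n$ when the walk sits at heights $O(1)$ — is left uncontrolled. So the compensator computation is only justified on compacts of $(0,\infty)$ that the process is never shown to stay in, and the identification of subsequential limits as $\mathrm{BES}(5)$ started from $0$ is not complete as written.

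To close the gap you need a quantitative statement that the rescaled walk spends negligible time at low heights, uniformly in $n$: for instance, a Green's function/occupation-time bound showing that the expected number of visits to level $m$ is $O(m)$, whence the expected time spent by $X^{(n)}$ below level $\eta$ (in rescaled time) is $O(\eta^2)$ uniformly in $n$; combined with the fact that $\mathrm{BES}(5)$ spends zero time at $0$ and is the unique solution of its martingale problem from $0$, this makes the Stroock--Varadhan argument go through. Alternatively, compare with the walk started from level $\lfloor \delta\sqrt n\rfloor$ and use the Feller property and the entrance law of $\mathrm{BES}(5)$ at $0$. Or, most economically, do what the paper does and quote Lamperti's theorem (see also~\cite{BK14}), whose hypotheses are exactly the drift and variance asymptotics you computed; the independence-and-gluing step for the two-sided statement is fine as you wrote it.
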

\proof  This is a direct consequence of a well known result of Lamperti~\cite{Lam61} (see also~\cite[Section~5.2]{BK14} and~\cite{thesis} for details), if one uses the explicit expression for the transition probabilities $ \mathbf{p}$ given by~\eqref{eq:p}.  \endproof 

In order to compute the scaling limit of the full infinite positive treed bridge $ \B_{\infty}$ we encode it by a pair of processes $(C,V)$ defined on $\mathbb{Z}$, built by reading labels along the contour. Heuristically, imagine that a particle follows the contour of the treed bridge $\B_{\infty}$ from left to right, meeting all corners $(c_{i})_{i \in \mathbb{Z}}$ of the treed bridge (adjacent to both real and phantom vertices, in left-to-right order, so that $c_0$ is the corner adjacent to the root vertex of the treed bridge); we define $C(i)$ to be the distance in the treed bridge between $v(c_{i})$ and the root vertex, and $V(i)$ to be the label of $v(c_i)$.

\begin{figure}
\centering\begin{tikzpicture}

\tikzstyle{real}=[inner sep=1.5pt, draw=white,fill=black, thick, circle]
\tikzstyle{phantom}=[inner sep=1.5pt, draw=black,fill=white, thick, circle]

	\begin{pgfonlayer}{nodelayer}
	\draw[dashed] (0,-2.1) to (0,2.5);
		\node [style=real, label=below:\contour{white}{0}] (0) at (0, 0) {};
		\node [style=phantom, label=below:1] (1) at (1, 0) {};
		\node [style=real, label=below:2] (2) at (2, 0) {};
		\node [style=phantom, label=below:1] (3) at (3, 0) {};
		\node [style=real,, label=below:2] (4) at (4, 0) {};
		\node [style=phantom, label=below:1] (5) at (5, 0) {};
		\node [style=real, label=below:1] (6) at (-1, 0) {};
		\node [style=real, label=below:2] (7) at (-2, 0) {};
		\node [style=real, label=below:3] (8) at (-3, 0) {};
		\node [style=phantom, label=below:2] (9) at (-4, 0) {};
		\node [style=real, label=below:3] (10) at (-5, 0) {};
		\node (left) at (-6,0) {$\ldots$};
		\node (right) at (6,0) {$\ldots$};
		\node [style=real, label=right:\contour{white}{\footnotesize 2}] (11) at (2, 0.75) {};
		\node [style=real, label=left:\contour{white}{\footnotesize 3}] (12) at (1.5, 1.5) {};
		\node [style=real, label=above:\contour{white}{\footnotesize 1}] (13) at (2.5, 1.5) {};
		\node [style=real, label=above:\contour{white}{\footnotesize 2}] (14) at (1.25, 2.25) {};
		\node [style=real, label=above:\contour{white}{\footnotesize 4}] (15) at (1.75, 2.25) {};
		\node [style=real, label=above:\contour{white}{\footnotesize 2}] (16) at (2, 1.5) {};
		\node [style=real, label=above:\contour{white}{\footnotesize 2}] (17) at (3.75, 0.75) {};
		\node [style=real, label=above:\contour{white}{\footnotesize 3}] (18) at (4.25, 0.75) {};
		\node [style=real, label=left:\contour{white}{\footnotesize 2}] (19) at (-2, 0.75) {};
		\node [style=real, label=above:\contour{white}{\footnotesize 2}] (20) at (-2.5, 1.5) {};
		\node [style=real, label=above:\contour{white}{\footnotesize 3}] (21) at (-1.5, 1.5) {};
		\node [style=real, label=right:\contour{white}{\footnotesize 4}] (22) at (-5, 0.75) {};
		\node [style=real,, label=right:\contour{white}{\footnotesize 4}] (23) at (-5, 1.5) {};
		\node [style=real,, label=right:\contour{white}{\footnotesize 5}] (24) at (-5, 2.25) {};
	\end{pgfonlayer}
	\begin{pgfonlayer}{edgelayer}
	\draw[red!90,line cap=round,line width=10pt] (0.center) to (1.center) to (2.center) to (11.center)--(12.center)--(14.center);
	\draw[red!90,line cap=round,line width=10pt] (12.center) to (15.center);
	\draw[red!90,line cap=round,line width=10pt] (11.center) to (13.center);
	\draw[red!90,line cap=round,line width=10pt] (11.center) to (16.center);
	\draw[red!90,line width=1pt, ->] ([yshift=4.5pt]2.center) to ([yshift=4.5pt, xshift=5pt]3.center);
	\draw[orange,line cap=round,line width=10pt] (7.center) to (19.center);
	\draw[orange,line cap=round,line width=10pt] (20.center) to (19.center) to (21.center);
	\draw[orange,line width=1pt, ->] ([yshift=4.5pt]0.center) to ([yshift=4.5pt, xshift=5pt]10.center);
	\draw[white,line cap=round,line width=8pt] (0.center) to (1.center) to (2.center);
	\draw[white,line cap=round,line width=8pt] (0.center) to (9.center);
	\draw[red!10,line cap=round,line width=8pt] (11.center) to (13.center);
	\draw[red!10,line cap=round,line width=8pt] (11.center) to (16.center);
	\draw[white,line cap=round,line width=8pt] (2.center) to (3.west);
	\draw[white,line cap=round,line width=8pt] (-3,-0.1) to (5,-0.1);
	\draw[red!10,line cap=round,line width=8pt] (2.center) to (11.center)--(12.center)--(14.center);
	\draw[red!10,line cap=round,line width=8pt] (12.center) to (15.center);
	\draw[orange!10,line cap=round,line width=8pt] (7.center) to (19.center);
	\draw[orange!10,line cap=round,line width=8pt] (20.center) to (19.center) to (21.center);
	\fill[white] (0.2,-0,1) rectangle (0.1,0.4);
		\draw   (7) to (19);
		\draw   (19) to (21);
		\draw   (19) to (20);
		\draw   (2) to (11);
		\draw   (11) to (13);
		\draw   (11) to (16);
		\draw   (12) to (11);
		\draw   (14) to (12);
		\draw   (12) to (15);
		\draw   (10) to (22);
		\draw   (22) to (23);
		\draw   (23) to (24);
		\draw   (17) to (4);
		\draw   (4) to (18);
		\draw   (0) to (1);
		\draw   (1) to (2);
		\draw   (2) to (3);
		\draw   (3) to (4);
		\draw   (4) to (5) to (right);
		\draw   (0) to (6);
		\draw   (6) to (7);
		\draw   (7) to (8);
		\draw   (8) to (9);
		\draw   (9) to (10) to (left);
	\end{pgfonlayer}
\begin{scope}[shift={(0,-2.1)},scale=0.3]
\draw[help lines,step=1,nearly transparent, path fading=north] (-15.5,0) grid (15.5,6.5);
\draw[black,thick,->] (-20,0)--(20,0);
\node at (15,4) {\color{red}{$C(n)$}};
\node at (17,1) {\color{blue}{$V(n)$}};
\fill[red!10, nearly transparent] (2,0)--(2,2)--(3,3)--(4,4)--(5,5)--(6,4)--(7,5)--(8,4)--(9,3)--(10,4)--(11,3)--(12,4)--(13,3)--(14,2)--(14,0);
\fill[orange!10, nearly transparent] (-2,0)--(-2,2)--(-3,3)--(-4,4)--(-5,3)--(-6,4)--(-7,3)--(-8,2)--(-8,0);
\draw[very thick, black] (5,5)--(5,4) (7,5)--(7,4) (6,4)--(6,3) (10,4)--(10,3) (12,4)--(12,3) (9,3)--(9,2);
\draw[thick,red,->] (0,0)--(1,1)--(2,2)--(3,3)--(4,4)--(5,5)--(6,4)--(7,5)--(8,4)--(9,3)--(10,4)--(11,3)--(12,4)--(13,3)--(14,2)--(15,3);
\draw[thick,orange,->] (0,0)--(-1,1)--(-2,2)--(-3,3)--(-4,4)--(-5,3)--(-6,4)--(-7,3)--(-8,2)--(-9,3)--(-10,4)--(-11,5);
\draw[thick,blue,->] (0,0)--(1,1)--(2,2)--(3,2)--(4,3)--(5,2)--(6,3)--(7,4)--(8,3)--(9,2)--(10,2)--(11,2)--(12,1)--(13,2)--(14,2)--(15,1);
\draw[thick,blue,->] (0,0)--(-1,1)--(-2,2)--(-3,2)--(-4,3)--(-5,2)--(-6,2)--(-7,2)--(-8,2)--(-9,3)--(-10,2)--(-11,3);
\draw[thick, black, dashed] (5,4)--(7,4) (6,3)--(12,3);
\draw[very thick,red] (2,0)--(14,0);
\draw[very thick,orange] (-2,0)--(-8,0);
\draw node at (8,-1) {\footnotesize{$T(2)$}};
\draw node at (-5,-1) {\footnotesize{$T(-2)$}};
\end{scope}
\end{tikzpicture}
\caption{\small\label{fig:contour and label processes}The processes $C$ and $V$ as produced by a particle visiting the treed bridge along its contour. Notice that $V$ may remain constant on intervals corresponding to trees $T(i)$, and may only increase or decrease when the particle is moving along the bridge; intervals corresponding to trees may be retrieved from the values of $C$, since the last time the $i$-th bridge vertex is visited corresponds to the point where $C$ takes the value $i$ for the last time.}
\end{figure}
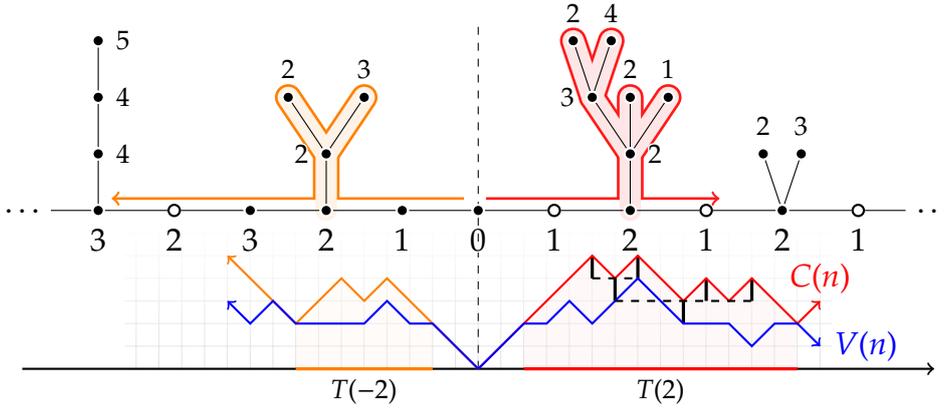

We now proceed to describe the scaling limit for the pair $(C,V)$. In order to introduce the limiting processes we shall need some notation: we refer the reader to~\cite[Section 2.4]{LGM10} for details. In particular we need $ \mathbb{N}_{x}$, the excursion measure of the Brownian snake started at $x$ and driven by an excursion of the Brownian motion under the standard It\^o excursion measure. For our purposes, we can see this object as a sigma-finite measure on the space $\Omega$ of all pairs of continuous paths $$\omega=( \mathbf{e}_{t}, {\widehat{ \mathbf{w}}}_{t})_{0 \leq t \leq \sigma}$$ where $\sigma= \sigma(\omega)$ is the lifetime of the path $\omega$ and is ``distributed'' according to the measure $ \mathrm{d\sigma}/(2\sqrt{2 \pi \sigma^3})$. Roughly speaking, conditionally on $\sigma$, the process $( \mathbf{e}_{t})_{0 \leq t \leq \sigma}$ is a Brownian excursion of length $\sigma$ and ${\widehat{ \mathbf{w}}}$ is the head of the Brownian snake started at $x$ and driven by $ \mathbf{e}$. We denote by $ \mathcal{R}(\omega)$ the range of the path $ {\widehat{ \mathbf{w}}}$.

Recall from Proposition~\ref{prop:scalingX} that $Z$ is a two-sided Bessel process of dimension $5$.   Conditionally given $Z$, let 
$  \mathcal{N} = \sum_{i \in I} \delta_{(r_{i}, \omega_{i}) } $ be a Poisson process on $ \mathbb{R} \times  \Omega$ with intensity 
 \begin{eqnarray} \label{eq:intensity}  \mathrm{d}t \,\mathbb{N}_{0}( \mathrm{d}\omega)\mathbf{1}_{\min  \mathcal{R}(\omega) > - \sqrt{ \frac{3}{2}}Z_{t}}.  \end{eqnarray}
Note that, in contrast with~\cite[Eq. (7)]{LGM10}, there is no multiplicative factor of $2$ in the above display, and that the Bessel process has been multiplied by a factor of $ \sqrt{3/2}$. From this Poisson process we build two continuous functions $\zeta$ and $\widehat{W}$ as follows (see~\cite[Section 3.1]{LGM10} for details). For each $i \in I$ we set 
$$ \sigma_{i} = \sigma( \omega_{i}), \qquad \omega_{i} =(  \mathbf{e}^{i}_{s}, \widehat{ \mathbf{w}}_{s}^i)_{0 \leq s \leq \sigma_{i}}.$$ Then the two functions $\zeta$ and $\widehat{W}$ needed to describe the scaling limit of $(C,V)$ are obtained by concatenating (in the order given by the $r_{i}$) the functions 
$$\left( r_{i} +  \mathbf{e}_{s}^i, \sqrt{ \frac{3}{2}} Z_{r_{i}} +  \widehat{ \mathbf{w}}_{s}^i\right)_{0 \leq s \leq \sigma_{i}}.$$
More formally,  for $u \in \mathbb{R}$ we set 
$$ \tau_{u} = \sum_{i \in I} \mathbf{1}_{\{ r_{i} \in [0,u]\}} \cdot \sigma_{i};$$
then clearly $u \mapsto \tau_{u}$ is right-continuous and increasing on $ \mathbb{R}_{+}$, left-continuous and decreasing on $ \mathbb{R}_{-}$. For all $s \in \mathbb{R}$ there is a unique $u$ with the same sign as $s$ such that $|s|$ is between $\tau_{u-}$ and $\tau_{u+}$, and we have exactly one of the following:
\begin{itemize}
\item there is a unique $i \in I$ such that $u=r_{i}$; in this case, we set 
 \begin{eqnarray*} 
 \zeta_{s} &=& |u| +  \mathbf{e}_{||s|-\tau_{u-}|}^i,\\
  \widehat{W}_{s} &=& \sqrt{ \frac{3}{2}} Z_{u} +  \widehat{ \mathbf{w}}^i_{||s|-\tau_{u-}|}; \end{eqnarray*}
  \item there is no such $i$ (this happens if $\tau_{u-}=s=\tau_{u+}$); in this case, we set
$$ \zeta_{s}=|u| \qquad \mbox{ and } \qquad \widehat{W}_{s} = \sqrt{ \frac{3}{2}} Z_{u}.$$
\end{itemize}
It is easy to see that both $\zeta$ and $\widehat{W}$ are continuous processes over $\mathbb{R}$; an easy adaptation of the argument yielding~\cite[Eq. (8)]{LGM10} (replacing the Bessel process of dimension $9$ by $ \sqrt{3/2}$ times a Bessel process of dimension $5$) shows that $\widehat{W}$ is transient in the sense that $\lim_{s \to \pm \infty} \widehat{W}_{s} = + \infty$ almost surely.

 \begin{theorem}[Scaling limit for the contour functions]  \label{thm:scalinglimit} We have the following convergence in distribution uniformly on every compact subset of $\mathbb{R}$:
 
 $$ \left( \frac{1}{n} C(n^2s),  \sqrt{\frac{3}{2n}} V(n^2s) \right)_{s \in \mathbb{R}} \xrightarrow[n\to\infty]{(d)} \big(  \zeta_{s}, \widehat{W}_{s}\big)_{s \in \mathbb{R}}.$$
 \end{theorem}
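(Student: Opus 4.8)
\textbf{Proof plan for Theorem~\ref{thm:scalinglimit}.}

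The plan is to follow the strategy of Le Gall and M\'enard~\cite{LGM10} very closely, exploiting the parallelism between the infinite positive treed bridge $\B_\infty$ here and the infinite positive labelled tree used to construct the UIPQ there. The pair $(C,V)$ is built by the same mechanism as in~\cite{LGM10}: the bridge $X$ plays the role of the ``spine'' along which independent Boltzmann trees $\rho^+_{X_i}$ are grafted at the down-steps, and the reading of labels along the contour produces, between consecutive visits to spine vertices, an excursion coming from the contour/label pair of a single grafted tree. Concretely, I would first isolate the two deterministic ingredients: (i) the spine process $X$, whose scaling limit is the two-sided Bessel(5) process $Z$ by Proposition~\ref{prop:scalingX}; and (ii) for each grafted tree of root label $m$, the joint scaling limit of its contour function and label function, which — since $\rho^+_m$ is a multi-type Galton--Watson tree whose offspring distribution converges, as $m\to\infty$, to that of $\rho_0$ (the critical geometric GW tree with i.i.d.\ uniform $\{-1,0,1\}$ displacements) — is governed by the Brownian snake $\mathbb{N}_0$, just as in~\cite{LGM10}. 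The positivity constraint $l>0$ only affects trees whose root label $m$ stays bounded, and by Proposition~\ref{prop:transience} (or Proposition~\ref{small labels}) the labels along the spine grow like $|i|^{1/2-\varepsilon}$, so the positivity constraint becomes asymptotically irrelevant on macroscopic scales; this is exactly the reason the limiting snake excursions are governed by the \emph{unconstrained} measure $\mathbb{N}_0$ but truncated by the indicator $\mathbf{1}_{\min\mathcal{R}(\omega)>-\sqrt{3/2}\,Z_t}$, which encodes the surviving positivity requirement that a grafted tree at a spine vertex of rescaled label $\sqrt{3/2}\,Z_t$ must not dip below $0$.

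The key steps, in order, would be: (1) Decompose $(C,V)$ restricted to a fixed compact time interval $[-A,A]$ (in the $n^2$-rescaled clock) into the contribution of spine vertices together with the contributions of the finitely many ``large'' trees (those that occupy a macroscopic fraction of contour time) and a remainder coming from ``small'' trees. (2) Show that the small trees contribute negligibly in the uniform norm: their total contour length between two macroscopically separated spine positions is $o(n^2)$ and their label fluctuations are $o(\sqrt n)$, using standard estimates for Boltzmann GW trees (second-moment / maximal-inequality bounds on tree sizes and on label oscillations, as in~\cite[Lemma~8, Lemma~9]{LGM10}). (3) Show that the point process recording, for each macroscopic tree, its rescaled contour-time location $r_i$, its rescaled genealogy $\mathbf{e}^i$ and rescaled head-of-snake $\widehat{\mathbf{w}}^i$, converges to the Poisson process $\mathcal{N}$ with intensity~\eqref{eq:intensity}; this requires (a) the joint convergence of the spine to $Z$ and of the partial contour-time sums to the clock $\tau_u$, which follows from an invariance-principle / functional-CLT argument for the i.i.d.\ (given the spine) tree sizes, and (b) the convergence of each individual large tree's contour+label pair to a Brownian-snake excursion conditioned by $\min\mathcal{R}>-\sqrt{3/2}\,Z_{r_i}$, which is the positivity-survival statement and follows from the local limit behaviour of $\rho^+_m$ for $m\to\infty$ combined with the classical Aldous/Brownian-snake convergence. (4) Concatenate: the definition of $(\zeta,\widehat W)$ by splicing the excursions in the order of the $r_i$ is exactly the continuous analogue of how $(C,V)$ is assembled from the trees, so convergence of the point process plus negligibility of the remainder yields uniform-on-compacts convergence of the full pair; continuity of $\zeta$ and $\widehat W$ is checked as in~\cite[Section~3.1]{LGM10}.

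The main obstacle I expect is step (3)(b): correctly identifying the conditioning that survives in the limit and proving the joint convergence of the Poisson point process \emph{together with} the Bessel spine it is built over. One must show that, conditionally on the rescaled spine converging to $Z$, a grafted Boltzmann tree at a down-step with root label $\asymp \sqrt n\,z$ has contour+label pair converging (after rescaling contour length by $n^2$ and labels by $\sqrt{3/2n}$) to a Brownian-snake excursion under $\mathbb{N}_0$ tilted by $\mathbf{1}_{\min\mathcal{R}>-\sqrt{3/2}\,z}$, with the right $\sigma$-finite intensity $\mathrm{d}t\,\mathbb{N}_0(\mathrm{d}\omega)$ — in particular tracking the exact constants (the absence of the factor $2$ present in~\cite{LGM10}, due to the one-sided/half-plane geometry, and the $\sqrt{3/2}$ dilation of the Bessel process, which comes from the relation~\eqref{eq:w=pp} between $w_n$ and $\mathbf{p}$ and the resulting variance normalisation). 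This is where the half-plane combinatorics genuinely differs from~\cite{LGM10}, and the bookkeeping of constants, together with a uniform (in the root label $m$, over the relevant range) control of the positive Boltzmann trees, is the delicate part; everything else is a faithful transcription of the arguments in~\cite[Sections~2--3]{LGM10}.
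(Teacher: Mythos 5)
Your plan takes exactly the paper's route: the paper's proof does not redo~\cite{LGM10} but cites the proof of Theorem~5 there and records only the modifications, which are the ones you list (one-sided grafting, hence a single label process; the positivity constraint surviving only as the indicator $\mathbf{1}_{\min \mathcal{R}(\omega) > -\sqrt{3/2}\,Z_t}$ against the rescaled spine; the modified intensity; the $\sqrt{3/2}$-dilated Bessel process of dimension $5$), and your steps (1)--(4) faithfully reconstruct the~\cite{LGM10} scheme being adapted, so as a plan it is correct. Since this constant bookkeeping is essentially all the paper's proof contains, two attributions deserve correction: the missing factor $2$ in~\eqref{eq:intensity} is due not to one-sidedness as such (one-sidedness only removes the second, correlated label process of~\cite{LGM10}) but to the fact that trees are grafted only at down-steps of the bridge, the rescaled counting measure on $\DS(X)$ converging to $\tfrac{1}{2}\,\mathrm{Leb}$, which halves the intensity relative to~\cite[Eq.~(7)]{LGM10}; and the factor $\sqrt{3/2}$ has nothing to do with~\eqref{eq:w=pp} --- it appears because the space rescaling $\sqrt{2n/3}$ is tuned to tree-label increments uniform on $\{-1,0,+1\}$ (variance $2/3$), whereas bridge increments are $\pm 1$, so the walk of Proposition~\ref{prop:scalingX} becomes $\sqrt{3/2}$ times a standard Bessel process of dimension $5$ under that normalisation. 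One further nuance in your step (3)(a): tree sizes under $\rho_0$ and $\rho^+_m$ have tails of index $1/2$ and infinite mean, so the convergence of the contour-time clock to $\tau$ is a stable-subordinator-type statement already packaged in the Poisson process $\mathcal{N}$, not a functional CLT.
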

 
 \proof A proof of this result can be obtained with the methods developed in~\cite{LGM10} to prove Theorem~5; since all that is needed is a series of marginal adjustments, we shall not repeat here the rather long and technical argument in full, but simply highlight the differences between our model and that of~\cite{LGM10}. In~\cite{LGM10}, finite trees are grafted on both sides of a semi-infinite spine; they are, exactly as in our case, conditionally independent given the labels $(Y_i)_{i\geq 0}$, and distributed according to $\rho^+_{Y_i}$. The fact that in our case trees are grafted on only one side of the infinite bridge simplifies the situation slightly (we only have to account for one process $\widehat{W}$ coding the labels instead of two correlated processes). As in~\cite{LGM10}, the fact that the labels in the contour of random positive labeled trees evolve roughly like random walks with increments uniformly chosen in $\{-1,0,+1\}$ justifies the rescaling of the label process by $\sqrt{2n/3}$ in space and $n$ in time, see~\cite[Proposition~3]{LGM10}. Contrary to what happens in~\cite{LGM10}, however, labels along the bridge itself can vary only by $+1$ or $-1$; because of this, rescaling (by $\sqrt{\frac{2n}{3}}$ in space and $n$ in time) and taking the limit results in $\sqrt{3/2}$ times a standard Bessel process of dimension $5$. This modification has no bearing at all on the arguments of the proof, since the rough estimates needed are also valid for multiples of Bessel $5$ processes.  Finally, finite trees in~\cite{LGM10} are grafted at every vertex of the spine; here, trees are only grafted on down-steps of our infinite bridge; this has been taken into account by having the intensity measure in \eqref{eq:intensity} lose a factor of 2 with respect to the analogue~\cite[Eq. (7)]{LGM10}. More precisely, if one considers the counting measure on down-steps then it is easy to see that
 $$ \frac{1}{n}\sum_{i \in \DS(X)} \delta_{i/n}  \xrightarrow[n\to\infty]{a.s.} \frac{1}{2} \cdot \mathrm{Leb},$$ where $ \mathrm{Leb}$ is the Lebesgue measure. With these remarks at hand, it is easy to adapt the proof of~\cite[Theorem 5]{LGM10} to obtain the desired result. \endproof

\subsection{Volume estimates}
We can now use Theorem~\ref{thm:scalinglimit} to deduce the limiting law of the profile of distances in the UIHPQ exactly as in~\cite[Section 4]{LGM10}. Here we shall focus on a simpler quantity than the full distance profile, namely the volume of the ball of radius $r$ in the UIHPQ. 
\begin{proposition} \label{prop:limitlawball} Let $\#[ \UIHPQ]_{n}$ denote the volume of the ball of radius $n$ in the UIHPQ (that is, the number of its vertices). Then we have the following convergence in distribution:
 \begin{eqnarray*} \frac{1}{n^4} \# [ \UIHPQ]_{n} & \xrightarrow[n\to\infty]{(d)} &  \frac{9}{8}\int_{ \mathbb{R}} \mathrm{d}s\, \mathbf{1}_{ \widehat{W}_{s} \leq 1}.  \end{eqnarray*}
\end{proposition}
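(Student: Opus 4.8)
The plan is to follow the strategy of~\cite[Section 4]{LGM10} and express the volume of the ball of radius $n$ in $\UIHPQ$ in terms of the contour functions $(C,V)$ of the infinite positive treed bridge $\B_\infty$, and then invoke Theorem~\ref{thm:scalinglimit} together with a suitable tightness/uniform integrability argument to pass to the limit. First I would recall, from the construction $\UIHPQ=\Phi(\B_\infty)$ and from Remark~\ref{premark}, that the real vertices of $\B_\infty$ are exactly the vertices of $\UIHPQ$ (apart from the root), and that the label $l(v)$ of a real vertex equals its graph distance to the root. Thus $\#[\UIHPQ]_n$ equals (up to the additive constant $1$ for the root, which is negligible at scale $n^4$) the number of real vertices $v$ of $\B_\infty$ with $l(v)\le n$. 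Since each real vertex is visited by the contour exploration at least once, and the number of corners adjacent to a vertex of degree $d$ in a tree is $d$, one has to be slightly careful: the clean way, exactly as in~\cite{LGM10}, is to count each real vertex \emph{once} by recording only the corner at which $C$ attains a new running value, or equivalently to use the well-known fact that the number of vertices at height $\le n$ in the treed bridge is, up to lower-order fluctuations, comparable to the number of corners $i$ with $V(i)\le n$ divided by an averaging factor; more precisely one writes $\#\{v \text{ real}: l(v)\le n\} = \#\{i : C(i)=C(i-1)+1 \text{ or } i \text{ is the first visit}, V(i)\le n\}$ and controls the discrepancy with $\#\{i: V(i)\le n\}$.

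Next I would set $n\to\infty$ and rescale: writing the index $i$ as $n^4 s$ would be natural if $C$ lived at scale $n^2$ for time $n^4$, but Theorem~\ref{thm:scalinglimit} is stated with time rescaled by $n^2$ and space by $n$. The correct bookkeeping is the following. A ball of radius $n$ in the map corresponds, in the treed bridge, to vertices of label $\le n$; by the scaling $V(n^2 s)\approx \sqrt{2n/3}\,\widehat W_s$, having label of order $n$ forces $s$ to be of order $n$, hence time of order $n^3$ in the unrescaled contour... — so I would instead rescale directly by replacing $n$ with $n^2$ in the radius, i.e. study $\#[\UIHPQ]_{n^2}$ and divide by $n^8$; equivalently I would re-parametrize Theorem~\ref{thm:scalinglimit} so that $\bigl(n^{-2}C(n^4 s), \sqrt{3/(2n^2)}\,V(n^4 s)\bigr)\to(\zeta_s,\widehat W_s)$, which is the same statement with $n$ replaced by $n^2$. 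Under this parametrization, the number of corners $i$ with $V(i)\le n^2$ is asymptotic to $n^4\int_{\mathbb R} \mathbf 1_{\widehat W_s\le \sqrt{3/2}}\,ds$ after rescaling the counting measure $n^{-4}\sum_i \delta_{i/n^4}$, which converges to Lebesgue measure on $\mathbb R$ by the contour time parametrization. To get from ``number of corners of label $\le n^2$'' to ``number of real vertices of label $\le n^2$'' one divides by the asymptotic average number of corners per vertex: along the contour of a Boltzmann positive labelled tree the mean degree is $2$ (each edge contributes two corner-increments), and moreover only down-steps carry trees, contributing the extra factor $2$ already absorbed into \eqref{eq:intensity}; carefully tracking these combinatorial factors — together with the change of variable constant $\sqrt{3/2}$ in the threshold versus the $9/8$ in the prefactor — is what produces the constant $9/8$. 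Concretely, $\int \mathbf 1_{\widehat W_s\le \sqrt{3/2}}\,ds$ scales like $(\sqrt{3/2})^?\cdot\int\mathbf 1_{\widehat W_s\le 1}ds$ only after applying Brownian-snake scaling to $(\zeta,\widehat W)$, which turns the threshold $\sqrt{3/2}$ into $1$ at the cost of a deterministic Jacobian; matching this against the factor $1/2$ from the down-step density and the factor $1/2$ from corners-per-edge gives the advertised $9/8$.

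The skeleton of the argument is then: (i) identify $\#[\UIHPQ]_n$, up to $O(1)$, with a corner count in $\B_\infty$ weighted by an explicit local combinatorial factor; (ii) use Theorem~\ref{thm:scalinglimit} and the convergence $n^{-2}C(n^4\cdot)\to\zeta$, $\sqrt{3/(2n^2)}V(n^4\cdot)\to\widehat W$ in the topology of uniform convergence on compacts, plus the transience of $\widehat W$ (stated just before Theorem~\ref{thm:scalinglimit}), to deduce that $n^{-8}$ times the corner count converges in distribution to $c\int_{\mathbb R}\mathbf 1_{\widehat W_s\le\sqrt{3/2}}\,ds$; (iii) apply Brownian-snake scaling to rewrite the limit as $\tfrac98\int_{\mathbb R}\mathbf 1_{\widehat W_s\le 1}\,ds$; and (iv) conclude by substituting $n\mapsto\sqrt n$ (i.e. stating the result at radius $n$ rather than $n^2$) to recover the form in the proposition. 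I would remark that step (ii) requires a uniform control ensuring that the (random) region of the contour relevant to the ball of radius $n$ has time-length $O(n^4)$ with overwhelming probability — this is where the transience of $\widehat W$ and the estimates on small labels far from the root (Proposition~\ref{small labels} and Proposition~\ref{prop:transience}) enter, guaranteeing that vertices of small label do not occur in remote parts of the treed bridge, so that the indicator $\mathbf 1_{\widehat W_s\le\cdot}$ has compact (random) support and the integral is well-defined and finite.

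\textbf{Main obstacle.} The genuinely delicate point is the precise accounting of constants in step (i)–(iii): translating a \emph{corner} count into a \emph{vertex} count requires knowing the asymptotic density of corners per real vertex along the contour of Boltzmann positive labelled trees, and this density must be combined correctly with the factor $1/2$ coming from the fact that trees are grafted only at down-steps (already visible in the missing factor $2$ in \eqref{eq:intensity} relative to~\cite[Eq.~(7)]{LGM10}) and with the Brownian-snake scaling that converts the threshold $\sqrt{3/2}$ into $1$. A sign that the bookkeeping must be done with care is precisely that the UIPQ constant is $1$ (cf.~\eqref{eq:cvballuipq}) whereas here it is $9/8$, and the ratio $\tfrac78$ in~\eqref{intro:comparisonmean} is yet a third number; getting all three consistent is the crux. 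I expect that, modulo this careful constant-chasing (which~\cite{LGM10} carries out in the sphere case and which Bouttier's alternative derivation via~\cite{BG09} cross-checks), the limit theorem itself follows by a routine continuous-mapping plus uniform-integrability argument, since the functional $w\mapsto\int_{\mathbb R}\mathbf 1_{w(s)\le 1}\,ds$ is a.s.\ continuous at $\widehat W$ (the level set $\{s:\widehat W_s=1\}$ having zero Lebesgue measure by standard properties of the head of the Brownian snake) and the transience of $\widehat W$ makes the integral a.s.\ finite.
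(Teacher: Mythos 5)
Your overall route is the same as the paper's: the proposition is proved by adapting the proof of~\cite[Theorem 6]{LGM10}, with the contour-function convergence replaced by Theorem~\ref{thm:scalinglimit}, and the only substantive content is the bookkeeping of scales and constants. It is exactly this bookkeeping that does not come out right in your write-up. First, your scales are mismatched: with the reparametrization $\bigl(n^{-2}C(n^4s),\sqrt{3/(2n^2)}\,V(n^4s)\bigr)\to(\zeta_s,\widehat W_s)$, the condition $V(n^4s)\le n^2$ translates into $\widehat W_s\lesssim n\sqrt{3/2}$, not $\widehat W_s\le\sqrt{3/2}$, and the corresponding corner count is of order $n^8$, not $n^4$. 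What you actually want is radius $n$ (not $n^2$) at time scale $n^4$: the condition $V(n^4s)\le n$ is equivalent to $\sqrt{3/(2n^2)}\,V(n^4s)\le\sqrt{3/2}$, which is how the threshold $\sqrt{3/2}$ and the normalization $n^4$ arise, and then no final substitution $n\mapsto\sqrt n$ is needed. Your initial heuristic ``label of order $n$ forces time of order $n^3$'' is also off (labels of order $n$ live at bridge positions of order $n^2$, hence at contour times of order $n^4$), which is the source of this confusion.

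Second, and more importantly, the constant is not established, and the tally you sketch is wrong. The only conversion factor between the corner count and the vertex count is the $\tfrac12$ coming from the fact that each tree edge is traversed twice by the contour while the number of real vertices is asymptotically the number of tree edges; the factor $\tfrac12$ coming from the density of down-steps must \emph{not} be applied again, since it is already built into the intensity~\eqref{eq:intensity} (this is precisely the missing factor $2$ relative to~\cite[Eq.~(7)]{LGM10}) and hence into the law of $\widehat W$. Moreover the Brownian-snake scaling exponent you leave as ``$(\sqrt{3/2})^{?}$'' is $4$: rescaling labels by $\lambda$ rescales contour time by $\lambda^4$, so $\int\mathbf 1_{\widehat W_s\le\sqrt{3/2}}\,\mathrm ds \eqdist (3/2)^{2}\int\mathbf 1_{\widehat W_s\le 1}\,\mathrm ds$. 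The correct product is thus $\tfrac12\cdot\tfrac94=\tfrac98$, whereas your proposed combination (``$\tfrac12$ from the down-step density and $\tfrac12$ from corners-per-edge'' times an undetermined power of $\sqrt{3/2}$) would give $\tfrac9{16}$ if the exponent $4$ were inserted, and in any case does not produce $\tfrac98$. Since the whole point of the statement is this constant, this is a genuine gap; the remaining ingredients of your skeleton (a.s.\ continuity of $w\mapsto\int\mathbf 1_{w\le 1}$, finiteness via transience of $\widehat W$, control of small labels far from the root) are in line with what the adaptation of~\cite{LGM10} requires.
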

Again we do not provide a full proof of the proposition, since all that is needed is a simple adaptation of~\cite[Proof of Theorem~6]{LGM10} using Theorem~\ref{thm:scalinglimit}. We shall, however, sketch the chain of approximate equalities in order to highlight the role of the scaling constants. 

To begin with, note that for large $r \geq 0$ we have 
  \begin{eqnarray*}  \frac{1}{r^4} \#[ \UIHPQ]_{r}  \approx  \frac{1}{2r^4}\int_{ \mathbb{R}} \mathbf{1}_{ V(s) \leq r} \mathrm{d}s.  \end{eqnarray*}
The reason for the factor $ \frac{1}{2}$ is that every edge of each tree is visited twice by the contour process, and that there are roughly as many real vertices in $\B_\infty$ as edges visited by the contour process on large scales. Next we use Theorem~\ref{thm:scalinglimit} to argue that
 \begin{eqnarray*}\frac{1}{2r^4}\int_{ \mathbb{R}} \mathbf{1}_{ V(s) \leq r} \mathrm{d}s & =& \frac{1}{2} \int_{ \mathbb{R}} \mathbf{1}_{V(r^4s) \leq r} \mathrm{ds}\\
 &=& \frac{1}{2} \int_{ \mathbb{R}} \mathbf{1}_{  \sqrt{\frac{3}{2r^2}}V(r^4s) \leq   \sqrt{3/2}} \mathrm{ds}\\
 & \overset{(d)}{\underset{Thm.~\ref{thm:scalinglimit}}{\approx}}& \frac{1}{2} \int_{ \mathbb{R}} \mathbf{1}_{ \widehat{W}_{s} \leq   \sqrt{3/2}} \mathrm{ds}\\
  & \overset{(d)}{\underset{ \mathrm{scaling}}{=}}& \frac{1}{2}  \left(  \sqrt{\frac{3}{2}} \right)^4	 \int_{ \mathbb{R}} \mathbf{1}_{ \widehat{W}_{s} \leq  1} \mathrm{ds}
.  \end{eqnarray*}
For later use, let us adapt the calculation of~\cite[Proposition 5]{LGM10} (see~\cite{LGM10erratum} for an amended version) and compute the mean of the limiting law that appears in the proposition. By the Poissonian construction of $( \zeta, \widehat{W})$ and the fact that $Z_{t}$ is a five-dimensional Bessel process we have 
 \begin{eqnarray*} \mathbb{E}\left[ \int_{ \mathbb{R}} \mathrm{d}s \mathbf{1}_{ \widehat{W}_{s} \leq 1} \right]  &=& 2\cdot   \mathbb{E}\left[ \int_{0}^\infty  \mathrm{d}t\  \mathbb{N}_{ \sqrt{3/2}Z_{t}}\left[ \mathbf{1}_{ \mathcal{R} \subset (0,\infty)} \int_{0}^\sigma  \mathrm{d}s \mathbf{1}_{ \widehat{ \mathbf{w}}_{s} \leq  1} \right]\right].  \end{eqnarray*}
By a standard calculation, the expected value of the total local time accumulated at level $x\geq 0$ for a $d$-dimensional Bessel process is $ x\frac{2}{d-2}$. Consequently, the expected total local time at level $x\geq 0$ for $ (\sqrt{3/2} Z_{t})_{t \geq 0} \overset{(d)}{=} (Z_{ 3t/2})_{t \geq 0}$ is equal to $  \frac{4 x}{9}$. The function $x \mapsto  \mathbb{N}_{x}\left[ \mathbf{1}_{ \mathcal{R} \subset (0,\infty)} \int_{0}^\sigma  \mathrm{d}s \mathbf{1}_{ \widehat{ \mathbf{w}}_{s} \leq 1} \right]$ has also been computed in~\cite{LGM10,LGM10erratum}, and  is equal to $\frac{2}{35x^3} \mathbf{1}_{x \geq 1} + ( \frac{x^2}{5}- \frac{x^4}{7}) \mathbf{1}_{x <1}$.  Hence we have 
 \begin{eqnarray} \label{eq:mean1/12} \frac{9}{8}\mathbb{E}\left[ \int_{ \mathbb{R}} \mathrm{d}s \mathbf{1}_{ \widehat{W}_{s} \leq 1} \right] =   \int_{0}^\infty  \mathrm{d}x \ x \left( \frac{2}{35x^3} \mathbf{1}_{x \geq 1} + \left( \frac{x^2}{5}- \frac{x^4}{7}\right) \mathbf{1}_{x <1}\right)	 = \frac{1}{12}.  \end{eqnarray}

 \subsection{The Brownian half-plane}
To conclude this section, we use the previous scaling limit result for the contour functions to define the  \emph{Brownian half-plane}, which should be the scaling limit of the UIHPQ in the local Gromov--Hausdorff sense. The construction is similar to the construction of the Brownian plane~\cite{CLGplane,CLGHull}.

Recall the notation of Section~\ref{scaling limits}: we first build an infinite continuous labelled tree from the functions $\zeta$ and $\widehat{W}$ by grafting trees onto the real line according to the Poisson point measure $ \mathcal{N}$. More precisely, we introduce a pseudo-distance on $\mathbb{R}$, denoted $d_{\zeta}$, defined  for $s,t \in \mathbb{R}$ by $ d_{\zeta}(s,t) = \zeta(s)+ \zeta(t) - 2 \min_{u \in [s \wedge t, s \vee t]} \zeta(u).$ The space $ \mathcal{T}$ obtained by taking the quotient of $ \mathbb{R}$ by the equivalence relation $ d_{\zeta}=0$, endowed with the quotient metric $ d_{\zeta}$, is a locally compact real tree (see~\cite{CLGplane} for a similar construction). Notice that $ \mathcal{T}$ has a unique doubly-infinite geodesic: equivalently, $\mathcal{T}$ has two ends. It follows from the construction of the Brownian snake excursion measure that the labelling $\widehat{W}$ is compatible with the canonical projection $ \pi : \mathbb{R} \to \mathcal{T}$: we keep the notation $\widehat{W}$ for this labelling on $\mathcal{T}$. We then mimic Schaeffer's construction in the continuous setting: for $s,t \in \mathbb{R}$, we first define
$$ D^\circ(s,t) = \widehat{W}_{s}+ \widehat{W}_{t} - 2 \max \left\{ \min_{u \in [s\wedge t, s \vee t]} \widehat{W}_{u} , \min_{u \in [s \vee t, \infty) \cup (-\infty, s\wedge t]} \widehat{W}_{u} \right\}$$ 
and then extend $D^\circ(s,t)$ to $ \mathcal{T} \times \mathcal{T}$ by
setting (for $a,b\in \mathcal{T}$)
$$D^\circ (a,b)= \min\{ D^\circ (s,t): s,t\in \mathbb{R},\; \pi(s)=a, \pi(t)=b\}.$$
Finally, for all $a,b\in\mathcal{T}$, consider
$$D^*(a,b) = \inf_{a_0=a,a_1,\ldots,a_p=b} \sum_{i=1}^p D^\circ(a_{i-1},a_i)$$
where the infimum is taken over all finite sequences $a_0,a_1,\ldots,a_p$ in $\mathcal{T}$ such that $a_0=a$ and
$a_p=b$. It is not hard to check that $D^*$ is a pseudo-distance on $\mathcal{T}$ (see~\cite{CLGplane,LeGallICM,MieStFlour} for similar constructions). We once more take the quotient of the space $ \mathcal{T}$ by the equivalence relation $ D^*=0$ to obtain a metric space, endowed with the quotient metric and with a distinguished point (the equivalence class of $\pi(0)$), which we call the \emph{Brownian half-plane}.

\section{The UIHPQ with a simple boundary}\label{simple boundary}
In this section we extend some of the scaling limit results on the geometry of the UIHPQ to the UIHPQ with a simple boundary, from now on denoted $\UIHPQS$, which was first introduced by Angel~\cite{Ang05} as the local limit of quadrangulations with a simple boundary whose size and perimeter both tend to infinity in a suitable way. An interesting feature of maps with a simple boundary is that they can be used to perform surgery operations by glueing their boundaries in various ways; for example, one can fold the boundary of $\UIHPQS$ upon itself (see Figure~\ref{folding}) to obtain a random infinite quadrangulation of the plane endowed with an infinite (one-ended) self-avoiding walk. One may also glue together two independent copies of $\UIHPQS$ and obtain an infinite quadrangulation of the plane, this time endowed with a doubly infinite self-avoiding walk. These models of quadrangulations with self-avoiding walks will be further discussed in a companion paper~\cite{companion}, which will heavily rely on results from this section.

 \begin{figure}[!h]
  \begin{center}
  \includegraphics[width=0.8\textwidth]{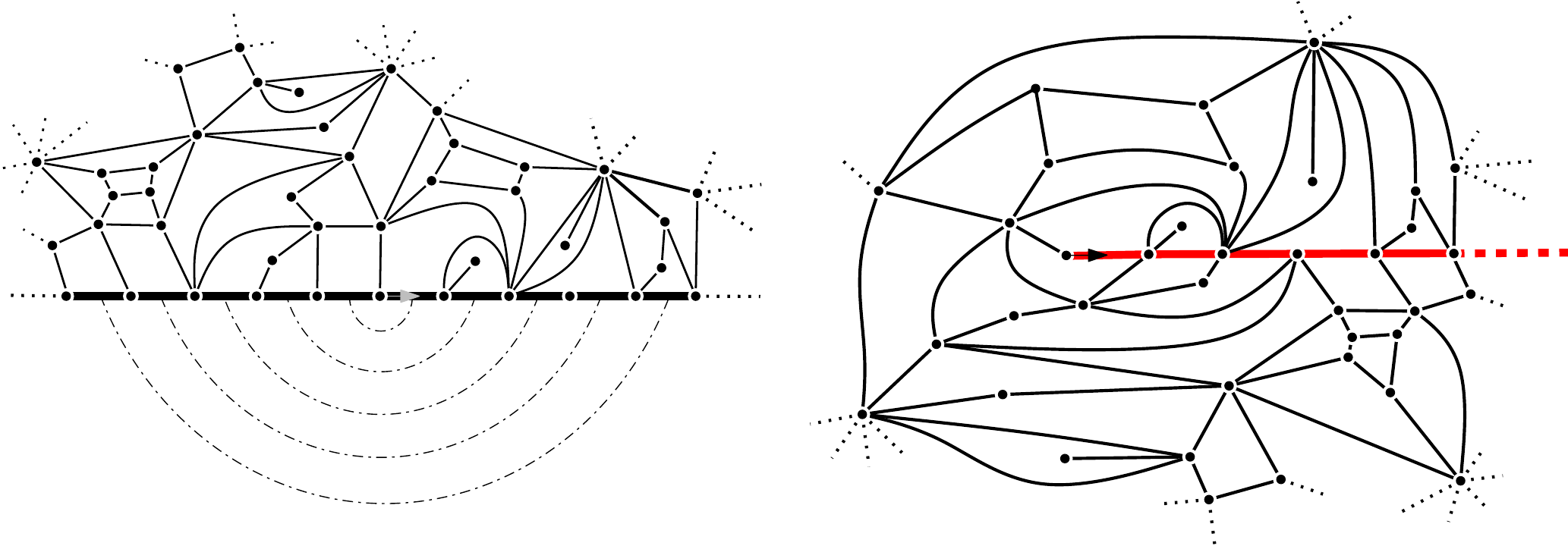}
  \caption{\label{folding}\small{How to fold the UIHPQ with a simple boundary upon itself by glueing together the two halves of its boundary.}}
  \end{center}
  \end{figure}
  
As stated in~\cite[Section 5]{CMboundary}, $\UIHPQS$ can also be obtained via a pruning procedure from the UIHPQ $\UIHPQ$. The pruning consists in eliminating all of the finite quadrangulations hanging from pinch points of the boundary of $\UIHPQ$ (see Figure~\ref{fig:pruning}). Starting with the root vertex, one can follow the contour of the boundary of $\UIHPQ$, thus obtaining a sequence of vertices $(x_i)_{i\in\mathbb{Z}}$, where $x_0$ is the root vertex and vertices may appear more than once; for each vertex $y$ on the boundary of $\UIHPQ$, consider $\mathrm{left}(y)=\min\{i\mid x_i=y\}$ and $\mathrm{right}(y)=\max\{i\mid x_i=y\}$; we can build a subsequence $(v_i)_{i\in\mathbb{Z}}$ of $(x_i)_{i\in\mathbb{Z}}$ by erasing all elements $x_i$ such that there exists a vertex $y$ for which $\mathrm{left}(y)\leq i<\mathrm{right}(y)$ and setting $v_0$ to correspond to the first vertex with non-negative index in the original sequence not to be erased. The sequence $(v_i)_{i\in\mathbb{Z}}$ is in fact a simple doubly infinite path; the infinite (random) quadrangulation lying above it, rooted in the edge $v_0v_1$ (oriented away from $v_0$), is $\UIHPQS$. The complement of $\UIHPQS$ in $\UIHPQ$ is a sequence of (almost surely) finite quadrangulations with a generic boundary hanging from the vertices $(v_i)_{i\in\mathbb{Z}}$, some of which may consist of a single vertex; we name such (random) quadrangulations $(\q_i)_{i\in\mathbb{Z}}$, where $\q_i$ includes vertex $v_i$ (so that $\q_0$ includes the root vertex of $\UIHPQ$, and possibly the entire root edge). By~\cite[Proposition 6]{CMboundary}, the $\q_i$'s are independent and distributed according to the so-called free Boltzmann distribution on quadrangulations with a general boundary, with the exception of $\q_0$, which has an additional bias (see~\cite{CMboundary} for details). 

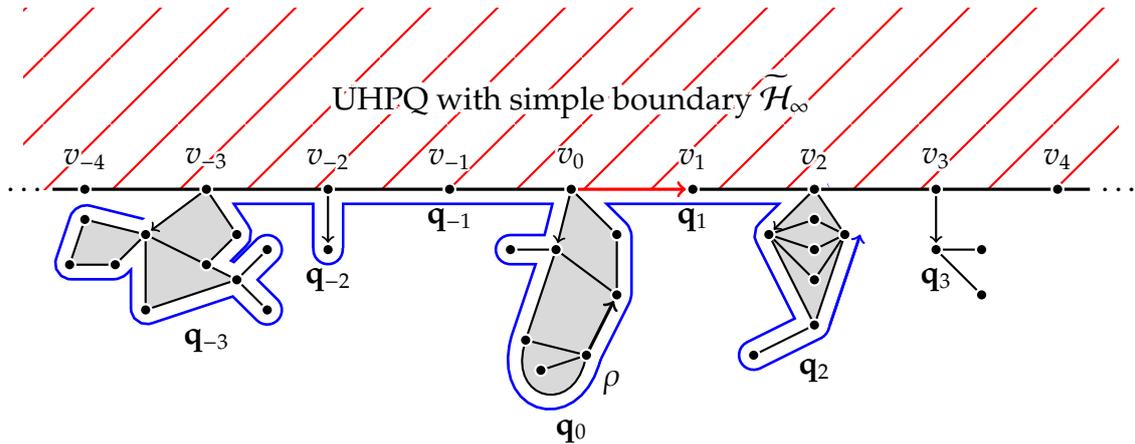
\begin{figure}[width=0.8\textwidth]
\centering\begin{tikzpicture}[scale=0.8]
\tikzstyle{real}=[inner sep=1.5pt, draw=white,fill=black, thick, circle]
\tikzstyle{bla}=[thick]
\tikzstyle{geoleft}=[very thick]
\tikzstyle{root}=[->, very thick]
    
	\begin{pgfonlayer}{nodelayer}
	\contourlength{1pt}
	\node at (-6,-2.5) {$\q_{-3}$};
	\node at (-4,-1.5) {$\q_{-2}$};
	\node at (-2,-0.5) {$\q_{-1}$};
	\node at (0,-4) {$\q_{0}$};
	\node at (2,-0.5) {$\q_{1}$};
	\node at (4,-3) {$\q_{2}$};
	\node at (6,-1.5) {$\q_{3}$};
		\node [style=real, label=above:\contour{white}{$v_{0}$}] (0) at (0, 0) {};
		\node [style=real,label=above:\contour{white}{$v_{1}$}] (1) at (2, 0) {};
		\node [style=real,label=above:\contour{white}{$v_{2}$}] (2) at (4, 0) {};
		\node [style=real,label=above:\contour{white}{$v_{-1}$}] (3) at (-2, 0) {};
		\node [style=real,label=above:\contour{white}{$v_{-2}$}] (4) at (-4, 0) {};
		\node [style=real,label=above:\contour{white}{$v_{-3}$}] (5) at (-6, 0) {};
		\node [style=real, label=above:\contour{white}{$v_{-4}$}] (6) at (-8, 0) {};
		\node (left) at (-9,0) {$\ldots$};
		\node (right) at (9,0) {$\ldots$};
		\node [style=real,label=above:\contour{white}{$v_{3}$}] (7) at (6, 0) {};
		\node [style=real, label=above:\contour{white}{$v_4$}] (8) at (8, 0) {};
		\node [style=real] (9) at (0.75, -0.75) {};
		\node [fill=blue,style=real] (10) at (-0.25, -1) {};
		\node [fill=blue,style=real] (11) at (0.75, -1.75) {};
		\node [fill=blue,style=real] (12) at (-0.75, -2.5) {};
		\node [fill=blue,style=real, label=below right:$\rho$] (13) at (0.25, -2.75) {};
		\node [style=real] (14) at (-1, -1) {};
		\node [style=real] (15) at (-0.5, -3) {};
		\node [style=real] (16) at (3.25, -0.75) {};
		\node [style=real] (17) at (4.5, -0.75) {};
		\node [style=real] (18) at (4, -0.5) {};
		\node [style=real] (19) at (4, -1) {};
		\node [style=real] (20) at (4, -1.5) {};
		\node [style=real] (21) at (4, -2.25) {};
		\node [style=real] (22) at (3, -2.75) {};
		\node [style=real] (23) at (6, -1) {};
		\node [style=real] (24) at (6.75, -1) {};
		\node [style=real] (25) at (6.75, -1.75) {};
		\node [style=real] (26) at (6, -1) {};
		\node [style=real] (27) at (-4, -1) {};
		\node [style=real] (28) at (-7, -0.75) {};
		\node [style=real] (29) at (-6, -1.25) {};
		\node [style=real] (30) at (-5.5, -0.75) {};
		\node [style=real] (31) at (-5.5, -1.5) {};
		\node [style=real] (32) at (-5, -1) {};
		\node [style=real] (33) at (-5, -2) {};
		\node [style=real] (34) at (-7, -2) {};
		\node [style=real] (35) at (-8, -0.5) {};
		\node [style=real] (36) at (-8.25, -1.25) {};
		\node [style=real] (37) at (-7.5, -1.25) {};
	\end{pgfonlayer}
	\begin{pgfonlayer}{edgelayer}
	\draw[line width=12pt, blue, rounded corners=1pt] (28.center) to (35.center) to (36.center) to (37.center) to (28.center) to (34.center) 
	to (31.center) to (29.center) to (30.center) to (5.center) to (4.center) to (27.center) to (4.center) to 
	(2.center) to (16.center) to (21.center) to (17.center);
	\draw[line width=12pt, blue, rounded corners=1pt, cap=round] (13.center) to (11.center) to (9.center) to (0.center) to (10.center) to (12.center) [bend right=90, looseness=2.6] to (13.center);
	\draw[line width=12pt, blue, rounded corners=1pt, cap=round] (33.center)--(31.center)--(32.center);
	\draw[line width=12pt, blue, rounded corners=1pt, cap=round] (4.center)--(27.center);
	\draw[line width=12pt, blue, rounded corners=1pt, cap=round] (14.center)--(10.center);
	\draw[line width=12pt, blue, rounded corners=1pt, cap=round] (21.center)--(22.center);
	
	\draw[line width=10pt, white, rounded corners=1pt, cap=round] (5.center) to (28.center) to (35.center) to (36.center) to (37.center) to (28.center) to (34.center) 
	
	to (31.center) to (29.center) to (30.center) to (5.center) to (4.center) to (27.center) to (4.center) to 
	(2.center) to (17.center) to (21.center) to (16.center) to (2.center);
	\draw[line width=10pt, white, rounded corners=1pt, cap=round] (13.center) to (11.center) to (9.center) to (0.center) to (10.center) to (12.center) [bend right=90, looseness=2.6] to (13.center);
	\draw[line width=10pt, white, rounded corners=1pt, cap=round] (33.center)--(31.center)--(32.center);
	\draw[line width=10pt, white, rounded corners=1pt, cap=round] (4.center)--(27.center);
	\draw[line width=10pt, white, rounded corners=1pt, cap=round] (14.center)--(10.center);
	\draw[line width=10pt, white, rounded corners=1pt, cap=round] (21.center)--(22.center);
	\draw[line width=10pt, white, rounded corners=1pt, cap=round] ([yshift=3pt]5.center) to ([yshift=3pt]2.center);
	\draw[blue, ->, line width=1pt] ([xshift=7.2pt]21.center) to ([xshift=7.2pt]17.center);
	
		\fill[gray!30] (13.center) to (11.center) to (9.center) to (0.center) to (10.center) to (12.center) [bend right=90, looseness=2.6] to (13.center);
		\fill[gray!30] (2.center) to (17.center) to (21.center) to (16.center) to (2.center);
		\fill[gray!30] (5.center) to (28.center) to (35.center) to (36.center) to (37.center) to (28.center) to (34.center) to (31.center) to (29.center) to (30.center) to (5.center);
		\contourlength{2pt}
		\path[pattern=flexible hatch, pattern color=red, hatch distance=25pt, hatch thickness=0.8pt] (-9,0) rectangle (9,3) node[pos=.5] {\contour{white}{UHPQ with simple boundary $\UIHPQS$}};
		\draw [style=bla, <-] (10) to (0);
		\draw [style=bla] (10) to (11);
		\draw [style=bla] (11) to (9);
		\draw [style=bla] (9) to (0);
		\draw [style=bla] (12) to (10);
		\draw [style=bla, bend right=90, looseness=2.25] (12) to (13);
		\draw [style=root, draw=black] (13) to (11);
		\draw [style=bla] (14) to (10);
		\draw [style=bla] (13) to (15);
		\draw [style=bla] (12) to (13);
		\draw [style=bla, <-] (16) to (2);
		\draw [style=bla] (2) to (17);
		\draw [style=bla] (18) to (16);
		\draw [style=bla] (18) to (17);
		\draw [style=bla] (17) to (19);
		\draw [style=bla] (19) to (16);
		\draw [style=bla] (16) to (20);
		\draw [style=bla] (20) to (17);
		\draw [style=bla] (21) to (17);
		\draw [style=bla] (21) to (16);
		\draw [style=bla] (22) to (21);
		\draw [style=bla, <-] (23) to (7);
		\draw [style=bla] (23) to (24);
		\draw [style=bla] (23) to (25);
		\draw [style=bla, <-] (27) to (4);
		\draw [style=bla, ->] (5) to (28);
		\draw [style=bla] (28) to (29);
		\draw [style=bla] (29) to (30);
		\draw [style=bla] (30) to (5);
		\draw [style=bla] (29) to (31);
		\draw [style=bla] (31) to (32);
		\draw [style=bla] (31) to (33);
		\draw [style=bla] (31) to (34);
		\draw [style=bla] (34) to (28);
		\draw [style=bla] (36) to (35);
		\draw [style=bla] (37) to (36);
		\draw [style=bla] (37) to (28);
		\draw [style=bla] (28) to (35);
		\draw [style=geoleft] (6) to (5);
		\draw [style=geoleft] (5) to (4);
		\draw [style=geoleft] (4) to (3);
		\draw [style=geoleft] (3) to (0);
		\draw [style=root, draw=red] (0) to (1);
		\draw [style=geoleft] (1) to (2);
		\draw [style=geoleft] (2) to (7);
		\draw [style=geoleft] (7) to (8);
		\draw [style=geoleft] (6) to (left);
		\draw [style=geoleft] (8) to (right);
	\end{pgfonlayer}
\end{tikzpicture}
\caption{\label{fig:pruning}\small{The UIHPQ with a simple boundary obtained by a pruning procedure from the UIHPQ with a generic boundary. The finite quadrangulations $\q_i$ are rooted in their first edge to be met when following the left-to-right contour of the boundary of the UIPQ (represented by the blue arrow) so that the root vertex of $\q_i$ is the vertex $v_i$ on the simple boundary of $\UIHPQS$.}}
\end{figure}
 
We denote by $\Q_f$ a free Boltzmann quadrangulation with a general boundary; the probability that $Q_{f}=q$ for a given (rooted) quadrangulation $q\in\sQ_{n,p}$ (with $n$ internal faces and perimeter $2p$) is
 $$ \mathbb{P}(\Q_{f} =q) = \frac{1}{W(1/12,1/8)} \left(\frac{1}{12}\right)^{n} \left(\frac{1}{8}\right)^p,$$ where $W(g,z)$ is the generating function defined in Section~\ref{quadrangulations}, counting cardinalities of the sets $\sQ_{n,p}$ with a weight $g$ per inner face and $ \sqrt{z}$ per edge on the boundary; using the explicit formula~\eqref{W} one can deduce that
 \begin{eqnarray} \label{eq:meanfinite} \mathbb{E}[ \mathrm{Perimeter}(\Q_{f})] = \left.\frac{ 2z\partial_{z}W(g,z)}{W(g,z)}\right |_{\begin{subarray}{c} g=1/12\\z=1/8 \end{subarray}} = 2.\end{eqnarray}
The area (i.e.~the number of inner faces) of $\Q_{f}$ has no first moment since $\partial_{g}W(g,z)$ explodes when $z=1/8$ as $g \to 1/12$. Singularity analysis, however, shows that 
 \begin{eqnarray} \label{eq:sizetail} \mathbb{P}( \mathrm{Area}(\Q_{f}) = n) \sim \frac{[g^n]W(g,1/8)}{W(1/12,1/8)} = C \cdot n^{-7/4}  \end{eqnarray} for some constant $C>0$ (whose value is not relevant for what follows).

\bigskip 
Using the pruning procedure described above we can now deduce a result analogous to Proposition~\ref{prop:scalingX} for the UIHPQ with a simple boundary:
\begin{proposition}
Let $(\widetilde{X}_{k})_{k \in \mathbb{Z}}$ be the process of distances to the root vertex read along the boundary of the UIHPQ with a simple boundary (which is a doubly infinite path, implicitly identified with $ \mathbb{Z}$); then, using the same notation as in Proposition~\ref{prop:scalingX}, we have
$$ \left( \frac{\widetilde{X}_{[nt]}}{ \sqrt{n}} \right)_{t \in \mathbb{R}}  \xrightarrow[n\to\infty]{(d)} \sqrt{3}\left(Z_{t}\right)_{t \in \mathbb{R}}.$$
\end{proposition}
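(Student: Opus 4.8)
The plan is to deduce this from Proposition~\ref{prop:scalingX} by expressing the process $\widetilde X$ along the simple boundary as a \emph{time change} of the process $X$ governing the boundary of $\UIHPQ$, via the pruning description of $\UIHPQS$ recalled above. Writing $\UIHPQ=\Phi(\B_\infty)$ and letting $(x_i)_{i\in\mathbb Z}$ be its boundary contour, Remark~\ref{premark} (in the infinite version of Section~\ref{infinite construction}) tells us that $X_i=\mathrm{d_{gr}}(x_i,\rho)$, with $\rho$ the root vertex of $\UIHPQ$, is exactly the process of Proposition~\ref{prop:scalingX}. The simple boundary $(v_k)_{k\in\mathbb Z}$ of $\UIHPQS$ is a subsequence of $(x_i)_{i\in\mathbb Z}$, and between two consecutive retained vertices the contour of $\UIHPQ$ traverses the boundary of one hanging quadrangulation $\q_j$ plus one edge; hence, if $\theta(k)$ is the index (in $\mathbb Z$) of a visit of the contour of $\UIHPQ$ to $v_k$, one has for $k\ge1$
$$\theta(k)=\theta(0)+\sum_{j=1}^{k}L_j,\qquad L_j:=\mathrm{Perimeter}(\q_{j})+1,$$
with an analogous formula (involving the $\q_j$ with $j<0$) for $k\le-1$; note that $X$ is constant on each block of contour visits to a given $v_k$, so the choice of visit is immaterial. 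I would then observe the exact identity $\widetilde X_k=X_{\theta(k)}-c$, where $c:=\mathrm{d_{gr}}^{\q_0}(v_0,\rho)$ is an almost surely finite random constant independent of $k$: indeed, each $\q_j$ meets the rest of $\UIHPQ$ only at the single vertex $v_j$, so no geodesic enters a hanging quadrangulation except to terminate there, whence $\mathrm{d_{gr}}^{\UIHPQ}(v_k,\rho)=\mathrm{d_{gr}}^{\UIHPQS}(v_k,v_0)+\mathrm{d_{gr}}^{\q_0}(v_0,\rho)$.

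The next step is a law of large numbers for the time change $\theta$. By~\cite[Proposition~6]{CMboundary}, the families $(\q_j)_{j\ge1}$ and $(\q_j)_{j\le-1}$ are independent and each consists of i.i.d.\ free Boltzmann quadrangulations with a general boundary (the central piece $\q_0$ has an extra bias but is still a.s.\ finite), and by~\eqref{eq:meanfinite} we have $\E[L_j]=\E[\mathrm{Perimeter}(\q_j)]+1=3$. Although the law of $\mathrm{Perimeter}(\q_j)$ has infinite variance --- its tail is of order $p^{-3/2}$ by~\eqref{eq:asympZp} --- the strong law of large numbers applies since the first moment is finite, giving $\theta(k)/k\to3$ (resp.\ $\theta(k)/|k|\to3$) almost surely as $k\to+\infty$ (resp.\ $k\to-\infty$); since $\theta$ is moreover monotone on $\{k\ge0\}$ and on $\{k\le0\}$, this upgrades to $\theta([nt])/n\to3t$ almost surely, uniformly on every compact subset of $\mathbb R$.

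Finally I would combine the two limits. From Proposition~\ref{prop:scalingX}, $(X_{[ns]}/\sqrt n)_{s\in\mathbb R}\to(Z_s)_{s\in\mathbb R}$ in distribution, uniformly on compacts; jointly with the a.s.\ deterministic convergence $\theta([nt])/n\to3t$ (the joint convergence being automatic because the second limit is deterministic), and using that composition is continuous at a pair whose second coordinate is the continuous deterministic map $t\mapsto3t$ and that $Z$ has continuous sample paths, one gets
$$\Bigl(\frac{X_{\theta([nt])}}{\sqrt n}\Bigr)_{t\in\mathbb R}=\Bigl(\frac{X_{[n\cdot\theta([nt])/n]}}{\sqrt n}\Bigr)_{t\in\mathbb R}\xrightarrow[n\to\infty]{(d)}(Z_{3t})_{t\in\mathbb R}$$
uniformly on compacts (rigorously, via a Skorokhod representation: bound $|X_{\theta([nt])}/\sqrt n-Z_{3t}|$ by $\sup_{s\in K}|X_{[ns]}/\sqrt n-Z_s|$ plus the modulus of continuity of $Z$ evaluated at $\sup_t|\theta([nt])/n-3t|$, both tending to $0$). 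Subtracting the bounded error $\widetilde X_{[nt]}-X_{\theta([nt])}=-c=O(1)$, which vanishes after division by $\sqrt n$, yields $(\widetilde X_{[nt]}/\sqrt n)_{t\in\mathbb R}\to(Z_{3t})_{t\in\mathbb R}$ in distribution. It then suffices to invoke the self-similarity of index $1/2$ of the Bessel process of dimension $5$ started from $0$: $(Z_{3t})_{t\ge0}\overset{(d)}{=}\sqrt3\,(Z_t)_{t\ge0}$, and since the two half-lines remain independent, $(Z_{3t})_{t\in\mathbb R}\overset{(d)}{=}\sqrt3\,(Z_t)_{t\in\mathbb R}$, which is the claimed statement.

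The main obstacle I anticipate is the bookkeeping behind the identity $\widetilde X_k=X_{\theta(k)}-c$ and the exact form of the time change $\theta$ --- that is, extracting from the pruning procedure of~\cite{CMboundary} precisely how the contour of $\UIHPQ$ decomposes along the hanging quadrangulations and verifying that distances along the simple core coincide with those in $\UIHPQ$. By contrast, the heavy tail of the perimeters is harmless here: at the relevant scale of time it produces fluctuations of $\theta([nt])$ of order only $n^{2/3}=o(n)$, so that the elementary law of large numbers is enough and the continuity of the Bessel limit absorbs the remaining discrepancy.
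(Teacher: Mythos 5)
Your proposal is correct and follows essentially the same route as the paper: view $\UIHPQS$ as the pruned UIHPQ, note that consecutive boundary vertices of the core are separated along the UIHPQ contour by $\mathrm{Perimeter}(\q_j)+1$ steps, apply the law of large numbers with $\E[\mathrm{Perimeter}(\Q_f)]=2$ to get the factor $3$ in the time change, and combine with Proposition~\ref{prop:scalingX} and Bessel scaling. You merely spell out details the paper leaves implicit (the exact identity $\widetilde X_k=X_{\theta(k)}-c$ and the composition/Skorokhod argument), which is fine.
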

\proof  We use the construction of the UIHPQ with a simple boundary $\UIHPQS$ as the result of the pruning operation on the standard UIHPQ, as described at the beginning of this section; recall that the boundary $(v_i)_{i\in\mathbb{Z}}$ of $\UIHPQS$ was constructed as a subsequence of the contour $(x_i)_{i\in\mathbb{Z}}$ of the full boundary of $\UIHPQ$, so that each $v_i$ is identified with some $x_{n_i}$, its leftmost occurrence in the sequence $(x_i)_{i\in\mathbb{Z}}$. It is now quite clear that $n_{i}-n_{i-1}-1$ is equal to the perimeter of the quadrangulation $\q_i$, so that by the law of large numbers we get
$$ \frac{n_{i}}{i} \xrightarrow[i \to \pm\infty]{a.s.} 1 + \mathbb{E}[ \mathrm{Perimeter}(Q_{f})]\underset{ \eqref{eq:meanfinite}}{=}3.$$
This fact can be combined with Proposition~\ref{prop:scalingX} to obtain the desired result.\endproof

Similarly, Proposition~\ref{prop:limitlawball} can be restated for the UIHPQ with a simple boundary:
\begin{proposition}\label{prop:limitlawball for UIHPQS} Proposition~\ref{prop:limitlawball} still holds if $\UIHPQ$ is replaced by $\UIHPQS$.
\end{proposition}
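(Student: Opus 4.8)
The plan is to leverage the pruning decomposition of $\UIHPQS$ from $\UIHPQ$ together with the scaling limit already obtained for the UIHPQ (Proposition~\ref{prop:limitlawball}), showing that the ``decorations'' $(\q_i)_{i\in\mathbb{Z}}$ removed by the pruning procedure contribute negligibly to the volume of large balls around the root, and that the small dilation of the boundary length only affects the label process (hence the volume) by a bounded multiplicative factor that washes out in the $n\to\infty$ limit after rescaling.

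First I would set up notation following the beginning of this section: write $\UIHPQ$ as its pruned core $\UIHPQS$ with the finite quadrangulations $(\q_i)_{i\in\mathbb{Z}}$ hanging from the boundary vertices $(v_i)_{i\in\mathbb{Z}}$, the $\q_i$ being independent free Boltzmann quadrangulations (up to the bias of $\q_0$). The key observation is that the ball of radius $n$ in $\UIHPQ$ splits into its intersection with $\UIHPQS$ and its intersections with the various $\q_i$. I would bound the total number of vertices contributed by the $\q_i$'s to $[\UIHPQ]_n$: only the $\q_i$ that are grafted on boundary vertices $v_i$ within distance $n$ of the root can contribute, and a $\q_i$ can contribute at most $\#\q_i$ vertices. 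The number of $v_i$ within distance $n$ of the root grows like $n^2$ (this follows from Proposition~\ref{prop:scalingX} applied along the boundary, since distances along the boundary of the core are of order $\sqrt{i}$, so $\{|i| : \widetilde X_i \le n\}$ has size $O(n^2)$), and by \eqref{eq:sizetail} each $\q_i$ has area with tail $n^{-7/4}$ hence finite $3/4+\delta$-moments for small $\delta$, and in particular $\E[\#\q_i^{3/2}]<\infty$; a crude union bound or a Markov/Borel--Cantelli argument then shows $\sum_{|i|\le Cn^2}\#\q_i = o(n^4)$ in probability. Therefore $\#[\UIHPQ]_n = \#([\UIHPQ]_n\cap \UIHPQS) + o(n^4)$.

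Next I would identify $[\UIHPQ]_n\cap\UIHPQS$ with (up to an $o(n^4)$ correction coming from boundary effects) the ball of radius $n$ computed \emph{within} $\UIHPQS$ using the intrinsic distance of $\UIHPQS$: since adding the $\q_i$'s can only create shortcuts through finite pieces hanging off the boundary, and those shortcuts are short (each $\q_i$ has bounded expected perimeter by \eqref{eq:meanfinite}, so typical intra-$\q_i$ distances are $O(1)$ and the relevant $\q_i$ are all within $O(1)$ of the core boundary), the distance in $\UIHPQ$ and the distance in $\UIHPQS$ agree for core vertices up to a bounded additive error, which perturbs the radius-$n$ ball volume by $o(n^4)$. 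Then I would invoke the scaling-limit machinery exactly as in Proposition~\ref{prop:limitlawball}: $\UIHPQS$ is itself encoded (via the positive treed bridge construction, restricted appropriately, or directly via Angel's limit) by a contour/label pair whose scaling limit is governed by the same Poissonian construction of $(\zeta,\widehat W)$, except that the underlying Bessel-$5$ process along the boundary is dilated — from the previous Proposition the boundary distance process rescales to $\sqrt3\,Z$ rather than $Z$ — which by the scaling property of $\widehat W$ (already used to pass from $\sqrt{3/2}$ to $1$ in Proposition~\ref{prop:limitlawball}) only changes the limiting volume by a deterministic constant; since Proposition~\ref{prop:limitlawball for UIHPQS} merely asserts that the \emph{same} statement ``$\frac1{n^4}\#[\cdot]_n$ converges in distribution'' holds, not that the limit is literally equal, it suffices to produce convergence in law to a (possibly differently normalised) version of $\frac98\int_{\mathbb R}\mathbf1_{\widehat W_s\le 1}\,\mathrm ds$. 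Actually, re-reading the statement, Proposition~\ref{prop:limitlawball for UIHPQS} claims Proposition~\ref{prop:limitlawball} ``still holds'', i.e.\ with the exact same limiting random variable; I would reconcile this by noting that the $7/9$ comparison in \eqref{intro:comparisonmean} is claimed for \emph{both} the UIHPQ and the UIHPQ with simple boundary, forcing the volume limits to coincide, and this coincidence is precisely because the extra dilation along the boundary is compensated by the fact that the $\q_i$'s carry no volume at scale $n^4$ — so the core $\UIHPQS$, reparametrised by its own boundary length, has a contour process converging to the \emph{same} $(\zeta,\widehat W)$ (the constant $\sqrt3$ is absorbed by changing the time parametrisation, not the spatial one that controls $\{\widehat W\le 1\}$).

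The main obstacle I expect is making rigorous the claim that the $\q_i$'s do not contribute to balls at scale $n^4$: one must control simultaneously (i) how many $\q_i$'s are relevant — which requires a reasonably sharp two-sided estimate on $\#\{i : \mathrm{d_{gr}}(v_i,\rho)\le n\}$, obtainable from the transience and $\sqrt i$-growth of the boundary distance process (Propositions~\ref{prop:transience} and~\ref{prop:scalingX}) — and (ii) the fluctuations of $\sum \#\q_i$ over such a random index set, where the summands are heavy-tailed (only finite $3/4$-moments). A truncation argument (split off $\q_i$ with $\#\q_i > n^{4-\delta}$, whose number is controlled by the tail \eqref{eq:sizetail} and a Borel--Cantelli estimate, and bound the rest by $n^2\cdot n^{4-\delta}=o(n^4)$ after optimising $\delta$) should suffice, but it needs to be done carefully and uniformly in $n$. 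A secondary, more routine, obstacle is the bookkeeping of the distortion between $\mathrm{d_{gr}}$ in $\UIHPQ$ and in $\UIHPQS$ for core vertices, together with the rooting/pointing conventions of Remark~\ref{premark} when transporting the contour-function scaling limit of Theorem~\ref{thm:scalinglimit} to the pruned model; I would handle this by the same approximate-equality chain displayed after Proposition~\ref{prop:limitlawball}, inserting the dilation factor and checking it cancels.
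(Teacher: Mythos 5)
Your overall route is the same as the paper's: realise $\UIHPQS$ inside $\UIHPQ$ via the pruning decomposition, show that the hanging quadrangulations $(\q_i)$ contribute $o_P(n^4)$ vertices to balls of radius $n$, absorb the root offset into a small perturbation of the radius, and conclude from Proposition~\ref{prop:limitlawball}. However, your heavy-tail bookkeeping does not close as written. From \eqref{eq:sizetail} one gets $\P(\#\q_i>t)\asymp t^{-3/4}$, so $\#\q_i$ has finite moments only of order strictly less than $3/4$; the claims of finite $(3/4+\delta)$-moments and of $\E[\#\q_i^{3/2}]<\infty$ are false. Moreover the truncation you propose cannot be optimised: to make the crude bound (number of indices)$\times$(truncation level) $=n^2\cdot n^{4-\delta}$ be $o(n^4)$ you need the truncation level below $n^2$, but among $\asymp n^2$ i.i.d.\ variables with tail $t^{-3/4}$ the number of exceedances of such a level is of order $n^{1/2}$ and these exceedances cannot be discarded (they carry most of the mass, since $\E[\#\q_i]=\infty$). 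The sum over the $O(n^{2+\alpha})$ relevant indices is genuinely of order $n^{4(2+\alpha)/3}=o(n^4)$, and the way to see it — which is what the paper does — is to use that $\#\q_i$ is stochastically dominated by a variable in the domain of attraction of the totally asymmetric $3/4$-stable law, or equivalently to apply Markov's inequality to the truncated expectations $\E[\#\q_i\mathbf{1}_{\#\q_i\le T}]\asymp T^{1/4}$ rather than to the worst-case value $T$.

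Your closing paragraph is also problematic: re-deriving a contour scaling limit for $\UIHPQS$ with the boundary process dilated by $\sqrt3$, and arguing that the limiting volumes must coincide because of the $7/9$ comparison \eqref{intro:comparisonmean} announced in the introduction, is both unnecessary and circular — that comparison is a consequence of the present proposition, not an input to it. Once you have $\#[\UIHPQS]_n=\#[\UIHPQ]_n+o_P(n^4)$ (the content of your first two paragraphs), the statement follows immediately from Proposition~\ref{prop:limitlawball}, with the exact same limit law; no separate encoding of $\UIHPQS$ is needed, and this is precisely how the paper proceeds. A smaller imprecision: since each $\q_i$ is attached to the core at the single vertex $v_i$, the decorations create no shortcuts at all between core vertices, so distances within $\UIHPQS$ agree exactly with the induced distances from $\UIHPQ$; the only discrepancy is the random offset $\mathbf{d}$ between the root $\rho\in\q_0$ and the root vertex of $\UIHPQS$, which is not ``bounded'' but is handled, as in the paper, by restricting to the asymptotically almost sure event $\{\mathbf{d}<\sqrt n\}$ and comparing $\#[\UIHPQ]_{n+\mathbf{d}}$ with $\#[\UIHPQ]_n$ via Proposition~\ref{prop:limitlawball}.
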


\begin{figure}[width=0.8\textwidth]
\centering\begin{tikzpicture}[scale=0.8]
\tikzstyle{real}=[inner sep=1.5pt, draw=white,fill=black, thick, circle]
\tikzstyle{bla}=[thick]
\tikzstyle{geoleft}=[very thick]
\tikzstyle{root}=[->, very thick]
	\begin{pgfonlayer}{nodelayer}
		\node [style=real, label=above:$v_0$] (0) at (0, 0) {};
		\node [style=real] (1) at (2, 0) {};
		\node [style=real] (2) at (4, 0) {};
		\node [style=real] (3) at (-2, 0) {};
		\node [style=real] (4) at (-4, 0) {};
		\node [style=real] (5) at (-6, 0) {};
		\node [style=real, label=above:$v_{-a}$] (6) at (-8, 0) {};
		\node (left) at (-9,0) {$\ldots$};
		\node (right) at (9,0) {$\ldots$};
		\node [style=real] (7) at (6, 0) {};
		\node [style=real, label=above:$v_b$] (8) at (8, 0) {};
		\node [style=real] (9) at (0.75, -0.75) {};
		\node [fill=blue,style=real] (10) at (-0.25, -1) {};
		\node [fill=blue,style=real] (11) at (0.75, -1.75) {};
		\node [fill=blue,style=real] (12) at (-0.75, -2.5) {};
		\node [fill=blue,style=real, label=right:$\rho$] (13) at (0.25, -2.75) {};
		\node [style=real] (14) at (-1, -1) {};
		\node [style=real] (15) at (-0.5, -3) {};
		\node [style=real] (16) at (3.25, -0.75) {};
		\node [style=real] (17) at (4.5, -0.75) {};
		\node [style=real] (18) at (4, -0.5) {};
		\node [style=real] (19) at (4, -1) {};
		\node [style=real] (20) at (4, -1.5) {};
		\node [style=real] (21) at (4, -2.25) {};
		\node [style=real] (22) at (3, -2.75) {};
		\node [style=real] (23) at (6, -1) {};
		\node [style=real] (24) at (6.75, -1) {};
		\node [style=real] (25) at (6.75, -1.75) {};
		\node [style=real] (26) at (6, -1) {};
		\node [style=real] (27) at (-4, -1) {};
		\node [style=real] (28) at (-7, -0.75) {};
		\node [style=real] (29) at (-6, -1.25) {};
		\node [style=real] (30) at (-5.5, -0.75) {};
		\node [style=real] (31) at (-5.5, -1.5) {};
		\node [style=real] (32) at (-5, -1) {};
		\node [style=real] (33) at (-5, -2) {};
		\node [style=real] (34) at (-7, -2) {};
		\node [style=real] (35) at (-8, -0.5) {};
		\node [style=real] (36) at (-8.25, -1.25) {};
		\node [style=real] (37) at (-7.5, -1.25) {};
	\end{pgfonlayer}
	\begin{pgfonlayer}{edgelayer}
	\contourlength{1.5pt}
	\draw[draw=red, bend right=45, thick, fill=red!10] (8.center) to node[midway, yshift=-8pt, inner sep=3pt]{\contour{white}{\textcolor{red}{distance from $\rho$ $\leq n+\mathbf{d}$}}} (6.center);
		\draw[draw=blue, thick, fill=blue!10, bend right=50] (2.center) to node[midway, yshift=-8pt, inner sep=3pt]{\contour{white}{\textcolor{blue}{distance from $\rho$ $\leq n$}}} (4.center);
		\fill[blue!10] (13.center) to (11.center) to (9.center) to (0.center) to (10.center) to (12.center) [bend right=90, looseness=2.4] to (13.center);
		\fill[red!10] (2.center) to (17.center) to (21.center) to (16.center) to (2.center);
		\fill[red!10] (5.center) to (28.center) to (29.center) to (30.center) to (5.center);
		\draw [style=bla] (10) to (0);
		\draw [style=bla] (10) to (11);
		\draw [style=bla] (11) to (9);
		\draw [style=bla] (9) to (0);
		\draw [style=bla] (12) to (10);
		\draw [style=bla, bend right=90, looseness=2.25] (12) to (13);
		\draw [style=root, draw=blue] (13) to (11);
		\draw [style=bla] (14) to (10);
		\draw [style=bla] (13) to (15);
		\draw [style=bla] (12) to (13);
		\draw [style=bla] (16) to (2);
		\draw [style=bla] (2) to (17);
		\draw [style=bla] (18) to (16);
		\draw [style=bla] (18) to (17);
		\draw [style=bla] (17) to (19);
		\draw [style=bla] (19) to (16);
		\draw [style=bla] (16) to (20);
		\draw [style=bla] (20) to (17);
		\draw [style=bla] (21) to (17);
		\draw [style=bla] (21) to (16);
		\draw [style=bla] (22) to (21);
		\draw [style=bla] (23) to (7);
		\draw [style=bla] (23) to (24);
		\draw [style=bla] (23) to (25);
		\draw [style=bla] (27) to (4);
		\draw [style=bla] (5) to (28);
		\draw [style=bla] (28) to (29);
		\draw [style=bla] (29) to (30);
		\draw [style=bla] (30) to (5);
		\draw [style=bla] (29) to (31);
		\draw [style=bla] (31) to (32);
		\draw [style=bla] (31) to (33);
		\draw [style=bla] (31) to (34);
		\draw [style=bla] (34) to (28);
		\draw [style=bla] (36) to (35);
		\draw [style=bla] (37) to (36);
		\draw [style=bla] (37) to (28);
		\draw [style=bla] (28) to (35);
		\draw [style=geoleft] (6) to (5);
		\draw [style=geoleft] (5) to (4);
		\draw [style=geoleft] (4) to (3);
		\draw [style=geoleft] (3) to (0);
		\draw [style=root, draw=red] (0) to (1);
		\draw [style=geoleft] (1) to (2);
		\draw [style=geoleft] (2) to (7);
		\draw [style=geoleft] (7) to (8);
		\draw [style=geoleft] (6) to (left);
		\draw [style=geoleft] (8) to (right);
		\draw [dashed, <->] (0) -- (13) node[midway, right,yshift=6pt] {$\mathbf{d}$};
	\end{pgfonlayer}
\end{tikzpicture}
\caption{\label{fig:volume difference}\small{A representation of the balls $[\UIHPQ]_n$ and $[\UIHPQ]_{n+\mathbf{d}}$; notice that $[\UIHPQS]_n\subseteq [\UIHPQ]_{n+\mathbf{d}}$ (in fact, it is obtained from $[\UIHPQ]_{n+\mathbf{d}}$ by eliminating any vertices belonging to $\cup_{i\in\mathbb{Z}}\q_i$, with the exception of the $v_i$'s).}}
\end{figure}

\proof Again, we see $\UIHPQS$ as being constructed from the UIHPQ $\UIHPQ$ via the pruning procedure of~\cite{CMboundary}. We will show that
$$\lim_{n\to\infty}\P(|\#[\UIHPQ]_n-\#[\UIHPQS]_n|>\epsilon n^4)=0$$
for any positive $\epsilon$, which entails the proposition.

Thanks to the pruning construction, we have $|\#[\UIHPQ]_n-\#[\UIHPQS]_n|\leq \sum_{i=-a}^b|\q_i|+|\#[\UIHPQ]_{n+\mathbf{d}}-\#[\UIHPQ]_n|$, where $\mathbf{d}$ is the distance between the root vertex of $\UIHPQ$ and the infinite component, while $-a$ and $b$ are the minimum and maximum $i$ such that vertex $v_i$ is at distance $n$ from vertex $v_0$ (see Figure~\ref{fig:volume difference}).

Without loss of generality we may then restrict ourselves to the event $\mathbf{d}< \sqrt{n}$, which is asymptotically almost sure; on the other hand, we have $\P\left(|\#[\UIHPQ]_{n+\mathbf{d}}-\#[\UIHPQ]_{n}|>\frac{\epsilon}{2} n^4\bigm\vert \mathbf{d}<\sqrt{n}\right)\to 0$ as $n\to\infty$ by Proposition~\ref{prop:limitlawball}.

Also, we have shown in Proposition~\ref{prop:limitlawball for UIHPQS} that the probability of $\max\{a,b\}$ being at least $n^{2+\alpha}$ is infinitesimal in $n$ for all positive $\alpha$. Excluding the single (almost surely) finite quadrangulation $\q_0$ in order not to be forced to take the bias of its law into account, we are reduced to computing
$$\lim_{n\to\infty}\P\left(\sum_{0<|i|\leq n^{2+\alpha}}|\q_i|>\frac{\epsilon}{4}n^4\right),$$
where the $\q_i$'s are i.i.d.~free Boltzmann quadrangulations. Since the size of $\q_i$ can always be bounded from above by four times its area, we may use \eqref{eq:sizetail} to deduce that $\q_i$ is stochastically dominated by a random variable in the domain of attraction of the totally asymmetric 3/4-stable random variable. In particular it follows by standard estimates \cite{Sk57} that there is a constant $C$ such that
$$\lim_{n\to\infty}\P\left(\sum_{0<|i|\leq n^{2+2   \alpha}}|\q_i|> Cn^{4(2+\alpha)/3}\right)=0,$$
and we can choose $\alpha$ so that asymptotically $Cn^{4(2+2\alpha)/3}<\epsilon n^4/4$.
\endproof

\bibliographystyle{siam}
\bibliography{bibli}

\end{document}